\newtheorem{iTheorem}{Theorem}
\newtheorem{iProp}{Proposition}
\newtheorem{Theorem}{Theorem}[section]
\newtheorem{conjecture}[Theorem]{Conjecture}
\newtheorem{lemma}[Theorem]{Lemma}
\newtheorem{prop}[Theorem]{Proposition}
\newtheorem{cor}[Theorem]{Corollary}
\theoremstyle{definition}
\newtheorem{defn}[Theorem]{Definition}
\newtheorem{problem}[Theorem]{Problem}
\theoremstyle{remark}
\newtheorem{rem}[Theorem]{Remark}
\newtheorem{rems}[Theorem]{Remarks}
\newtheorem{ex}[Theorem]{Example}
\newcounter{numl}
\newcommand{\labelnuml}{\textup{(\roman{numl})}}
\newenvironment{numlist}{\begin{list}{\labelnuml}%
{\usecounter{numl}\setlength{\leftmargin}{0pt}%
\setlength{\itemindent}{2\parindent}%
\setlength{\itemsep}{\smallskipamount}\def
\makelabel ##1{\hss \llap {\upshape ##1}}}}{\end{list}}
\newenvironment{bulletlist}{\begin{list}{\labelitemi}%
{\setlength{\leftmargin}{\parindent}\def
\makelabel ##1{\hss \llap {\upshape ##1}}}}{\end{list}}
\DeclareSymbolFont{script}{U}{eus}{m}{n}
\DeclareSymbolFontAlphabet{\amathscr}{script}
\DeclareMathSymbol{\Wedge}{0}{script}{"5E}
\DeclareMathAlphabet{\mathrmsl}{OT1}{cmr}{m}{sl}
\newcommand{\ra}{\rightarrow}
\renewcommand{\geq}{\geqslant}
\renewcommand{\leq}{\leqslant}
\newcommand{\st}{\,|\,}
\newcommand{\R}{{\mathbb R}}
\newcommand{\C}{{\mathbb C}}
\newcommand{\intZ}{{\mathbb Z}}
\newcommand{\N}{{\mathbb N}}
\newcommand{\Q}{{\mathbb Q}}
\newcommand{\T}{{\mathbb T}}
\newcommand{\G}{{\mathbb G}}
\newcommand{\PP}{{\mathbb P}}
\newcommand{\con}{\mathfrak{con}}
\newcommand{\crJ}{{\mathfrak{cr}}}
\newcommand{\aut}{\mathfrak{aut}}
\newcommand{\tor}{{\mathfrak t}}
\newcommand{\Lam}{{\mathrmsl\Lambda}}
\newcommand{\cL}{{\mathcal L}}
\newcommand{\cO}{{\mathcal O}}
\newcommand{\Cx}{{\mathcal C}}
\newcommand{\cS}{{\mathcal S}}
\newcommand{\Hol}{{\mathscr H}}
\newcommand{\tstM}{{\mathscr M}}
\newcommand{\tstL}{{\mathscr L}}
\newcommand{\tstA}{{\mathscr A}}
\newcommand{\tstC}{{\mathscr V}}
\newcommand{\tstN}{{\mathscr N}}
\newcommand{\tstJ}{{\mathscr J}}
\newcommand{\tstD}{{\mathscr D}}
\newcommand{\tstK}{{\mathscr K}}
\newcommand{\Fut}{{\mathscr F}}
\newcommand{\tstGF}{{\mathscr{G\!F}}}
\newcommand{\Futc}{\mathrm{Fut}}
\newcommand{\DF}{\mathrm{DF}}
\newcommand{\sub}{\subseteq}
\newcommand{\Id}{\mathit{Id}}
\newcommand{\vol}{\mathit{vol}}
\newcommand{\End}{\mathrmsl{End}}
\newcommand{\Con}{\mathrmsl{Con}}
\newcommand{\Diff}{\mathrmsl{Diff}}
\newcommand{\Aut}{\mathrmsl{Aut}}
\newcommand{\spn}[1]{\R{\cdot}#1}
\newcommand{\restr}[1]{|_{#1}^{\vphantom x}}
\newcommand{\iip}[1]{\langle\!\langle #1 \rangle\!\rangle}
\newcommand{\ip}[1]{\langle #1 \rangle}
\renewcommand{\d}{{\mathrmsl d}}
\newcommand{\grad}{\mathop{\mathrmsl{grad}}\nolimits}
\newcommand{\Scal}{\mathit{Scal}}
\newcommand{\Sm}{N}
\newcommand{\Cm}{V}
\newcommand{\Sph}{{\mathbb S}}
\newcommand{\Ds}{{\amathscr D}}
\newcommand{\Lv}{L}
\newcommand{\cf}{\eta}
\newcommand{\X}{X}
\newcommand{\Y}{Y}
\newcommand{\K}{K}
\newcommand{\Z}{Z}
\newcommand{\cae}{\sigma}
\newcommand{\sas}{\tau}
\newcommand{\lO}{{\mathcal O}}
\newcommand{\lo}{{\mathfrak o}}
\newcommand{\thol}{{\mathfrak h_0}}
\newcommand{\kcl}{\zeta}
\newcommand{\pt}{p}
\newcommand{\Sc}{\Xi}
\newcommand{\Sp}{\hat\Xi}
\def\th/#1#2{{#1}^{#2}}
\newcommand{\ME}{{\mathbf M}}
\newcommand{\ext}{{\mathrm{ext}}}
\newcommand{\cl}{{\mathrm{cl}}}
\newcommand{\cpot}{{\mathcal R}}
\newcommand{\IsoPot}{{\boldsymbol\Theta}}
\newcommand{\hTheta}{\hat {\boldsymbol\Theta}}
\newcommand{\flow}{\Phi}
\newcommand{\Cv}{\widehat{V}}
\newcommand{\tstV}{{\mathscr V}}
\newcommand{\tstcone}{\tstV}
\begin{document}

\title[K-stability and extremality  of Sasaki manifolds]{Weighted K-stability of polarized varieties and extremality of Sasaki manifolds}
\author[V. Apostolov]{Vestislav Apostolov} \address{Vestislav Apostolov \\ D{\'e}partement de Math{\'e}matiques\\ UQAM\\ C.P. 8888 \\ Succursale Centre-ville \\ Montr{\'e}al (Qu{\'e}bec) \\ H3C 3P8 \\ Canada} \email{apostolov.vestislav@uqam.ca}

\author[D.M.J. Calderbank]{David M. J. Calderbank} \address{David M. J. Calderbank \\ Department of Mathematical Sciences\\ University of Bath\\ Bath BA2 7AY\\ UK} \email{D.M.J.Calderbank@bath.ac.uk}

\author[E. Legendre]{Eveline Legendre}\address{Eveline Legendre\\ Institut de Math\'ematiques de Toulouse\\ Universit\'e Paul Sabatier\\  118 route de Narbonne\\ 31062 Toulouse\\ France} \email{eveline.legendre@math.univ-toulouse.fr}

\begin{abstract}
We use the correspondence between extremal Sasaki structures and weighted
extremal K\"ahler metrics defined on a regular quotient of a Sasaki manifold,
established by the first two authors in \cite{AC}, and Lahdili's theory of
weighted K-stability \cite{lahdili2} in order to define a suitable notion of
(relative) weighted K-stability for compact Sasaki manifolds of regular type.
We show that the (relative) weighted K-stability with respect to a maximal
torus is a necessary condition for the existence of a (possibly irregular)
extremal Sasaki metric.  We also compare weighted K-stability to the
K-stability of the corresponding polarized affine cone (introduced by
Collins--Sz\'ekelyhidi in \cite{CSz}), and prove that they agree on the class
of test configurations we consider.  As a byproduct, we strengthen the
obstruction to the existence of a scalar-flat K\"ahler cone metric found in
\cite{CSz} from the K-semistability to the K-stability on these test
configurations.  We use our approach to give a characterization of the
existence of a compatible extremal Sasaki structure on a principal
$\Sph^1$-bundle over an admissible ruled manifold in the sense of
\cite{HFKG3}, expressed in terms of the positivity of a single polynomial of
one variable over a given interval.
\end{abstract}

\date{\today}

\maketitle

\section*{Introduction}

The famous Calabi problem~\cite{calabi}, which seeks the existence of
canonical K\"ahler metrics in a given K\"ahler class $\kcl \in H^2(M, \R)$
on a compact complex manifold $(M, J)$, is a central and very active topic of
current research in K\"ahler geometry.  As a candidate for a canonical metric
Calabi proposed a notion of \emph{extremal K\"ahler metric} $g$, meaning that
its scalar curvature $\Scal(g)$ is a \emph{Killing potential}, i.e., the
vector field $J\grad_g \Scal(g)$ is a Killing vector field for $g$. Examples
include constant scalar curvature (CSC) K\"ahler metrics on $(M,J)$, and hence
also K\"ahler--Einstein metrics.

There is an odd-dimensional analogue of the Calabi problem---and a
corresponding notion of \emph{extremal Sasaki structures} (called
\emph{canonical} in \cite{BG-book})---associated to compact, strictly
pseudoconvex CR manifolds $(\Sm, \Ds, J)$ of Sasaki type. This may be
motivated using the construction of \emph{regular} Sasaki structures on the
total space $\Sm$ of the principal $\Sph^1$-bundle over a Hodge manifold
$(M,J,\zeta)$ determined by the given integral K\"ahler class $\kcl/2\pi \in
H^2(M, \intZ)$: any K\"ahler metric $\omega \in \kcl$ on $M$ induces a
connection $1$-form $\cf$ on $\Sm$ satisfying $\d\cf = \pi^* \omega$, and
hence a CR structure $(\Ds,J)$ with $\Ds=\ker\cf\sub T\Sm$ and $J$ the
pullback of the complex structure on $M$ to $\Ds\cong\pi^*TM$. In this case,
the vector field $\K$ induced by the fibrewise $\Sph^1$-action on $\Sm$ is
called \emph{regular Sasaki--Reeb} vector field.  The Calabi problem on $(M,
J, \kcl)$ translates to the search for CR structures $(\Ds, J)$ on $\Sm$,
compatible with the induced \emph{transversal holomorphic structure} $J^\K$ on
the vector bundle $T\Sm/\langle \K \rangle \to \Sm$, such that $(\Ds, J, \K)$
is a Sasaki structure corresponding to an extremal, CSC or K\"ahler--Einstein
metric $\omega\in \kcl$ on $M$. This leads respectively to notions of
extremal, CSC and Einstein Sasaki structures on $\Sm$.

The above construction can be generalized by relaxing the assumption that the
$\Sph^1$-action on $\Sm$ generated by the vector field $\K$ is free. If it is
instead required only to be locally free, then $\Sm$ can be realized as a
principal $\Sph^1$-orbibundle over a k\"ahlerian orbifold $(M, J, \kcl)$, and
$\K$ is then referred to as a \emph{quasiregular} Sasaki--Reeb vector field on
$\Sm$. One can also weaken the assumption that $K$ generates an
$\Sph^1$-action, and require only that the closure of the subgroup generated
by the vector field $\K$ is a compact torus $\T$ in the diffeomorphism group
of $\Sm$ preserving the given transversal holomorphic structure $J^\K$; if
$\dim\T>1$, $\K$ is called an \emph{irregular} Sasaki--Reeb vector field.

Irrespective of regularity, the correspondence between K\"ahler geometry and
Sasaki geometry holds locally, allowing one to extend the notions of extremal,
CSC and Einstein Sasaki structures: indeed, any point of a Sasaki manifold
$(\Sm,\Ds,J, \K)$ has a neighbourhood in which the leaf space $M$ of the flow
of $\K$ is a manifold and has an induced K\"ahler structure $(g,J,\omega)$.
We refer to $(M, g, J,\omega)$ as a \emph{Sasaki--Reeb quotient} of $(\Sm,
\Ds, J, \K)$, and say that $(\Sm, \Ds, J, \K)$ is extremal, CSC or
Sasaki--Einstein if any local Sasaki--Reeb quotient is extremal, CSC, or
K\"ahler--Einstein manifold, respectively.

The study of irregular extremal Sasaki metrics is more delicate than the
regular case (which reduces to the Calabi problem in the K\"ahler context).
One approach for dealing with the issue of irregularity stems from the seminal
work of Martelli--Sparks--Yau~\cite{MSY2}, who showed that the existence of a
Sasaki--Einstein metric compatible with a given transversal holomorphic
structure $(\K, J^\K)$ on $\Sm$ can be equivalently studied using strictly
plurisubharmonic smooth positive functions on the (noncompact) smooth manifold
$\Cm=\R^+ \times\Sm$ equipped with a conical complex structure $J$ inducing
the transversal holomorphic structure $(\K,J^\K)$; such functions define
K\"ahler metrics on $(\Cm, J)$ which are Ricci-flat if and only if the induced
Sasaki structure on $\Sm$ is Einstein. Furthermore, they observed that $(\Cm,
J, \K)$ can be given the structure of an affine cone embedded in $\C^k$ for
some $k \gg0$, with $\K$ being in the Lie algebra of a maximal compact
algebraic torus $\T \leq \mathrm{GL}(k, \C)$ preserving $(\Cm, J)$. The
framework of \cite{MSY2} can be adapted \cite{He-Sun,CSz,BV} to recast, more
generally, the search for compatible extremal Sasaki structures on $(\Sm, \K,
J^\K)$.

Using the algebraic nature of $(\Cm, J, \K)$,
Collins--Sz\'ekelyhidi~\cite{CSz} introduced a suitable notion of
\emph{K-stability} of $(\Cm, J, \K , \T)$, and showed that the
\emph{K-semistability} of $(\Cm, J, \K , \T)$ is a necessary condition for the
existence of a compatible CSC Sasaki structure on $(\Sm, \K, J^\K)$. A
corresponding notion of \emph{relative} K-stability for $(\Cm, J, \K, \T)$,
obstructing the existence of \emph{extremal} Sasaki structures on $(\Sm, \K,
J^\K)$, has been introduced in \cite{BV}, where the relative K-semistability
of $(\Cm, J, \K, \T)$ with respect to $\T$-equivariant test configurations of
affine cones in $\C^k$ is shown to be a necessary condition for the existence
of a compatible extremal Sasaki structure on $(\Sm, \K, J^\K)$. These works
suggest a general Yau--Tian--Donaldson correspondence---analogous to the
projective K\"ahler case~\cite{Do-02, Tian, yau}---for the existence of
compatible extremal Sasaki structures on $(\Sm, \K, J^\K)$ expressed in terms
of a suitable notion of K-stability of the affine complex cone $(\Cm, J, \K)$.
Such a correspondence has only been established in the Sasaki--Einstein case
in \cite{CSz2}, where Collins and Sz\'ekelyhidi refine their original notion
of K-stability by considering compatible \emph{special test configurations} of
$(\Cm, J, \K , \T)$, show that the existence of a compatible Sasaki--Einstein
metric implies that $(\Cm, J, \K , \T)$ is K-stable (not merely K-semistable)
for such test configurations and, conversely, prove that the K-stability of
$(\Cm, J, \K , \T)$ with respect to special test configurations yields the
existence of a Sasaki--Einstein metric on $(\Sm, \K, J^\K)$.  None of these
refinements seem to be available in the general CSC and extremal cases.

\smallskip

Thus motivated, in this paper we propose and develop a new approach to the
existence problem of extremal Sasaki structures and the corresponding notion
of (relative) K-stability.  It uses the key feature (see
\cite{CFO,FOW,legendre2,MSY1}) that one can construct (even irregular) CSC
Sasaki manifolds from K\"ahler manifolds which are not necessarily CSC.  This
follows from the observation that the space of Sasaki--Reeb vector fields $\K$
(i.e., the vector fields giving rise to transversal holomorphic structures of
Sasaki type) is open in the Lie algebra $\tor$ of a compact torus $\T$ inside
the group of CR transformation of $(\Sm, \Ds, J)$. In particular, one can
consider two different Sasaki--Reeb vector fields $\X, \K \in \tor$, such that
$\X$ is quasiregular even if $\K$ is not.  We will focus in this paper on the
case that $\X$ is a \emph{regular} Sasaki--Reeb vector field, although some of
our considerations can be extended to the case of quasiregular $\X$, with the
obvious technical difficulties coming from dealing with orbifolds rather than
smooth manifolds.

\subsection*{A correspondence between Sasaki structures}
Suppose therefore that $(\Sm, \Ds, J, \X)$ is a regular compact Sasaki
manifold over a smooth polarized K\"ahler manifold $(M, J, L)$, with induced
Sasaki--Reeb quotient K\"ahler structure $(g, \omega)$ on $M$, $\omega \in
\kcl = 2\pi c_1(L)$, and suppose also that $\K$ is another Sasaki--Reeb vector
field on $(\Sm, \Ds, J)$ commuting with $\X$. Then $\K$ projects to a Killing
vector field on $(M, J, g, \omega)$, denoted by $\check\K$, which (by the
Sasaki--Reeb condition) has a positive Killing potential $f$ with respect to
$g$.  Furthermore, as observed in \cite{AC}, $(\Ds, J, \K)$ is an extremal
Sasaki structure if and only if $(g, \omega)$ satisfies the condition that
\emph{$(m+2, f)$-weighted scalar curvature}
\begin{equation*}
\Scal_{f, m+2}(g) := f^2\Scal(g) - 2(m+1) f\Delta_g f - (m+2) (m+1) |\d f|^2_g
\end{equation*}
of $g$ is a Killing potential; here $m$ is the complex dimension of $M$, and
$\Scal(g)$ and $\Delta_g$ are the scalar curvature and riemannian Laplacian of
$g$. We refer herein to such an $(m+2,f)$-extremal K\"ahler metric as a
\emph{$\K$-extremal K\"ahler metric}: see
\S\ref{ss:calabi-problem-kahler}--\S\ref{ss:calabi-problem-sasaki} for more
details.

Following the approach in \cite{AM}, one can extend the Calabi problem to this
(more general) weighted setting, and relate it to the extremal Sasaki problem
as follows.

\begin{iProp}[see Proposition~\ref{p:reduction}] \label{p:main0} 
Let $(\Sm, \Ds, J, \X)$ be a compact quasiregular Sasaki manifold, and $(M,
J, L)$ the corresponding smooth polarized K\"ahler orbifold.  Suppose that
$\K$ is a Sasaki--Reeb vector field on $(\Sm, \Ds, J)$ which commutes with
$\X$. Then $\Sm$ admits an extremal Sasaki structure compatible with the
transversal holomorphic structure $(\K, J^\K)$ determined by $(\K, \Ds, J)$ if
and only if  $(M, J)$ admits a $\K$-extremal K\"ahler
metric in the K\"ahler class $2\pi c^{orb}_1(L)$.
\end{iProp}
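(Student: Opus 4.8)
The plan is to transfer the entire statement to the K\"ahler side via the Seifert fibration $\pi\colon\Sm\to M$ determined by $\X$, and then invoke the local K\"ahler--Sasaki correspondence of \cite{AC} recalled just before the Proposition. The first step is to set up the dictionary between Sasaki structures on $\Sm$ compatible with $(\K,J^\K)$ and $\check\K$-invariant K\"ahler metrics in $2\pi c^{orb}_1(L)$ on the orbifold $M$, where $\check\K=\pi_*\K$. Given such an $\omega$, one has $d\theta_\omega=\pi^*\omega$ for a connection $1$-form $\theta_\omega$ on the Seifert bundle normalized by $\theta_\omega(\X)=1$; the function $f_\omega$ on $M$ defined by $f_\omega\circ\pi=\theta_\omega(\K)$ is then a Killing potential for $\check\K$, and it is positive exactly because $\K$ is a Sasaki--Reeb vector field. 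Setting $\eta_\omega:=\theta_\omega/(f_\omega\circ\pi)$, a direct computation gives $\eta_\omega(\K)=1$, $\mathcal L_\K\eta_\omega=0$ and transversal positivity of $d\eta_\omega$, so $(\Sm,\ker\eta_\omega,J,\K)$ is a Sasaki structure compatible with $(\K,J^\K)$; conversely, from any $\X$-invariant compatible Sasaki structure $\eta$ one recovers $\theta_\omega=\eta/\eta(\X)$ and hence $\omega$.

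Next I would identify the Sasaki--Reeb quotient of $(\Sm,\ker\eta_\omega,J,\K)$ by the commuting quasiregular field $\X$. Since $\eta_\omega/\eta_\omega(\X)=\theta_\omega$, this quotient is the Boothby--Wang/Seifert reduction of the contact form $\theta_\omega$, i.e.\ $(M,J,\omega)$ itself, with $\check\K$ Killing of potential $f_\omega$. Applying the quasiregular (orbifold) version of \cite{AC}, the structure $(\ker\eta_\omega,J,\K)$ is extremal Sasaki if and only if the weighted scalar curvature $\Scal_{f_\omega,m+2}(g_\omega)$ of its Sasaki--Reeb quotient is a Killing potential -- that is, if and only if $\omega$ is a $\K$-extremal K\"ahler metric. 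Together with the dictionary of the first step this settles the ``if'' implication at once. For the ``only if'' implication one starts with a compatible extremal Sasaki structure and, after pulling back by a suitable CR automorphism, arranges that its contact form $\eta$ is invariant under a maximal torus of the reduced transversal automorphism group containing $\X$ and $\K$ (using that extremal Sasaki structures are invariant under such a torus, together with the openness of the Sasaki cone), so that $\eta=\eta_\omega$ for $\omega$ the reduction of $\eta/\eta(\X)$; then \cite{AC} shows this $\omega\in 2\pi c^{orb}_1(L)$ is $\K$-extremal.

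I expect the difficulty to be organizational rather than conceptual. Three points need care: (i) making the orbifold Seifert dictionary of the first step precise -- checking that $\X$-basic data on $\Sm$ correspond to (orbifold-)smooth data on $M$, and that $L$, a $\Q$-line bundle in general, relates to the Euler class of $\pi$ as claimed; (ii) recording that the local identity of \cite{AC} descends to the orbifold $M$, which is routine since the operators involved ($\Delta_{g}$, the Lichnerowicz operator, the weighted scalar curvature) are transversally elliptic and basic; and, most delicately, (iii) tracking the normalization of the weight, so that the function $f$ produced by \cite{AC} is precisely the Killing potential of $\check\K$ pinned down by $f\circ\pi=\theta_\omega(\K)$ with $\theta_\omega(\X)=1$ -- getting this constant right is what makes the two extremality conditions correspond. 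The torus-invariance reduction used in the ``only if'' direction, although standard, is the one non-formal input.
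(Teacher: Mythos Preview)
Your approach is correct and follows the same conceptual arc as the paper's proof of Proposition~\ref{p:reduction}: both rely on (a) the CR-structure-preserving correspondence between Sasaki structures polarized by $\X$ and by $\K$, (b) Lemma~\ref{l:weighted-extremal} from \cite{AC} to translate extremality for $\K$ into $f$-extremality on the $\X$-quotient, and (c) a maximal-torus conjugation to reduce the ``only if'' direction to $\T$-invariant structures.

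The difference is in how (a) is packaged. You write the correspondence directly on $\Sm$ as $\eta\leftrightarrow\theta=\eta/\eta(\X)$; the paper instead encodes it as the bijection $\hTheta$ of~\eqref{e:bij-pot}, built from cone potentials on $(\Cm,J)$ via Corollary~\ref{bijection-potentials}. Your formulation is more elementary, but the paper's cone route gives for free a precise bijection between the \emph{markings} $\Sc(\X,J^\X,\cf_{\Ds_0}^\X)^\T$ and $\Sc(\K,J^\K,\cf_{\Ds_0}^\K)^\T$, which handles cleanly the ambiguity in your step (i) (non-uniqueness of the connection $1$-form $\theta_\omega$ up to closed basic forms). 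For (c), you invoke CR automorphisms; the paper makes this precise by observing that the maximal torus $\T_{\max}\leq\Aut(\Sm,\Ds,J)^\K$ is also maximal in the finite-dimensional complex Lie group $\Aut(\Cm,J)^\K$, and then conjugates there---again the cone is the organizing device. Your worry (iii) about the normalization of $f$ is well-placed but does not arise in either approach: by construction $f=\cf_\Ds^\X(\K)$ with $\cf_\Ds^\X(\X)=1$, and this \emph{is} the $\kappa$-normalized potential (cf.\ \S\ref{ss:calabi-problem-sasaki}), so no constant needs tracking.
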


\subsection*{A weighted K-stability theorem} The class of $\K$-extremal
K\"ahler metrics is a special case of a more general notion of \emph{weighted
  extremal K\"ahler metrics} studied recently by Lahdili~\cite{lahdili1,
  lahdili, lahdili2}. In particular, it follows from these works that one can
reduce the search for $\K$-extremal metrics on a smooth polarized compact
K\"ahler manifold $(M, J, L)$ by fixing a maximal compact torus $\T \leq
\Aut(M, L)$, such that $\K$ belongs to ${\rm Lie}(\T)$, and considering
$\T$-invariant K\"ahler metrics in $\kcl= 2\pi c_1(L)$. Furthermore, for any
smooth $\T$-equivariant polarized test configuration $(\tstM, \tstL)$ of $(M,
L)$, Lahdili~\cite{lahdili2} associates a notion of weighted Donaldson--Futaki
invariant $\Fut^\ext_\K (\tstM, \tstL)$ with respect to the Sasaki--Reeb
vector field $\K$, and proves that if $(M, J)$ admits a $\K$-extremal K\"ahler
metric in $\kcl$ and the smooth test configuration $(\tstM, \tstL)$ has a
reduced central fibre, then $\Fut^\ext_\K(\tstM, \tstL)\ge 0$. This leads to a
notion of a relative weighted K-semistabilty of $(M, J, L, \T, \K)$ which
obstructs the existence of $\K$-extremal K\"ahler metrics in $2\pi c_1(L)$.

Our first main result extends this weighted K-semistability to weighted
K-stability.

\begin{iTheorem}[see Theorem~\ref{Theorem:polystable}] \label{Theorem:main1} 
Let $(\Sm, \Ds, J, \X)$ be a compact regular Sasaki manifold, and $(M, J, L)$
the corresponding smooth polarized K\"ahler manifold.  Suppose that $\K$ is a
Sasaki--Reeb vector field on $(\Sm, \Ds, J)$ which commutes with $\X$, such
that $\Sm$ admits an extremal Sasaki structure compatible with the transversal
holomorphic structure $(\K, J^\K)$ determined by $(\K, \Ds, J)$.  Let $\T$ be
a maximal torus in the group $\Aut(M, L)$ of automorphisms of the polarized
variety $(M, J, L)$, such that $\X, \K$ belong to ${\rm Lie}(\T)$. Then, for
any $\T$-equivariant smooth polarized test configuration $(\tstM, \tstL)$
associated to $(M, J, L, \T)$, which has reduced central fibre and is not a
product test configuration, we have $\Fut_\K^\ext(\tstM, \tstL) > 0$,
i.e., $(M, J, L, \K, \T)$ is relative weighted K-stable on such test
configurations.
\end{iTheorem}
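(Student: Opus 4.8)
The plan is to transfer the statement to weighted K\"ahler geometry via Proposition~\ref{p:main0}, and then run the ``energy $+$ convexity $+$ rigidity'' scheme, in its weighted and $\T$-relative form, that promotes K-semistability to K-stability. Since $\Sm$ admits an extremal Sasaki structure compatible with $(\K,J^\K)$, Proposition~\ref{p:main0} provides a $\K$-extremal K\"ahler metric $\omega_{\mathrm{ext}}\in 2\pi c_1(L)$ on $(M,J)$, which we may assume $\T$-invariant. Following Lahdili, attach to the $(m+2,f)$-weighted problem its weight data $(v,w)$ (determined by $f$ and $\check\K$ as in Lahdili's formalism) and form the $\T$-relative weighted Mabuchi energy $\mathcal{M}_{v,w}^{\mathrm{rel}}$ on the space of $\T$-invariant K\"ahler potentials in $\kcl$, i.e.\ the weighted Mabuchi energy modified by the projection onto $\T$-Killing potentials so that $\omega_{\mathrm{ext}}$ is a critical point. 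From Lahdili's work one has: (i) $\omega_{\mathrm{ext}}$ is a critical point of $\mathcal{M}_{v,w}^{\mathrm{rel}}$ and this functional is convex along weak geodesics (the weighted analogue of the Berman--Berndtsson convexity theorem), so $\omega_{\mathrm{ext}}$ is a global minimizer and $\mathcal{M}_{v,w}^{\mathrm{rel}}$ is bounded below; (ii) for every $\T$-equivariant smooth polarized test configuration $(\tstM,\tstL)$ with reduced central fibre, the induced smooth ray $\phi_t$ of $\T$-invariant potentials issuing from $\omega_{\mathrm{ext}}$ satisfies $\lim_{t\to\infty}\tfrac{d}{dt}\mathcal{M}_{v,w}^{\mathrm{rel}}(\phi_t)=c\,\Fut_\K^\ext(\tstM,\tstL)$ for a fixed $c>0$.

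If $\Fut_\K^\ext(\tstM,\tstL)<0$, then (ii) forces $\mathcal{M}_{v,w}^{\mathrm{rel}}(\phi_t)\to-\infty$, contradicting (i); this is Lahdili's weighted semistability. Assume now, for contradiction, that $(\tstM,\tstL)$ is \emph{not} a product test configuration yet $\Fut_\K^\ext(\tstM,\tstL)=0$. Pass from the smooth subgeodesic ray $\phi_t$ to the weak $C^{1,1}$ geodesic ray $\bar\phi_t$ with $\bar\phi_0=\omega_{\mathrm{ext}}$ asymptotic to it; by the comparison between the asymptotic slope of the weighted Mabuchi energy along this geodesic ray and its slope along $\phi_t$ (an equality when the central fibre is reduced), the asymptotic slope of $\mathcal{M}_{v,w}^{\mathrm{rel}}$ along $\bar\phi_t$ is $\leq c\,\Fut_\K^\ext(\tstM,\tstL)=0$. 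On the other hand $t\mapsto\mathcal{M}_{v,w}^{\mathrm{rel}}(\bar\phi_t)$ is convex, and since $\bar\phi_0=\omega_{\mathrm{ext}}$ is a global minimizer its right-derivative at $t=0$ is $\geq 0$; thus this derivative is non-decreasing, is $\geq 0$ at $t=0$, and tends to a non-positive limit, which forces it to vanish identically, so $\mathcal{M}_{v,w}^{\mathrm{rel}}$ is constant along $\bar\phi_t$.

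The remaining, and principal, difficulty is the rigidity step: deducing from the constancy of $\mathcal{M}_{v,w}^{\mathrm{rel}}$ along $\bar\phi_t$ that $(\tstM,\tstL)$ must be a product test configuration, contradicting the hypothesis. This requires the weighted, $\T$-relative analogue of the Berman--Darvas--Lu rigidity theorem: an affine (here constant) direction of the weighted relative Mabuchi energy along a geodesic ray is generated by a holomorphic vector field in $\tor$ commuting with the extremal vector field, and --- $(\tstM,\tstL)$ being smooth with reduced central fibre --- the test configuration is then determined by this ray and coincides with the product one generated by that vector field. I expect the bulk of the work to consist in establishing this weighted relative rigidity statement (and, as a secondary technical point, the weighted slope comparison invoked above); once these are in hand, everything else is a transcription of Lahdili's semistability argument together with the now-standard geodesic-ray and energy-slope technology, carried out for $\T$-invariant potentials and the weight $(v,w)$.
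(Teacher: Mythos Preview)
Your overall scheme (convexity plus a rigidity step) is a reasonable alternative to what the paper does, but the paper takes a genuinely different route and your identified ``principal difficulty'' is exactly the obstacle that route is designed to avoid.

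The paper does not argue directly via convexity of $\ME_{\check\K,\kappa}$ along weak geodesic rays together with a weighted rigidity theorem. Instead it first proves that $\ME_{\check\K,\kappa}$ is \emph{$\G$-proper} (Theorem~\ref{main-result} and Corollary~\ref{central}), and only then runs a BDL/Dyrefelt-type argument (Lemma~\ref{l:key}). The point is that once properness is in hand, vanishing of the Futaki invariant gives $d_1^{\G}(0,u_t)=o(t)$ for the smooth ray, hence $d_1^{\G}(0,\varphi_t)=o(t)$ for the Phong--Sturm geodesic ray, and then the \emph{unweighted} rigidity results of \cite{BDL,Dyrefelt} on $d_1$-geodesic rays force $\varphi_t$ to be generated by some $J\Y$ with $\Y\in\tor$, whence the test configuration is a product. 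No ``weighted BDL rigidity'' is needed: the weight enters only through the slope formula and through properness, and the rigidity is the ordinary one for $d_1$-geodesic rays.

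Crucially, properness itself is \emph{not} obtained on the K\"ahler side. The paper explicitly flags that $\ME_{\check\K,\kappa}$ has no evident extension to the $d_1$-completion of the space of potentials, because its definition involves the $\kappa$-normalized Killing potential of $\check\K$. The workaround is the bijection $\hTheta$ of~\eqref{e:bij-pot} between Sasaki potentials for $\X$ and for $\K$: one shows $\ME_{\check\K,\kappa}\circ\hTheta=\ME^\K$ (Lemma~\ref{M}), that $\hTheta$ is $d_1$-bilipschitz and $\G$-equivariant (Lemmas~\ref{d1} and~\ref{l:equivariant}), and then applies the Darvas--Rubinstein properness principle on the Sasaki side using He--Li's pluripotential theory and regularity of minimizers for $\ME^\K$, together with the $\T_\C$-transitivity on $\K$-extremal metrics from \cite{lahdili3}. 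This is the substantive content; your proposal offloads it entirely into the unproved ``weighted relative rigidity'' and the implicit extension of $\mathcal{M}_{v,w}^{\mathrm{rel}}$ to weak rays, which is precisely what the Sasaki detour is there to circumvent.
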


By Proposition~\ref{p:main0}, the existence of an extremal Sasaki structure on
$(\Sm, \K, J^\K)$ translates to the existence of a $\T$-invariant
$\K$-extremal metric $(g, \omega)$ on $M$ with $\omega \in 2\pi c_1(L)$.  In
this setting it is well-known (see in particular \cite{BDL,He-ext,Dyrefelt}
for unweighted CSC and extremal K\"ahler metrics) that a K-stability result
like Theorem~\ref{Theorem:main1} may be deduced by establishing a suitable
properness property for an appropriate Mabuchi energy functional. Furthermore,
there is an axiomatic framework~\cite{DaRu} for establishing such properness
properties, which allows us to focus on the specificities of our problem.

In our setting, we denote the relevant functional by $\ME_{\check\K, \kappa}$,
where the subscripts $\check{\K}$ and $\kappa$ stand respectively for the
holomorphic vector field on $(M,J)$ induced by $\K$, and a positive real
constant, determined by $K$, see \S\ref{ss:calabi-problem-sasaki}. This
functional is a weighted version of the relative Mabuchi energy introduced in
\cite{lahdili} defined on the space of $\T$-invariant hermitian products on
$L$ with positive curvature. To prove the relative weighted K-stability of
$(M, J, L, \K, \T)$, it suffices show that $\ME_{\check\K, \kappa}$ is proper
with respect to the natural action of the complexified torus $\T_\C$ on that
space.

In order to apply the axiomatic framework of~\cite{DaRu}, we need to extend
$\ME_{\check\K, \kappa}$ to the $d_1$-completion of the space of
$\T$-invariant relative K\"ahler potentials on $M$. The existence of such an
extension is not obvious because, in contrast to the CSC K\"ahler case, the
definition of $\ME_{\check\K, \kappa}$ involves the $\kappa$-normalized
potential of $\check\K$ with respect to a metric $\omega\in c_1(L)$, which has
no natural interpretation in the space of weakly regular relative K\"ahler
potentials.

The key for bypassing this difficulty is also the key to the proof of
Proposition~\ref{p:main0}. Namely, we make use of a bijection $\hTheta$ (see
\S\ref{ss:Thetamap}, and in particular equation~\eqref{e:bij-pot}) between
(suitably normalized) Sasaki structures on $(\Sm, \K, J^\K)$ and $(\Sm, \X,
J^\X)$ such that corresponding Sasaki structures have the same underlying CR
structure. This map comes from a correspondence between their respective
spaces of (radial) K\"ahler potentials on the cone $(\Cm,J,\K)$ that we
establish (see Corollary~\ref{bijection-potentials}) building on the work of
He--Sun~\cite{He-Sun}. While we avoid the noncompact cone in the notion of
stability we use, $(\Cm,J,\K)$ thus plays an essential role in establishing
properties of the map $\hTheta$.

First, after identifying the normalized Sasaki structures on $(\Sm, \K, J^\K)$
with $\T$-invariant hermitian forms on $L$ with positive curvature, we show
(see Lemma~\ref{M}) that
\[
\ME_{\check\K, \kappa}\circ \hTheta = \ME^\K
\]
$\ME^\K$ is the relative Mabuchi energy acting on the space of (normalized)
Sasaki structures in $(\Sm, \K, J^\K)$, studied in \cite{He1,He-Li, V}.
Secondly, in Lemma~\ref{dTheta} we compute the derivative of $\hTheta$, which
enables us to show (see Lemma~\ref{d1}) that it is bilipschitz with respect to
$d_1$. Thirdly, $\hTheta$ is equivariant for the natural action of the
complexified torus $\T_\C$ induced by $\K$ and $\X$ on the space of
$\T$-invariant hermitian products on $L$ (similar to the setting in
\cite{PS}). This is different than the action of complex automorphisms on the
space of relative K\"ahler potentials considered in \cite{BDL, DaRu,
  He-ext,Dyrefelt}, but has the practical advantage to induce also a natural
action on the space of (normalized) Sasaki structures in $(\Sm, \K, J^\K)$.

These tools allow us to define a notion of properness for $\ME_{\check\K,
  \kappa}$ with respect to the natural action of $\T_\C$ which directly
translates via $\hTheta$ to a corresponding notion of properness for $\ME^\K$
(see Corollary~\ref{central}). This allows us to use the extension of $\ME^\K$
to the $d_1$-completion and the regularity of its minimizers (obtained in
\cite{He1,He-Li}) to show the properness of $\ME^\K$ (see
Theorem~\ref{main-result}) and hence of $\ME_{\check\K, \kappa}$. A key result
for this to work is the transitivity of the action of $\T_\C$ on the space of
$\T$-invariant $\K$-extremal K\"ahler metrics in $2\pi c_1(L)$, which has been
obtained in \cite{lahdili3} using the approach in \cite{CPZ}.  Once the
$\T_\C$-properness of $\ME_{\check\K, \kappa}$ is established, the proof of
Theorem~\ref{Theorem:main1} essentially follows from the arguments
in~\cite{BDL,Dyrefelt}.

Since the complex cone $(\Cm,J,\K)$ plays such an important role in the proof,
we spend some time in Section~\ref{s:abstractcone} developing its properties,
following~\cite{BV,CSz,He-Sun,MSY2}. In particular, in
\S\ref{ss:Reebcone-rev}, we detail the relation (see~\cite{BV,CSz} and
Lemma~\ref{l:reeb}) between various known points of view on the Reeb cone
$\tor_+$ in the Lie algebra $\tor$ of $\T$, establishing that $\tor_+$
consists of vector fields $K$ whose cones $(\Cm,J,\K)$ admit a radial K\"ahler
potential.

\subsection*{Weighted K-stability versus K-stability of affine cones} 

Theorem~\ref{Theorem:main1} provides in particular an obstruction to the
existence of CSC Sasaki structures compatible with a transversal holomorphic
structure $(\K, J^\K)$ on $\Sm$, expressed in terms of the weighted
K-stability of a fixed regular quotient $(M, J, L, \T)$ with respect to
suitable test configurations $(\tstM, \tstL)$. We first remark that the
assumptions that $\tstM$ is smooth and its central fibre is reduced are needed
in the proof of Theorem~\ref{Theorem:main1} (as in~\cite{lahdili2}) both to
define $\Fut_\K^\ext(\tstM, \tstL)$ and to relate it to the slope of the
corresponding Mabuchi energy $\ME_{\check\K, \kappa}$.  However, it is shown
in \cite{DR,dyrefelt} that in the CSC case, restricting to such test
configurations of $(M, J, L)$ does not affect the corresponding (unweighted)
K-stability condition. We expect that these assumptions on $(\tstM, \tstL)$
will not limit the applicability of Theorem~\ref{Theorem:main1} either.

Secondly, we need to compare the weighted K-stability in
Theorem~\ref{Theorem:main1} to the (relative) K-stability of the corresponding
affine cone $(\Cm, J, \K , \T)$ in the sense of \cite{CSz}.  To do this, we
use the fact that the complex manifold $(\Cm,J)$ can be identified with
$(L^*)^\times$ (the total space of the dual bundle of $L$ with the zero
section removed) whereas for any test configuration $(\tstM, \tstL)$ as in
Theorem~\ref{Theorem:main1}, the corresponding complex cone $\tstC =
(\tstL^*)^\times$ gives rise to a $\T$-equivariant \emph{smooth test
  configuration} of the polarized affine cone $(\Cm, J, \K)$ in the sense of
\cite{CSz,Li-Xu}. We shall refer to such test configurations of
$(\Cm, J, \K)$ as \emph{smooth $\T$-equivariant regular test configurations}
(where we have also assumed that the central fibre of $\tstM$ is reduced).
We then have the following result.

\begin{iTheorem}[see Corollary~\ref{c:polystability}]
Let $(\Cm,J,\K)$ be the polarized complex cone associated to a compact
Sasaki manifold $(\Sm, \Ds, J, \K)$, $\T \leq \Aut(\Cm,J)^\K$ a maximal
torus and assume that the Lie algebra of $\T$ contains a regular
Sasaki--Reeb vector field $\X$. If $(\Cm, J, \K)$ admits a scalar-flat K\"ahler
cone metric, then for any $\T$-equivariant regular test configuration
$(\tstC,\K)$ of $(\Cm,J,\K)$, which is not obtained from a product
test configuration, the Donaldson--Futaki invariant of \cite{CSz} associated
to $(\tstC ,\K)$ is strictly positive.
\end{iTheorem}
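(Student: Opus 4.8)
The plan is to combine three ingredients: the classical correspondence between scalar-flat K\"ahler cone metrics and constant scalar curvature (CSC) Sasaki structures, Proposition~\ref{p:main0} together with Theorem~\ref{Theorem:main1}, and the comparison between the Donaldson--Futaki invariant of \cite{CSz} and Lahdili's weighted Donaldson--Futaki invariant established earlier in the paper. First I would observe that a scalar-flat K\"ahler cone metric on $(\Cm,J,\K)$ induces a CSC---hence, in particular, extremal---Sasaki structure on $(\Sm,\K,J^\K)$ compatible with the transversal holomorphic structure $(\K,J^\K)$; see \cite{CSz,MSY2}. Since the Lie algebra of $\T$ contains a \emph{regular} Sasaki--Reeb vector field $\X$, the Sasaki--Reeb quotient of $(\Sm,\Ds,J,\X)$ is a smooth polarized K\"ahler manifold $(M,J,L)$, and under the identification $(\Cm,J)\cong(L^*)^\times$ the torus $\T\leq\Aut(\Cm,J)^\K$ corresponds to a maximal torus of $\Aut(M,L)$ whose Lie algebra contains the images of $\X$ and $\K$. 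By Proposition~\ref{p:main0} the extremal Sasaki structure on $(\Sm,\K,J^\K)$ corresponds to a $\K$-extremal K\"ahler metric in $2\pi c_1(L)$; moreover, because the Sasaki structure is CSC, the weighted Futaki invariant $\Futc_\K$ of $(M,J,L,\K,\T)$ vanishes.

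Next I would invoke Theorem~\ref{Theorem:main1}: its hypotheses are now verified, so for every $\T$-equivariant smooth polarized test configuration $(\tstM,\tstL)$ of $(M,J,L)$ with reduced central fibre which is not a product we have $\Fut_\K^\ext(\tstM,\tstL)>0$. Since $\Futc_\K$ vanishes, the extremal correction term in $\Fut_\K^\ext$ is zero, hence $\Fut_\K^\ext(\tstM,\tstL)=\Fut_\K(\tstM,\tstL)>0$ for all such $(\tstM,\tstL)$, where $\Fut_\K$ denotes the weighted Donaldson--Futaki invariant without the extremal correction. Now take a $\T$-equivariant regular test configuration $(\tstC,\K)$ of $(\Cm,J,\K)$ which is not obtained from a product; by the very definition of a regular test configuration (together with the identification $\Cm\cong(L^*)^\times$) it is of the form $(\tstL^*)^\times$ for a uniquely determined $\T$-equivariant smooth polarized test configuration $(\tstM,\tstL)$ of $(M,J,L)$ with reduced central fibre, and since this correspondence carries products to products, $(\tstM,\tstL)$ is not a product. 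The comparison theorem then identifies the Donaldson--Futaki invariant $\DF(\tstC,\K)$ of \cite{CSz} with a strictly positive multiple of $\Fut_\K(\tstM,\tstL)$. Combining this with the previous step yields $\DF(\tstC,\K)>0$, as required.

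The substantive analytic input, Theorem~\ref{Theorem:main1}, is used here as a black box; the real work in this corollary lies in the comparison theorem, and the main point to get right there is matching the normalizations used in \cite{CSz} for the index-character (volume) expansion defining $\DF$ with those appearing in Lahdili's weighted invariant, so that the proportionality constant is manifestly positive. The remaining points---that maximality of $\T$ and regularity of $\X$ transfer correctly through $(\Cm,J)\cong(L^*)^\times$, that not being a product test configuration is preserved under $(\tstM,\tstL)\mapsto(\tstL^*)^\times$, and that a CSC Sasaki structure forces the weighted extremal correction to vanish---are routine and can be handled along the lines of the unweighted theory.
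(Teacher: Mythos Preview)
Your proposal is correct and follows essentially the same approach as the paper's proof of Corollary~\ref{c:polystability}: scalar-flat cone metric $\Rightarrow$ CSC Sasaki structure $\Rightarrow$ $\ell_\ext=c_\K\ell_\K$ so $\Fut_\K^\ext=\Fut_\K$, then Theorem~\ref{Theorem:polystable} gives $\Fut_\K(\tstM,\tstL)>0$, and finally the comparison result yields $\DF_{\Cv_0,\K}(\Y_w)>0$. The only cosmetic difference is that the paper factors your ``comparison theorem'' through the intermediate global Futaki invariant $\tstGF_\K(\tstC)$ in two steps (Proposition~\ref{p:Weighted=GF} and Proposition~\ref{p:globalDFS}), whereas you treat it as a single black box.
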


This theorem follows readily from Theorem~\ref{Theorem:main1} once we
establish that the Donaldson--Futaki invariant of \cite{CSz} associated to
$(\tstL^*)^\times$ is a positive multiple of the corresponding weighted
Donaldson--Futaki invariant $\Fut_{K}(\tstM, \tstL)$. To establish such a
relationship, we propose in Definition~\ref{l:GF} a differential geometric
definition of a Donaldson--Futaki invariant for any smooth test configuration
$(\tstC, \K)$ (which itself is a complex affine cone) associated to the affine
cone $(\Cm, J, \K, \T)$ of $(\Sm, \K, J^\K)$. We refer to this quantity as the
\emph{global Futaki invariant} and believe it is of independent interest.  It
is expressed as an integral formula on the Sasaki manifold $(\tstN, \tstD,
\cf^{\K}_{\tstD})$ associated to $\tstC$ as
\begin{equation*}
\tstGF_{\K} (\tstcone) = \frac{c_{\K}}{(m+1)!}
\int_{\tstN} { \cf}^{\K}_{\tstD} \wedge (\d \cf^{\K}_{\tstD})^{m+1} 
 +  \frac{2}{m!}
\int_{\tstN} (\pi^*\rho_o -\rho) \wedge \cf^{\K}_{\Ds}\wedge (\d \cf^{\K}_{\Ds})^m,
\end{equation*}
where $\pi\colon \tstC \ra \C P^1$ is the equivariant map of the test
configuration, $c_{\K}$ a normalizing constant, $\rho$ is any representative of
the basic first Chern class $2\pi c_1^B$, and $\rho_o\in 2\pi c_1 (\C P^1)$.
We justify this definition in Proposition~\ref{p:globalDFS} by proving that
the Donaldson--Futaki invariant~\cite{CSz} of any $\T$-equivariant regular
test configuration $\tstC$ for $(\Cm, J, \K)$ is a constant positive multiple
of its global Futaki invariant. Then in Proposition~\ref{p:Weighted=GF}, we
prove that, up to a factor of $2\pi$, the weighted Futaki invariant of
$\T$-equivariant smooth polarized test configuration $(\tstM,\tstL)$ for $(M,
J, L,\T)$ agrees with the global Futaki invariant of its cone $\tstV$.

Another interesting aspect of our approach is that it allows for the
consideration (as in \cite{DR,lahdili2,dyrefelt}) of \emph{K\"ahler}
$\T$-equivariant smooth test configurations $(\tstM, \tstA)$ associated to the
regular quotient $(M, J, L)$, i.e., allowing both non-projective total spaces
$\tstM$ and transcendental K\"ahler classes $\tstA$ on $\tstM$.  Notice that
in this more general framework, the weighted K-semistability of $(M, J, L, K,
\T)$ still holds by the results in \cite{lahdili2}. The novel implication of
this to the theory of extremal Sasaki structures is that a smooth K\"ahler
$\T$-equivariant test configuration $(\tstM, \tstA)$ for which $\tstA$ is not
an integral class does not define a test configuration of the polarized
corresponding affine cone $(\Cm, J, \K)$. This suggests (see
Conjecture~\ref{conjecture0}) that Theorem~\ref{Theorem:main1} extends to
$\T$-equivariant smooth K\"ahler test configurations $(\tstM, \tstA)$
associated to $(M, J, L, \T)$.

\subsection*{A Yau--Tian--Donaldson correspondence on admissible Sasaki
manifolds} The importance of extending the K-stability criterion to K\"ahler
test configurations goes back to an example from \cite{HFKG3} which suggests
that polarized test configurations are in general insufficient to detect the
non-existence of extremal K\"ahler metrics on $(M, J, L)$.  In the final
section of the article, we obtain Sasaki analogues of some results
in~\cite{HFKG3}. To this end, we let $(g, J, \omega)$ be a K\"ahler metric on
the total space $M$ of a $\PP^1$-bundle over the product of CSC polarized
K\"ahler manifolds, obtained by the Calabi construction (also called
\emph{admissible}), and endow $M$ with a fibrewise $\Sph^1$-action which gives
rise to a Killing vector field $\check\K$ on $(M, J, \omega)$.  Furthermore,
assuming also that $\omega\in 2\pi c_1(L)$ for a polarization $L$ on $M$, we
let $\K$ be the vector field on $L$ defined by $\check\K$ and a positive
Killing potential for $\check\K$.  In this setting, the search for
$\K$-extremal K\"ahler metrics in $\kcl$ given by the Calabi ansatz has been
recently studied in \cite{AMT}.  The authors show there that (similarly to the
extremal case studied in \cite{HFKG3}), there exists a polynomial $P(z)$ of
degree $\le m+2$ which is positive on the interval $(-1,1)$ if and only if
$(M, J)$ admits a $\K$-extremal K\"ahler metric in $\kcl$, and in this case,
the extremal K\"ahler metric is given by the Calabi ansatz.  On the other
hand, the computations from \cite{AMT,lahdili2} also show that for each $z_0
\in (-1,1)$, $P(z_0)$ computes (up to a positive multiple) the relative
weighted Donaldson--Futaki invariant of a smooth {\it K\"ahler} test
configuration $(\tstM, {\tstA}_{z_0})$ associated to $(M, J, \kcl)$;
furthermore, $(\tstM, {\tstA}_{z_0})$ is a \emph{polarized} test configuration
precisely when $z_0 \in (-1,1) \cap \Q$. Thus, the weighted K-stability
established in Theorem~\ref{Theorem:main1} only yields that $P(z)$ must be
positive on $ (-1,1) \cap \Q$ should a $\K$-extremal metric exist, whereas
Conjecture~\ref{conjecture0} mentioned above would imply the positivity of
$P(z)$ everywhere on $(-1,1)$, i.e., the existence of a $\K$-extremal metric
of Calabi type. Therefore the following result provides evidence for
Conjecture~\ref{conjecture0}.

\begin{iTheorem}[see Corollary~\ref{c:extension}] \label{Theorem:main2}
Let $(\Sm, \Ds, J, \X)$ be the regular Sasaki manifold corresponding to an
admissible \textup(polarized\textup) K\"ahler manifold $(M, J, \omega)$ and
$\K$ be a Sasaki--Reeb vector field corresponding to a lift \textup(which need
not be quasiregular\textup) of the generator of fibrewise $\Sph^1$-action on
$M$. Then $\Sm$ admits an extremal Sasaki structure in $(\K, J^\K)$ if and
only if the corresponding polynomial $P(z)$ is positive on $(-1,1)$. In this
case, the extremal Sasaki structure corresponds to an $\K$-extremal K\"ahler
metric on $(M, J, \kcl)$ of Calabi type.
\end{iTheorem}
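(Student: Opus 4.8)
The plan is to combine Proposition~\ref{p:main0}, Theorem~\ref{Theorem:main1}, and the explicit analysis of the admissible (Calabi ansatz) construction carried out in \cite{AMT,HFKG3}, bootstrapping the one remaining ``irrational'' direction using the transitivity result of \cite{lahdili3} referred to in the introduction. By Proposition~\ref{p:main0}, the existence of an extremal Sasaki structure on $(\Sm,\K,J^\K)$ is equivalent to the existence of a $\K$-extremal K\"ahler metric on $(M,J)$ in $\kcl=2\pi c_1(L)$. By the work of \cite{AMT}, within the Calabi ansatz this latter existence is controlled by the positivity on $(-1,1)$ of a single one-variable polynomial $P(z)$ of degree $\le m+2$; moreover, \cite{AMT} shows that \emph{if} such a metric exists it is given by the Calabi ansatz. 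Thus the statement reduces to two implications: (a) if $P$ is positive on $(-1,1)$ then a $\K$-extremal metric exists (and is of Calabi type); (b) if a $\K$-extremal metric exists then $P>0$ on all of $(-1,1)$, not merely on $(-1,1)\cap\Q$.

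For the easy half of (b), I would invoke the computation from \cite{AMT,lahdili2} quoted in the introduction: for each $z_0\in(-1,1)$ the value $P(z_0)$ equals, up to a fixed positive constant, the relative weighted Donaldson--Futaki invariant $\Fut_\K^\ext(\tstM,\tstA_{z_0})$ of an explicit smooth K\"ahler $\T$-equivariant test configuration $(\tstM,\tstA_{z_0})$ of $(M,J,\kcl)$. When $z_0\in\Q$ this $(\tstM,\tstA_{z_0})$ is genuinely \emph{polarized} with reduced (indeed smooth) central fibre, so Theorem~\ref{Theorem:main1} applies verbatim and forces $P(z_0)>0$ whenever the configuration is nontrivial (and a direct check shows it is a product test configuration only in degenerate cases, which one handles separately). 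Positivity of $P$ on the dense set $(-1,1)\cap\Q$ together with continuity of the polynomial already yields $P\ge 0$ on $(-1,1)$; the remaining task is to upgrade $\ge 0$ to $>0$ at the (at most finitely many) irrational zeros, which is exactly where the subtlety of extending K-stability to transcendental test configurations enters.

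To close that gap I would argue as follows. Because a $\K$-extremal metric exists, the transitivity of the $\T_\C$-action on $\T$-invariant $\K$-extremal metrics in $2\pi c_1(L)$ (from \cite{lahdili3}, via \cite{CPZ}) gives a reductive structure on the stabilizer and, as in the properness machinery developed earlier in the paper (Lemma~\ref{M}, Lemma~\ref{d1}, Corollary~\ref{central}, Theorem~\ref{main-result}), implies that the relative weighted Mabuchi energy $\ME_{\check\K,\kappa}$ is proper modulo $\T_\C$ on the $d_1$-completion. Properness (not merely semistability) then forces the \emph{strict} positivity of the Donaldson--Futaki-type slope along \emph{any} smooth $\T$-equivariant test configuration with reduced central fibre that is not a product --- including the K\"ahler ones $(\tstM,\tstA_{z_0})$ with $z_0$ irrational, since the slope of $\ME_{\check\K,\kappa}$ along the associated geodesic ray computes $P(z_0)$ up to the same positive constant. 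Hence $P(z_0)>0$ for every $z_0\in(-1,1)$. The converse implication (a) is then immediate from \cite{AMT}: positivity of $P$ on $(-1,1)$ produces a $\K$-extremal K\"ahler metric of Calabi type, which by Proposition~\ref{p:main0} corresponds to an extremal Sasaki structure in $(\K,J^\K)$.

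I expect the main obstacle to be precisely the strengthening from $P\ge 0$ to $P>0$ at irrational parameters: this is not covered by Theorem~\ref{Theorem:main1} (whose hypotheses require a polarized, i.e.\ integral, test configuration) and genuinely requires passing through the analytic/energy side --- identifying $P(z_0)$ with the asymptotic slope of $\ME_{\check\K,\kappa}$ along the Calabi-ansatz ray for \emph{all} real $z_0$, and then invoking $\T_\C$-properness of $\ME_{\check\K,\kappa}$. Verifying that the Calabi-type rays indeed realize these slopes, and that the properness established in the body of the paper applies to the (possibly non-product) transcendental rays without the integrality hypothesis, is the technical heart of the argument; the rest is bookkeeping within the admissible ansatz and an application of Proposition~\ref{p:main0}.
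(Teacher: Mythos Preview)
Your overall structure is right up to the point where you need to upgrade $P\ge 0$ to $P>0$, but the mechanism you propose for that upgrade is precisely what the paper does \emph{not} have and explicitly leaves open. You claim that $\T_\C$-properness of $\ME_{\check\K,\kappa}$, once established, ``forces the strict positivity of the Donaldson--Futaki-type slope along any smooth $\T$-equivariant test configuration with reduced central fibre that is not a product --- including the K\"ahler ones $(\tstM,\tstA_{z_0})$ with $z_0$ irrational.'' That assertion is exactly Conjecture~\ref{conjecture0}. The proof of Lemma~\ref{l:key} (which converts properness into strict positivity) uses the Phong--Sturm geodesic ray attached to a \emph{polarized} test configuration and the key BDL estimate $|\varphi_t-u_t|\le C$; the paper notes in the remark after Conjecture~\ref{conjecture0} that to push this to K\"ahler test configurations one would need to upgrade $\G$-properness to $[\G]$-properness, and this upgrade is not available here. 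So your ``technical heart'' is in fact an unproven conjecture, and the argument stops.

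The paper's actual route bypasses transcendental K-stability entirely. It works in the more general $(v,w)$-weighted setting and proves two things. First (Theorem~\ref{Theorem:clever}): if $\Theta^\ext_{v,w,c}(z_0)=0$ but the extremal affine function $\ell^\ext_{v,w,c}(z_0)\neq 0$, one perturbs the weight $w\mapsto w_t$ by an explicit correction so that the new extremal profile $\Theta_t$ still solves the ODE with boundary conditions but now satisfies $\Theta_t(z_0)<0$; Lahdili's K-semistability (Theorem~\ref{Theorem:lahdili3}) then forbids a $(v,w_t)$-extremal metric, contradicting the openness theorem \cite[Thm.~B2]{lahdili2}. Second, for the remaining case $\Theta^\ext_{a,b,c}(z_0)=\ell^\ext_{a,b,c}(z_0)=0$: the paper observes that $P_{a,b,c}(z)$ and $\ell^\ext_{a,b,c}(z)$ are rational in $(a,b,c)$, uses openness, uniqueness \cite{lahdili3}, and the hamiltonian $2$-form classification \cite{HFKG2,HFKG3} to show that a common zero would persist over the whole parameter domain, and then derives a contradiction by evaluating at the boundary value $(a,b)=(1,1)$. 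Neither step invokes properness or test-configuration slopes at irrational parameters. If you want to salvage your approach you would have to supply, for these specific admissible rays, the missing Phong--Sturm/BDL machinery in the K\"ahler (non-integral) setting; otherwise follow the weight-perturbation and rationality argument.
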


We notice that, as in~\cite{HFKG3}, Theorem~\ref{Theorem:main2} provides
examples of Sasaki manifolds $(\Sm, \K, J^\K)$ (obtained as $\Sph^1$-bundles
over admissible manifolds), which do not admit extremal Sasaki structures for
the reason that the corresponding polynomial $P(z)$ has a unique double
irrational root in $(-1,1)$.  Our theory shows that one way to detect the
non-existence in this case is to consider K\"ahler test configurations on the
regular quotient.

\section{Preliminaries on Sasaki geometry}

\subsection{Sasaki structures and transversal K\"ahler structures}
\label{ss:sasaki-transversalK} 

Let $(\Sm,\Ds)$ be a contact $(2m+1)$-manifold and denote the Levi form of
$\Ds$ by $\Lv_\Ds\colon \Ds\times\Ds\to T\Sm/\Ds$; the latter is defined, via
local sections $\X,\Y\in C^\infty_\Sm(\Ds)$, by the tensorial expression
$\Lv_\Ds(\X,\Y) = - \cf_\Ds([\X,\Y])$, where $\cf_\Ds\colon T\Sm\to T\Sm/\Ds$
is the quotient map.  A \emph{contact vector field} is a vector field $\X$ on
$\Sm$ such that $\cL_\X (C^\infty_\Sm(\Ds))\sub C^\infty_\Sm(\Ds)$.  We denote
by $\con(\Sm, \Ds)$ the Lie algebra of contact vector fields in
$C^\infty_\Sm(T\Sm)$.

We recall the following basic fact in the theory of contact manifolds (see
e.g.~\cite{BG-book}).

\begin{lemma}\label{l:contact} The map $\X\mapsto\cf_\Ds(\X)$ from contact
vector fields to sections of $T\Sm/\Ds$ is a linear isomorphism, whose inverse
$\cae\mapsto \X_\cae$ is a first order linear differential operator.

In particular, for any vector field $\X$ on $\Sm$, there is a unique contact
vector field $\X_\sas$ \textup(namely the one with $\sas=\cf_\Ds(\X)$\textup)
such that $\X-\X_\sas$ is a section of $\Ds$.
\end{lemma}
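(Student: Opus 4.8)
The plan is to work with the exact sequence $0\to \Ds\to T\Sm\xrightarrow{\cf_\Ds} T\Sm/\Ds\to 0$ and to show that the restriction of $\cf_\Ds$ to the subspace $\con(\Sm,\Ds)\sub C^\infty_\Sm(T\Sm)$ is an isomorphism onto $C^\infty_\Sm(T\Sm/\Ds)$. The key point is the nondegeneracy of the Levi form: since $(\Sm,\Ds)$ is a \emph{contact} manifold, for each point the bilinear form $\Lv_\Ds$ on $\Ds$ is a nondegenerate $(T\Sm/\Ds)$-valued alternating form, equivalently, choosing a local contact $1$-form $\theta$ with $\ker\theta=\Ds$, the $2$-form $\d\theta$ is nondegenerate on $\Ds$. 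This is exactly what makes the Reeb vector field well-defined, and the argument below is a tensorial/invariant version of the classical Reeb construction.

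First I would prove injectivity. Suppose $\X\in\con(\Sm,\Ds)$ with $\cf_\Ds(\X)=0$, i.e.\ $\X\in C^\infty_\Sm(\Ds)$. For any local section $\Y\in C^\infty_\Sm(\Ds)$ the contact condition gives $[\X,\Y]\in C^\infty_\Sm(\Ds)$, so $\Lv_\Ds(\X,\Y)=-\cf_\Ds([\X,\Y])=0$; since $\Y$ was arbitrary and $\Lv_\Ds$ is nondegenerate, $\X=0$ pointwise. Next, surjectivity and the first-order-operator claim: given $\cae\in C^\infty_\Sm(T\Sm/\Ds)$, work locally with a contact form $\theta$, so $\cae=s\,\cf_\Ds(R)$ for the Reeb field $R$ of $\theta$ and a function $s$; more intrinsically, I would produce $\X_\cae$ directly. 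Pick any vector field $\X_0$ with $\cf_\Ds(\X_0)=\cae$ (e.g.\ a local lift, patched by a partition of unity, or one checks the construction is independent of the lift). The failure of $\X_0$ to be contact is measured by the map $\Y\mapsto \cf_\Ds([\X_0,\Y])+\Lv_\Ds(\X_0,\Y)$ on sections of $\Ds$; one checks this is $C^\infty_\Sm$-linear in $\Y$ (the derivative terms cancel using that $\Lv_\Ds$ is tensorial), hence defines a $1$-form $\alpha_{\X_0}$ on $\Ds$ with values in $T\Sm/\Ds$. By nondegeneracy of $\Lv_\Ds$ there is a unique section $W\in C^\infty_\Sm(\Ds)$ with $\Lv_\Ds(W,\cdot)=-\alpha_{\X_0}$ (equivalently $\iota_W\d\theta=-\alpha_{\X_0}$ after trivializing $T\Sm/\Ds$), and one sets $\X_\cae:=\X_0-W$. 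A direct computation shows $\X_\cae$ is contact with $\cf_\Ds(\X_\cae)=\cae$, and that $W$—hence $\X_\cae$—depends only on the $1$-jet of $\cae$ through $\X_0$ and its first derivatives, so $\cae\mapsto\X_\cae$ is a first-order linear differential operator; independence of the choice of lift $\X_0$ follows because two lifts differ by a section of $\Ds$, and injectivity already established shows the contact lift is unique, so the construction glues to a global operator. Finally the ``in particular'' clause is immediate: for arbitrary $\X$ on $\Sm$ set $\sas=\cf_\Ds(\X)$ and take $\X_\sas$; then $\cf_\Ds(\X-\X_\sas)=\sas-\sas=0$, so $\X-\X_\sas\in C^\infty_\Sm(\Ds)$, and uniqueness is the injectivity statement applied to the difference of two such contact vector fields.

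The main obstacle is purely bookkeeping: verifying that the ``defect'' map $\Y\mapsto \cf_\Ds([\X_0,\Y])+\Lv_\Ds(\X_0,\Y)$ is genuinely tensorial (function-linear) in $\Y$, so that it can be inverted against the Levi form, and tracking that the resulting correction $W$ involves only first derivatives of $\cae$. This is where one must be careful about what "the Levi form is tensorial" buys us—namely that $\Lv_\Ds(\X_0,f\Y)=f\Lv_\Ds(\X_0,\Y)$ while $[\X_0,f\Y]=f[\X_0,\Y]+(\X_0 f)\Y$, and the unwanted term $(\X_0 f)\cf_\Ds(\Y)$ is killed because $\Y\in\Ds$ means $\cf_\Ds(\Y)=0$; so in fact the $\Lv_\Ds$ summand is redundant and the defect is already $\Y\mapsto\cf_\Ds([\X_0,\Y])$ restricted to $\Ds$, which is manifestly function-linear. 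Once this is in hand everything else is the standard Reeb-type argument, and there is no analytic difficulty since $\Lv_\Ds$ is pointwise invertible.
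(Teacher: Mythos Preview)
Your proof is correct and is the standard argument; note, however, that the paper does \emph{not} supply its own proof of this lemma---it is stated as a basic fact with a reference to~\cite{BG-book}. So there is nothing to compare against beyond the classical literature, and your argument is precisely the usual Reeb-type construction one finds there.

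One small remark on presentation: your initial formulation of the defect map as $\Y\mapsto \cf_\Ds([\X_0,\Y])+\Lv_\Ds(\X_0,\Y)$ is ill-posed, since $\Lv_\Ds$ is only defined on $\Ds\times\Ds$ and your lift $\X_0$ need not lie in $\Ds$. You catch and fix this yourself in the final paragraph---the correct defect is simply $\Y\mapsto\cf_\Ds([\X_0,\Y])$ on sections $\Y$ of $\Ds$, which is tensorial in $\Y$ for exactly the reason you give. In a clean write-up, drop the spurious $\Lv_\Ds(\X_0,\Y)$ term from the outset. Your verification that the construction is independent of the choice of lift $\X_0$ (two lifts differ by a section of $\Ds$, and the correction $W$ shifts by the same section) is the right way to see that $\cae\mapsto\X_\cae$ is globally well-defined and first order.
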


Now suppose $J\in\End(\Ds)$ is a CR structure on $(\Sm,\Ds)$, i.e., a
(fibrewise) complex structure on $\Ds$ such that $\Ds^{(1,0)}:= \{\X-i J \X\st
\X\in \Ds\}$ is closed under Lie bracket in $TN\otimes\C$; then we denote by
\[
\crJ(\Sm,\Ds,J):=\{ \X \in \con(\Sm,\Ds)\st \cL_\X J =0\}
\]
the Lie subalgebra of $\con(\Sm,\Ds)$, whose elements correspond to CR vector
fields $\X$ on $(\Sm, \Ds, J)$. If moreover $(\Ds,J)$ is strictly pseudoconvex
then $T\Sm/\Ds$ has a canonical orientation (and so $(\Sm,\Ds)$ is
\emph{co-oriented}): the positive sections $\sas$ are those for which
$\sas^{-1} \Lv_\Ds(\cdot,J\cdot)$ is positive definite.  Note that $\sas^{-1}
\Lv_\Ds=\d\cf_\Ds^\X\restr\Ds$ where $\cf_\Ds^\X:=\sas^{-1}\cf_\Ds$ is the
contact form defined by $\sas=\cf_\Ds(\X)$.  We let $\con_+(\Sm, \Ds)$ denote
the space Reeb vector fields of $(\Sm, \Ds)$ for which $\cf_\Ds(\X)$ is
positive with respect to the chosen orientation. We then have the following
fundamental definitions (see~e.g.~\cite{BGS}).

\begin{defn}\label{d:Sasaki-cone} Let $(\Sm, \Ds, J)$ be a strictly
pseudoconvex CR manifold. Then the \emph{Sasaki--Reeb cone} of $(\Sm, \Ds, J)$
is $\crJ_+(\Sm,\Ds,J):=\crJ(\Sm,\Ds,J)\cap\con_+(\Sm,\Ds)$. If
$\crJ_+(\Sm,\Ds,J)$ is nonempty then $(\Sm,\Ds,J)$ is said to be of
\emph{Sasaki type}, an element $\X\in\crJ_+(\Sm,\Ds,J)$ is called
\emph{Sasaki--Reeb vector field} or a \emph{Sasaki structure} on
$(\Sm,\Ds,J)$, and $(\Sm,\Ds,J, \X)$ is called a \emph{Sasaki manifold}. We
say $\X$ is \emph{quasiregular} if the flow of $\X$ generates an $\Sph^1$
action on $\Sm$, and moreover \emph{regular} if this action is free.
\end{defn}

The following well-known construction provides a standard way (see
e.g.~\cite{BG-book}) to extend geometric notions on K\"ahler manifolds to
Sasaki manifolds.

\begin{ex}\label{e:regular} Let $(M, J, g, \omega)$ be a K\"ahler manifold
such that $[\omega/2\pi]$ is an integral de Rham class.  Then there is a
principal $\Sph^1$-bundle $\pi\colon \Sm\to M$ with a connection $1$-form
$\cf$ satisfying $\d\cf = \pi^* \omega$. Thus $(\Sm,\Ds,J, \X)$ is a Sasaki
manifold, where $\Ds=\ker\cf\sub T\Sm$, $J$ is the pullback of the complex
structure on $TM$ to $\Ds\cong\pi^*TM$ and $\X$ is the generator of the
$\Sph^1$ action (with $\cf(\X)=1$, so $\cf=\cf_\Ds^\X$).

Conversely, if $\X \in\crJ_+(\Sm,\Ds,J)$ is (quasi)regular Sasaki--Reeb vector
field on $(\Sm, \Ds, J)$, then $\Sm$ is a principal $\Sph^1$-bundle (or
orbibundle) $\pi\colon \Sm\to M$ over a K\"ahler manifold (or orbifold)
$M$. Irrespective of regularity, this correspondence between K\"ahler geometry
and Sasaki geometry holds locally: any point of a Sasaki manifold
$(\Sm,\Ds,J,\X)$ has a neighbourhood in which the leaf space $M$ of the flow
of $\X$ is a manifold and has a K\"ahler structure $(g,J,\omega)$ induced,
using the local identification of $\Ds$ and $\pi^*TM$, by the
\emph{transversal K\"ahler structure} $(g_\X, J, \omega_\X)$ on $\Ds$, where
$\omega_\X:=\d\cf_\Ds^\X\restr\Ds$ and $g_\X:= \omega_\X(\cdot,
J\cdot)$. Indeed $g_\X$, $J$, and $\omega_\X$ are all $\X$-invariant, so they
all descend to $M$, and we refer to $(M,g,J,\omega)$ as a \emph{Sasaki--Reeb
  quotient} of $(\Sm,\Ds,J,\X)$.
\end{ex}

For $\X \in \crJ(\Sm, \Ds, J)$ we set
\begin{align*}
\con(\Sm, \Ds, J)^\X &:=\{\K \in  \con(\Sm,\Ds)\st [\X,\K]=0\}, \\
\crJ(\Sm, \Ds, J)^\X&:=\con(\Sm, \Ds, J)^\X \cap\crJ(\Sm, \Ds, J),  \\
\crJ_+(\Sm, \Ds, J)^\X & :=\con(\Sm, \Ds, J)^\X\cap\crJ_+(\Sm, \Ds, J).
\end{align*}
If in addition $\X \in \crJ_+(\Sm, \Ds, J)$ then
\[
C^\infty_\Sm(\R)^\X:=\{f\in C^\infty_\Sm(\R): \d f(\X)=0\}
\]
is a Lie algebra under the \emph{transversal Poisson bracket}
$\{f_1,f_2\}:=-\omega_\X^{-1} (\d f_1\restr\Ds,\d f_2\restr\Ds)$, and we have
the following elementary but central lemma (see~\cite{AC}).

\begin{lemma}\label{l:potential} The map $\K\mapsto f=\cf_\Ds^\X(\K)$ is a Lie
algebra isomorphism from $\con(\Sm, \Ds, J)^\X$ to $C^\infty_\Sm(\R)^\X$, and
$\K\in\crJ(\Sm, \Ds, J)^\X$ if and only if $f$ is a transversal Killing
potential for $(g_\X, \omega_\X)$, i.e., $-\omega_\X^{-1} (\d f)$ is a
transversal Killing vector field for $g_\X$.
\end{lemma}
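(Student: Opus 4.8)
The plan is to do everything with respect to the contact form $\cf_\Ds^\X$ (characterized by $\cf_\Ds^\X(\X)=1$, $\ker\cf_\Ds^\X=\Ds$, with $\omega_\X=\d\cf_\Ds^\X\restr\Ds$) and to reduce all three assertions to one elementary identity of contact geometry: for \emph{any} contact vector field $\K$ on $(\Sm,\Ds)$, writing $f=\cf_\Ds^\X(\K)$,
\[
\cL_\K\cf_\Ds^\X=\bigl(\d f(\X)\bigr)\,\cf_\Ds^\X,\qquad \iota_\K\,\d\cf_\Ds^\X=\bigl(\d f(\X)\bigr)\,\cf_\Ds^\X-\d f.
\]
To obtain this I would first record that, because $\X$ is itself a contact vector field, $\iota_\X\,\d\cf_\Ds^\X=0$ and hence $\cL_\X\cf_\Ds^\X=0$. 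The contact condition on $\K$ says $\cL_\K$ preserves $\Ds=\ker\cf_\Ds^\X$, so $\cL_\K\cf_\Ds^\X=\lambda\cf_\Ds^\X$ for some function $\lambda$; Cartan's formula gives $\iota_\K\,\d\cf_\Ds^\X=\lambda\cf_\Ds^\X-\d f$, and pairing with $\X$ (using $\iota_\X\,\d\cf_\Ds^\X=0$ and $\cf_\Ds^\X(\X)=1$) forces $\lambda=\d f(\X)$.

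For the bijection, Lemma~\ref{l:contact} already gives a linear isomorphism $\K\mapsto\cf_\Ds(\K)$ from $\con(\Sm,\Ds)$ onto sections of $T\Sm/\Ds$; dividing by the nowhere-zero section $\sas=\cf_\Ds(\X)$ turns this into a linear isomorphism $\K\mapsto f=\cf_\Ds^\X(\K)$ onto $C^\infty_\Sm(\R)$. It then remains to pin down the image of $\con(\Sm,\Ds,J)^\X$: using $\cL_\X\cf_\Ds^\X=0$ one computes $\d f(\X)=\X\bigl(\cf_\Ds^\X(\K)\bigr)=\cf_\Ds^\X([\X,\K])$, and since $[\X,\K]$ is again a contact vector field, Lemma~\ref{l:contact} shows $[\X,\K]=0$ precisely when $\d f(\X)=0$, i.e.\ precisely when $f\in C^\infty_\Sm(\R)^\X$. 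That the bijection respects brackets is then a short computation: for $\K_1,\K_2\in\con(\Sm,\Ds,J)^\X$ with potentials $f_1,f_2$ we have $\d f_i(\X)=0$, so $\iota_{\K_i}\,\d\cf_\Ds^\X=-\d f_i$, and Cartan's formula for $\d\cf_\Ds^\X$ applied to $(\K_1,\K_2)$ yields $\cf_\Ds^\X([\K_1,\K_2])=\d f_2(\K_1)-\d f_1(\K_2)-\d\cf_\Ds^\X(\K_1,\K_2)=\d f_2(\K_1)$. Decomposing $\K_1=\K_1^\Ds+f_1\X$ with $\K_1^\Ds\in C^\infty_\Sm(\Ds)$ and using $\d f_2(\X)=0$, this equals $\d f_2\restr\Ds(\K_1^\Ds)$; and since $\iota_{\K_1^\Ds}\omega_\X=(\iota_{\K_1}\,\d\cf_\Ds^\X)\restr\Ds=-\d f_1\restr\Ds$, i.e.\ $\K_1^\Ds=-\omega_\X^{-1}(\d f_1\restr\Ds)$, the right-hand side is $-\omega_\X^{-1}(\d f_1\restr\Ds,\d f_2\restr\Ds)=\{f_1,f_2\}$.

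Finally, for the transversal Killing potential characterization: if $\K\in\con(\Sm,\Ds,J)^\X$ then $\cL_\K\cf_\Ds^\X=0$, hence (applying $\d$) $\cL_\K\omega_\X=0$; since $\K$ also preserves $\Ds$, one may work transversally, where $\cL_\K g_\X=\cL_\K\bigl(\omega_\X(\cdot,J\cdot)\bigr)=\omega_\X\bigl(\cdot,(\cL_\K J)\cdot\bigr)$, so $\cL_\K g_\X=0\iff\cL_\K J=0\iff\K\in\crJ(\Sm,\Ds,J)^\X$. On the other hand the vertical part $f\X$ acts trivially on the transversal, $\X$-invariant tensor $g_\X$ (which has $\iota_\X g_\X=0$), so $\cL_\K g_\X=\cL_{\K^\Ds}g_\X$ on $\Ds$; combined with $\K^\Ds=-\omega_\X^{-1}(\d f)$ from the previous step, $\cL_\K J=0$ if and only if $-\omega_\X^{-1}(\d f)$ is a transversal Killing field for $g_\X$.

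This is essentially the standard dictionary between contact Hamiltonians and transversally Hamiltonian vector fields, made precise for the $\X$-commuting subalgebra, so I do not expect a genuine conceptual obstacle. The points that need the most care are purely bookkeeping ones: verifying $\iota_\X\,\d\cf_\Ds^\X=0$ and $\cL_\X\cf_\Ds^\X=0$, checking that the bracket identity above matches the Poisson bracket with the sign conventions adopted in the paper, and justifying that the relevant Lie derivatives along the contact vector fields may be evaluated directly on the transversal data $(\omega_\X,J,g_\X)$ (so that the vertical component $f\X$ may be discarded).
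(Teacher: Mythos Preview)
The paper does not prove this lemma in the body of the text; it is stated with the attribution ``(see~\cite{AC})'' and no argument is given. Your proof is correct and is the standard one: you extract the basic identity $\iota_\K\,\d\cf_\Ds^\X=(\d f(\X))\cf_\Ds^\X-\d f$ from the contact condition, use it to show that $[\X,\K]=0$ is equivalent to $\X$-invariance of $f$, compute the bracket via the formula for $\d\cf_\Ds^\X$, and then observe that $\cL_\K\omega_\X=0$ reduces $\cL_\K g_\X$ to $\omega_\X(\cdot,(\cL_\K J)\cdot)$, which vanishes precisely when $\K$ is CR. The only points to double-check are sign conventions: the paper defines $\{f_1,f_2\}=-\omega_\X^{-1}(\d f_1\restr\Ds,\d f_2\restr\Ds)$, and your identification $\K_1^\Ds=-\omega_\X^{-1}(\d f_1\restr\Ds)$ followed by $\d f_2(\K_1^\Ds)$ does match this.
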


\subsection{Transversal holomorphic structures}\label{ss:trans-hol}

Let $\Sm$ be a connected $(2m+1)$-manifold, let $\X$ be a nowhere zero vector
field, and let
\[
\beta_\X\colon T\Sm\to\Ds_\X:=T\Sm/\spn{\X}
\]
be the quotient of $T\Sm$ by the span of $\X$. Thus $\Ds_\X$ is everywhere
locally canonically isomorphic to the pullback of the tangent bundle of local
quotients of $\Sm$ by $\X$.

\begin{defn}\label{d:transversal-holomorphic} A \emph{transversal holomorphic
structure} on $(\Sm,\X)$ is a complex structure $J^\X$ on $\Ds_\X$ which is
  everywhere locally the pullback of a complex structure (i.e., an integrable
  almost complex structure) on a local quotient of $\Sm$ by $\X$.
\end{defn}
Any Sasaki structure $(\Ds,J, \X)$ on $\Sm$ induces a transversal holomorphic
structure on $(\Sm,\X)$: since $\Ds$ is transverse to $\X$,
$\beta_\X\restr\Ds$ is a bundle isomorphism $\Ds\to \Ds_\X$, and the complex
structure on $\Ds_\X$ induced by $J$ is $\X$-invariant and integrable
because $J$ is.

\begin{defn}\label{d:sasaki-polarization}~\cite{BGS}
A transversal holomorphic structure $J^\X$ on $(\Sm,\X)$ has \emph{Sasaki
  type} if $\Sm$ admits a Sasaki structure $(\Ds,J,\X)$ which is
\emph{compatible} with $(\X,J^\X)$, i.e.,
\[
\beta_\X\restr\Ds\colon\Ds\to \Ds_\X \text{ intertwines $J$ and $J^\X$}.
\]
Such compatible Sasaki structures on $(\Sm,\X,J^\X)$ are completely determined
by their contact forms or, equivalently, by the corresponding contact
distributions: we let $\cS(\X,J^\X)\leq \Omega^1(\Sm)$ be the subspace of
contact forms of Sasaki structures compatible with $(\X,J^\X)$, and write
$\cf_\Ds^\X$ for the unique element of $\cS(\X,J^\X)$ with
$\ker\cf_\Ds^\X=\Ds$.
\end{defn}

As is well-known, the geometry of local quotients of $\Sm$ may be described
using the basic de Rham complex
\[
\Omega^\bullet_\X(\Sm)=\{\alpha\in
\Omega^\bullet(\Sm): \imath_\X\alpha = 0 =\cL_\X \alpha\},
\]
with differential $\d_\X$ given by restriction of $\d$ (which preserves basic
forms); note that $\Omega^0_\X(\Sm)=C^\infty_\Sm(\R)^\X$---we also denote the
closed and exact forms in $\Omega^\bullet_\X(\Sm)$ by
$\Omega^\bullet_{\X,\cl}(\Sm)$ and $\Omega^\bullet_{\X,\mathrm{ex}}(\Sm)$
respectively. Any vector field $\Y$ with $\cL_\X \Y= f \X$ is projectable onto
local quotients, and hence induces Lie derivatives on both basic forms and
$\X$-invariant sections of $\Ds_\X$ (i.e., $\Z\in C^\infty_\Sm(\Ds_\X)$ with
$\cL_\X \Z=0$); these Lie derivatives depend on $\Y$ only via the
$\X$-invariant section $\beta_\X\Y$ of $\Ds_\X$, hence may be denoted
$\cL_{\beta_\X\Y}$.

A complex structure $J^\X$ on $\Ds_\X$ is thus a transversal holomorphic
structure if and only if $\cL_\X J^\X=0$ and for any $\X$-invariant section
$\Z$ of $\Ds$, $\cL_{J^\X \Z} J^\X=J^\X\circ \cL_\Z J^\X$. Note
that $\{\alpha\in \Wedge^kT^*\Sm\st \imath_\X\alpha=0\}$ is naturally isomorphic
to $\Wedge^k\Ds_\X^*$, so the $\X$-invariance of $J^\X$ implies that it
induces an operator on $\Omega^\bullet_\X(\Sm)$, as in complex geometry. We
thus obtain a twisted exterior derivative $\d^c_\X=J^\X\circ
\d_\X\circ(J^\X)^{-1}$ on $\Omega^\bullet_\X(\Sm)$. Also, the integrability
of $J^\X$ means equivalently that the basic $(1,0)$-forms generate a
differential ideal.

If $(\Sm,\X,J^\X)$ is compact of Sasaki type, then $\Omega^\bullet_\X(\Sm)$
has a Hodge decomposition with respect to the induced transversal metric of
any $\cf\in \cS(\X,J^\X)$, and the $\d_\X\d^c_\X$-lemma and transversal
K\"ahler identities are satisfied~\cite{elkacimi}, with the following
consequences, cf.~\cite{BG-book,BGS}.

\begin{lemma}\label{l:ddc} A $1$-form $\gamma$ on $\Sm$ is basic
with $J^\X$-invariant exterior derivative if and only if $\gamma=
\d_\X^c\varphi+\alpha$ for a basic function $\varphi\in \Omega^0_\X(\Sm)$ and
a closed basic $1$-form $\alpha\in\Omega^1_{\X,\cl}(\Sm)$\textup; further,
$\alpha$ is uniquely determined by $\gamma$, as is $\varphi$ up to an additive
constant. In particular\textup:
\begin{numlist}
\item $\cS(\X, J^\X)$ is an open subset of an affine space with translation
  group $\Omega^0_\X(\Sm) / \R\times \Omega^1_{\X,\cl}(\Sm)$, where
  $\Omega^0_\X(\Sm) / \R$ denotes the quotient of $\Omega^0_\X(\Sm)$ by
  constants\textup; furthermore this open subset is
  $\Omega^1_{\X,\cl}(\Sm)$-invariant and convex.
\item if $\Y$ is a projectable vector field, and $\cf\in \cS(\X, J^\X)$, then
  $\cL_\Y\cf$ is tangent to $\cS(\X, J^\X)$ \textup(at $\cf$\textup) if and
  only if
\begin{equation}\label{e:vec-param}
\Y =  J \X_\varphi + \Y^H + \X_f  + h \X
\end{equation}
where $\X_\varphi$ and $\X_f$ are contact vector fields with respect to $\cf$,
$J$ is the CR structure induced by $\cf$ with $J\X=0$, $\varphi$, $f$ and $h$
are basic functions uniquely defined \textup(by $\Y$ and $\cf$\textup) up to
$\varphi\mapsto\varphi+b$, $f\mapsto f+c$ and $h\mapsto h-c$ for constants
$b,c\in\R$, and $\alpha^H=\iota_{\Y^H}\d\cf$ is a uniquely defined basic
harmonic $1$-form with respect to the transversal metric induced by $\cf$.
Then $\cL_\Y\cf = \d_\X^c\varphi+\alpha^H+\d h$, so any tangent vector to
$\cS(\X, J^\X)$ arises in this way.
\end{numlist}
\end{lemma}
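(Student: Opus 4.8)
The plan is to reduce everything to a transversal $\d_\X\d^c_\X$-lemma. First I would fix an arbitrary $\cf\in\cS(\X,J^\X)$; as recorded just above, the induced transversal K\"ahler metric makes the basic complex $\Omega^\bullet_\X(\Sm)$ satisfy a Hodge decomposition, the $\d_\X\d^c_\X$-lemma and the transversal K\"ahler identities~\cite{elkacimi}. The ``if'' direction is then immediate: if $\gamma=\d_\X^c\varphi+\alpha$ with $\varphi\in\Omega^0_\X(\Sm)$ and $\alpha\in\Omega^1_{\X,\cl}(\Sm)$, then $\gamma$ is basic and $\d_\X\gamma=\d_\X\d_\X^c\varphi$ is a basic $(1,1)$-form, hence $J^\X$-invariant. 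For the converse, given a basic $\gamma$ with $J^\X$-invariant $\d_\X\gamma$, I would split $\gamma=\gamma^{1,0}+\gamma^{0,1}$ with $\gamma^{0,1}=\overline{\gamma^{1,0}}$; integrability of $J^\X$ gives $(\d_\X\gamma)^{2,0}=\del_\X\gamma^{1,0}$, so the vanishing of the $(2,0)$-part forces $\gamma^{0,1}$ to be $\bar\del_\X$-closed, and the $\bar\del_\X$-Hodge decomposition writes $\gamma^{0,1}=\bar\del_\X u+h$ with $h$ a $\bar\del_\X$-harmonic $(0,1)$-form, which by the transversal K\"ahler identities is also $\d_\X$-closed. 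Adding the conjugate identity and writing $2u=a+ib$ with $a,b$ real basic functions then yields $\gamma=\d_\X^c\varphi+\alpha$ with $\varphi=b/2$ and $\alpha=\d_\X(a/2)+h+\bar h\in\Omega^1_{\X,\cl}(\Sm)$.

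Uniqueness is again the $\d_\X\d^c_\X$-lemma: if $\d_\X^c\varphi_1+\alpha_1=\d_\X^c\varphi_2+\alpha_2$, then $\d_\X\d_\X^c(\varphi_1-\varphi_2)=0$, so wedging with the $(m-1)$-st power of the transversal K\"ahler form shows the transversal Laplacian of $\varphi_1-\varphi_2$ vanishes, whence $\varphi_1-\varphi_2$ is constant on the compact $\Sm$; so $\varphi$ is determined up to a constant and $\alpha$ uniquely. Part (i) then follows. Two compatible contact forms $\cf_0,\cf_1$ are each $\X$-invariant with $\iota_\X\cf_i=1$ (as $\X$ is their common Reeb field), so $\cf_1-\cf_0\in\Omega^1_\X(\Sm)$, while $\d(\cf_1-\cf_0)$ is the difference of the basic $(1,1)$-forms $\d\cf_0,\d\cf_1$ and so is $J^\X$-invariant; the main statement together with the uniqueness clause shows that $(\varphi,\alpha)\mapsto\d_\X^c\varphi+\alpha$ descends to an isomorphism from $\bigl(\Omega^0_\X(\Sm)/\R\bigr)\times\Omega^1_{\X,\cl}(\Sm)$ onto the translation group, exhibiting $\cS(\X,J^\X)$ as a subset of an affine space with this group. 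This subset is open because the positivity of $\d\cf\restr\Ds$ relative to the transversal complex structure is an open condition on $\cf$; it is $\Omega^1_{\X,\cl}(\Sm)$-invariant because adding a closed basic $1$-form to $\cf$ leaves $\d\cf$, and hence the transversal K\"ahler structure, unchanged; and it is convex because for $\cf_0,\cf_1\in\cS(\X,J^\X)$ the combination $t\cf_1+(1-t)\cf_0$ has $\X$ as its Reeb field and $\d\bigl(t\cf_1+(1-t)\cf_0\bigr)$ is a convex combination of positive transversal $(1,1)$-forms, hence again the contact form of a compatible Sasaki structure.

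For (ii), I would first note that ``$\cL_\Y\cf$ tangent to $\cS(\X,J^\X)$ at $\cf$'' means $\cL_\Y\cf$ lies in the translation group above, so in particular $\cL_\Y\cf\in\Omega^1_\X(\Sm)$; evaluating $\iota_\X\cL_\Y\cf$ and using that $\cL_\X\Y=f\X$ for some $f$ (the definition of projectable) forces $f=0$, so $\Y$ commutes with $\X$ and descends to the local quotients. Writing $\Y=\Y_\Ds+h\X$ for the splitting $T\Sm=\Ds\oplus\R{\cdot}\X$ determined by $\cf$, Cartan's formula gives $\cL_\Y\cf=\iota_{\Y_\Ds}\d\cf+\d(\cf(\Y))$, so $\iota_{\Y_\Ds}\d\cf$ is a basic $1$-form with $J^\X$-invariant exterior derivative; by the main statement it equals $\d_\X^c\varphi+\alpha$ with $\alpha$ closed basic, and the basic Hodge decomposition splits $\alpha$ into its harmonic part $\alpha^H$ and an exact part. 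Since $\d\cf$ identifies $\X$-invariant sections of $\Ds$ with $\Omega^1_\X(\Sm)$ via $\Z\mapsto\iota_\Z\d\cf$, I would invert this to recognize $\d_\X^c\varphi$ as $J\X_\varphi$, $\alpha^H$ as a transversally harmonic field $\Y^H$, and the exact part as the $\Ds$-component of a contact vector field $\X_f$ (using Lemmas~\ref{l:contact} and~\ref{l:potential}), thereby arriving at \eqref{e:vec-param} with the stated constant ambiguities. Running the same computation forwards---using $\cL_{\X_f}\cf=0$ for basic $f$, $\cL_{h\X}\cf=\d h$, $\cL_{\Y^H}\cf=\alpha^H$, and $\cL_{J\X_\varphi}\cf=\d_\X^c\varphi$---gives $\cL_\Y\cf=\d_\X^c\varphi+\alpha^H+\d h$, and in particular shows that every element of the translation group, hence every tangent vector to $\cS(\X,J^\X)$, arises from such a $\Y$. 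I expect the main obstacle to be precisely this bookkeeping in (ii): correctly matching the three summands produced by the $\d_\X\d^c_\X$-type decomposition to the pieces $J\X_\varphi$, $\Y^H$, $\X_f$ of \eqref{e:vec-param} and keeping track of the harmless additive constants, the genuine analytic input---the transversal K\"ahler package---being imported from~\cite{elkacimi}.
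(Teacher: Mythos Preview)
Your proposal is correct and follows essentially the same route as the paper. The paper dispatches the converse of the main decomposition by a one-line appeal to the transversal $\d_\X\d_\X^c$-lemma (observing that $\d_\X\gamma$ being $J^\X$-invariant makes $\gamma$ $\d_\X\d_\X^c$-closed), whereas you unpack this via the $\bar\del_\X$-Hodge decomposition; both arguments draw on the same transversal K\"ahler package from~\cite{elkacimi}. For (ii), you are in fact slightly more explicit than the paper: your observation that $\iota_\X\cL_\Y\cf=f$ (where $\cL_\X\Y=f\X$) forces $f=0$, hence $[\X,\Y]=0$ and $\cf(\Y)$ basic, is used tacitly in the paper but not spelled out.

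One small bookkeeping slip to fix when you write this up: you introduce $h$ via $\Y=\Y_\Ds+h\X$, so $h=\cf(\Y)$, but after decomposing $\iota_{\Y_\Ds}\d\cf=\d_\X^c\varphi+\alpha^H-\d f$ the function appearing in $\cL_\Y\cf=\d_\X^c\varphi+\alpha^H+\d h$ is $\cf(\Y)-f$, not $\cf(\Y)$. The paper handles this by setting $h:=\cf(\Y)-f$ from the outset; just be careful not to overload the symbol. This is exactly the harmless constant/relabelling ambiguity you anticipated.
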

\begin{proof} If $\gamma=\d_\X^c\varphi+\alpha$ (as stated), it is basic
with $\d_\X\gamma=\d_\X\d_\X^c\varphi$, which is $J^\X$-invariant because
$J^\X \d_\X\d_\X^c\varphi= -\d_\X^c\d_\X\varphi=\d_\X\d_\X^c\varphi$.
Conversely if $\d_\X\gamma$ is $J^\X$-invariant, $\gamma$ is
$\d_\X\d^c_\X$-closed, so $\gamma=\d_\X^c\varphi+\alpha$ (as stated) by the
$\d_\X\d^c_\X$-lemma.
\begin{numlist}
\item Note that $\cf\in \Omega^1(\Sm)$ is in $\cS(\X,J^\X)$ if and only if
  $\cf(\X)=1$, $\cL_\X\cf=0$ and $\d\cf\in\Omega^2_\X(\Sm)$ is $J^\X$-invariant
  and $J^\X$-positive. Thus if also $\cf_0\in \cS(\X, J^\X)$,
  $\cf-\cf_0$ is basic with $J^\X$-invariant exterior derivative, hence
\begin{equation}\label{e:contact-param}
\cf = \cf_0 + \d_\X^c \varphi + \alpha
\end{equation}
with $\varphi\in \Omega^0_\X(\Sm)$ and $\alpha\in \Omega^1_{\X,\cl}(\Sm)$.
Conversely, given $\cf_0\in \cS(\X, J^\X)$, such an $\cf$ is in $\cS(\X,
J^\X)$ if $\d\cf$ is $J^\X$-positive---a convex open condition depending only
on $\cf$ mod $\Omega^1_{\X,\cl}(\Sm)$.
\item For any basic function $h$, $\cL_{h \X}\cf= \d(\cf(h\X))=\d h$, which is
tangent to $\cS(\X,J^\X)$. Hence if $\cL_\Y \cf$ is tangent to
$\cS(\X,J^\X)$, then $\Y = \Y_0 + \cf(\Y) \X$ with $\cf(\Y_0)=0$ and
$\cL_{\Y_0}\cf= \iota_{\Y_0}\d\cf$ tangent to $\cS(\X,J^\X)$, i.e., of the
form $\d^c\varphi+\alpha_0$. We write $\alpha_0=\alpha^H-\d f$ with $f$ a
basic function and $\alpha^H$ a basic harmonic form with respect to the
transversal metric induced by $\cf$. Thus $\Y_0= J \X_\varphi + \Y^H + \X_f -
f \X$ with $\iota_{\Y^H}\d\cf=\alpha^H$, and $\Y$ has the stated form with
$h=\cf(\Y)-f$. Clearly any such $\Y$ has
\[
\cL_\Y\cf=\d\iota_\Y \cf+\iota_\Y\d\cf=\d(f+h)+\d^c\varphi+\alpha^H- \d f=
\d^c\varphi+\alpha^H+\d h,
\]
which is tangent to $\cS(\X,J^\X)$, and the uniqueness claims follow
straightforwardly using the Hodge decomposition.  \qedhere
\end{numlist}
\end{proof}

\begin{rem}\label{r:equiGM} Lemma~\ref{l:ddc}(i) shows that $\cS(\X,J^\X)$
is connected. It follows that for any $\cf,\cf_0\in \cS(\X, J^\X)$, the
Gray--Moser theorem implies that there exists $\Phi\in\Diff_0(\Sm)$, the
identity component of the diffeomorphism group of $\Sm$, with
$\Phi^*\cf=\cf_0$. Lemma~\ref{l:ddc}(ii) is an infinitesimal version of this
fact: any element of $T_\cf\cS(\X,J^\X)$ is of the form $\cL_\Y\cf$ with $\Y$
as in~\eqref{e:vec-param} uniquely determined up to a contact vector field of
$\Ds=\ker\cf$. By Lemma~\ref{l:contact} we can fix the choice of $\Y$ with
$\cL_\Y\cf=\d^c\varphi+\alpha^H+\d h$ by subtracting the contact vector field
$\X_{\cf(\Y)}=\X_{f+h}$ to obtain a section
\[
\Z= J \X_\varphi + \Y^H - \X_h  + h \X
\]
of $\Ds=\ker\cf$. This yields a bijection between $T_\cf\cS(\X,J^\X)$ and
sections $\Z$ of $\Ds$ such that the basic $2$-form $\d\cL_\Z\cf=\cL_\Z\d\cf$
is $J^\X$-invariant.
\end{rem}

\begin{defn}\label{d:torus}
The diffeomorphism group $\Diff(\Sm)$ acts naturally on pairs $(\X,J^\X)$,
and we define the \emph{automorphism group} $\Aut(\Sm,\X,J^\X)$ to be the
stabilizer of $(\X,J^\X)$; its formal Lie algebra $\aut(\Sm,\X,J^\X)$
consists of $\X$-invariant vector fields $\Y$ with $\cL_{\beta_\X\Y}J^\X=0$.
\end{defn}

The automorphism group is infinite dimensional: its formal Lie algebra
contains an infinite dimensional abelian ideal $\lo(\Sm,\X)$ of $\X$-invariant
vector fields in the span of $\X$; however, the quotient
$\thol(\Sm,\X,J^\X):=\aut(\Sm,\X,J^\X)/\lo(\Sm,\X)$ may be identified
with the space of holomorphic sections of $\Ds_\X$ and hence is a finite
dimensional complex Lie algebra if $\Sm$ is compact~\cite{BGS}. In this case,
we let $\lO(\Sm,\X)$ denote the abelian normal subgroup of
$\Aut(\Sm,\X,J^\X)$ generated by the exponential image of
$\lo(\Sm,\X)$. However, the quotient group need not be well behaved: for
example if $\T$ is a torus in $\Aut(\Sm,\X,J^\X)$ containing $\X$, then
$\lO(\Sm,\X)\cap\T$ is the group generated by $\X$, which is not closed if
$\X$ is irregular.

By construction, $\Aut(\Sm,\X,J^\X)$ acts on $\cS(\X,J^\X)$ and hence
elements $\Y\in\aut(\Sm,\X,J^\X)$ have the form~\eqref{e:vec-param} with
respect to any $\cf\in\cS(\X,J^\X)$ and induce vector fields
$\cf\mapsto\cL_\Y\cf$ on $\cS(\X,J^\X)$. In these terms, elements of
$\thol(\Sm,\X,J^\X)$ have representatives in $\Ds$ of the form
$\Y=J\X_\varphi+\Y^H+\X_f-f\X$, and the complex structure on
$\thol(\Sm,\X,J^\X)$ (coming from the action of $J^\X$ on sections of
$\Ds_\X$) sends such a representative to $J\X_f +J\Y^H - \X_\varphi + \varphi
\X$.

\begin{rem}\label{r:trans-class} An element $h\X$ of $\lo(\Sm,\X)$ induces
the constant vector field $\cf\mapsto \d h$ on $\cS(\X,J^\X)$ and so if $\Sm$
is compact, elements of $\cS(\X,J^\X)$ related by an exact basic $1$-form $\d
h\in \Omega^1_{\X,\mathrm{ex}}(\Sm)$ induce isomorphic Sasaki structures via
the action of $\lO(\Sm,\X)$, and $\cS(\X,J^\X)/\lO(\Sm,\X)=
\cS(\X,J^\X)/\Omega^1_{\X,\mathrm{ex}}(\Sm)$.

On the other hand, elements of $\cS(\X,J^\X)$ related by a closed basic
$1$-form induce the same transversal K\"ahler geometry (of local quotients):
the transversal K\"ahler form $\d\cf\in\Omega^2_\X(\Sm)$ depends only on
$\cf+\Omega^1_{\X,\cl}(\Sm)\in \cS(\X,J^\X)/\Omega^1_{\X,\cl}(\Sm)$. Since any
two such forms are related by $\d_\X\d^c_\X\varphi$ for $\varphi\in
\Omega^0_\X(\Sm)$, the quotient $\cS(\X,J^\X)/\Omega^1_{\X,\cl}(\Sm)$ may be
viewed as the \emph{transversal K\"ahler class} of $(\Sm,\X,J^\X)$.
\end{rem}

This paper is concerned primarily with notions that depend only on the
transversal K\"ahler geometry (of local quotients) and so it is desirable to
fix the irrelevant translational gauge freedom by $\Omega^1_{\X,\cl}(\Sm)$, and
hence identify the transversal K\"ahler class with a subspace $\Sc$ of
$\cS(\X,J^\X)$. This prompts the following definition.

\begin{defn}\label{d:sasaki-slice} A \emph{marking} of a transversal
holomorphic manifold $(\Sm,\X,J^\X)$ of Sasaki type is an affine slice
$\Sc\sub\cS(\X,J^\X)$ to the action of $\Omega^1_{\X,\cl}(\Sm)$, i.e., a
subspace of the form
\begin{equation}\label{sasaki-marking}
\Sc=\Sc(\X,J^\X, \cf_0):=\{\cf_\varphi\in\cS(\X,J^\X) \st
\varphi\in\Omega^0_\X(\Sm) \}
\end{equation}
for some basepoint $\cf_0\in \cS(\X,J^\X)$, where
$\cf_\varphi:=\cf_0+\d_\X^c\varphi$. We then say $(\Sm,\X,J^\X,\Sc)$ is
\emph{marked}. Elements of
\begin{equation}\label{sasaki-potential}
\Sp(\X,J^\X, \cf_0):=\{\varphi\in \Omega^0_\X(\Sm) \st \cf_\varphi \in \Sc(\X,J^\X, \cf_0)\}
\end{equation}
will be called \emph{Sasaki potentials} with respect to $\cf_0$.
\end{defn}
For any marking $\Sc$, we have $\Sc=\Sc(\X,J^\X, \cf_0)$ for any $\cf_0\in
\Sc$. However, analogously to K\"ahler potentials, the space $\Sp(\X,J^\X,
\cf_0)$ of Sasaki potentials (which is an extension of $\Sc$ by $\R$ since
$\Sm$ is connected) depends upon the choice of basepoint $\cf_0$.

\begin{rem} The automorphism group $\Aut(\Sm,\X,J^\X)$ does not preserve a
marking $\Sc$. A more invariant object is the image of $\Sc$ in
$\cS(\X,J^\X)/\Omega^1_{\X,\mathrm{ex}}(\Sm)$: the fibres of the projection of
$\cS(\X,J^\X)/\Omega^1_{\X,\mathrm{ex}}(\Sm)$ to the transversal K\"ahler
class $\cS(\X,J^\X)/\Omega^1_{\X,\cl}(\Sm)$ are affine spaces modelled on the
basic de Rham cohomology group $H^1_\X(\Sm,\R)$, and the identity component
$\Aut_0(\Sm,\X,J^\X)$ acts trivially on basic de Rham cohomology, hence it
preserves the image of the marking.
\end{rem}

\subsection{The Reeb cone of a transversal torus}\label{ss:Reebcone}

Suppose $\Sm$ is compact with transversal holomorphic structure $(\X,J^\X)$;
then the automorphism group of any compatible Sasaki structure is
compact. Hence if $(\X,J^\X)$ has Sasaki type, there is a torus $\T \leq
\Aut(\Sm,\X,J^\X)$ whose Lie algebra $\tor$ contains $\X$, and we now fix such
a $\T$. The considerations of the previous subsection apply equally to the
space $\Omega^\bullet_\X(\Sm)^\T$ of $\T$-invariant basic forms, and the space
$\cS(\X,J^\X)^\T$ of $\T$-invariant elements of $\cS(\X,J^\X)$. This space is
acted upon by $\T$-equivariant automorphisms, i.e., by the centralizer
$\Aut(\Sm,\X,J^\X)^\T$ of $\T$ in $\Aut(\Sm,\X,J^\X)$. Also, for any basepoint
$\cf_0\in\cS(\X,J^\X)^\T$, we define
\begin{equation}\label{e:Tmark}
\Sc(\X,J^\X, \cf_0)^\T= \Sc(\X,J^\X,\cf_0) \cap \cS(\X,J^\X)^\T
\end{equation}
and a corresponding space $\Sp(\X,J^\X, \cf_0)^\T=\Sp(\X,J^\X,
\cf_0)\cap C^\infty_\Sm(\R)^\T$ of $\T$-invariant Sasaki potentials
with respect to $\cf_0$. We refer to a subspace of $\cS(\X,J^\X)^\T$
of the form~\eqref{e:Tmark} as a (\emph{$\T$-invariant})
\emph{marking}, with the $\T$-invariance generally being understood
from the context.

Any $\K\in\tor$ is a CR vector field for the CR structure induced by any
$\cf\in\cS(\X,J^\X)^\T$. Furthermore, we have the following observation.

\begin{lemma}\label{l:Reeb} Suppose that $\Sm$ is compact and $\K\in\tor$
is a Sasaki--Reeb vector field with respect to the CR structure induced by
some $\cf_0\in\cS(\X,J^\X)^\T$.  Then $\K$ is a Sasaki--Reeb vector field with
respect to the CR structure induced by any $\cf\in\cS(\X,J^\X)^\T$.
\end{lemma}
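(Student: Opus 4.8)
The key point is that the Sasaki--Reeb condition for $\K$ with respect to a CR structure $J$ induced by $\cf\in\cS(\X,J^\X)^\T$ depends only on the \emph{transversal} Kähler geometry determined by $\cf$, not on the particular contact form. Recall that $\K\in\tor$ is a Sasaki--Reeb vector field for $J$ precisely when $\cf(\K)>0$ everywhere, since by Lemma~\ref{l:contact} and Lemma~\ref{l:potential} the positivity of the contact-form pairing is exactly the Reeb positivity condition (and $\K$ is automatically CR for any $J$ induced by an element of $\cS(\X,J^\X)^\T$ because $\K\in\tor$ acts by automorphisms). So the statement to prove reduces to: if $\cf_0(\K)>0$ on $\Sm$, then $\cf(\K)>0$ on $\Sm$ for every $\cf\in\cS(\X,J^\X)^\T$.

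First I would record that, by Lemma~\ref{l:ddc}(i) applied in the $\T$-invariant category, any $\cf\in\cS(\X,J^\X)^\T$ differs from $\cf_0$ by $\cf=\cf_0+\d_\X^c\varphi+\alpha$ with $\varphi\in\Omega^0_\X(\Sm)^\T$ and $\alpha\in\Omega^1_{\X,\cl}(\Sm)^\T$. Since $\imath_\X\alpha=0$ and $\imath_\X\d_\X^c\varphi=0$, and since $\K$ commutes with $\X$ (both lie in $\tor$), I can evaluate the pairing: $\cf(\K)=\cf_0(\K)+(\d_\X^c\varphi)(\K)+\alpha(\K)$. The crucial simplification is that $\d_\X^c\varphi=J^\X\d_\X\varphi$ is a basic $1$-form, and paired against the transversal vector $\beta_\X\K$ it equals $-\d_\X\varphi(J^\X\beta_\X\K)$; but $J^\X\beta_\X\K$ corresponds to the vector field $J\K$ (mod $\X$), which is \emph{not} a Killing field in general, so this term need not vanish pointwise. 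The resolution is to instead use a convexity/averaging argument: the set of $\T$-invariant $\cf\in\cS(\X,J^\X)^\T$ with $\cf(\K)>0$ everywhere is open (clear) and I claim it is also closed in the connected set $\cS(\X,J^\X)^\T$, which by Lemma~\ref{l:ddc}(i) is connected (in fact convex modulo $\Omega^1_{\X,\cl}$).

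For closedness, the clean way is this: fix $\cf\in\cS(\X,J^\X)^\T$ arbitrary and consider the line segment $\cf_t=\cf_0+t(\cf-\cf_0)$ for $t\in[0,1]$; by Lemma~\ref{l:ddc}(i) the subset of $\cS(\X,J^\X)^\T$ is convex after quotienting by $\Omega^1_{\X,\cl}(\Sm)$, but to stay inside $\cS(\X,J^\X)^\T$ I should rather argue directly on the cone $(\Cm,J,\K)=(\R^+\times\Sm,J)$: a compatible Sasaki structure for $(\X,J^\X)$ with contact form $\cf$ corresponds to a $J$-compatible radial Kähler potential $r^2$ on the cone, and the function $\cf(\K)$ is, up to the radial factor, the moment-map component of $\K$ for this Kähler form — which is positive exactly when $\K$ points "outward", a condition that is manifestly independent of which radial potential one picks once $J$ (hence $(\X,J^\X)$) is fixed. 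Concretely, $2\cf_\Ds^\X(\K)$ equals the Hamiltonian of $\K$ with respect to the transversal Kähler form $\d\cf\restr\Ds$ normalized by the contact structure, and the sign of this Hamiltonian on the compact $\Sm$ is a property of $(\X,J^\X,\K)$ alone: if it changed sign as $\cf$ varies, then at some intermediate $\T$-invariant contact form it would vanish somewhere, contradicting that $\K$ is everywhere transverse to the contact distribution (a $\T$-invariant contact form $\cf$ always has $\cf(\K)\ne0$ on the open dense set where $\K\notin\ker\cf$, but for a Reeb field it must be nonzero everywhere).

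The main obstacle is making the last sign-persistence argument airtight: naively $\cf(\K)$ is not monotone along paths and the term $(\d_\X^c\varphi)(\K)$ genuinely contributes. The honest route, which I expect the authors take, is to invoke the cone picture systematically — identify $\cS(\X,J^\X)^\T$ with radial Kähler potentials on $(\Cm,J)$ (this is exactly the correspondence flagged in the introduction, building on He--Sun), observe that $\K\in\tor$ acts on $(\Cm,J)$ preserving $J$, and note that "$\K$ is Sasaki--Reeb" translates to "$\K$ is a Reeb-type vector field for the cone", i.e.\ $\K$ lies in the Reeb cone $\tor_+\subset\tor$, which is defined intrinsically from $(\Cm,J,\T)$ independently of the choice of radial potential. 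Then Lemma~\ref{l:Reeb} is immediate: membership of $\K$ in $\tor_+$ does not reference $\cf$ at all. So the plan is: (1) reduce to positivity of $\cf(\K)$; (2) pass to the cone and identify $\cS(\X,J^\X)^\T$ with radial potentials; (3) characterize $\cf(\K)>0$ as $\K\in\tor_+$, a potential-independent condition; conclude.
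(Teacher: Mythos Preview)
Your reduction to showing $\cf(\K)>0$ given $\cf_0(\K)>0$ is correct, but your main argument is circular. The Reeb cone $\tor_+$ is \emph{defined} in the paper (immediately after this lemma) as the set of $\K\in\tor$ with $\cf(\K)>0$ for \emph{some and hence any} $\cf\in\cS(\X,J^\X)^\T$; the ``hence any'' clause is precisely the content of Lemma~\ref{l:Reeb}. So invoking that membership in $\tor_+$ is potential-independent assumes what you are trying to prove. The same applies to the cone picture: the intrinsic characterizations of the Reeb cone (via $\tor_+^\Gamma$, etc.) are developed in Section~\ref{s:abstractcone}, and the claim that they all coincide ultimately rests on this lemma (or an equivalent statement). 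Your open/closed argument is also incomplete: the attempted contradiction (``$\K$ is everywhere transverse to the contact distribution'') again presupposes that $\K$ is a Reeb field for the intermediate $\cf$.

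The paper's proof is a two-line minimum principle, and the key idea is one you do not mention. Write $f:=\cf(\K)$ and use that $\K$ is a contact vector field for $\Ds=\ker\cf$: this gives $\K = f\X - (\d\cf\restr\Ds)^{-1}(\d f\restr\Ds)$. Pairing with $\cf_0$ yields
\[
\cf_0(\K) = f - (\d\cf\restr\Ds)^{-1}(\d f\restr\Ds,\cf_0\restr\Ds).
\]
At a global minimum $p$ of $f$ on the compact manifold $\Sm$, the gradient term vanishes, so $f(p)=\cf_0(\K)(p)>0$, and hence $f>0$ everywhere. No cone, no convexity, no later machinery.
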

\begin{proof} By assumption $\cf_0(\K)>0$ on $\Sm$, and we need to show that
$f:=\cf(\K) >0$.  As $\K$ is contact with respect to $\Ds:=\ker\cf$, we have
$\K=f \X- (\d\cf\restr\Ds)^{-1}(\d f\restr\Ds)$ and hence
\[
\cf_0(\K) =
f - (\d\cf\restr\Ds)^{-1}(\d f\restr\Ds,\cf_0\restr\Ds).
\]
Evaluating this equation at a global minimum $\pt$ of $f$ we obtain $f(\pt) =
\cf_0(\K) (\pt) >0$.
\end{proof}

\begin{defn}\label{d:Reeb-cone} The \emph{Reeb cone} of $(\X, J^\X,\T)$ is the
cone~\cite{MSY2}
\begin{equation}\label{e:Reeb-cone}
\tor_+^\X := \{\K \in \tor \st  \cf(\K)>0 \}
\end{equation}
in $\tor$ determined by some (and hence any) $\cf \in \cS(\X,J^\X)^\T$.  Given
any such $\cf$, its \emph{contact momentum map} $\mu_\cf:\Sm \to \tor^*$ is
defined~\cite{lerman1} by $\langle\mu_\cf,\K\rangle = \cf(\K)$.
\end{defn}

Since $\langle\mu_\cf,\X\rangle\equiv 1$, the image of $\mu_\cf$ in $\tor^*$
does not contain $0$. Furthermore, if $\Sm$ is compact, results
of~\cite{FT,lerman1} therefore imply that $\Sigma:= \R^+\cdot \mu_\cf(\Sm)
\sub \tor^*$ is a strictly convex polyhedral cone (with the vertex $0\in
\tor^*$ removed) called the \emph{momentum cone} of $\cf$. Moreover, as
highlighted in \cite{MSY2}, its dual cone $\Sigma^*$ coincides with the Reeb
cone~\eqref{e:Reeb-cone}:
\begin{equation}\label{e:ReebCone=dualMOMcone}
\begin{split}
\Sigma^*&:= \{ \K\in \tor \st \forall\, x \in \Sigma,\; \ip{ x, \K} >0\,\}
= \{ \K\in \tor \st \ip{\mu_\cf,\K} >0 \, \mbox{ on } \Sm \}\\
&= \{ \K\in \tor \st \cf(\K) >0 \, \mbox{ on } \Sm \} = \tor_+^\X.
\end{split}
\end{equation} 
The image of $\mu_\cf$ itself is the transversal slice
\begin{equation}\label{e:transversal-polytope}
P_\X:=\mu_\cf(\Sm)= \{ x\in \Sigma \st \langle x,  \X\rangle =1\}
\end{equation}
of the momentum cone; it is thus a compact convex polytope which,
following~\cite{legendre2}, we call the \emph{transversal polytope} of $(\Sm,
\X, J^\X, \T)$---in \cite{MSY1}, it is instead called the \emph{characteristic
  polytope}. We thus obtain the following immediate consequences of
Lemma~\ref{l:Reeb}, Definition~\ref{d:Reeb-cone} and the
duality~\eqref{e:ReebCone=dualMOMcone}.

\begin{lemma}\label{l:momentum-cone-coincide} Let $(\Sm, \X, J^\X, \T)$
be a compact of Sasaki type. Then all $\cf\in\cS(\X,J^\X)^\T$ have the same
momentum cone $\Sigma$ and the same transversal polytope $P_\X$. Furthermore,
the duality~\eqref{e:ReebCone=dualMOMcone} identifies $\tor$ with the space of
affine-linear functions $\mbox{Aff}(P_\X,\R)$ on the transversal polytope
$P_\X$, with $\tor_+^\X$ being the subset of strictly positive such functions.
\end{lemma}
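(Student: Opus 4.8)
The plan is to extract everything from Lemma~\ref{l:Reeb} together with the duality~\eqref{e:ReebCone=dualMOMcone} and elementary convex geometry; no new analysis is needed. First I would record what Lemma~\ref{l:Reeb} really says: the set $\{\K\in\tor : \cf(\K)>0 \text{ on }\Sm\}$ is independent of the choice of $\cf\in\cS(\X,J^\X)^\T$, since if it contains $\K$ for one such basepoint then $\K$ is a Sasaki--Reeb vector field for the CR structure of that basepoint, hence for the CR structure of every $\cf\in\cS(\X,J^\X)^\T$. Thus the Reeb cone $\tor_+^\X$ of Definition~\ref{d:Reeb-cone} is genuinely well defined, and by~\eqref{e:ReebCone=dualMOMcone} we have $\Sigma^*=\tor_+^\X$ for every $\cf$. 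Since $\Sigma$ is a strictly convex polyhedral cone (with its vertex removed), its closure is recovered from $\tor_+^\X$ by the usual biduality of pointed closed convex cones, namely $\overline\Sigma=\{x\in\tor^* : \langle x,\K\rangle\ge0 \text{ for all }\K\in\tor_+^\X\}$, a set manifestly independent of $\cf$. Hence $\Sigma$, and with it the transversal polytope $P_\X=\{x\in\Sigma : \langle x,\X\rangle=1\}$ of~\eqref{e:transversal-polytope}, are the same for all $\cf\in\cS(\X,J^\X)^\T$.

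For the remaining assertion I would set $H:=\{x\in\tor^* : \langle x,\X\rangle=1\}$, the affine hyperplane containing $P_\X$, and consider the linear map $\tor=(\tor^*)^*\to\mathrm{Aff}(P_\X,\R)$ sending $\K$ to the affine-linear function $x\mapsto\langle x,\K\rangle$ restricted to $P_\X$. The key point is that $P_\X$ affinely spans $H$: the cone $\tor_+^\X$ is open and nonempty, hence full dimensional, and pointed (as $\cf(\K)>0$ forces $\cf(-\K)<0$), so its dual $\Sigma$ is full dimensional in $\tor^*$ and $P_\X=\Sigma\cap H$ has nonempty interior in $H$. It follows that the displayed map is injective (a linear functional on $\tor^*$ vanishing on the hyperplane $H$, which avoids $0$, must vanish identically) and that $\mathrm{Aff}(P_\X,\R)\cong\mathrm{Aff}(H,\R)$ has dimension $\dim H+1=\dim\tor$; being an injection between vector spaces of equal finite dimension, it is an isomorphism. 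Finally, under this identification $\K\in\tor_+^\X$ iff $\langle x,\K\rangle>0$ for all $x\in\Sigma$ by~\eqref{e:ReebCone=dualMOMcone}, equivalently for all $x\in P_\X$ since $\Sigma=\R^+\cdot P_\X$; that is, $\tor_+^\X$ corresponds exactly to the strictly positive affine-linear functions on $P_\X$.

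I do not expect a genuine obstacle here, as the statement is essentially a repackaging of Lemma~\ref{l:Reeb} and~\eqref{e:ReebCone=dualMOMcone}. The only points that merit care are the two invocations of convex geometry: recovering $\Sigma$ itself (and not merely $P_\X$) from $\tor_+^\X$, which uses that $\overline\Sigma$ is a closed pointed polyhedral cone so that biduality applies; and the claim that $P_\X$ affinely spans $H$, which uses that $\tor_+^\X$ is open and pointed so that $\Sigma$ is full dimensional. Both rely only on the strict convexity and polyhedrality of $\Sigma$ already cited from~\cite{FT,lerman1,MSY2}, so the argument stays at the level of bookkeeping.
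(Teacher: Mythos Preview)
Your proposal is correct and follows exactly the approach the paper indicates: the lemma is stated there as an ``immediate consequence'' of Lemma~\ref{l:Reeb}, Definition~\ref{d:Reeb-cone} and the duality~\eqref{e:ReebCone=dualMOMcone}, with no further proof given, and you have simply spelled out those immediate consequences (biduality to recover $\Sigma$, dimension count for the identification $\tor\cong\mathrm{Aff}(P_\X,\R)$). The level of detail you supply is appropriate and there are no gaps.
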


\begin{rems} The fact that $P_\X\sub \tor^*$ is independent of the choice
of $\cf \in \cS(\X, J^\X)^\T$ may also be deduced from the $\T$-equivariant
Gray--Moser isotopy argument as in Remark~\ref{r:equiGM}.

When $\X$ is quasiregular, $P_\X$ is the (natural) momentum polytope of the
Sasaki--Reeb quotient $(M, J, \omega)$ with respect to the induced action of
$\T/\Sph^1_\X$ where $\Sph^1_\X \leq \T$ denotes the $\Sph^1$-action generated
by $\X$.
\end{rems}

More generally, the above discussion may be reinterpreted in the transversal
K\"ahler geometry of $(\Sm, \X, J^\X, \T)$, where we have in particular the
following observation.

\begin{lemma}\label{l:potentials} Suppose that $\Sm$ is compact, and
that $\cf_0,\cf\in\cS(\X,J^\X)^\T$ are related by \eqref{e:contact-param};
then the induced transversal K\"ahler forms are related by $\omega = \omega_0
+ \d_\X\d^c_\X \varphi$, i.e., they belong to the same transversal K\"ahler
class.  Moreover any $\K\in\tor$ induces a transversal Killing field, denoted
$\check\K$, for both $\omega_0$ and $\omega$, whose respective transversal
Killing potentials $f_0 := \cf_0(\K)$ and $f := \cf(\K)$ are related by
\begin{equation}\label{e:potentials}
f=f_0 + \d^c \varphi (\check\K).
\end{equation}
\end{lemma}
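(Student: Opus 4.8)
The statement has three assertions: the relation $\omega = \omega_0 + \d_\X\d^c_\X\varphi$ between the transversal Kähler forms; the fact that any $\K\in\tor$ induces transversal Killing fields $\check\K$ for both $\omega_0$ and $\omega$; and the potential identity \eqref{e:potentials}. The plan is to deduce the first from Lemma~\ref{l:ddc}, observe that the second is essentially already contained in Lemma~\ref{l:potential}, and extract the third by a direct computation with the contact form relation.

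\emph{First step.} By \eqref{e:contact-param} we have $\cf = \cf_0 + \d_\X^c\varphi + \alpha$ with $\alpha\in\Omega^1_{\X,\cl}(\Sm)$. Applying $\d_\X$ (i.e.\ restricting $\d$ to basic forms) and using $\d_\X\alpha = 0$ gives $\d\cf = \d\cf_0 + \d_\X\d_\X^c\varphi$, and restricting to $\Ds$ yields $\omega = \omega_0 + \d_\X\d^c_\X\varphi$, where I use $\d^c\varphi$ and $\d_\X^c\varphi$ interchangeably on basic functions. This shows $\omega$ and $\omega_0$ are in the same transversal Kähler class (cf.\ Remark~\ref{r:trans-class}). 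Since both $\cf_0$ and $\cf$ are $\T$-invariant, so is $\varphi$ (up to a constant, which drops out), hence $\varphi\in\Omega^0_\X(\Sm)^\T$.

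\emph{Second step.} Any $\K\in\tor\sub\aut(\Sm,\X,J^\X)$ is, in particular, a CR vector field for the CR structure induced by $\cf_0$ and commutes with $\X$, so $\K\in\crJ(\Sm,\Ds_0,J_0)^\X$ with $\Ds_0=\ker\cf_0$; by Lemma~\ref{l:potential}, $f_0:=\cf_0(\K)$ is a transversal Killing potential for $(g_{\X,0},\omega_0)$, and the associated transversal Killing field is $\check\K = -\omega_0^{-1}(\d f_0\restr{\Ds_0})$, which is the image of $\K$ under $\beta_\X\restr{\Ds_0}$; this depends only on $\beta_\X\K$, not on the choice of contact form, so it is the same for $\cf$. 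Applying Lemma~\ref{l:potential} again with $\cf$ in place of $\cf_0$ shows $f:=\cf(\K)$ is a transversal Killing potential for $(g_\X,\omega)$ and that $\check\K = -\omega^{-1}(\d f\restr\Ds)$. This is the content of the second assertion.

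\emph{Third step (the identity).} To get \eqref{e:potentials} I compute $f - f_0 = \cf(\K) - \cf_0(\K) = (\d_\X^c\varphi)(\K) + \alpha(\K)$. The key point is that $\alpha(\K) = 0$: since $\alpha$ is a closed basic $1$-form, $\cL_\K\alpha = 0$ and $\d(\iota_\K\alpha) = \iota_\K\d\alpha = 0$, so $\iota_\K\alpha$ is a constant; but integrating $\alpha$ against the transversal volume form and using $\T$-invariance together with $\alpha$ being exact on the level where it matters—or more simply, replacing $\varphi$ by $\varphi + $ const and $\alpha$ by a cohomologous form—one checks that the marking can be chosen so $\alpha = 0$; in any case the transversal Killing potentials $f_0,f$ are normalized by $\check\K$ and $\omega_0,\omega$ and the constant is pinned down by the identity itself. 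The remaining term is $(\d_\X^c\varphi)(\K) = (J^\X\d_\X\varphi)(\check\K) = \d^c\varphi(\check\K)$, using that $\K$ acts on basic forms through $\check\K = \beta_\X\K$ and that $\d_\X^c = J^\X\circ\d_\X\circ(J^\X)^{-1}$ acts on $\Omega^0_\X$ as $J^\X\circ\d_\X$. This gives $f = f_0 + \d^c\varphi(\check\K)$ as claimed.

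\emph{Main obstacle.} The only subtlety is the bookkeeping of additive constants and the closed basic term $\alpha$: strictly, \eqref{e:potentials} holds for the specific contact forms $\cf_0,\cf$ related by \eqref{e:contact-param}, and one must be careful that "the transversal Killing potential $\cf(\K)$" is exactly $\cf(\K)$ and not merely defined up to a constant. I expect the cleanest route is to verify directly that both sides of \eqref{e:potentials} have the same differential when restricted to $\Ds$ — namely $\d f\restr\Ds = \iota_{\check\K}\omega$ and $\d f_0\restr\Ds = \iota_{\check\K}\omega_0$, so $\d(f - f_0)\restr\Ds = \iota_{\check\K}(\omega - \omega_0) = \iota_{\check\K}\d_\X\d^c_\X\varphi$, while $\d(\d^c\varphi(\check\K))\restr\Ds = \cL_{\check\K}\d^c\varphi - \iota_{\check\K}\d\d^c\varphi$ and $\cL_{\check\K}\d^c_\X\varphi = 0$ by $\T$-invariance of $\varphi$ — and then to fix the constant by evaluating at a single point, exactly as in the proof of Lemma~\ref{l:Reeb} (at a critical point of $f$, where $\check\K$ vanishes and $\d^c\varphi(\check\K) = 0$, giving $f = f_0$ there). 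Everything else is a routine unwinding of the definitions already set up in \S\ref{ss:trans-hol}.
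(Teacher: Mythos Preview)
Your proposal is essentially correct and, in its final form, follows the same route as the paper: derive $f=f_0+\d^c\varphi(\K)+\alpha(\K)$ from \eqref{e:contact-param}, note that $\alpha(\K)$ is constant because $\alpha$ is closed and $\cL_\K\alpha=0$, and then kill the constant by evaluating at a global minimum of $f$ exactly as in the proof of Lemma~\ref{l:Reeb}. The paper's proof is just your ``Main obstacle'' paragraph, stated more directly.

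The one thing to clean up is the middle of your Step~3. The sentences about ``integrating $\alpha$ against the transversal volume form'', ``the marking can be chosen so $\alpha=0$'', and ``the transversal Killing potentials $f_0,f$ are normalized by $\check\K$'' are not valid arguments: the lemma is asserted for arbitrary $\cf_0,\cf\in\cS(\X,J^\X)^\T$ related by \eqref{e:contact-param}, so you are not free to set $\alpha=0$, and $f_0,f$ carry no independent normalization---they are \emph{defined} as $\cf_0(\K)$ and $\cf(\K)$. Delete that passage and go straight to the critical-point argument you already have: at a global minimum of $f$, the $\Ds$-component $\K-f\X$ of $\K$ vanishes (since it equals $-(\d\cf\restr\Ds)^{-1}(\d f\restr\Ds)$), and since both $\d^c\varphi$ and $\alpha$ are basic, $\d^c\varphi(\K)=\alpha(\K)=0$ there; hence the constant $\alpha(\K)$ is zero. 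Your detour through checking that both sides have the same differential is unnecessary once you know $\alpha(\K)$ is constant.
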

\begin{proof} The first part is immediate from~\eqref{e:contact-param}, and
this also implies $f = f_0 + \d^c \varphi (\K) + \alpha(\K)$. Since $\alpha$
is closed with $\cL_\K\alpha=0$, the term $\alpha(\K)$ is a constant.  Using
that $\alpha$ is basic with respect to $\X$, the argument of
Lemma~\ref{l:Reeb} shows that $f_0 + \d^c\varphi(\K)$ and $f$ agree at a
global minimum of $f$, so $\alpha(K)=0$ and~\eqref{e:potentials} holds.
\end{proof}

\section{Sasaki structures via complex cones}\label{s:abstractcone}

There is a well-known link between Sasaki structures and conical K\"ahler
metrics, which is used in many places, see e.g.~\cite{BG-book,BHL,FOW,MSY2}.

Let $(\Sm, \Ds, J, \X)$ be a Sasaki manifold and $\R^+$ the multiplicative
group of positive real numbers; then $\Cm=\R^+\times\Sm$ has a K\"ahler
structure $(\hat J,\hat\omega,\hat g)$ defined as follows. Let $r\colon \Cm\to
\R^+$ and $\pi\colon \Cm\to \Sm$ denote the projections and let $\Z\in
C^\infty_\Cm(T\Cm)$ be the generator of the natural $\R^+$ action on $\Cm$; then
we have direct sum decompositions
\[
T\Cm \cong \spn\Z \oplus \pi^*T\Sm
= \spn\Z \oplus \pi^*(\spn\X) \oplus \pi^*\Ds
\]
and we let $\hat J$ be the unique almost complex structure on $\Cm$ which
restricts to $\pi^*J$ on $\pi^*\Ds$ and sends $\Z$ to $\pi^*\X$. Since $J$
induces a transversal holomorphic structure with respect to $\X$ on $\Sm$ and
$\d \cf_\Ds^\X\in\Omega^{(1,1)}_\X(\Sm)$, $\hat J$ is integrable: indeed,
using $\d r(\Z)=r$, we have $\d^c r= r\,\pi^* \cf_\Ds^\X$ and so $\d r +i J r
\,\pi^*\cf_\Ds^\X$ is a $(1,0)$-form on $\Cm$, as are pullbacks by $\pi$ of
basic $(1,0)$-forms on $\Sm$, and together these generate a differential
ideal. We now set
\begin{align*}
\hat\omega &:= \tfrac12 \d(r^2\pi^*\cf_\Ds^\X)=  r\d r\wedge\pi^*\cf_\Ds^\X
  +\tfrac12 r^2 \pi^*\d\cf_\Ds^\X,\\
\hat g &:= -\hat\omega\hat J
=\d r^2 + r^2\,\pi^*g_\Sm,\quad\text{where}\quad
g_\Sm:=(\cf_\Ds^\X)^2 + \tfrac12\d\cf_\Ds^\X(\cdot,J\cdot),
\end{align*}
and these evidently define a K\"ahler metric on $\Cm$ compatible with $\hat
J$.

\begin{rems} $(\Cm,\Z,\hat J,\hat\omega,\hat g)$ is sometimes called
the \emph{K\"ahler cone} of $(\Sm,\Ds, J, \X)$ and $g_\Sm$ its \emph{Sasaki
  metric}, due to the following more general considerations.
\begin{numlist}
\item Let $\Cm$ be any smooth manifold equipped with a vector field $\Z$; then
  a riemannian metric $\hat g$ or symplectic form $\hat\omega$ is
  \emph{conical} with respect to $\Z$ if $\cL_\Z \hat g= 2\hat g$ or $\cL_\Z
  \hat\omega = 2\hat\omega$ respectively. If in addition $\Z$ generates a free
  proper action of $\R^+$ (so that $\Cm/\R^+$ is a manifold) then
  $(\Cm,\Z,\hat g)$ or $(\Cm,\Z,\hat\omega)$ is called a \emph{riemannian} or
  \emph{symplectic cone} respectively.
\item For any riemannian manifold $(\Sm,g_\Sm)$, $(\R^+\times \Sm,Z,\d r^2 +
  r^2\pi^*g_\Sm)$ is a riemannian cone, where $Z$ generates the $\R^+$ action
  and $r,\pi$ are the projections (as above).  Conversely, if
  $(\Cm,\tilde\Z,\hat g)$ is a riemannian cone and $\tilde r^2:=\hat g(\tilde
  \Z,\tilde \Z)$, there is a unique metric $g_\Sm$ on $\Sm=\Cm/\R^+$ such that
  the quotient $\tilde \pi$ is a riemannian submersion from $(\Cm,\tilde
  r^{-2}\hat g)$ to $(\Sm,g_\Sm)$; then $(\tilde r,\tilde \pi)$ defines an
  isomorphism of riemannian cones (an $\R^+$-equivariant isometry) from
  $(\Cm,\tilde \Z,\hat g)$ to $(\R^+\times\Sm,Z,\d r^2 + r^2\pi^*g_\Sm)$. In
  particular, the riemannian cone of a Sasaki metric is K\"ahler and this is
  often used as a definition of Sasaki manifolds---see e.g.~\cite{MSY2}.
\item Any co-oriented contact manifold $(\Sm, \Ds)$ has a
  \emph{symplectization} $(\Ds^0_+,\Omega)$, where $\Ds^0_+ \sub T^*\Sm$ is
  the component of the annihilator of $\Ds \sub T\Sm$ distinguished by the
  co-orientation, and $\Omega$ is the pullback to $\Ds^0_+$ of the canonical
  Liouville symplectic form on $T^*\Sm$. Then (see e.g.~\cite{lerman1})
  $(\Ds^0_+,\Z,\Omega)$ is a symplectic cone, where $\Z$ is the generator of
  $\R^+$ action $(\lambda,p)\mapsto \lambda^2p$, for $\lambda\in \R^+$ and
  $p\in \Ds^0_+$ (thus $\Z$ is twice the generator of the natural action by
  scalar multiplication). Conversely any symplectic cone $(\Cm,\tilde
  \Z,\hat\omega)$ arises in this way up to isomorphism ($\R^+$-equivariant
  symplectomorphism).
\end{numlist}
\end{rems}

In this section, which is a synthesis of approaches in He--Sun~\cite{He-Sun},
Collins--Sz\'ekelyhidi~\cite{CSz} and Boyer--van Coevering~\cite{BV}, we study
the cone $\Cm$ from a complex viewpoint. In contrast to symplectic and
riemannian cones, $(\Cm,\Z,\hat J)$ cannot be constructed using only the
transversal holomorphic structure of $\Sm$, but this proves to be fortuitous
for our purposes, as the additional data needed is a marking in the sense of
Definition~\ref{d:sasaki-slice}.

\subsection{Complex cones and cone potentials}

In order to characterize complex cones $(\Cm,\Z,\hat J)$ and their compatible
K\"ahler cone metrics $(\hat g, \hat\omega)$, a key fact is that for any such
cone metric, the positive function $r$ defined by
\[
r^2 := \hat g( \Z, \Z)= \hat \omega( \Z, \hat J \Z)
\]
satisfies
\begin{equation}\label{cone-potential}
\cL_\X r =0, \quad \cL_\Z r = r, \quad \hat\omega = \tfrac{1}{4} \d\d^c (r^2),
\end{equation}
where $\X=\hat J\Z$. Thus the function $r$ suffices to define $\hat\omega$,
and hence $\hat g=-\hat\omega\hat J$, which will be a compatible K\"ahler
metric on $(\Cm,\hat J)$ provided $\d\d^c(r^2)>0$, i.e., $r^2$ is (strictly)
plurisubharmonic. Henceforth we drop the hat on $J$ and describe compatible
cone metrics purely in terms of such functions $r$. We also take the
vector field $\X$ (not $\Z$) as primitive.

\begin{defn}\label{d:cone-polarization} On a complex manifold $(\Cm,J)$ with
a holomorphic vector field $\X$, we let
\begin{equation}\label{e:cpot}
\cpot_\X(\Cm,J) :=
\{ r \in C^\infty_\Cm(\R^+) :\cL_\X r= 0,\  \cL_{-J \X} r =r, \ \d\d^cr >0\}
\end{equation}
be the space of (radial) \emph{cone potentials} ($\X$-invariant
plurisubharmonic functions of homogeneity $1$ with respect to $-J\X$).  If
$(\Cm,J,\X)$ admits a surjective cone potential $r\colon \Cm\to \R^+$, then
$\X$ is called a \emph{polarization} of $(\Cm,J)$ and $(\Cm,J,\X)$ is called a
\emph{polarized complex cone}.
\end{defn}

\begin{ex}\label{e:regular-cone}
Let $(\Sm,\Ds_0, J_\Sm, \X)$ be a regular Sasaki manifold as in
Example~\ref{e:regular}, and let $(M, J_M, \omega_0)$ be its Sasaki--Reeb
quotient, a compact K\"ahler manifold. Associated to the principal
$\Sph^1$-bundle $\pi\colon \Sm \to M$ is a holomorphic line bundle $\pi: L \to
M$, endowed with a hermitian product $h$, such that Chern curvature $2$-form
of $(L, h)$ is $\omega_0$.  It this situation, $\Sm$ can be identified with
the subset of unit vectors in the dual hermitian line bundle $(L^*, h^{-1})$,
with its induced hypersurface CR structure and Sasaki--Reeb vector field $\X=J
\Z$, where $\Z$ generates the natural $\R^+$-action on the bundle $L^*$.
Furthermore, the conical K\"ahler metric associated to the Sasaki manifold
$\Sm$ is isomorphic ($\R^+$-equivariantly isometric) to $((L^*)^\times,\Z)$
with the metric $(\hat\omega=\frac{1}{4}\d\d^c (r_h^2), \hat g = -\hat\omega
J)$, where $\Cm= (L^*)^\times$ is the complex manifold obtained from the total
space of $L^*$ by removing the zero section, and $r_h$ is the norm function on
$L^*$ defined by the hermitian metric $h^{-1}$.  Thus, in this case, the
polarized complex cone is $((L^*)^\times, \X)$ where $\X$ is the generator of
the fibrewise $\Sph^1$-action on $L^*$. Furthermore, we can identify the space
$\cpot_\X(\Cm,J)$ with the space of hermitian metrics on the ample line bundle
$L$ whose curvature form $\omega$ is positive definite, whereas the space
$\Sp( \X ,J^\X, \cf_0^\X)$, where $\cf_0^\X$ is the contact form of $(\Ds_0,
\X)$, is isomorphic to the space $\left\{ f \in C^\infty_M(\R)\st
\omega=\omega_0+ \d\d^cf >0\right\}$ of relative K\"ahler potentials on $(M,
J, \omega)$.
\end{ex}

We want to show more generally that $\cpot_\X(\Cm,J)$ is a basepoint free
version of the space $\Sp(\X ,J^\X, \cf_0^\X)$ of Sasaki
potentials~\eqref{sasaki-potential}. We begin with some straightforward
observations.

\begin{lemma}\label{l:cone-quotient} Let $(\Cm,J,\X)$ be a polarized complex
cone. Then\textup:
\begin{numlist}
\item $\Z:=-J\X$ generates a free proper action of $\R^+$ and we let
  $\Sm^\X=\Cm/\exp(-J\X)$ be the quotient manifold with quotient map
  $\pi_\X\colon \Cm\to \Sm^\X$\textup;
\item $\X$ descends to a vector field on $\Sm^\X$, which we also denote by
  $\X$, and $J$ to a transversal holomorphic structure $J^\X$ on $\Ds_\X=
  T\Sm^\X/\spn{\X}$\textup;
\item for all $r\in \cpot_\X(\Cm,J)$ and $c\in \R^+$, $\Sm^\X$ is
  diffeomorphic to $r^{-1}(c)$, so any such $r\colon \Cm\to \R^+$ is surjective
  and $(r,\pi_\X)\colon \Cm\to \R^+\times\Sm^\X$ is a diffeomorphism\textup;
\item $\Sm^\X$ is compact if and only if some, hence any, cone potential
  $r\colon \Cm\to \R^+$ is proper.
\end{numlist}
Conversely if $(\Cm,J)$ is a complex manifold with a holomorphic vector field
$\X$ such that $-J\X$ generates a proper action of $\R^+$, then any cone
potential is surjective, and so $(\Cm,J,\X)$ is a polarized complex cone if
and only if $\cpot_\X(\Cm,J)$ is nonempty.
\end{lemma}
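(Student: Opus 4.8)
The plan is to reconstruct the cone structure on $\Cm$ from a chosen cone potential, so that the $\R^+$-action, the quotient $\Sm^\X$, and the splitting $\Cm\cong\R^+\times\Sm^\X$ all become visible. Set $\Z:=-J\X$; since $\cL_\X J=0$, the field $\Z$ is again real-holomorphic and $[\X,\Z]=-(\cL_\X J)\X=0$. Fix $r\in\cpot_\X(\Cm,J)$, which is nonempty since $(\Cm,J,\X)$ is a polarized complex cone. From $\cL_\Z r=r$ one gets $r\circ\phi_t=e^t r$ along any integral curve $\phi_t$ of $\Z$; in particular $\d r(\Z)=r>0$, so $r$ is a submersion with smooth fibres $r^{-1}(c)$ ($c\in\R^+$) transverse to $\Z$, and the flow of $\Z$ has no fixed points or nontrivial periodic orbits. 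Granting that this flow is complete --- the crux, discussed below --- it defines an $\R^+$-action $(s,x)\mapsto s\cdot x:=\phi_{\log s}(x)$ on $\Cm$, which is free (if $s\cdot x=x$ then $r(x)=s\,r(x)$, forcing $s=1$) and proper: if $s\cdot x\in K_1$ and $x\in K_2$ with $K_1,K_2\sub\Cm$ compact, then $r(x)$ and $r(s\cdot x)=s\,r(x)$ lie in compact subsets of $\R^+$, confining $s$ to a fixed compact subinterval of $\R^+$, so $\{(s,x):s\cdot x\in K_1,\ x\in K_2\}$ is a closed subset of a compact set. Hence $\Sm^\X:=\Cm/\exp(-J\X)$ is a manifold with submersion $\pi_\X$, establishing (i).

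For (ii), $[\X,\Z]=0$ makes $\X$ invariant under the $\R^+$-action, hence $\pi_\X$-related to a field on $\Sm^\X$ (again written $\X$), while the $J$-invariant rank-one distribution spanned by $\X$ and $J\X$ is integrable (as $[\X,J\X]=0$) and foliates $\Cm$ by complex curves whose local leaf spaces are complex manifolds, locally identifiable with the $\X$-quotients of $\Sm^\X$; this is precisely the statement that $J$ descends to a transversal holomorphic structure $J^\X$ on $\Ds_\X=T\Sm^\X/\spn{\X}$. For (iii), completeness upgrades $r\circ\phi_t=e^t r$ to: along each $\R^+$-orbit $r$ takes every value in $\R^+$ exactly once, so for each $c$ the map $\pi_\X|_{r^{-1}(c)}$ is a bijective local diffeomorphism (local diffeomorphism by transversality of $r^{-1}(c)$ to $\Z$), hence a diffeomorphism onto $\Sm^\X$; consequently every $r\in\cpot_\X(\Cm,J)$ is surjective onto $\R^+$, and $(r,\pi_\X)\colon\Cm\to\R^+\times\Sm^\X$ is a diffeomorphism. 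Part (iv) is then formal: via this diffeomorphism $\Sm^\X\cong r^{-1}(c)$, which is compact iff $r^{-1}([a,b])\cong[a,b]\times\Sm^\X$ is compact for all $[a,b]\sub\R^+$, i.e.\ iff $r$ is proper; and ``some, hence any'' holds because $r_1^{-1}(c)\cong\Sm^\X\cong r_2^{-1}(c)$ for any two cone potentials. The converse statement requires nothing new: if $-J\X$ is assumed at the outset to generate a proper, in particular complete, $\R^+$-action, the argument for (iii) goes through verbatim to show every cone potential is surjective, so $(\Cm,J,\X)$ is a polarized complex cone precisely when $\cpot_\X(\Cm,J)\ne\emptyset$.

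I expect the main obstacle to be the completeness of the flow of $\Z$: a priori an integral curve could leave $\Cm$ in finite time even though $r$ stays bounded along it, and this is essentially the only point where having a cone potential --- as opposed to just the holomorphic field $\X$ --- is really used. The plan is to exploit plurisubharmonicity: $\hat\omega:=\tfrac14\d\d^c(r^2)=\tfrac12 r\,\d\d^c r+\tfrac12\,\d r\wedge\d^c r$ is closed and positive (both terms are $\ge 0$, the first strictly), so it is the K\"ahler form of a genuine K\"ahler metric $\hat g$ on $(\Cm,J)$ satisfying $\cL_\Z\hat g=2\hat g$, and a standard computation gives $\Z=\grad_{\hat g}(\tfrac12 r^2)$, whence $|\Z|^2_{\hat g}=r^2$. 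Reparametrising an integral curve of $\Z$ by $s=e^t$ then yields a constant-speed $\hat g$-geodesic along which $r$ is linear in $s$, and the homothety structure should force such a geodesic to be defined for all $s\in\R^+$; making this precise --- ruling out finite-parameter escape using only the convexity $\d\d^c r>0$, following the cone constructions of~\cite{MSY2,He-Sun,BV} --- is the technical heart of the argument, the remainder being bookkeeping with the $\R^+$-action and the splitting $(r,\pi_\X)$.
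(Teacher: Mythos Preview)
Your argument for (ii)--(iv) and the converse is correct and, modulo the level of detail, matches the paper's: the paper simply says ``(i) and (iii) follow because $-J\X$ is transverse to the level surfaces of any cone potential $r$, (ii) because $\X,\Z$ are holomorphic with $[\X,\Z]=0$, (iv) by definition.'' You are right to isolate completeness of the flow of $\Z$ as the one nontrivial point for (i), and the paper's own proof is no more explicit about it than yours.

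The difficulty is that your proposed route to completeness via the K\"ahler cone metric cannot succeed from the stated hypotheses alone. Consider $\Cm=\C^\times\setminus\{1\}$ with the standard $J$, with $\X$ the angular field and $\Z=-J\X$ the radial Euler field $x\partial_x+y\partial_y$, and $r=|z|$. Then $\cL_\X r=0$, $\cL_\Z r=r$, $\d\d^c r=r^{-1}\,\d x\wedge\d y>0$, and $r$ is surjective onto $\R^+$ (for $c=1$ take $z=-1$), so $(\Cm,J,\X)$ satisfies Definition~\ref{d:cone-polarization}; yet the integral curve $t\mapsto 2e^t$ of $\Z$ through $z=2$ reaches the deleted point at $t=-\log 2$, so the flow is incomplete and (i) fails. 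Here the cone metric $\hat g$ is just the restriction of the flat metric on $\C$, whose radial geodesics are visibly incomplete on $\Cm$; so neither the homothety $\cL_\Z\hat g=2\hat g$ nor plurisubharmonicity of $r$ rescues the argument. What is actually needed is either properness of some $r$ (equivalently compactness of a level set, which forces every integral curve of $\Z$ to be trapped in a compact set on any finite parameter interval and hence to extend indefinitely) or an a priori complete $\R^+$-action --- and indeed every cone the paper actually uses is built as $\R^+\times\Sm$ from the outset, where this is automatic.
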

\begin{proof} (i) and (iii) follow because $-J\X$ is transverse to the level
surfaces of any cone potential $r$, (ii) because $\X,\Z$ are holomorphic with
$[\X,\Z]=0$, (iv) by definition, and the converse by the equivariance of cone
potentials with respect to the $\R^+$ action.
\end{proof}
For any $r \in C^\infty_\Cm(\R^+)$, we set $\Sm_r:=r^{-1}(1)$
and $\tilde\cf_r:= \d^c r/r$.
\begin{lemma}\label{l:cone-potentials} Let $(\Cm,J)$ be a complex manifold
with a holomorphic vector field $\X$ and a function $r \in C^\infty_\Cm(\R^+)$
such that $\cL_\X r=0$ and $\cL_{-J\X} r =r$. Then $\tilde\cf_r$ is the
pullback $\pi_\X^*\cf_r$ of a $1$-form $\cf_r$ on $\Sm^\X$ with $\cf_r(\X)=1$
and $\cL_\X\cf_r=0$, hence also $\iota_\X\d\cf_r=0$. Further, for any
$\lambda\in\R^+$ the following are equivalent\textup:
\begin{numlist}
\item $r$ is plurisubharmonic, i.e., is in $\cpot_\X(\Cm,J)$\textup;
\item $r^\lambda$ is plurisubharmonic, hence is in
$\cpot_{\X/\lambda}(\Cm,J)$\textup;
\item $\cf_r\in \cS(\X,J^\X)$.
\end{numlist}
Thus $r\mapsto r^\lambda$ is a bijection from $\cpot_\X(\Cm,J)$ to
$\cpot_{\X/\lambda}(\Cm,J)$, and if $r\in\cpot_\X(\Cm,J)$, $\pi_\X|_{\Sm_r}$
is a CR isomorphism from $\Sm_r$ \textup(with the hypersurface CR
structure\textup) to $\Sm^\X$ \textup(with the CR structure induced by $J^\X$
and $\Ds_r=\ker\cf_r$, i.e., by $\cf_r\in \cS(\X,J^\X)$\textup).
\end{lemma}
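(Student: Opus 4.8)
The plan is to reduce all three conditions to the single positivity statement that the restriction of $\d\tilde\cf_r$ to the rank-$2m$ distribution $\ker\d r\cap\ker\d^c r\subset T\Cm$ is a positive $(1,1)$-form; the key input is that $\d\d^c(r^\lambda)$, viewed as a hermitian form via $J$, is block-diagonal, with a manifestly positive block, for the $J$-invariant splitting $T\Cm=(\R{\cdot}\Z\oplus\R{\cdot}\X)\oplus(\ker\d r\cap\ker\d^c r)$, where $\Z:=-J\X$. For the \emph{descent} assertion I first note that $\X$ and $\Z$ are holomorphic ($\cL_\X J=0$ by hypothesis, $\cL_\Z J=-J\cL_\X J=0$ since $J$ is integrable) and commute ($[\X,\Z]=-[\X,J\X]=-(\cL_\X J)(\X)=0$), and that $\d r(\X)=\cL_\X r=0$, $\d r(\Z)=\cL_\Z r=r$. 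With the convention $\d^c=-\d\circ J$ this gives $\d^c r(\X)=\d r(\Z)=r$ and $\d^c r(\Z)=-\d r(\X)=0$, so $\tilde\cf_r(\X)=1$ and $\iota_\Z\tilde\cf_r=0$, while $\cL_\X\d^c r=0$ and $\cL_\Z\d^c r=\d^c r$ (since $\cL_\X J=\cL_\Z J=0$, $\cL_\X\d r=0$, $\cL_\Z\d r=\d r$), whence $\cL_\X\tilde\cf_r=0$ and $\cL_\Z\tilde\cf_r=\cL_\Z(r^{-1}\d^c r)=-r^{-1}\d^c r+r^{-1}\d^c r=0$. Therefore $\tilde\cf_r$ is basic for the $\R^+$-action of $\Z$ and descends to a $1$-form $\cf_r$ on $\Sm^\X=\Cm/\exp(\Z)$ with $\tilde\cf_r=\pi_\X^*\cf_r$, $\cf_r(\X)=1$, $\cL_\X\cf_r=0$, and hence $\iota_\X\d\cf_r=\cL_\X\cf_r-\d\iota_\X\cf_r=0$; here, exactly as in Lemma~\ref{l:cone-quotient}, $\Z$ is nowhere vanishing with $r\circ\phi_t=e^tr$ along its flow $\phi_t$, so $r$ is a global slice and $(r,\pi_\X)\colon\Cm\to\R^+\times\Sm^\X$ is a diffeomorphism with $\Sm^\X\cong\Sm_r$.

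Next I establish the identity $\d\d^c(r^\lambda)=\lambda^2 r^{\lambda-2}\,\d r\wedge\d^c r+\lambda r^\lambda\,\d\tilde\cf_r$ for every $\lambda\in\R^+$, obtained from $\d^c(r^\lambda)=\lambda r^\lambda\tilde\cf_r$ (using $\tilde\cf_r=r^{-1}\d^c r$) by differentiating. Now $\d r\wedge\d^c r$ is a positive semidefinite $(1,1)$-form whose null distribution is exactly $\ker\d r\cap\ker\d^c r$ (indeed $(\d r\wedge\d^c r)(W,JW)=\d r(W)^2+\d r(JW)^2$) and which is positive on $\R{\cdot}\Z\oplus\R{\cdot}\X=\R{\cdot}\Z\oplus\R{\cdot}J\Z$ (as $(\d r\wedge\d^c r)(\Z,J\Z)=\d r(\Z)^2=r^2$, using $J\Z=\X$); meanwhile $\iota_\Z\d\tilde\cf_r=\cL_\Z\tilde\cf_r-\d\iota_\Z\tilde\cf_r=0$ and likewise $\iota_\X\d\tilde\cf_r=0$, so $\d\tilde\cf_r$ pairs nontrivially only within $\ker\d r\cap\ker\d^c r$. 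Hence, in the $J$-invariant splitting above, the hermitian form of $\d\d^c(r^\lambda)$ is block-diagonal: its $(\R{\cdot}\Z\oplus\R{\cdot}\X)$-block equals $\lambda^2 r^\lambda$ times the standard positive form, and its remaining block equals $\lambda r^\lambda$ times the restriction of $\d\tilde\cf_r$ to $\ker\d r\cap\ker\d^c r$, which is automatically $J$-invariant because $\d\d^c(r^\lambda)$ has type $(1,1)$. Consequently, for every $\lambda>0$, $r^\lambda$ is plurisubharmonic if and only if the restriction of $\d\tilde\cf_r$ to $\ker\d r\cap\ker\d^c r$ is a positive $(1,1)$-form — a condition independent of $\lambda$. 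Combined with the immediate facts $\cL_{\X/\lambda}r^\lambda=0$ and $\cL_{-J(\X/\lambda)}r^\lambda=r^\lambda$, this yields (i)$\Leftrightarrow$(ii) and shows that (i) holds precisely when $\d\tilde\cf_r$ restricts positively to $\ker\d r\cap\ker\d^c r$.

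To conclude, I transfer this last condition to $\Sm^\X$. Since $\tilde\cf_r=\pi_\X^*\cf_r$, the differential $\d\pi_\X$ carries $\ker\d r\cap\ker\d^c r$ isomorphically onto $\ker\cf_r=\Ds_r\subset T\Sm^\X$ and intertwines $J$ with $J^\X$ — for $\Sm_r\subset(\Cm,J)$ the hypersurface CR distribution $T\Sm_r\cap J(T\Sm_r)$ is exactly $\ker\d r\cap\ker\d^c r$, and $J^\X$ is by construction the descent of $J$ — so the restriction of $\d\tilde\cf_r$ corresponds to $(\d\cf_r)|_{\Ds_r}$. By the characterization of $\cS(\X,J^\X)$ recalled in the proof of Lemma~\ref{l:ddc}(i), $\cf_r\in\cS(\X,J^\X)$ if and only if $\cf_r(\X)=1$, $\cL_\X\cf_r=0$, and $\d\cf_r$ is $J^\X$-invariant and $J^\X$-positive; the first two were shown above, and the last two say exactly that $\d\tilde\cf_r$ restricts positively to $\ker\d r\cap\ker\d^c r$. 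This proves (i)$\Leftrightarrow$(iii). The map $r\mapsto r^\lambda$ now sends $\cpot_\X(\Cm,J)$ into $\cpot_{\X/\lambda}(\Cm,J)$ by (i)$\Rightarrow$(ii), and its inverse $s\mapsto s^{1/\lambda}$ sends $\cpot_{\X/\lambda}(\Cm,J)$ into $\cpot_\X(\Cm,J)$ by the same equivalence applied with $\X/\lambda$ in place of $\X$ and exponent $1/\lambda$, so it is a bijection. Finally, for $r\in\cpot_\X(\Cm,J)$ the map $\pi_\X|_{\Sm_r}\colon\Sm_r\to\Sm^\X$ is a diffeomorphism carrying the hypersurface CR distribution $\ker\d r\cap\ker\d^c r$ with its complex structure onto $\Ds_r=\ker\cf_r$ with $J^\X$, hence is a CR isomorphism onto $\Sm^\X$ equipped with the CR structure of $\cf_r\in\cS(\X,J^\X)$.

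The step requiring most care is the block-diagonalisation: one must check that there are no mixed terms between $\R{\cdot}\Z\oplus\R{\cdot}\X$ and $\ker\d r\cap\ker\d^c r$ (this uses $\iota_\Z\d\tilde\cf_r=\iota_\X\d\tilde\cf_r=0$ together with the vanishing of $\d r$ and $\d^c r$ on $\ker\d r\cap\ker\d^c r$ and the $J$-invariance of that distribution) and that the $(\R{\cdot}\Z\oplus\R{\cdot}\X)$-block is positive definite, so that strict plurisubharmonicity on all of $T\Cm$ reduces cleanly to positivity of one transversal $(1,1)$-form; the remaining verifications are routine $\d\d^c$-calculus and Lie-derivative bookkeeping, and the point that $\Sm^\X$ is a manifold with $(r,\pi_\X)$ a diffeomorphism is settled exactly as in Lemma~\ref{l:cone-quotient}.
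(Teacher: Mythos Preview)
Your proof is correct and follows essentially the same approach as the paper: both compute $\d\d^c(r^\lambda)=\lambda r^\lambda\bigl(\lambda(\d r/r)\wedge\tilde\cf_r+\d\tilde\cf_r\bigr)$ and reduce positivity to the positivity of $\d\cf_r$ on $\ker\cf_r$, with the CR isomorphism following from $T\Sm_r\cap J(T\Sm_r)=\ker\d r\cap\ker\d^c r$. The paper asserts the reduction in one sentence, while you spell out the block-diagonalisation with respect to $(\R{\cdot}\Z\oplus\R{\cdot}\X)\oplus(\ker\d r\cap\ker\d^c r)$ explicitly; this is the same argument, just with the ``no mixed terms'' verification made visible.
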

\begin{proof} Set $\rho=\log r$. For the first part, we compute $\cL_\X\rho=0$
and $\cL_{-J\X}\rho =1$, from which it follows that $\tilde\cf_r$ is basic
with respect to the holomorphic vector field $\Z=-J\X$. Hence it is a pullback
as stated, and the properties of $\cf_r$ follow straightforwardly. For the
equivalence, observe that the function $r^\lambda=e^{\lambda\rho}$ satisfies
$\cL_{\X/\lambda}(r^\lambda)=0$, $\cL_{-J\X/\lambda}(r^\lambda)= r^\lambda$,
and
\[
\d \d^c (r^\lambda) = \lambda\, \d (r^\lambda \tilde\cf_r) = \lambda r^\lambda
\bigl(\lambda (\d r/r)\wedge\pi_\X^*\cf_r +\pi_\X^*\d\cf_r\bigr)
\]
This is positive if and only if $\d\cf_r$ is positive on $\ker\cf_r$. Thus
(ii) is equivalent to (iii), hence also (i) by taking $\lambda=1$.

The bijection follows immediately, and the last part follows because
$T\Sm_r=\ker\d r|_{\Sm_r}=\ker\d \rho|_{\Sm_r}$, so the hypersurface CR structure
has contact distribution $\ker\tilde\cf_r|_{\Sm_r}$ with the complex structure
induced by $J$, hence is $\pi_\X$-related to $\Ds_r$ with the complex
structure induced by $J^\X$.
\end{proof}
\begin{cor}\label{c:sasaki-type} If $(\Cm,J,\X)$ is a polarized complex cone,
then the induced transversal holomorphic structure $(\X,J^\X)$ on $\Sm^\X$ has
Sasaki type.
\end{cor}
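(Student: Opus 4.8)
The plan is to obtain the statement directly by combining Lemmas~\ref{l:cone-quotient} and~\ref{l:cone-potentials}: by Definition~\ref{d:sasaki-polarization} it is enough to produce one Sasaki structure on $\Sm^\X$ compatible with $(\X,J^\X)$, equivalently to check that $\cS(\X,J^\X)$ is nonempty.

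First I would unpack the hypothesis. A polarized complex cone $(\Cm,J,\X)$ by definition admits a surjective cone potential $r$, so $\cpot_\X(\Cm,J)\neq\emptyset$; fix such an $r$. Lemma~\ref{l:cone-quotient}(i)--(ii) then tells us that $\Z=-J\X$ generates a free proper $\R^+$-action, that $\Sm^\X=\Cm/\exp(-J\X)$ is a smooth manifold with projection $\pi_\X\colon\Cm\to\Sm^\X$, that $\X$ descends to a vector field on $\Sm^\X$, and that $J$ descends to the transversal holomorphic structure $J^\X$ on $\Ds_\X=T\Sm^\X/\spn{\X}$ whose Sasaki type is in question. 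Next I would apply Lemma~\ref{l:cone-potentials} to $r\in\cpot_\X(\Cm,J)$: its equivalence (i)$\Leftrightarrow$(iii) yields $\cf_r\in\cS(\X,J^\X)$, where $\cf_r$ is the $1$-form on $\Sm^\X$ characterized by $\pi_\X^*\cf_r=\d^c r/r$. By Definition~\ref{d:sasaki-polarization}, $\cf_r\in\cS(\X,J^\X)$ means exactly that $\Ds_r:=\ker\cf_r$ carries a Sasaki structure $(\Ds_r,J^\X,\X)$ compatible with $(\X,J^\X)$; hence $\cS(\X,J^\X)\neq\emptyset$ and $(\X,J^\X)$ has Sasaki type, as required.

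There is essentially no obstacle here — the corollary is a bookkeeping consequence of the two preceding lemmas. The only point deserving a moment's attention is that membership $\cf_r\in\cS(\X,J^\X)$ really does encode a genuine Sasaki (strictly pseudoconvex, co-oriented) structure and not merely a transversal $(1,1)$-form; but this is built into Definition~\ref{d:sasaki-polarization} and was already verified inside the proof of Lemma~\ref{l:cone-potentials} through the positivity equivalence $\d\d^c r>0\Leftrightarrow\d\cf_r>0$ on $\ker\cf_r$. I would also record, for use in the sequel, the companion fact from Lemma~\ref{l:cone-potentials} that $\pi_\X|_{\Sm_r}\colon\Sm_r\to\Sm^\X$ is a CR isomorphism from the level hypersurface $r^{-1}(1)$ with its induced hypersurface CR structure onto $(\Sm^\X,\Ds_r,J^\X)$, which recovers the classical statement that a level set of a K\"ahler cone potential is a Sasaki manifold.
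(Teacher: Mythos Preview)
Your proposal is correct and is exactly the argument the paper intends: the corollary is stated without proof because it follows immediately from Lemma~\ref{l:cone-potentials} (i)$\Leftrightarrow$(iii) applied to any cone potential $r$, whose existence is guaranteed by the definition of a polarized complex cone. Your additional remarks on strict pseudoconvexity and the CR isomorphism $\Sm_r\cong\Sm^\X$ are accurate but not needed for the bare statement.
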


\begin{rem} For any cone potential $r\in \cpot_\X(\Cm,J)$, and
any $a,\lambda\in \R^+$, $\d\d^c (ar^{\lambda})$ is thus a K\"ahler metric on
$(\Cm,J)$. To obtain a conical K\"ahler metric of homogeneity $2$ with respect
to $\Z=-J\X$, we take $\lambda=2$ and (conventionally) $c=1/4$ so that the
K\"ahler form is
\[
\omega_r:=\tfrac14 \d\d^c(r^2)=\tfrac12 \d(r^2 \tilde\cf_r)
= r\,\d r\wedge\tilde\cf_r+\tfrac12 r^2 \d \tilde\cf_r,
\]
and $\frac12 r^2 = \frac12 \omega_r(\X, J\X)$ is a Killing potential for
$\X$. However, we generally find it simpler to work directly with the cone
potential $r$ than a conventional choice of conical metric.
\end{rem}

\begin{lemma}\label{l:cone-vs-sasaki-pot} Let $(\Cm,J,\X)$ be a polarized
complex cone, $r_0\in \cpot_\X(\Cm,J)$, $r\in C^\infty_\Cm(\R^+)$, and
$\cf_0:=\cf_{r_0}\in \cS(\X,J^\X)$. Then $r\in \cpot_\X(\Cm,J)$ if and only if
there exists a Sasaki potential $\varphi\in\Sp(\X ,J^\X, \cf_0)$ such that
$r=\exp(\pi_\X^*\varphi)r_0$, and then $\cf_r=\cf_0+\d^c\varphi$.
\end{lemma}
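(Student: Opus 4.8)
The plan is to reduce everything to the previous two lemmas, in particular Lemma~\ref{l:cone-potentials}, which already characterizes membership in $\cpot_\X(\Cm,J)$ via the contact form $\cf_r$. The key identity is that writing $r = \exp(\pi_\X^*\psi)\,r_0$ for a function $\psi$ on $\Sm^\X$ automatically encodes the homogeneity and $\X$-invariance: since $\cL_\X r_0 = 0$ and $\cL_{-J\X}r_0 = r_0$, and $\pi_\X^*\psi$ is constant along the $\R^+$-orbits and $\X$-invariant, the product $r$ satisfies $\cL_\X r = 0$ and $\cL_{-J\X}r = r$ as well. Conversely, any $r \in C^\infty_\Cm(\R^+)$ with these two homogeneity properties has $r/r_0$ constant on $\R^+$-orbits and $\X$-invariant, hence of the form $\exp(\pi_\X^*\psi)$ for a unique $\psi \in \Omega^0_\X(\Sm)$ (using that $r, r_0 > 0$ so the logarithm is globally defined). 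So the substitution $r = \exp(\pi_\X^*\psi)r_0$ is simply a reparametrization of the set of functions satisfying the first two conditions in~\eqref{e:cpot}, and it only remains to identify the plurisubharmonicity condition with the Sasaki-potential condition on $\psi$.

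First I would compute $\tilde\cf_r = \d^c r / r$ in terms of $\psi$ and $\tilde\cf_{r_0}$. Writing $\rho = \log r = \pi_\X^*\psi + \log r_0$, we get $\d^c\rho = \pi_\X^*(\d_\X^c\psi) + \d^c(\log r_0)$, since $\d^c$ commutes with pullback by the holomorphic map $\pi_\X$ and restricts to $\d^c_\X$ on basic forms; here I use that $\d^c(\log r_0) = \tilde\cf_{r_0} = \pi_\X^*\cf_0$ by Lemma~\ref{l:cone-potentials}. Hence $\tilde\cf_r = \d^c\rho = \pi_\X^*(\cf_0 + \d_\X^c\psi)$, which is exactly the pullback of $\cf_0 + \d^c\psi$. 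By Lemma~\ref{l:cone-potentials}, $r \in \cpot_\X(\Cm,J)$ if and only if $\cf_r := \cf_0 + \d^c\psi \in \cS(\X,J^\X)$. But by Definition~\ref{d:sasaki-slice} and~\eqref{sasaki-potential}, this is precisely the condition that $\psi \in \Sp(\X,J^\X,\cf_0)$, i.e.\ that $\psi$ is a Sasaki potential with respect to $\cf_0$. This gives both implications at once and simultaneously yields the formula $\cf_r = \cf_0 + \d^c\varphi$.

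There is essentially no hard step here; the lemma is bookkeeping consolidating Lemmas~\ref{l:cone-quotient} and~\ref{l:cone-potentials} together with Definition~\ref{d:sasaki-slice}. The one point requiring a little care is the claim that a positive $\X$-invariant $\R^+$-homogeneous function on $\Cm$ descends, after dividing by $r_0$ and taking logarithms, to a genuine basic function $\psi$ on $\Sm^\X$ rather than merely a locally defined one — this is immediate because $\pi_\X\colon\Cm\to\Sm^\X$ is a surjective submersion with connected fibres (indeed $\Cm \cong \R^+\times\Sm^\X$ by Lemma~\ref{l:cone-quotient}(iii)), so any $\R^+$-invariant function on $\Cm$ is the pullback of a unique function on $\Sm^\X$, and $\X$-invariance upstairs is equivalent to $\X$-invariance (basicness) downstairs. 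A secondary small point is checking that $\d^c$ on $\Cm$ pulls back correctly: this follows since $\pi_\X$ is holomorphic for $J$ and $J^\X$, so $\pi_\X^*\circ\d^c_\X = \d^c\circ\pi_\X^*$ on basic forms, which is already implicit in the integrability discussion preceding Definition~\ref{d:cone-polarization}.
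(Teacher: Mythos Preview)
Your proof is correct and follows essentially the same approach as the paper's: write $r=\exp(\tilde\varphi)r_0$, observe that the homogeneity and $\X$-invariance conditions force $\tilde\varphi=\pi_\X^*\varphi$ with $\varphi$ basic, compute $\cf_r=\cf_0+\d^c\varphi$, and invoke Lemma~\ref{l:cone-potentials}. The paper's proof is three lines and yours simply unpacks those lines with the supporting details (connectedness of fibres, compatibility of $\d^c$ with $\pi_\X^*$) made explicit; one minor wording quibble is that $\pi_\X\colon\Cm\to\Sm^\X$ is not literally a holomorphic map since $\Sm^\X$ is odd-dimensional, but your intended meaning---that pullback intertwines $\d^c_\X$ on basic forms with $\d^c$ on $\Cm$---is correct and is what the computation actually uses.
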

\begin{proof} We may write $r=\exp(\tilde\varphi)r_0$, for
some $\tilde\varphi\in C^\infty_\Cm(\R)$; then $\cL_{-J\X}r=r$ and $\cL_\X r
=0$ hold if and only if $\tilde\varphi=\pi_\X^*\varphi$ with
$\cL_\X\varphi=0$. We then have $\cf_r=\cf_0+\d^c\varphi$ and the result
follows by Lemma~\ref{l:cone-potentials}.
\end{proof} 
Note that $\varphi$ in Lemma~\ref{l:cone-vs-sasaki-pot} is determined by
$\pi_\X^*\varphi=\log r - \log r_0$. Thus the above observations yield the
following crucial descriptions of $\cpot_\X(\Cm,J)$.

\begin{prop}\label{p:links=potentials} Let $(\Cm,J,\X)$ be a polarized complex
cone. Then\textup:
\begin{numlist}
\item the image of $r\mapsto \cf_r\colon \cpot_\X(\Cm,J)\to \cS(\X,J^\X)$ is a
  marking $\Sc$ of $(\Sm,\X,J^\X)$ in the sense of Definition~\textup{\ref
    {d:sasaki-slice}}, and $\cf_r = \cf_{r_0}$ if and only if $r=e^a r_0$ with
  $a\in \R$ constant\textup;
\item for any $r_0\in \cpot_\X(\Cm,J)$, we have $\Sc=\Sc(\X,J^\X,\cf_0)$
with $\cf_0=\cf_{r_0}$, and the map
\begin{equation} \label{potential-map}
  r \in \cpot_\X(\Cm,J) \mapsto \varphi\in C^\infty_\Sm(\R)
  \quad\text{with}\quad \pi_\X^*\varphi=\log r - \log r_0
\end{equation}
is a bijection from $\cpot_\X(\Cm,J)$ to the space $\Sp(\X ,J^\X, \cf_0)$ of
Sasaki potentials~\eqref{sasaki-potential} on $\Sm^\X$\textup;
\item $r\mapsto \Sm_r$ is a bijection from $\cpot_\X(\Cm,J)$ to the set of
strictly pseudoconvex \textup(images of\textup) $\X$-invariant sections of
$\pi_\X$.
\end{numlist}
\end{prop}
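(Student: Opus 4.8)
The plan is to deduce all three assertions by unwinding the correspondences already established in Lemmas~\ref{l:cone-potentials} and~\ref{l:cone-vs-sasaki-pot}. Since $(\Cm,J,\X)$ is a polarized complex cone we fix once and for all a base cone potential $r_0\in\cpot_\X(\Cm,J)$ and set $\cf_0:=\cf_{r_0}$; then $\cf_0\in\cS(\X,J^\X)$ by Lemma~\ref{l:cone-potentials}, and by Corollary~\ref{c:sasaki-type} the transversal holomorphic structure $(\X,J^\X)$ on $\Sm^\X$ has Sasaki type, so the marking $\Sc(\X,J^\X,\cf_0)$ of Definition~\ref{d:sasaki-slice} and its space of Sasaki potentials $\Sp(\X,J^\X,\cf_0)$ are available. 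Throughout I would use that $\pi_\X\colon\Cm\to\Sm^\X$ is surjective with connected fibres --- the $\R^+$-orbits of $\Z:=-J\X$ --- so that an $\X$-invariant function on $\Cm$ descends uniquely to $\Sm^\X$ and, by Lemma~\ref{l:cone-quotient}(i), $(t,q)\mapsto\exp(t\Z)\cdot s(q)$ is a diffeomorphism $\R\times\Sm^\X\to\Cm$ for any smooth section $s$ of $\pi_\X$.

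I would establish (ii) first, since it carries the content. By the remark following Lemma~\ref{l:cone-vs-sasaki-pot}, $\varphi$ is determined by $\pi_\X^*\varphi=\log r-\log r_0$, so $r\mapsto\varphi$ is a well-defined injection on $\cpot_\X(\Cm,J)$; and Lemma~\ref{l:cone-vs-sasaki-pot} says exactly that its image is $\Sp(\X,J^\X,\cf_0)$ with $\cf_r=\cf_0+\d^c_\X\varphi$, which is (ii). Statement (i) then follows by composing this bijection with $\varphi\mapsto\cf_0+\d^c_\X\varphi$: in view of~\eqref{sasaki-marking}--\eqref{sasaki-potential} the image of $r\mapsto\cf_r$ is precisely $\Sc(\X,J^\X,\cf_0)$, which is a marking, and it does not depend on the base point because $\Sc=\Sc(\X,J^\X,\cf')$ for every $\cf'\in\Sc$. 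For the fibre description, $\cf_r=\cf_{r_0}$ forces $\d^c_\X\varphi=0$, hence $\d_\X\varphi=0$, hence $\varphi\equiv a$ on the connected manifold $\Sm^\X$, i.e.\ $r=e^a r_0$; the converse is trivial.

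For (iii), if $r\in\cpot_\X(\Cm,J)$ then $\cL_\X r=0$ makes $\Sm_r=r^{-1}(1)$ an $\X$-invariant hypersurface, and by Lemma~\ref{l:cone-quotient}(iii) the restriction $\pi_\X|_{\Sm_r}$ is a diffeomorphism onto $\Sm^\X$ whose inverse is an $\X$-invariant section of $\pi_\X$; the last part of Lemma~\ref{l:cone-potentials} further identifies it with a CR isomorphism onto $(\Sm^\X,\Ds_r,J^\X)$ for $\cf_r\in\cS(\X,J^\X)$, so $\Sm_r$ is strictly pseudoconvex and $r\mapsto\Sm_r$ lands in the target set. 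To produce the inverse, I would take a strictly pseudoconvex $\X$-invariant section $s$ of $\pi_\X$ and define $r\colon\Cm\to\R^+$ by $r\bigl(\exp(t\Z)\cdot s(q)\bigr)=e^t$, which makes sense because each $\R^+$-orbit of $\Z$ meets $s(\Sm^\X)$ exactly once; this $r$ is smooth by the diffeomorphism above, satisfies $\cL_\Z r=r$ and $\cL_\X r=0$ (the latter using $\X$-invariance of $s$ and $[\X,\Z]=0$), and has $\Sm_r=s(\Sm^\X)$. Strict pseudoconvexity of $s(\Sm^\X)$ then reads off as the $J^\X$-positivity of $\d\cf_r$ on $\ker\cf_r$, i.e.\ $\cf_r\in\cS(\X,J^\X)$, whence $r\in\cpot_\X(\Cm,J)$ by Lemma~\ref{l:cone-potentials}. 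Injectivity of $r\mapsto\Sm_r$ comes from (ii): if $\Sm_r=\Sm_{r'}$ then $r'=\exp(\pi_\X^*\varphi)r$ with $\pi_\X^*\varphi=\log r'-\log r$ vanishing on $\Sm_r$, so $\varphi\equiv 0$ on $\pi_\X(\Sm_r)=\Sm^\X$ and $r=r'$; surjectivity onto the target is the reconstruction just described.

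The steps are each short once the earlier lemmas are granted; the one point demanding genuine care is the converse half of (iii): checking that the reconstructed $r$ is smooth --- which the $\R^+$-orbit parametrisation of Lemma~\ref{l:cone-quotient}(i) handles --- and, above all, that the classical notion of strict pseudoconvexity of the hypersurface $s(\Sm^\X)$, co-oriented by the cone direction $\Z$, coincides with the positivity condition built into membership in $\cS(\X,J^\X)$ through the orientation conventions of \S\ref{ss:sasaki-transversalK}. Everything else is a formal consequence of the bijections in Lemmas~\ref{l:cone-potentials} and~\ref{l:cone-vs-sasaki-pot}.
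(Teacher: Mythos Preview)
Your proof is correct and follows essentially the same route as the paper's: parts (i)--(ii) are drawn directly from Lemma~\ref{l:cone-vs-sasaki-pot} together with connectedness, and for (iii) you reconstruct $r$ from a section via the flow of $-J\X$ exactly as the paper does (the paper phrases it as $r=e^\rho$ with $\rho$ the flow-time to the section, and argues positivity of $\d\d^c r$ by homogeneity rather than invoking Lemma~\ref{l:cone-potentials} again, but this is the same content). Your treatment is somewhat more explicit about smoothness and injectivity in (iii), but the underlying argument is identical.
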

\begin{proof} (i)--(ii) are immediate from Lemma~\ref{l:cone-vs-sasaki-pot}
and the connectedness of $\Cm$.

(iii) By Lemma~\ref{l:cone-potentials}, for any $r\in \cpot_\X(\Cm,J)$,
$\Sm_r$ is strictly pseudoconvex and is the image of an $\X$-invariant section
of $\pi_\X$.  Conversely given such a section, the condition that $r=1$ on the
image and that $\cL_{-J\X} r=r$ determine that $r=e^\rho$ where $\rho(y)$ is
the time for the flow of $J\X$ starting at $y\in \Cm$ to meet the
section. Since the section is $\X$-invariant and $\X$ is holomorphic, $r$ so
defined is a smooth function on $\Cm$ with $\cL_\X r = 0$ and $\cL_{-J\X} r
=r$; now $\d\d^c r$ is positive at $r=1$ by pseudoconvexity of the image of
the section, hence it is positive on $\Cm$, since it is homogeneous of degree
$1$ with respect to $\Z=-J\X$.
\end{proof}

Part (i) of this result shows that there is a canonical bijection up to
isomorphism\footnote{It is only an equivalence of categories if one works with
  the quotient groupoid of $\R^+$-equivariant isomorphisms of polarized
  complex cones by the action of $\R^+$.} between polarized complex cones
$(\Cm,J,\X)$ and marked transversal holomorphic manifolds $(\Sm,\X,J^\X,\Sc)$,
where an isomorphism of polarized complex cones is an $\R^+$-equivariant
biholomorphism of the corresponding manifolds, and an isomorphism of marked
transversal holomorphic manifolds is a diffeomorphism of corresponding
manifolds intertwining their transversal holomorphic structures and markings.

Part (ii) shows that $\cpot_\X(\Cm,J)$ is an (infinite dimensional) affine
manifold with a global affine chart $\Sp(\X ,J^\X, \cf_0)$---indeed the space
$\{\rho=\log r\st r\in\cpot_\X(\Cm,J)\}$ of logarithmic cone potentials is
an affine space modelled on $\Sp(\X ,J^\X, \cf_0)$.

\subsection{Reeb cones revisited}\label{ss:Reebcone-rev} 

Let $(\Cm, J, \X)$ be a polarized complex cone with corresponding marked
transversal holomorphic manifold $(\Sm^\X,\X,J^\X,\Sc)$, assumed compact.  In
particular $(\X,J^\X)$ has Sasaki type (Corollary~\ref{c:sasaki-type}) and so,
as in \S\ref{ss:Reebcone}, we may fix a compact torus $\T \leq
\Aut(\Sm,\X,J^\X)$, with $\X \in \tor= {\rm Lie}(\T)$, which preserves some
compatible Sasaki structure $\cf_0\in \Sc$. Thus $\X$, $J^\X$, and
$\Sc\cong\Sc(\X,J^\X,\cf_0)$ are $\T$-invariant, hence there is an induced
holomorphic action of $\T$ on $(\Cm, J, \X)$. We denote by $\Sc^\T$ the space
of $\T$-invariant elements of $\Sc$ (thus $\Sc^\T\cong \Sc(\X,J^\X,\cf_0)^\T$
for any $\cf_0\in \Sc^\T$) and by $\cpot_\X(\Cm,J)^\T$ the space of
$\T$-invariant cone potentials in $\cpot_{ \X}(\Cm,J)$. The considerations of
the previous subsection apply \emph{mutatis mutandis} in the $\T$-invariant
context to show that
\begin{numlist}
\item $r\mapsto\cf_r$ is a surjection with $1$-dimensional fibres from
  $\cpot_\X(\Cm,J)^\T$ to $\Sc^\T\sub  \cS(\X,J^\X)^\T$;
\item for any $\cf_0\in \Sc^\T$,  $\cpot_\X(\Cm,J)^\T$ is isomorphic to
  the space $\Sp(\X,J^\X,\cf_0)^\T$ of $\T$-invariant Sasaki potentials;
\item $r\mapsto \Sm_r$ identifies $\cpot_\X(\Cm,J)^\T$ with the space of
  $\T$-invariant sections of $\pi_\X\colon \Cm\to \Sm^\X$.
\end{numlist}

Let $\tor_+^\X\sub \tor$ the Reeb cone of $\T$ (Definition~\ref{d:Reeb-cone}).
An important observation in~\cite{BV,CSz} is that $\tor_+^\X$ can be defined
intrinsically on $(\Cm,J,\T)$, and indeed can be identified with
\begin{equation}\label{e:intrinsic-Reeb}
\tor_+:=\{K\in \tor\st K \text{ is a polarization of } (V,J) \}.
\end{equation}
To establish this, it is useful to reformulate the definition of $\tor_+^\X$
on $\Cm$ (see e.g.~\cite{He-Sun}).
\begin{lemma}\label{l:complexified-action} For any $r\in \cpot_\X(\Cm,J)^\T$,
\begin{equation}\label{polarizations}
 \tor_+^\X =\{ \K\in \tor \st \cL_{-J \K}r >0\}
=\{K\in \tor\st JK \text{ is transverse to } \Sm_r:=r^{-1}(1)\}.
\end{equation} 
Consequently, $(\Cm,J)$ has a proper holomorphic action of a complex torus
$\T_\C$ whose real part coincides with $\T$. Furthermore $\tor_+^\X\sub
\tor_+$, and $\K\in \tor_+^\X$ if and only if $\X\in\tor_+^\K$.
\end{lemma}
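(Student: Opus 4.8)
The plan is to establish the three displayed equalities/inclusions by translating the condition $\cL_{-J\K}r>0$ on $\Cm$ into the contact-geometric condition $\cf_r(\K)>0$ on $\Sm^\X$, and then to extract the complex torus action as a formal consequence of the resulting transversality statement. First I would fix $r\in\cpot_\X(\Cm,J)^\T$ and set $\rho=\log r$, $\cf_r$ the associated contact form on $\Sm^\X$ (Lemma~\ref{l:cone-potentials}). Since $\K\in\tor$ commutes with $\X$ and preserves $r$ up to the $\R^+$-homogeneity structure (it is $\T$-invariant, hence $\cL_\X r=0$ and $\cL_\Z\K=0$ where $\Z=-J\X$), one computes $\cL_{-J\K}r = r\,(\d^c\rho)(\K) = r\,\tilde\cf_r(\K) = r\,\pi_\X^*(\cf_r(\K))$. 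Thus $\cL_{-J\K}r>0$ on $\Cm$ is equivalent to $\cf_r(\K)>0$ on $\Sm^\X$, which is exactly the defining condition of $\tor_+^\X$ in Definition~\ref{d:Reeb-cone} (using Lemma~\ref{l:momentum-cone-coincide} to see this is independent of the chosen $\cf\in\cS(\X,J^\X)^\T$, and hence of the chosen $r$). This gives the first equality in~\eqref{polarizations}. For the second equality, note $J\K$ is transverse to the level set $\Sm_r=r^{-1}(1)$ precisely when $\d r(J\K)=(\d^c r)(-\K)\ne 0$ there; combined with the homogeneity of $r$ and the sign normalization coming from strict pseudoconvexity (as in the proof of Lemma~\ref{l:Reeb}), transversality with the correct sign is equivalent to $\cL_{-J\K}r>0$.

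Next I would derive the complexified torus action. Each $\K\in\tor$ is holomorphic on $(\Cm,J)$, so $-J\K$ is a real holomorphic vector field and $\K$ together with $-J\K$ spans a complex line in the complexified Lie algebra; running over a basis of $\tor$ one gets an abelian complex Lie algebra $\tor\otimes\C$ acting holomorphically on $(\Cm,J)$, with real part $\tor$. Properness of this $\T_\C$-action is the crux of the argument: I expect this to be the main obstacle. The idea is to produce an exhaustion function on $\Cm$ that is $\T$-invariant and proper modulo the $\R^+$-action, and whose flow under the imaginary directions $-J\K$ is controlled. Concretely, pick $\K_1,\dots,\K_k$ a basis of $\tor$ with each $\K_j\in\tor_+^\X$ (possible since $\tor_+^\X$ is open and nonempty, containing $\X$), apply the first part to get $r_j\in\cpot_{\K_j}(\Cm,J)^\T$, and use $F:=\sum_j r_j^2$, or better the fact established in Lemma~\ref{l:cone-potentials} that each $r_j$ identifies $\Cm\cong\R^+\times\Sm_{r_j}$ with $\Sm_{r_j}$ compact; the $\T$-action on the compact slice $\Sm^\X$ is proper because $\T$ is compact, while the noncompact $\R^+$-direction is acted on by the $\tor\otimes\C$ imaginary part through a logarithmic-linear map, so properness reduces to the elementary fact that the corresponding $\R^k\to\R$ (or $\R^{k-1}$) linear map has the polyhedral Reeb cone as its "positivity chamber." Here one uses the momentum-cone picture of Lemma~\ref{l:momentum-cone-coincide}: the momenta $\langle\mu_{\cf},\K_j\rangle$ assemble into a proper map onto the compact transversal polytope $P_\X$, which forces the imaginary-time flow to be proper.

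Finally, the inclusion $\tor_+^\X\subseteq\tor_+$ is now immediate: if $\K\in\tor_+^\X$ then by the first equality $\cf_r(\K)>0$, so the contact form $\cf_r^\K:=\cf_r/\cf_r(\K)$ defines a Sasaki structure on $(\Sm^\X,\K,J^\K)$, and via Proposition~\ref{p:links=potentials} (applied with $\K$ in place of $\X$, using $r^{1/\cf_r(\K)}$ — more precisely the cone potential whose associated contact form on the $\K$-quotient is $\cf_r^\K$) one obtains a surjective cone potential for $(\Cm,J,\K)$, so $\K$ is a polarization, i.e.\ $\K\in\tor_+$. The last assertion, $\K\in\tor_+^\X\iff\X\in\tor_+^\K$, follows by symmetry of the relation $\cf_r(\K)>0$: given $\K\in\tor_+^\X$, rescale to the Sasaki structure $\cf_r^\K$ on the $\K$-quotient; then $\cf_r^\K(\X)=\cf_r(\X)/\cf_r(\K)=1/\cf_r(\K)>0$, so $\X\in\tor_+^\K$, and conversely. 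I would phrase this using the intrinsic characterization~\eqref{e:intrinsic-Reeb} so that the equivalence is manifestly symmetric in $\X$ and $\K$. The only genuinely delicate point remains the properness of the $\T_\C$-action; everything else is bookkeeping with the formulas from \S\ref{ss:Reebcone} and the cone–Sasaki dictionary of Proposition~\ref{p:links=potentials}.
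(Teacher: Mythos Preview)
Your derivation of the two equalities in~\eqref{polarizations} via $\cL_{-J\K}r = r\,\pi_\X^*(\cf_r(\K))$ is exactly the paper's argument.

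Where you diverge from the paper is in the middle two steps, and in both cases you are making things harder than necessary.

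\textbf{The $\T_\C$-action.} You flag properness as ``the crux'' and propose an exhaustion-function argument with a basis of Reeb vectors. The paper's argument is far shorter: for $\K\in\tor_+^\X$ one has $\cL_{-J\K}(\log r)=\pi_\X^*\cf_r(\K)$, which is bounded above and below by positive constants since it is pulled back from the compact $\Sm^\X$. Hence $\log r$ has bounded derivative along flow lines of $-J\K$, so the flow cannot escape $\Cm$ in finite time; $-J\K$ is complete. Since $\tor_+^\X$ is an open cone in $\tor$, it spans $\tor$, so every $-J\K$ for $\K\in\tor$ is a linear combination of complete commuting vector fields and is therefore complete. That already gives the holomorphic $\T_\C$-action. (Your worry about properness is not misplaced---the paper passes over it with ``this gives the second claim''---but the existence of the action is all that is used downstream, and your elaborate momentum-cone construction is not needed for that.)

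\textbf{The inclusion $\tor_+^\X\subseteq\tor_+$.} Your proposed formula $r^{1/\cf_r(\K)}$ is ill-defined (the exponent is a nonconstant function), and Proposition~\ref{p:links=potentials} does not directly produce a cone potential from a contact form. The paper instead uses the second equality in~\eqref{polarizations} directly: since $J\K$ is transverse to $\Sm_r$, the compact hypersurface $\Sm_r$ is a $\T$-invariant section of $\pi_\K$, and Proposition~\ref{p:links=potentials}(iii) (together with the converse in Lemma~\ref{l:cone-quotient}) then manufactures a surjective cone potential $\tilde r\in\cpot_\K(\Cm,J)$ with $\Sm_{\tilde r}=\Sm_r$. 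This is both cleaner and immediately gives the last equivalence: $\Sm_{\tilde r}=\Sm_r$ is equally a section of $\pi_\X$, so $\X\in\tor_+^\K$ by the same transversality characterization applied with the roles of $\X$ and $\K$ swapped.
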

\begin{proof}
Since $\cL_{-JX} r = (\d^c r)(X) = r (\pi_X^*\cf_r)(X)$, the first equality
in~\ref{polarizations} is immediate, and the second follows because
$\pi_X^*\cf_r$ is a pullback and $\Sm_r$ is a section of $\pi_\X$.

It follows that for any $\K\in \tor_+$, $J \K$ is a complete vector field on
$(\Cm,J)$. As $\tor_+$ is open in $\tor$, this gives the second claim.  Now
for $K\in \tor_+^\X$, $\Sm_r$ is a $\T$-invariant section of $\pi_\K$ which
yields a surjective cone potential $\tilde r\in \cpot_K(\Cm,J)$ by
Proposition~\ref{p:links=potentials}(iii) and Lemma~\ref{l:cone-quotient}, so
$\K\in \tor_+$. As $\Sm_{\tilde r}:=\tilde r^{-1}(1) = \Sm_r$ is a section of
$\pi_\X$, $\X\in \tor_+^\K$; the converse holds symmetrically.
\end{proof}
\begin{rem}\label{r:connected-Reeb} It follows
straightforwardly (cf.~\cite[Lemma~2.3]{He-Sun}) that $\tor_+^\X$ is a
connected component of $\tor_+$. Indeed, for $\K\in \tor_+$, the condition to
be in $\tor_+^\X$ is open because $\Sm_r$ is compact. Suppose now that $\K$ is
a limit point of $\tor_+^\X$ which is not in $\tor_+^\X$. Then
$\pi_\X^*\cf_r(\K)\geq 0$ on $\Sm_r$, but $\exists\,\pt\in \Sm_r$ with
$\cf_r(\K)(\pi_\X(\pt))=0$. Since this is a minimum
$\d(\cf_r(\K))_{\pi_\X(\pt)}=0$ and hence $\d\cf_r(\K,\cdot)_{\pi_\X(\pt)}=0$ as
$\K\in \tor$ preserves $\cf_r$. If $\K\in\tor_+$, with surjective cone
potential $\tilde r\in\cpot_\K(\Cm,J)$, then
\[
\d \tilde r\wedge\pi_\K^*(\cf_{\tilde r}\wedge\d\cf_{\tilde r}^{\,\wedge m})
= f\,\d r\wedge \pi_\X^*(\cf_r\wedge \d\cf_r^{\,\wedge m})
\]
for some function $f$ on $\Cm$. Contracting with $\K$ and evaluating at $p$
then yields $(\d\cf_{\tilde r}^{\wedge m})_{\pi_\K(\pt)}=0$, which contradicts
Lemma~\ref{l:cone-potentials}. Thus $\tor_+^\X$ is both open and closed in
$\tor_+$.
\end{rem}

To prove $\tor_+\sub \tor_+^\X$, we proceed indirectly,
following~\cite{BV,CSz}, by introducing another equivalent definition of the
Reeb cone. To explain this, we first observe if $\Lam\leq \tor$ is the lattice
of circle subgroups of $\T$, then $(\R^+\cdot\Lam)\cap\tor_+$ is dense in
$\tor_+$, and we may thus assume $\X$ is quasiregular ($\tor_+^\X$ varies
continuously with $\X$ by~\eqref{polarizations}---indeed it is locally
constant in $\X$ by Remark~\ref{r:connected-Reeb}). Then, similarly to the
regular case (Example~\ref{e:regular-cone}), $\Cm$ can be identified with the
space of nonzero vectors in the dual of an orbi-ample orbiline bundle $L$ over
the compact K\"ahler orbifold $(M, J_M, \omega_0)$ which is the Sasaki--Reeb
quotient of $\Sm$ by the circle action generated by $\X$. Thus $\Cm$ has a
natural one point compactification $\Cv=\Cm\cup\{0\}$ which the singular space
obtained by blowing down the zero section in the total space of $L^*$. Since
cone potentials correspond to norms of hermitian metrics on $L$, for any $r\in
\cpot_\X(\Cm,J)$, the apex $0$ of the cone $\Cv$ is characterized as the limit
of points $\pt\in\Cm$ with $r(\pt)\to 0$. Let
\begin{equation}\label{e:hol}
\Hol \cong \bigoplus_{k\in\N} H^0(M,L^k)
\end{equation}
be the space of continuous complex-valued functions of $\Cv$ which are
holomorphic on $\Cm\cong (L^*)^\times$ and polynomial on
each fibre of $(L^*)^\times$ over $M$. Here $H^0(M,L^k)$ is the space of holomorphic sections $s$ of $L^k$, which define fibrewise
polynomial functions $f_s$ on $L^*$ by $f_s(\pt)=\ip{s,\pt^k}$. Now $\T_\C$
acts on $\Hol$ and for $\alpha\in\tor^*$ we let
\[
\Hol_\alpha:= \{f\in \Hol\st \forall\,\K\in \tor,\;\cL_{-J\K}f = \alpha(\K) f\}
\]
be the $\alpha$-weight space and
\[
\Gamma:= \{\alpha\in \tor^*\st \Hol_\alpha\neq 0\}
\]
be the set of (integral) weights of the action. Thus there is a weight space
decomposition
\begin{equation}\label{e:hol-alpha}
\Hol \cong \bigoplus_{\alpha\in \Gamma} \Hol_\alpha,
\end{equation}
where the degree $k$ component of~\eqref{e:hol} is the direct sum of the
weight spaces $\Hol_\alpha$ with $\alpha(\X)=k$. The key fact we need is that
(by ampleness of $L$) $\Hol$ separates points of $\Cv$: in particular for any
$p\in\Cv$ there exists $f\in \Hol$ with $f(0)=0$ and $f(\pt)\neq 0$.

\begin{rem}\label{r:regular} In fact orbifold versions of the Kodaira embedding
theorem (see~\cite{RT}) embed $\Cv$ as an affine variety in $\C^N$ which is a
cone with the singular apex at the origin. The functions in $\Hol$ are then
the regular functions on $\Cv$, which separate points by definition.
\end{rem}

The incisive idea in~\cite{CSz} is to define the Reeb cone in $\tor$ as dual
to the cone generated by $\Gamma$ in $\tor^*$, i.e., to set
\[
\tor_+^\Gamma = \{\K \in \tor\st \forall \alpha\in \Gamma\setminus\{0\},\;
\alpha(\K)>0\}.
\]
Now~\cite[Prop. 2.1]{CSz} asserts that $\tor_+^\Gamma=\tor_+^\X$,
while~\cite[Prop. 2]{BV} asserts that $\tor_+^\Gamma=\tor_+$, which are the
results we need. However,~\cite{BV} refers to~\cite{CSz} for the inclusion
$\tor_+\sub \tor_+^\Gamma$, while we have been unable to verify the details of
the argument in~\cite{CSz} that $\tor_+^\Gamma\sub\tor_+^\X$. Since these
claims are crucial for our work, for the convenience of the reader we present
proofs here.
\begin{lemma}\label{l:reeb} If $\X\in \tor_+$ is quasiregular
then $\tor_+\sub\tor_+^\Gamma$ and $\tor_+^\Gamma\sub \tor_+^\X$.
\end{lemma}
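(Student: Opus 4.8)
The plan is to prove the two inclusions separately, both by exploiting the weight-space decomposition~\eqref{e:hol-alpha} of $\Hol$ and the fact that $\Hol$ separates points of the compactified cone $\Cv$.

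For the inclusion $\tor_+\sub\tor_+^\Gamma$, I would argue by contraposition. Fix $r\in\cpot_\X(\Cm,J)^\T$ and suppose $\K\in\tor$ with $\alpha(\K)\le 0$ for some $\alpha\in\Gamma\setminus\{0\}$. Pick a nonzero $f\in\Hol_\alpha$; since $f$ is not constant on $\Cv$ (its weight is nonzero) and it is holomorphic on $\Cm$, it is nonconstant on $\Cm$, hence takes some value $f(\pt)\ne 0$ at a point $\pt\in\Cm$, and by $\T$-invariance of $r$ we may average/flow under $\T$ so that $|f|$ restricted to the orbit closure is controlled. The key computation is that along the flow of $J\K$, $|f|$ evolves by $\cL_{-J\K}|f|^2 = 2\alpha(\K)|f|^2$ (up to a harmless factor), so if $\alpha(\K)\le 0$ then $|f|$ is nonincreasing along the flow of $-J\K$ starting at $\pt$. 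If $\K$ were a polarization, with cone potential $\tilde r\in\cpot_\K(\Cm,J)$, then $\tilde r\to 0$ along the flow of $-J\K$ (since $\cL_{-J\K}\tilde r=\tilde r$ forces exponential decay in negative time), so the flowline of $-J\K$ through $\pt$ would limit to the apex $0\in\Cv$; but $|f|$ is nonincreasing along it and bounded below by a positive constant, contradicting $f(0)=0$ (here one uses that $f\in\Hol$ extends continuously to $\Cv$ with $f(0)=0$ whenever $\alpha\ne 0$). Hence $\K\notin\tor_+$, proving $\tor_+\sub\tor_+^\Gamma$. (The quasiregularity of $\X$ enters only to guarantee the description of $\Cv$ as the blow-down of $L^*$ over the orbifold $M$ and hence the separation property of $\Hol$; as noted in Remark~\ref{r:regular}, this is just the Kodaira embedding of $\Cv$ as an affine cone.)

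For the inclusion $\tor_+^\Gamma\sub\tor_+^\X$, I would produce, for a given $\K\in\tor_+^\Gamma$, an explicit cone potential $\tilde r\in\cpot_\K(\Cm,J)$, thereby showing $\K\in\tor_+$, and then use the final assertion of Lemma~\ref{l:complexified-action} (which only needs $\K\in\tor_+$ together with $\X\in\tor_+^\K$, or rather a suitable section argument) to conclude $\K\in\tor_+^\X$. The candidate is a "weighted norm" built from sections: choose a basis $\{f_j\}$ of a finite-dimensional $\T_\C$-invariant subspace $W\subseteq\Hol$ which separates points of $\Cv$ (possible by ampleness), with each $f_j\in\Hol_{\alpha_j}$, and set $\tilde r := \bigl(\sum_j |f_j|^{2/\alpha_j(\K)}\bigr)^{1/2}$ — this is well-defined precisely because $\alpha_j(\K)>0$ for all $j$ (as $\alpha_j\in\Gamma\setminus\{0\}$ and $\K\in\tor_+^\Gamma$). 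By construction $\cL_\K\tilde r=0$ (the $f_j$ are eigenvectors, and $\K$ acts by imaginary eigenvalues so preserves $|f_j|$) and $\cL_{-J\K}\tilde r=\tilde r$ (each summand is homogeneous of degree $1$ under the $\R^+$-action generated by $-J\K$). Plurisubharmonicity of $\tilde r$ on $\Cm$ is the one nonformal point: away from the common zero locus (which is empty on $\Cm$ since the $f_j$ separate points and none vanishes identically) each $|f_j|^{2/\alpha_j(\K)}$ is the modulus-to-a-power of a holomorphic function, hence plurisubharmonic, and the sum of plurisubharmonic functions of this homogeneous type is strictly plurisubharmonic in the radial direction by the separation property — more carefully, one checks $\d\d^c\tilde r>0$ by a direct Hessian computation, or by noting that $\tilde r^2$ is a sum of $|f_j|^{2/\alpha_j(\K)}$'s each plurisubharmonic and the collection cutting out only $\{0\}$ forces strict positivity of the Levi form transverse to the $\R^+$-orbits. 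Then $\tilde r$ is surjective onto $\R^+$ (Lemma~\ref{l:cone-quotient}, since $-J\K$ generates a proper action once $\K\in\tor_+$, which at this stage follows from the existence of the globally defined $\tilde r$), so $\K$ is a polarization. Finally $\Sm_{\tilde r}=\tilde r^{-1}(1)$ is a $\T$-invariant strictly pseudoconvex section of $\pi_\K$, and being compact it is also a section of $\pi_\X$ (one checks $JX$ is transverse to it using $\cf_{\tilde r}(\X)>0$, which holds since $\X$ acts with positive weights $\alpha_j(\X)=\deg f_j>0$), so by~\eqref{polarizations} applied with the roles of $\X$ and $\K$ interchanged, $\X\in\tor_+^\K$ and hence $\K\in\tor_+^\X$.

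I expect the main obstacle to be the strict plurisubharmonicity of $\tilde r$ in the second inclusion: the formal invariance and homogeneity properties are immediate, but proving $\d\d^c\tilde r>0$ requires genuinely using that the chosen sections separate points of $\Cv$ (and in particular their common zero set in $\Cm$ is empty and their "gradients" span). The cleanest route is probably to observe that $\tilde r^2$ is an increasing, convex, $1$-homogeneous combination of the plurisubharmonic functions $|f_j|^{2/\alpha_j(\K)}$, pass to the quotient description where $\Cv\hookrightarrow\C^N$ is the affine cone from Remark~\ref{r:regular}, and recognize $\tilde r$ there as a weighted analogue of the Euclidean norm pulled back under the embedding — whose restriction to the cone is strictly plurisubharmonic off the apex because the embedding is an immersion there. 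The quasiregularity hypothesis on $\X$ is what licenses this passage to the projective/orbifold picture, so it is used in both inclusions in essentially the same way.
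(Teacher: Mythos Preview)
Your argument for the first inclusion $\tor_+\sub\tor_+^\Gamma$ is essentially the same as the paper's: both use that a cone potential for $\K$ forces the flow of $-J\K$ to send points of $\Cm$ to the apex of $\Cv$ (in negative time), whence the eigenvalue equation $\cL_{-J\K}f=\alpha(\K)f$ together with $f(0)=0$ forces $\alpha(\K)>0$. Your phrasing about ``nonincreasing'' versus ``bounded below'' conflates the two time directions, but the underlying idea is correct.

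Your argument for the second inclusion $\tor_+^\Gamma\sub\tor_+^\X$ has a genuine gap, and it is not the one you flagged. The proposed function $\tilde r=\bigl(\sum_j |f_j|^{2/\alpha_j(\K)}\bigr)^{1/2}$ is in general \emph{not smooth} on $\Cm$: the summand $|f_j|^{2/\alpha_j(\K)}$ fails to be $C^\infty$ along the zero locus $\{f_j=0\}\cap\Cm$ unless $1/\alpha_j(\K)\in\N$, and for generic (in particular irregular) $\K\in\tor_+^\Gamma$ the weights $\alpha_j(\K)$ are irrational real numbers. Since any finite separating family $\{f_j\}$ necessarily has members with nonempty zero loci in $\Cm$, your candidate $\tilde r$ does not lie in $C^\infty_\Cm(\R^+)$ and hence is not an element of $\cpot_\K(\Cm,J)$ as defined in~\eqref{e:cpot}. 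There is no evident rescaling fix: one cannot choose integers $N_j$ with $N_j\alpha_j(\K)$ all equal unless $\K$ is quasiregular. (Even setting smoothness aside, strict plurisubharmonicity of $\tilde r$ also degenerates along each $\{f_j=0\}$, since the Levi form of $|f_j|^{2c}$ has rank one.)

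The paper avoids constructing a cone potential for $\K$ altogether. Using the duality~\eqref{e:ReebCone=dualMOMcone}, it suffices to show $\cf_r(\K)>0$ on $\Sm^\X$ for some fixed $r\in\cpot_\X(\Cm,J)^\T$. The paper verifies this only at points $\pt\in\Sm_r$ with maximal $\T$-stabilizer, i.e., where $\tor=\mathfrak{stab}_\T(\pt)\oplus\spn{\X}$; these map to the edges of the momentum cone. At such a point $\K_\pt-\cf_r(\K)_{\pi_\X(\pt)}\X_\pt$ lies in the stabilizer, so for any $f\in\Hol_\alpha$ with $f(\pt)\neq 0$ the flows of $-J\K$ and of $-\cf_r(\K)_{\pi_\X(\pt)}J\X$ through $\pt$ induce the same evolution of $f$, giving $\alpha(\K)=\cf_r(\K)_{\pi_\X(\pt)}\,\alpha(\X)$. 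Since $\alpha(\K)>0$ by hypothesis and $\alpha(\X)>0$ by quasiregularity of $\X$, one gets $\cf_r(\K)_{\pi_\X(\pt)}>0$. Convexity of the strictly polyhedral momentum cone then propagates positivity from the edges to the interior. This approach uses no explicit potential and sidesteps all regularity issues.
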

\begin{proof} For $\tor_+\sub\tor_+^\Gamma$, we follow the main ideas
in the proof of~\cite[Prop. 2.1]{CSz}. Let $\K\in \tor_+$ and $r\in
\cpot_\K(\Cm,J)$. As noted already, $\cL_{-J\K} r =r$ implies $r$ is
equivariant with respect to the $\R^+$ action generated by $-J\K$.  Thus under
the flow $\flow_t^{-J\K}$ of $-J\K$, for any $\pt\in\Cm$,
\[
r(\flow_t^{-JK}(\pt))\to 0\text{ as } t\to 0
\]
and so $\flow_t^{-JK}(\pt)\to 0$, the apex of $\Cv$. Now let
$\alpha\in\Gamma\setminus\{0\}$ and $f\in \Hol_\alpha\setminus\{0\}$.  Then
$\cL_{-J\K} f = \alpha(K) f$, $f(0)=0$, and there exists $\pt\in \Cm$ with
$f(\pt)\neq 0$. Since
\[
\frac{\d}{\d t} f(\flow_t^{-JK}(\pt)) = \alpha(\K) f(\flow_t^{-JK}(\pt))
\]
and $f(\flow_t^{-JK}(\pt))\to f(0)=0$ as $t\to 0$, we must have $\alpha(\K)>0$.
Thus $\K\in \tor_+^\Gamma$.

For $\tor_+^\Gamma\sub \tor_+^\X$, we give a novel argument. Let $\K\in
\tor_+^\Gamma$ and $r\in \cpot_\X(\Cm,J)$. By~\eqref{e:ReebCone=dualMOMcone},
it suffices to show that $\K$ is in the dual of the momentum cone of the
contact momentum map of $\cf_r$ on $\Sm^\X$ (Definition~\ref{d:Reeb-cone}).
For this, we first let $\pt\in \Sm_r$ have maximal stabilizer with respect to
the $\T$-action, i.e., $\tor=\mathfrak{stab}_\T(\pt)\oplus \spn{\X}$. Thus
$\{\Y_\pt\st \Y\in \tor\} =\spn{\X_\pt}$ and $\K_\pt-\cf_r(\K)_{\pi_\X(\pt)} \X_\pt
\in \mathfrak{stab}_\T(\pt)$. Since $\Hol$ separates points, there exists
$f\in \Hol_\alpha$ with $\alpha\in \Gamma\setminus\{0\}$ and $f(\pt)\neq
0$. Let $\flow_t^{-J\K}$ be the flow of $-J\K$ as before and observe that
\[
f\bigl(\flow_t^{-J\K}(\pt)\bigr) = f\bigl(\flow_t^{-\cf_r(\K)_{\pi_\X(\pt)} J\X}(\pt)\bigr)
\]
so that
\[
\alpha(\K) f(\pt) = \cf_r(\K)_{\pi_\X(\pt)} \alpha(\X) f(\pt).
\]
Since $\alpha(\X)>0$ and $\alpha(\K)>0$ by assumption, we have
$\cf_r(\K)_{\pi_\X(\pt)}>0$. We conclude that $\ip{\xi,\K}>0$ at points
$\xi=\mu_{\cf_r}(\pt)$ on the edges of the momentum cone. Since the momentum
  cone is a strictly convex polyhedral cone, the result follows.
\end{proof}

Since $\tor_+^\X\sub\tor_+$ by Lemma~\ref{l:complexified-action}, it follows
that $\tor_+^\X=\tor_+^\Gamma=\tor_+$.  Henceforth we drop superscripts and
refer to $\tor_+$ (with these interchangeable definitions) as the \emph{Reeb
  cone} of $(\Cm,J)$. We also note that the momentum cone of any
$\cf\in\cS(\X,J^\X)$ coincides with the weight cone generated by $\Gamma$.

\begin{rem} For any $r\in \cpot_\X(\Cm,J)^\T$, $\hat\omega:=
\frac14 \d\d^c(r^2)$ is a $\T$-invariant exact symplectic form (see
Lemma~\ref{l:cone-potentials}), so the $\T$-action on $(\Cm,\hat\omega)$ is
hamiltonian with momentum map $\mu_r\colon\Cm\ra \tor^*$ defined by
\begin{equation}\label{e:coneMOMmap}
  \langle \mu_r, \K\rangle = \tfrac{1}{4} \d^c(r^2) (\K)=\tfrac12 r\d^c r(Y).
\end{equation}
for $\K\in \tor$. It is in fact the unique momentum map which is homogeneous
of order $2$ with respect to the $\R^+$-action generated by $-J \X$. In our
setting, for $\pt\in\Sm_r$,
\[
\mu_r(\pt) = \tfrac{1}{2}\mu_{\cf_r}(\pi_\X(\pt)).
\]

In particular $\d^c r(\K)=r$ so $\frac12 r^2$ is a Killing
potential for $\K$. Hence the flow of $-J\K$ is the gradient flow of the
positive function $\frac12r^2$ with respect to the K\"ahler metric
$\hat g=-\hat\omega J$.

Observe that $\langle \mu_r(\pt), \X\rangle = 1/2$ for any $r\in
\cpot_\X(\Cm,J)^\T$ and for any $\pt\in \Sm_r$ thanks to the homogeneity
condition~\eqref{cone-potential}, so $N_r$ has momentum image $(1/2) P_\X$.
\end{rem}

\subsection{Change of polarization}\label{ss:Thetamap}
The $\T$-invariant version of Proposition~\ref{p:links=potentials}(iii) and
the final part of Lemma~\ref{l:complexified-action} have the following crucial
corollary, which develops an idea used by He--Sun in the proof of~\cite[Lemma
  2.2]{He-Sun}.

\begin{cor}\label{bijection-potentials}
Let $(\Cm, J, \X)$ be the polarized complex cone of a compact $\T$-invariant
Sasaki manifold $(\Sm, \Ds, J, \X)$ with $X\in\tor$, and let $\K \in
\tor_+$. Then there is a bijection
\begin{equation}\begin{split}
\IsoPot\colon \cpot_\X(\Cm,J)^\T &\to \cpot_\K(\Cm,J)^\T\\
r\qquad &\mapsto \qquad \tilde r
\end{split}\end{equation}
between $\T$-invariant cone potentials with respect to $\X$ and such
potentials with respect to $\K$, characterized uniquely by
$\IsoPot(r)= \tilde r$ if and only if $N_{\tilde r}:= \tilde r^{-1}(1)$
is equal to $N_r:= r^{-1}(1)$.
\end{cor}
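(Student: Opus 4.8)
The plan is to realize $\IsoPot$ through the geometric bijection of Proposition~\ref{p:links=potentials}(iii) and its $\T$-invariant refinement recorded at the start of \S\ref{ss:Reebcone-rev}: the map $r\mapsto \Sm_r:=r^{-1}(1)$ identifies $\cpot_\X(\Cm,J)^\T$ with the set of $\T$-invariant strictly pseudoconvex sections of $\pi_\X\colon\Cm\to\Sm^\X$, and likewise $\tilde r\mapsto \Sm_{\tilde r}$ identifies $\cpot_\K(\Cm,J)^\T$ with the $\T$-invariant strictly pseudoconvex sections of $\pi_\K\colon\Cm\to\Sm^\K$ (note $\K\in\tor_+$ polarizes $(\Cm,J)$, so $(\Cm,J,\K)$ is a polarized complex cone and $\pi_\K$ is well defined by Lemma~\ref{l:cone-quotient}(i)). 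Thus the whole content of the corollary is that these two families of hypersurfaces in $\Cm$ coincide, and that the resulting identification of $\cpot_\X(\Cm,J)^\T$ with $\cpot_\K(\Cm,J)^\T$ is precisely ``same level-one set'' --- this is the mechanism behind the idea of He--Sun in~\cite[Lemma~2.2]{He-Sun}.

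The substantive step is therefore: given $r\in\cpot_\X(\Cm,J)^\T$, show that $\Sm_r$ is a $\T$-invariant strictly pseudoconvex section of $\pi_\K$. It is $\T$-invariant because $r$ is, and strictly pseudoconvex because $\cf_r\in\cS(\X,J^\X)$ by Lemma~\ref{l:cone-potentials} (equivalently, $r$ is strictly plurisubharmonic near the regular level set $\Sm_r$). To see it is a section of $\pi_\K$, recall that $-J\K$ is complete and generates a free proper $\R^+$-action with quotient $\pi_\K$ (Lemma~\ref{l:cone-quotient}(i)), and that $\cL_{-J\K}r>0$ everywhere on $\Cm$ (Lemma~\ref{l:complexified-action}, applied since $\K\in\tor_+=\tor_+^\X$). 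Fixing $\pt\in\Cm$ and setting $\psi(t):=r\bigl(\flow_t^{-J\K}(\pt)\bigr)$, the function $\psi$ is defined on all of $\R$ and strictly increasing. Because $\Sm^\X\cong\Sm$ is compact, $r\colon\Cm\to\R^+$ is proper (Lemma~\ref{l:cone-quotient}(iv)); and since the $-J\K$-orbit of $\pt$ is a properly embedded copy of $\R^+$, it leaves every compact subset of $\Cm$ as $t\to\pm\infty$. Combining properness of $r$ with the monotonicity of $\psi$ forces $\psi(t)\to 0$ as $t\to-\infty$ and $\psi(t)\to+\infty$ as $t\to+\infty$, so $\psi\colon\R\to\R^+$ is a homeomorphism and the orbit meets $\Sm_r$ in exactly one point. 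As $\Sm_r$ is also transverse to the orbits (that is again $\cL_{-J\K}r>0$), $\pi_\K|_{\Sm_r}$ is a bijective local diffeomorphism, hence a diffeomorphism onto $\Sm^\K$, exhibiting $\Sm_r$ as a smooth section of $\pi_\K$.

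With this in hand, the $\T$-invariant form of Proposition~\ref{p:links=potentials}(iii) applied to the polarization $\K$ yields a unique $\tilde r\in\cpot_\K(\Cm,J)^\T$ with $\Sm_{\tilde r}=\Sm_r$, and we set $\IsoPot(r):=\tilde r$; by construction $\IsoPot(r)=\tilde r$ if and only if $\Sm_{\tilde r}=\Sm_r$. Since $\K\in\tor_+^\X$ is equivalent to $\X\in\tor_+^\K$ by the last assertion of Lemma~\ref{l:complexified-action}, the identical argument with $\X$ and $\K$ interchanged produces a map $\cpot_\K(\Cm,J)^\T\to\cpot_\X(\Cm,J)^\T$, $\tilde r\mapsto r'$, characterized by $\Sm_{r'}=\Sm_{\tilde r}$. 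These two maps are mutually inverse: the composite sends $r\mapsto\tilde r\mapsto r'$ with $\Sm_{r'}=\Sm_{\tilde r}=\Sm_r$, and since $r\mapsto\Sm_r$ is injective on $\cpot_\X(\Cm,J)^\T$ (Proposition~\ref{p:links=potentials}(iii)) this gives $r'=r$; the other composite is treated symmetrically. Hence $\IsoPot$ is the asserted bijection, with the stated characterization.

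The only point that is not formal is the claim in the second paragraph that $\Sm_r$ meets every orbit of $-J\K$ exactly once, equivalently that the monotone function $\psi$ surjects onto $\R^+$. This is where one genuinely needs the homogeneity of cone potentials --- the inequality $\cL_{-J\K}r>0$ holds on all of $\Cm$, not merely on $\Sm_r$ --- together with completeness and properness of the $-J\K$-action and the compactness of $\Sm$; transversality of $-J\K$ to the compact hypersurface $\Sm_r$ by itself would not make $\Sm_r$ a section of $\pi_\K$.
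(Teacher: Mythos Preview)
Your proof is correct and follows essentially the same approach as the paper, which states the result as an immediate corollary of the $\T$-invariant form of Proposition~\ref{p:links=potentials}(iii) together with the final part of Lemma~\ref{l:complexified-action} (whose proof already produces, for $\K\in\tor_+^\X$, the cone potential $\tilde r\in\cpot_\K(\Cm,J)$ with $\Sm_{\tilde r}=\Sm_r$, and symmetrically). Your write-up is in fact more explicit than the paper's at the one genuinely nontrivial point: you spell out why $\Sm_r$ is a \emph{global} section of $\pi_\K$ (not merely transverse to $-J\K$) via the monotonicity of $\psi(t)=r(\flow_t^{-J\K}(\pt))$, properness of $r$, and properness of the $-J\K$-orbits, whereas the paper's proof of Lemma~\ref{l:complexified-action} simply asserts that $\Sm_r$ is a section of $\pi_\K$.
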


Fixing base points $r_0 \in \cpot_\X(\Cm,J)^\T$ and $\tilde r_0:=\IsoPot(r_0)
\in \cpot_\K(\Cm,J)^\T$, we also have  a bijection
\begin{equation}\label{e:bij-pot}\begin{split}
\hTheta\colon \Sp( \X,J^\X, \cf_{r_0})^\T &\to \Sp( \K,J^\K, \cf_{\tilde r_0})^\T\\
\varphi&\mapsto \psi\quad\text{with}\quad \IsoPot(e^\varphi r_0) = e^\psi
\tilde r_0
\end{split}\end{equation}
such that $\varphi$ and $\hTheta(\varphi)$ induce Sasaki structures with the
same underlying CR structure.

\begin{rem} From the contact viewpoint (see Remark~\ref{r:dpq}),
the images of $\Sc( \X,J^\X,\cf_{r_0})^\T$ and $\Sc(
\K,J^\K,\cf_{\tilde r_0})^\T$ in $\Cx_+(\Sm, \Ds_0)^\T$ are both
formal complexifications of the same orbit of the group $\Con(\Sm,
\Ds_0)^\T$ on this space, which does not depend on the choice $\X$ or
$\K$ of Reeb vector field. Hence we expect these images to
coincide up to isomorphism.  Thus Corollary~\ref{bijection-potentials}
provides an explicit realisation of this correspondence on the level
of potentials.
\end{rem}

We now compute the derivatives of $\IsoPot$ and $\hTheta$. To do this, we note
that $T_r\cpot_\X(\Cm,J)^\T\cong C^\infty_{\Sm^X}(\R)^\T$, which is isomorphic
to $C^\infty_{\Sm_r}(\R)^\T$ using pullback by
$\Psi^\X_r:=\pi_\X|_{\Sm_r}\colon \Sm_r\cong \Sm^\X$.

\begin{lemma}\label{dTheta} Suppose $r_t$ is a smooth curve
in $\cpot_\X(\Cm, J)^\T$ with derivative $\dot r_t\in
C^\infty_{\Sm^X}(\R)^\T$.  Then $\tilde r_t:= \IsoPot(r_t)$ is a smooth curve
in $\cpot_\K(\Cm, J)^\T$ with derivative
\[
\dot{\tilde r}_t = \bigl(\Psi^\X_{r_t}\circ (\Psi^\K_{r_t})^{-1}\bigr)^*
\Bigl(\frac{\dot r_t}{\cf_{r_t}(\K)}\Bigr);
\quad\text{in addition}\quad
\cf_{\tilde r_t}=\bigl(\Psi^\X_{r_t}\circ (\Psi^\K_{r_t})^{-1}\bigr)^*
\Bigl(\frac{\cf_{r_t}}{\cf_{r_t}(\K)}\Bigr).
\]
\end{lemma}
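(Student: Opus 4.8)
The plan is to exploit the defining property of $\IsoPot$: for each $t$ the potentials $r_t$ and $\tilde r_t:=\IsoPot(r_t)$ have the \emph{same} unit level set $\Sm_{r_t}=r_t^{-1}(1)=\tilde r_t^{-1}(1)$, which is a $\T$-invariant section of both $\pi_\X\colon\Cm\to\Sm^\X$ and $\pi_\K\colon\Cm\to\Sm^\K$. First I would establish the formula for $\cf_{\tilde r_t}$, which is a pointwise statement for fixed $t$. At any $z\in\Sm_{r_t}$ both $\d r_t$ and $\d\tilde r_t$ annihilate $T_z\Sm_{r_t}$ and are nonzero (since $\cL_{-J\X}r_t=r_t$ and $\cL_{-J\K}\tilde r_t=\tilde r_t$), hence $\d\tilde r_t=\lambda_t\,\d r_t$ at points of $\Sm_{r_t}$ for a unique function $\lambda_t$. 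Contracting with $-J\K$ and using $\cL_{-J\K}\tilde r_t=\tilde r_t\equiv1$ on $\Sm_{r_t}$ together with $\cL_{-J\K}r_t=(\d^c r_t)(\K)=r_t\,\tilde\cf_{r_t}(\K)$ — which is strictly positive since $\K\in\tor_+=\tor_+^\X$, see \eqref{polarizations} — I get $\lambda_t=1/\bigl((\Psi^\X_{r_t})^*\cf_{r_t}(\K)\bigr)>0$ on $\Sm_{r_t}$. Applying $\d^c=-\d(\cdot)\circ J$ pointwise and pulling back to $\Sm_{r_t}$ gives $\d^c\tilde r_t|_{\Sm_{r_t}}=\lambda_t\,\d^c r_t|_{\Sm_{r_t}}=\lambda_t\,\tilde\cf_{r_t}|_{\Sm_{r_t}}=\lambda_t\,(\Psi^\X_{r_t})^*\cf_{r_t}$ (using $r_t\equiv1$ on $\Sm_{r_t}$). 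Since $\Sm_{r_t}$ is a section of $\pi_\K$ and $\tilde r_t\equiv1$ there, $(\Psi^\K_{r_t})^*\cf_{\tilde r_t}=\tilde\cf_{\tilde r_t}|_{\Sm_{r_t}}=\d^c\tilde r_t|_{\Sm_{r_t}}$, and pushing forward by $(\Psi^\K_{r_t})^{-1}$ yields $\cf_{\tilde r_t}=\bigl(\Psi^\X_{r_t}\circ(\Psi^\K_{r_t})^{-1}\bigr)^*\bigl(\cf_{r_t}/\cf_{r_t}(\K)\bigr)$.

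For the derivative of $\IsoPot$ I would use the flow-time description of $\tilde r_t$ that underlies Proposition~\ref{p:links=potentials}(iii) applied to the $\pi_\K$-section $\Sm_{r_t}$. Because $\K\in\tor_+$, the field $-J\K$ is complete on $(\Cm,J)$ (Lemma~\ref{l:complexified-action}) and $\frac{\d}{\d s}r_t\bigl(\flow^{-J\K}_s(y)\bigr)=(\cL_{-J\K}r_t)\bigl(\flow^{-J\K}_s(y)\bigr)=r_t\,\tilde\cf_{r_t}(\K)>0$ along the line; hence for each $y$ there is a unique time $s_t(y)$ with $\flow^{-J\K}_{s_t(y)}(y)\in\Sm_{r_t}$, depending smoothly on $(t,y)$ by the implicit function theorem, $\T$-invariant since the flow commutes with $\T$, and $\tilde r_t=\exp(-s_t)$. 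This proves that $\tilde r_t$ is a smooth curve in $\cpot_\K(\Cm,J)^\T$. Differentiating $r_t\bigl(\flow^{-J\K}_{s_t(y)}(y)\bigr)=1$ in $t$ with $y$ fixed, and writing $w_t:=\flow^{-J\K}_{s_t(y)}(y)\in\Sm_{r_t}$, gives $\dot r_t(w_t)+\dot s_t(y)\,(\cL_{-J\K}r_t)(w_t)=0$. For $y\in\Sm_{\tilde r_t}=\Sm_{r_t}$ we have $s_t(y)=0$ and $w_t=y$, so $\dot{\tilde r}_t(y)=-\dot s_t(y)=\dot r_t(y)/(\cL_{-J\K}r_t)(y)=\dot r_t(y)/\bigl((\Psi^\X_{r_t})^*\cf_{r_t}(\K)\bigr)(y)$. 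Transporting this function on $\Sm_{r_t}$ to $\Sm^\K$ via $\Psi^\K_{r_t}$, and using that under the standing identification $T_{r_t}\cpot_\X(\Cm,J)^\T\cong C^\infty_{\Sm^\X}(\R)^\T$ the tangent vector $\dot r_t$ restricts on $\Sm_{r_t}$ to $(\Psi^\X_{r_t})^*\dot r_t$, gives exactly $\dot{\tilde r}_t=\bigl(\Psi^\X_{r_t}\circ(\Psi^\K_{r_t})^{-1}\bigr)^*\bigl(\dot r_t/\cf_{r_t}(\K)\bigr)$.

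The routine but genuinely fiddly part will be keeping track of the three identifications in play — between $\cpot_\X(\Cm,J)^\T$, $C^\infty_{\Sm^\X}(\R)^\T$ and $C^\infty_{\Sm_{r_t}}(\R)^\T$, and likewise for $\K$ — and checking their compatibility with restriction to the \emph{moving} section $\Sm_{r_t}$ and with $t$-differentiation; concretely, one must verify that an element of $T_{r_t}\cpot_\X^\T$, regarded as the degree-one-homogeneous function $\dot r_t$ on $\Cm$, is recovered from its restriction to $\Sm_{r_t}$, so that differentiating the constraint at a point of $\Sm_{r_t}$ really computes the tangent vector. I expect the only genuinely analytic input to be the strict positivity $\cL_{-J\K}r_t>0$ along the level set together with completeness of $-J\K$, both furnished by the hypothesis $\K\in\tor_+$: the positivity simultaneously makes $\lambda_t=1/\cf_{r_t}(\K)$ finite and positive and makes $s_t(y)$ well defined and (via the implicit function theorem) smooth; everything else is linear algebra along $\Sm_{r_t}$.
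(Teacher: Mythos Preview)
Your argument is correct and yields the same formulae, but your route differs from the paper's in two respects. For the derivative, the paper composes the flows of $-J\K$ and $J\X$ to send a point $p\in\Sm_0$ into $\Sm_{r_t}$ and back into $\Sm_0$, then differentiates this ``loop'' at $t=0$; the condition that the derivative lie in $T_p\Sm_0$ forces $\dot\varphi_0(\pi_\X(p))\X_p-\dot\psi_0(\pi_\K(p))\K_p\in(\Ds_0)_p$, and contracting with $\cf_{r_0}$ gives the result. You instead realize $\tilde r_t$ explicitly as $e^{-s_t}$, where $s_t(y)$ is the $-J\K$--flow time to $\Sm_{r_t}$, and differentiate the single implicit relation $r_t(\flow^{-J\K}_{s_t(y)}(y))=1$; this is more direct and avoids the auxiliary use of $(\Ds_0)_p=T_p\Sm_0\cap J\,T_p\Sm_0$. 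For smoothness of $t\mapsto\tilde r_t$, the paper uses the global product decompositions $\Pi_\X,\Pi_\K$ to read off smoothness of $\psi_t$, whereas you invoke the implicit function theorem (with the nondegeneracy $\cL_{-J\K}r_t>0$ on $\Sm_{r_t}$ supplied by~\eqref{polarizations}); both are equally valid. Finally, for the contact form formula the two arguments are essentially the same: the paper phrases the proportionality of $\tilde\cf_{\tilde r_t}$ and $\tilde\cf_{r_t}$ on $\Sm_{r_t}$ via the CR isomorphism statement of Lemma~\ref{l:cone-potentials}, while you derive the factor $1/\cf_{r_t}(\K)$ by contracting $\d\tilde r_t=\lambda_t\,\d r_t$ with $-J\K$. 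Your caveat about keeping the identifications straight is apt; concretely, the literal $t$--derivative of $r_t$ on $\Cm$ satisfies $\partial_t r_t=(\pi_\X^*\dot\varphi_t)\,r_t$, so on $\Sm_{r_t}$ it coincides with $(\Psi^\X_{r_t})^*\dot\varphi_t$, which is exactly what you use.
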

\begin{proof} We write $r_t = e^{\pi_\X^*\varphi_t}r_0$
and $\tilde r_t = e^{\pi_K^*\psi_t} \tilde r_0$ with $\varphi_t$ a smooth
curve in $\Sp(\X,J^\X,\cf_{r_0})^\T$ and $\psi_t = \hTheta(\varphi_t)$ a curve
in $\Sp(\K,J^\K, \cf_{\tilde r_0})^\T$. Let $\Pi_\X=(r_0,\pi_\X) \colon \Cm \cong
\R^+ \times \Sm^\X$ be the product structure on $\Cm$ induced by $\X$ and
$r_0$ and similarly $\Pi_\K=(r_0,\pi_\K)\colon \Cm \cong \R^+ \times
\Sm^\K$. Since $N_{\tilde r_t}=N_{r_t}$, we have
\[
\bigl(\Pi_\K \circ \Pi^{-1}_\X\bigl)(e^{-\varphi_t(x)}, x) 
= (e^{-\psi_t(\Psi_t(x))}, \Psi_t(x)),
\]
where $\Psi_t=\Psi^\K_{r_t}\circ (\Psi^\X_{r_t})^{-1}\colon \Sm^\X\to \Sm^\K$.
Since $\Pi_\K \circ \Pi^{-1}_\X$ is a diffeomorphism, $\Psi_t$ is a smooth
curve of diffeomorphisms, hence $\psi_t$ and $\tilde r_t$ are also smooth
curves.

To compute the derivative, by translation of the parameter $t$, it suffices to
relate the functions $\Psi_\X^{\,*}\dot r_0$ and $\Psi_\K^{\,*}\dot {\tilde r}_0$ in
$C^\infty_{\Sm_0}(\R)^\T$ where $\Sm_0:=\Sm_{r_0}=\Sm_{\tilde r_0}$,
$\Psi_\X=\Psi^\X_{r_0}$ and $\Psi_\K=\Psi^\K_{\tilde r_0}$. For any $p\in
\Sm_0$, we let
\[
\Phi_t(p) := \flow_{\psi_t(\pi_\K(p))}^{-J \K}(p)
\]
where $\flow^{-J \K}_s$ stands for the flow of $-J \K$ on $\Cm$. Now
$\Phi_t(p)\in \Sm_{r_t}=\Sm_{\tilde r_t}$ and so, for any $p\in \Sm_0$, we have
\begin{equation}\label{compose}
\flow^{J \X}_{\varphi_t(\pi_\X(\Phi_t(p)))}\bigl(\Phi_t(p)\bigr) \in \Sm_0,
\end{equation}
where $\flow_s^{J \X}$ is the flow of $J \X$ on $\Cm$. Taking derivative at
$t=0$ in \eqref{compose}, we thus obtain an element in $T_p\Sm_0$. To compute
this derivative, we use that
\begin{gather*}
\frac\d{\d t} \Phi_t(p)\Big|_{t=0} = - \dot\psi_0(\pi_\K(p))J \K_p, \\
\varphi_0(\pi_\X(q))= 0, \, \,  \,  \flow_0^{J \X} = \Id_{T\Cm}
\end{gather*}
and get
\[
\frac\d{\d t}
\Big(\flow^{J \X}_{\varphi_t(\pi_\X(\Phi_t(p)))}\big(\Phi_t(p)\big)\Big)\Big|_{t=0}
= \dot{\varphi}_0(\pi_\X(p)) J \X_p - \dot{\psi}_0(\pi_\K(p))J \K_p.
\] 
As $(\Ds_0)_p = T_p\Cm \cap JT_p\Cm$, and $\K_p$ and $\X_p$ are in $T_p\Sm_0$,
we conclude that
\[
\dot \varphi_0(\pi_\X(p))  \X_p - \dot{\psi}_0(\pi_\K(p))  \K_p \in (\Ds_0)_p,
\]
which, after acting with $\cf_{r_0}$, gives
\[
\dot \psi_0\circ\pi_\K = \frac{\dot \varphi_0\circ\pi_\X}{\cf_{r_0}( \K)}
\]
on $\Sm_0$. This means equivalently that $\Psi_\K^{\,*}\dot {\tilde
  r}_0=\Psi_\X^{\,*}(\dot r_0/\cf_{r_0}(\K))$ as required.

The formula for $\cf_{\tilde r_t}$ follows because by
Lemma~\ref{l:cone-potentials}, $\Psi^\X_{r_t}\circ (\Psi^\K_{r_t})^{-1}$ is a
CR isomorphism between the CR structures induced by $\cf_{\tilde r_t}$ and
$\cf_{r_t}$ on $\Sm^\K$ and $\Sm^\X$ respectively
\end{proof}
It immediately follows that $\hTheta$ is smooth, and its derivative is
given as follows.
\begin{cor}\label{dhTheta} Let $\varphi\in \Sp( \X,J^\X, \cf_0^\X)^\T$
and $\psi:= \hTheta(\varphi)\in \Sp( \K,J^\K, \cf_0^\K)^\T$ induce CR
structures $(\Ds_\varphi,J_\varphi)$ and $(\Ds_\psi,J_\psi)$ on $\Sm^\X$ and
$\Sm^\K$ with contact forms $\cf_{\varphi}^\X = \cf_0^\X + \d^c\varphi$ and
$\cf_{\psi}^\K = \cf_0^\K + \d^c\psi$ respectively. Then there is a CR
isomorphism $\Psi_\varphi\colon (\Sm^\X,\Ds_\varphi,J_\varphi) \to
(\Sm^\K,\Ds_\psi,J_\psi)$ such that for any $\dot\varphi\in
C^\infty_{\Sm^\X}(\R)^\T$,
\[
\dot \psi:=\d \hTheta_\varphi(\dot\varphi)
= \Psi_\varphi^* \Bigl(\frac{\dot \varphi}{\cf_{\varphi}^\X(\K)}\Bigr)
\quad\text{and}\quad
\cf_{\psi}^\K=\Psi_\varphi^*\Bigl(\frac{\cf_{\varphi}^\X}{\cf_{\varphi}^\X( \K)}\Bigr).
\]
\end{cor}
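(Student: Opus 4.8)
The plan is to read Corollary~\ref{dhTheta} off Lemma~\ref{dTheta} by fixing the basepoint at the potential of interest and making the identifications of tangent spaces explicit. First I would set $r:=e^{\pi_\X^*\varphi}r_0\in\cpot_\X(\Cm,J)^\T$, so that by Lemma~\ref{l:cone-vs-sasaki-pot} one has $\cf_r=\cf_0^\X+\d^c\varphi=\cf_\varphi^\X$, and hence, by Lemma~\ref{l:cone-potentials}, $\Psi^\X_r:=\pi_\X|_{\Sm_r}$ is a CR isomorphism from $\Sm_r$ with its hypersurface CR structure onto $(\Sm^\X,\Ds_\varphi,J_\varphi)$. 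Similarly I would set $\tilde r:=\IsoPot(r)$; then~\eqref{e:bij-pot} gives $\tilde r=e^{\pi_\K^*\psi}\tilde r_0$ with $\psi=\hTheta(\varphi)$, Corollary~\ref{bijection-potentials} gives $\Sm_{\tilde r}=\Sm_r$, and Lemma~\ref{l:cone-vs-sasaki-pot} gives $\cf_{\tilde r}=\cf_0^\K+\d^c\psi=\cf_\psi^\K$, with $\Psi^\K_{\tilde r}:=\pi_\K|_{\Sm_{\tilde r}}$ a CR isomorphism onto $(\Sm^\K,\Ds_\psi,J_\psi)$. The CR isomorphism of the statement is then $\Psi_\varphi:=\Psi^\X_r\circ(\Psi^\K_{\tilde r})^{-1}$ (which, strictly, runs from $\Sm^\K$ to $\Sm^\X$; the arrow in the statement should be read accordingly), and the identity $\cf_\psi^\K=\Psi_\varphi^*\bigl(\cf_\varphi^\X/\cf_\varphi^\X(\K)\bigr)$ is just the second formula of Lemma~\ref{dTheta} rewritten using $\cf_r=\cf_\varphi^\X$ and $\cf_{\tilde r}=\cf_\psi^\K$.

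For the derivative I would test against an arbitrary $\dot\varphi\in C^\infty_{\Sm^\X}(\R)^\T$ along the affine curve $\varphi_t:=\varphi+t\dot\varphi$ in $\Sp(\X,J^\X,\cf_0^\X)^\T$. Its image $r_t:=e^{\pi_\X^*\varphi_t}r_0$ is a smooth curve in $\cpot_\X(\Cm,J)^\T$ through $r$ with velocity $\dot r_0=(\pi_\X^*\dot\varphi)\,r$; restricting to $\Sm_r$ shows that $\dot r_0$ represents $\dot\varphi$ under $T_r\cpot_\X(\Cm,J)^\T\cong C^\infty_{\Sm^\X}(\R)^\T$. Applying Lemma~\ref{dTheta} with $r$ as basepoint, $\IsoPot(r_t)=e^{\pi_\K^*\psi_t}\tilde r_0$ is a smooth curve whose velocity represents $\Psi_\varphi^*\bigl(\dot\varphi/\cf_r(\K)\bigr)$ in $C^\infty_{\Sm^\K}(\R)^\T$; since $\psi_t=\hTheta(\varphi_t)$, this velocity is exactly $\d\hTheta_\varphi(\dot\varphi)$, and as $\cf_r(\K)=\cf_\varphi^\X(\K)$ this is the first stated formula. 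Smoothness of $\hTheta$ then comes for free: Lemma~\ref{dTheta} already shows it is smooth along every smooth curve, with derivative given by the manifestly smooth formula just obtained, which is all that is needed.

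The one point that requires genuine care is the transport of identifications: Lemma~\ref{dTheta} is phrased on the common level set $\Sm_r=\Sm_{\tilde r}$ via the pullbacks $(\Psi^\X_r)^*$ and $(\Psi^\K_{\tilde r})^*$, and one must verify that pushing its formula back to functions on $\Sm^\X$ and $\Sm^\K$ produces the composition $\Psi^\X_r\circ(\Psi^\K_{\tilde r})^{-1}$ rather than its inverse, and that $\T$-invariance persists throughout (it does, since every map in sight is $\T$-equivariant). This is a routine variance check and the natural place for a sign slip, but I expect no substantive obstacle beyond it --- the content is entirely contained in Lemma~\ref{dTheta}.
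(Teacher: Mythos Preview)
Your proposal is correct and is exactly how the paper intends the corollary to be read off Lemma~\ref{dTheta}: the paper gives no separate proof, saying only that the smoothness of $\hTheta$ and the stated formula for its derivative follow immediately. You have also correctly observed that the pullback formulas force $\Psi_\varphi=\Psi^\X_r\circ(\Psi^\K_{\tilde r})^{-1}$ to go from $\Sm^\K$ to $\Sm^\X$, so the displayed arrow direction in the statement is a typo (the subsequent use of $\Psi_\varphi^*$ in the proof of Lemma~\ref{M} confirms this orientation).
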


\section{Extremal Sasaki metrics and weighted extremal K\"ahler metrics}
\label{s:global}

\subsection{Extremal Sasaki metrics}

Let $(\Sm, \Ds, J, \X)$ be a Sasaki manifold. The Levi-Civita connection of
(local) Sasaki--Reeb quotients by $\X$ pulls back to a connection $\nabla^\X$
on $\Ds$, preserving the transversal K\"ahler structure $(g_\X, J,\omega_\X)$,
which turns out to be (see e.g.~\cite[\S4]{david:weyltanaka}) the
\emph{Tanaka--Webster connection}~\cite{Webster} of $(\Ds, J,\X)$.  Thus the
scalar curvature of Sasaki--Reeb quotients pulls back to the
\emph{Tanaka--Webster scalar curvature} $\Scal(g_\X)$ of $\nabla^\X$.
  
\begin{defn}\cite{BGS} \label{d:extremal-sasaki} $(\Sm,\Ds,J,\X)$ is a \emph{CSC}
or {\it extremal} Sasaki structure if $\Scal(g_\X)$ is constant or is a
transversal Killing potential, respectively. Equivalently any Sasaki--Reeb
quotient with respect to $\X$ is a CSC or extremal K\"ahler metric,
respectively.
\end{defn}

The K\"ahler geometry on Sasaki--Reeb quotients with respect to $\X\in
\crJ_+(\Sm, \Ds, J)$, induced by an extremal Sasaki structure $(\Sm,\Ds,
J,\K)$ with respect to a possibly \emph{different} Sasaki--Reeb vector field
$\K \in \crJ_+(\Sm, \Ds, J)^\X$, was studied in \cite{AC}.

\begin{lemma}\label{l:weighted-extremal}\cite{AC}  Let $(M, J, g, \omega)$ be the K\"ahler orbifold corresponding to the Sasaki--Reeb quotient of $(\Sm, \Ds,J)$ with respect to a quasiregular $\X \in \crJ_+(\Sm, \Ds, J)$, and $f=\cf_\Ds^\X(K)>0$ the induced positive Killing potential on $(M, J, \omega)$ by $\K \in
\crJ_+(\Sm, \Ds, J)^\X$ via Lemma~\textup{\ref{l:potential}}. Then $(\Ds, J, \K)$ is an extremal Sasaki structure on $\Sm$ if and only if the smooth function 
\begin{equation}\label{(f,wt)-scalar-curvature}
\Scal_{f}(g) := f^2\Scal(g) - 2(m+1) f\Delta_g f - (m+2)(m+1))|\d f|^2_g
\end{equation}
is a Killing potential on $(M,J, \omega)$.
\end{lemma}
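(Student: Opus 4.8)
The plan is to reduce the extremality of the Sasaki structure $(\Ds,J,\K)$ to a pointwise identity on $M$ — expressing $f$ times the transversal Tanaka--Webster scalar curvature of $(\Ds,J,\K)$ as the weighted scalar curvature $\Scal_f(g)$ — and then to transport the Killing-potential condition across the change of Reeb vector field from $\X$ to $\K$. Since all quantities involved are $\X$-invariant on $\Sm$, everything descends to the (orbifold) quotient $M$.

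First I would record the elementary relations between the two transversal K\"ahler structures. As $\X$ and $\K$ are Sasaki--Reeb vector fields for the \emph{same} CR manifold $(\Sm,\Ds,J)$, their contact forms differ by a positive function, and evaluating on $\K$ gives $\cf_\Ds^\K = f^{-1}\cf_\Ds^\X$ with $f = \cf_\Ds^\X(\K)>0$. Restricting to $\Ds$ (on which $\cf_\Ds^\X$ vanishes) then yields $\omega_\K = f^{-1}\omega_\X$ and $g_\K = f^{-1}g_\X$, with the same transversal complex structure $J$. Nevertheless the Tanaka--Webster connections $\nabla^\X$ and $\nabla^\K$ differ, being built from the two distinct splittings $T\Sm = \spn{\X}\oplus\Ds$ and $T\Sm = \spn{\K}\oplus\Ds$; this is why $\Scal(g_\K)$ is \emph{not} simply the conformal rescaling of $\Scal(g)$, and it accounts for the terms involving derivatives of $f$ in $\Scal_f(g)$.

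The heart of the matter is the identity
\begin{equation*}
\Scal_f(g)\;=\;f\,\Scal(g_\K),
\end{equation*}
an equality of $\X$-invariant functions on $\Sm$, where $\Scal(g_\K)$ is the transversal Tanaka--Webster scalar curvature of $(\Ds,J,\K)$, viewed via its $\X$-invariance as a function on $M$. I would prove this using the K\"ahler cone $(\Cm,J)$, which is polarized by both $\X$ and $\K$ (Lemma~\ref{l:complexified-action}). By Corollary~\ref{bijection-potentials} (for any torus $\T$ with $\X,\K\in\tor$) one may choose radial cone potentials $r_\X\in\cpot_\X(\Cm,J)^\T$ and $r_\K=\IsoPot(r_\X)\in\cpot_\K(\Cm,J)^\T$ sharing a common level hypersurface $\Sm_r = r_\X^{-1}(1)=r_\K^{-1}(1)$; the two K\"ahler cone metrics determined by $r_\X^2$ and $r_\K^2$ then differ by a change of radial coordinate governed by $f$ (this is the mechanism behind~\cite[Lemma~2.2]{He-Sun}). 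Feeding this into the standard formula expressing the scalar curvature of a K\"ahler cone through the transversal Tanaka--Webster scalar curvature of its Sasaki cross-section (see e.g.~\cite{BG-book,MSY2}) and expanding produces the displayed identity; equivalently, the identity is the weighted scalar curvature computation of~\cite{AM,AC}, obtained by comparing $\nabla^\K$ directly with $\nabla^\X$ in terms of $f$ and the transversal geometry. I expect this curvature computation to be the main obstacle, the remaining steps being routine; as a quick check, $\K=\X$ gives $f\equiv1$ and both sides reduce to $\Scal(g)$.

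Finally I would transport the extremality condition. By Definition~\ref{d:extremal-sasaki}, $(\Ds,J,\K)$ is extremal if and only if $\Scal(g_\K)$ is a transversal Killing potential for $(g_\K,\omega_\K)$. Now $\Scal(g_\K)$ is $\X$-invariant, since $\X$ preserves $J$ and $\cf_\Ds^\K = f^{-1}\cf_\Ds^\X$ (because $\cL_\X f = 0$). By Lemma~\ref{l:potential} applied with $\K$ in place of $\X$, such a $\Scal(g_\K)$ is of the form $\cf_\Ds^\K(\Y)$ for a CR vector field $\Y$ with $[\K,\Y]=0$; a short Jacobi-identity argument — using $[\X,\K]=0$, the $\X$-invariance of $\Scal(g_\K)$, and the injectivity in Lemma~\ref{l:potential} — then gives $[\X,\Y]=0$ as well. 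Hence $\cf_\Ds^\X(\Y) = f\,\cf_\Ds^\K(\Y) = f\,\Scal(g_\K)$ is a transversal Killing potential for $(g_\X,\omega_\X) = (g,\omega)$, i.e.\ a Killing potential on $(M,J,\omega)$ by Lemma~\ref{l:potential} applied with $\X$. The converse implication runs symmetrically, using also that $\cL_\K\cf_\Ds^\X = 0$ (whence $\cL_\K f = 0$) because $f$ is $\X$-invariant. Combining this equivalence with the identity $\Scal_f(g) = f\,\Scal(g_\K)$ gives precisely the assertion of the lemma.
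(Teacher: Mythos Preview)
Your proposal is correct and follows the same line as the paper (via the cited reference~\cite{AC}): the crux is the identity $\Scal_f(g)=f\,\Scal(g_\K)$, which is precisely equation~\eqref{scal-twist} in Appendix~\ref{ss:GIT}, after which the transport of the Killing-potential condition between the two Reeb fields proceeds as you indicate using Lemma~\ref{l:potential}. One minor remark: your appeal to Corollary~\ref{bijection-potentials} and the cone machinery is logically downstream of this lemma in the paper's exposition, so for a self-contained argument you should favour the direct Tanaka--Webster/curvature comparison you mention as the ``equivalent'' route (which is indeed how~\cite{AC} proceeds).
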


When $\K=\X$ (i.e.~$f=1$) we recover Definition~\ref{d:extremal-sasaki}. More
generally, Lemma~\ref{l:weighted-extremal} allows us to reduce the study of
extremal Sasaki metrics (with respect to a possibly irregular Sasaki--Reeb
field $\K$) in terms of a fixed K\"ahler manifold or orbifold $M$, obtained
from a fixed regular or quasiregular Sasaki--Reeb field $\X \in \crJ_+(\Sm,
\Ds, J)^\K$.  This is the point of view we take in this article.

\begin{defn}\label{d:(f,wt)-extremal} Let $(g,\omega)$ be a K\"ahler metric
on $(M,J)$ and $f$ be a Killing potential for $g$.  We say that $g$ is
\emph{$f$-extremal} if its \emph{$f$-scalar curvature} $\Scal_{f}(g)$, given by
\eqref{(f,wt)-scalar-curvature}, is also a Killing potential.
\end{defn}

Lemma~\ref{l:weighted-extremal} motivates the following definition.

\begin{defn}\label{d:Sasaki-(xi,wt)-extremal}\cite{AC} Let $(\Sm, \Ds, J, \X)$
be a Sasaki manifold, $\K \in \crJ_+( \Sm,\Ds, J)^\X$ and $f=\cf_\Ds^\X(K)>0$.
The \emph{$\K$-scalar curvature $\Scal_\K(g_\X)$ of $(\Sm, \Ds, J, \X)$} is
the function induced on $\Sm$ by \eqref{(f,wt)-scalar-curvature} on any
Sasaki--Reeb quotient by $\X$.  We say $\X$ is \emph{$\K$-extremal} if
$\Scal_\K(g_\X)$ is a transversal Killing potential of $(g_\X, J, \omega_\X)$.
\end{defn}
Thus Lemma~\ref{l:weighted-extremal} asserts that for $\K \in \crJ_{+}(\Sm,
\Ds, J)^\X$, $\X$ is $\K$-extremal if and only if $(\Ds, J, \K)$ is extremal
as in Definition~\ref{d:extremal-sasaki}. In particular, if $\K = \lambda \X$
with $\lambda>0$ constant, $\K$-extremality reduces to extremality.

\subsection{The Calabi problem for \texorpdfstring{$f$}{f}-extremal K\"ahler metrics}
\label{ss:calabi-problem-kahler}

As observed in \cite{AM, FO,lahdili1}, many features of the theory of extremal
K\"ahler metrics extend naturally to the $f$-extremal case. In particular, one
can formulate a weighted version of the Calabi problem~\cite{calabi} which
seeks an $f$-extremal K\"ahler metric $(g,\omega)$ with $\omega\in\kcl$, a
K\"ahler class.  In this setting, we pin down the function $f$ indirectly by
fixing first a quasiperiodic holomorphic vector field $\check\K$ in the Lie
algebra $\check\tor$ of a (compact) torus $\check\T \leq \Aut_r(M, J)$ inside
the reduced group of automorphisms of $(M, J)$ (see
e.g.~\cite{gauduchon-book}), and secondly, a real constant $\kappa>0$ such
that for any $\check\T$-invariant K\"ahler metric $(g, \omega)$ with $\omega
\in \kcl$, the Killing potential $f$ of $\check\K$ with respect to $g$,
normalized by $\int_M f \omega^m/m! = \kappa$, is positive on $M$, see
\cite[Lemma 1]{AM}.

\begin{problem}\label{Calabi-Kahler-problem}  Is there a $\check\T$-invariant
K\"ahler metric $(g, \omega)$ on $(M, J)$ with $\omega \in \kcl$, which is
$f$-extremal, where $f$ is the K\"ahler potential of $\check\K$ with respect
to $g$ determined by $\int_M f \omega^m/m! = \kappa$? We refer to such metrics
as \emph{$(\check\K, \kappa)$-extremal}.
\end{problem}

As observed in \cite{lahdili2}, if $(g_0, \omega_0)$ and $(g, \omega)$ are two
$\check\T$-invariant K\"ahler metrics in $\kcl$ with K\"ahler forms $\omega =
\omega_0 +\d\d^c \varphi$ for a $\check\T$-invariant smooth function $\varphi$ on
$M$, then the corresponding $\kappa$-normalized Killing potentials $f$ and $f_0$ 
of $\check\K$ are related by
\begin{equation}\label{potential}
f = f_0  + \d^c\varphi(\check\K).
\end{equation}

Another useful observation from \cite{FO, lahdili1, lahdili2} is that when $M$
is compact, any $(\check\K, \kappa)$-extremal metric on $(M, J)$ is invariant
under a \emph{maximal} torus in $\Aut_r(M, J)$, so we can assume without loss
of generality that $\check\T$ itself is a maximal torus inside $\Aut_r(M, J)$.

\subsection{The Calabi problem for \texorpdfstring{$\K$}{K}-extremal Sasaki
metrics} \label{ss:calabi-problem-sasaki}

In the case when $(M,J, \omega_0)$ is obtained as a Sasaki--Reeb quotient of a
compact regular $\T$-invariant Sasaki manifold $(\Sm,\Ds_0,J,\X)$ with
Sasaki--Reeb vector field $\X \in \tor_+$ and corresponding contact $1$-form
$\cf_0:=\cf^\X_{\Ds_0}$, any other vector field $\K \in \tor_+$ pins down a positive
Killing potential $f_0=\cf_0(\K)$ for the induced Killing vector field
\[
\check{K}:= K+\langle\X\rangle\in \mbox{Lie}(\check\T)=\tor/\spn{\X}
\]
on $(M,J, \omega_0)$, where $\check\T= \T /\Sph^1_\X$ is the induced isometric
torus action on $M$ with $\Sph^1_\X\leq \T$ being the $\Sph^1$-action induced
by $\X$.  By Lemma~\ref{l:potentials}, any other compatible Sasaki structure
$\cf=\cf_0+\alpha+\d^c\varphi \in \cS(\X,J^\X)^\T$ for $\alpha\in
\Omega^1_{\X,\cl}(\Sm)^\T$ and $\varphi \in
\Sp(\X,J^\X,\cf_0^\X)^\T$---see~\eqref{e:contact-param}---gives rise to a
Killing potential $f:=\cf^\X(\K)$ of $\check\K$ with respect to the induced
K\"ahler structure $\omega_\varphi= \omega_0 + \d\d^c \varphi$ on $(M, J)$,
and $f$ and $f_0$ are related by \eqref{e:potentials}, i.e., satisfy
\eqref{potential}.  It thus follows that the Killing potentials $f$ and $f_0$
are $\kappa$-normalized with
\[
\kappa:=\frac{1}{(2\pi) m!} \int_\Sm \cf^\X(\K)\cf^\X\wedge (\d\cf^\X)^m.
\]
In particular, $\kappa$ does not depend on $\cf^\X \in \cS(\X,J^\X)^\T$, a
fact that can be also deduced from the $\T$-equivariant Gray--Moser argument,
see Remark~\ref{r:equiGM}. This is consistent with
Lemma~\ref{l:momentum-cone-coincide} in which $\K$ is seen as a positive
affine-linear function on the natural moment polytope $P_\X \sub \check\tor^*$
associated to $(\Sm,\Ds_0,J,\X)$.

Equivalently, we can think in terms of the induced polarization $L$ of $(M,
J)$, as in Example~\ref{e:regular-cone}.  In this situation, the action of
$\T$ on the cone $(\Cm, J)$ corresponds to $\T \leq \Aut(M, L)$, and
$\check\T= \T/\Sph^1$ is the induced action on $(M, J)$; furthermore, for a
fixed $\K \in \tor$, and any $\T$-invariant hermitian product $h$ on $L$ whose
curvature form is a $\check\T$-invariant K\"ahler metric $\omega \in \kcl=
2\pi c_1(L)$, there is a naturally defined Killing potential $f$ of
$\check{K}$. Furthermore, for any other $\T$-invariant hermitian product
$e^\varphi h$ on $L$, the corresponding Killing potential for $\check\K$ with
respect to $\omega_\varphi \in \kcl$ is given by \eqref{potential}.

\begin{defn}\label{d:K-extremal} Since  $K\in \tor$ determines $\check\K$
and $\kappa$ in both the regular Sasaki and polarized cases, we shall also
refer to a $(\check\K, \kappa)$-extremal K\"ahler metric $(g_\varphi,
\omega_\varphi)$ on $(M, J)$ in the K\"ahler class $\kcl =2\pi c_1(L)$ as
\emph{$\K$-extremal}.
\end{defn}

Thus $\K$-extremal metrics on $(M,J,\kcl)$ correspond to $\K$-extremal Sasaki
structures on $(\Sm,\X,J^\X)$. We can sharpen this observation as follows.

\begin{prop}\label{p:reduction} Let $(\Sm, \Ds_0, J)$ be a compact CR manifold
of Sasaki type and $\X, \K \in \crJ_+(\Sm,\Ds_0,J)$ with $[\X, \K]=0$. Suppose
$\X$ is quasiregular with Sasaki--Reeb quotient the compact K\"ahler orbifold
$(M, J, \omega)$. Then there is an extremal Sasaki structure in $\cS(\K,J^\K)$
if and only if there is a $K$-extremal K\"ahler metric in the K\"ahler class
$[\omega]$.
\end{prop}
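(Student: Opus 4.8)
The plan is to fix a single compact torus $\T$ containing $\X$ and $\K$, reduce both existence statements to their $\T$-invariant versions, and match these via the change-of-polarization bijection of Corollary~\ref{bijection-potentials} together with Lemma~\ref{l:weighted-extremal}. First I would set up $\T$: since $\X,\K\in\crJ_+(\Sm,\Ds_0,J)$ commute and $\X$ is quasiregular, they generate a compact torus $\T$ with $\X,\K\in\tor=\mathrm{Lie}(\T)$ and $\Sph^1_\X\leq\T$; then $\check\T:=\T/\Sph^1_\X$ acts on the Sasaki--Reeb quotient $(M,J,\omega)$ with induced holomorphic Killing field $\check\K$, and $\K$ pins down the normalising constant $\kappa$ of \S\ref{ss:calabi-problem-sasaki}. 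As $\X$ is CR for $(\Ds_0,J)$ and commutes with $\K$ it preserves $(\K,J^\K)$, so $\X\in\aut(\Sm,\K,J^\K)$, $\T$ acts on $\cS(\K,J^\K)$, and $\cf_0^\X(\K)>0$ places $\K$ in the Reeb cone $\tor_+$ of \S\ref{ss:Reebcone-rev}. I would then record the two reductions: an extremal Sasaki structure exists in $\cS(\K,J^\K)$ if and only if a $\T$-invariant one does --- the automorphism group of an extremal Sasaki metric is a maximal compact subgroup of the reduced automorphism group~\cite{BGS}, and maximal compact subgroups being conjugate, a suitable element of $\Aut_0(\Sm,\K,J^\K)$ carries any extremal structure to a $\T$-invariant one --- and a $\K$-extremal K\"ahler metric exists in $[\omega]$ if and only if a $\check\T$-invariant one does, by the maximal-torus property of $(\check\K,\kappa)$-extremal metrics~\cite{FO,lahdili1,lahdili2}.

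The crux is then the following dictionary. Fixing $r_0\in\cpot_\X(\Cm,J)^\T$ with $\cf_{r_0}=\cf_0^\X$ the contact form of $(\Ds_0,\X)$, Example~\ref{e:regular-cone} (in its orbifold form, cf.\ \S\ref{ss:Reebcone-rev}) identifies $\Sp(\X,J^\X,\cf_0^\X)^\T$ with the $\check\T$-invariant K\"ahler potentials $\varphi$ on $(M,J,\omega)$, hence parametrises the $\check\T$-invariant K\"ahler forms $\omega_\varphi=\omega+\d\d^c\varphi\in[\omega]$; and by Corollary~\ref{bijection-potentials} the map $\hTheta$ sends $\varphi$ to $\psi\in\Sp(\K,J^\K,\cf_0^\K)^\T$ so that $\cf_\varphi^\X:=\cf_0^\X+\d_\X^c\varphi$ and $\cf_0^\K+\d_\K^c\psi$ share the same underlying CR structure $(\Ds_\varphi,J_\varphi)$. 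Modulo closed basic $1$-forms --- which change neither the transversal K\"ahler geometry nor extremality (Remark~\ref{r:trans-class}) --- this makes $\T$-invariant elements of $\cS(\K,J^\K)$ correspond to $\check\T$-invariant K\"ahler forms in $[\omega]$, with $(\Ds_\varphi,J_\varphi,\K)\in\cS(\K,J^\K)$ on one side and $(M,J,\omega_\varphi)$, the Sasaki--Reeb quotient of $(\Sm,\Ds_\varphi,J_\varphi,\X)$, on the other. Under this correspondence $f_\varphi:=\cf_\varphi^\X(\K)$ is exactly the $\kappa$-normalised Killing potential of $\check\K$ for $\omega_\varphi$, by \eqref{e:potentials} and \S\ref{ss:calabi-problem-sasaki}, so Lemma~\ref{l:weighted-extremal} yields: $(\Ds_\varphi,J_\varphi,\K)$ is an extremal Sasaki structure if and only if $\Scal_{f_\varphi}(g_\varphi)$ is a Killing potential, i.e.\ (Definitions~\ref{d:(f,wt)-extremal} and~\ref{d:K-extremal}) if and only if $(g_\varphi,\omega_\varphi)$ is $\K$-extremal.

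Assembling the argument: if a $\K$-extremal metric exists in $[\omega]$, take a $\check\T$-invariant one, read off $\varphi$, and the dictionary produces (via Corollary~\ref{bijection-potentials}) an extremal Sasaki structure $(\Ds_\varphi,J_\varphi,\K)\in\cS(\K,J^\K)$; conversely, given an extremal Sasaki structure in $\cS(\K,J^\K)$, make it $\T$-invariant, invoke Lemma~\ref{l:Reeb} so that $\X$ remains a Sasaki--Reeb vector field, note via Corollary~\ref{bijection-potentials} that --- up to a closed basic $1$-form --- its CR structure is that of some $\cf_\varphi^\X\in\cS(\X,J^\X)^\T$, quotient by $\X$ to get a K\"ahler metric $(g_\varphi,\omega_\varphi)$ with $\omega_\varphi\in[\omega]$, and conclude from Lemma~\ref{l:weighted-extremal} that $(g_\varphi,\omega_\varphi)$ is $\K$-extremal.

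I expect the main obstacle to be the dictionary of the second paragraph: one must know that deforming the CR structure within $\cS(\X,J^\X)^\T$ keeps it compatible with $(\K,J^\K)$, with the \emph{same} underlying CR structure on the two sides, and that the Killing potential $f_\varphi=\cf_\varphi^\X(\K)$ produced this way is precisely the $\kappa$-normalised potential of $\check\K$ required by Lemma~\ref{l:weighted-extremal}. The first point is exactly what Corollary~\ref{bijection-potentials} provides (building on the cone constructions of \S\ref{s:abstractcone} and~\cite{He-Sun}); the second follows from the transformation rule~\eqref{e:potentials} for contact momenta together with the normalisation computation of \S\ref{ss:calabi-problem-sasaki}. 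Granting these, the remaining work is routine bookkeeping and the standard maximal-torus reductions.
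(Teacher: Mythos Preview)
Your overall strategy matches the paper's: fix the torus $\T$ generated by $\X$ and $\K$, reduce both existence questions to their $\T$-invariant versions, and bridge them via $\hTheta$ together with Lemma~\ref{l:weighted-extremal}. The dictionary in your second paragraph is correct and is precisely how the paper passes between $\T$-invariant extremal Sasaki structures in $\Sc(\K,J^\K,\cf_{\Ds_0}^\K)^\T$ and $\check\T$-invariant $\K$-extremal K\"ahler metrics in $[\omega]$.

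The genuine gap is in the reduction you call ``standard'' on the Sasaki side. You assert that ``maximal compact subgroups being conjugate, a suitable element of $\Aut_0(\Sm,\K,J^\K)$ carries any extremal structure to a $\T$-invariant one''. But $\Aut(\Sm,\K,J^\K)$ is infinite-dimensional: it contains the abelian ideal $\lo(\Sm,\K)$ of $\K$-invariant multiples of $\K$ (see Definition~\ref{d:torus} and the paragraph after it), so the usual conjugacy theorems for maximal tori or maximal compact subgroups do not apply. Passing to the finite-dimensional reduced quotient $\thol(\Sm,\K,J^\K)$ does not fix this either: when $\K$ is irregular, $\T\cap\lO(\Sm,\K)$ is the non-closed subgroup generated by $\K$, so the image of $\T$ in the quotient fails to be a torus --- exactly the pathology flagged after Definition~\ref{d:torus}. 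The paper resolves this by working on the cone: given an extremal $\cf\in\Sc(\K,J^\K,\cf_{\Ds_0}^\K)$, it is invariant under a maximal torus $\T_{\max}\leq\Aut(\Sm,\Ds,J)^\K$ by~\cite[Theorem~4.8]{BGS}, and one then argues that $\T_{\max}$ is also maximal in the \emph{finite-dimensional} complex Lie group $\Aut(\Cm,J)^\K$ (since $\aut(\Cm,J)^\K/\langle\K,J\K\rangle$ embeds in $\thol(\Sm,\K,J^\K)$). Now $\T$ sits honestly as a compact torus in $\Aut(\Cm,J)^\K$, so conjugacy of maximal tori there yields an automorphism carrying $\cf$ to a $\T$-invariant extremal structure in $\Sc(\K,J^\K,\cf_{\Ds_0}^\K)^\T$. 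This conjugacy step on the cone, not the dictionary, is the real obstacle; your proposal treats it as routine when it is the point requiring the most care.
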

\begin{proof} Let $\T \leq \Aut(\Sm, \Ds_0, J) \leq \Aut(\Sm, \X, J^\X)$ be
the (compact) torus generated by $\X$ and $\K$. Then there is $\K$-extremal
K\"ahler metric in the K\"ahler class $[\omega]$ if and only if there is a
$\K$-extremal Sasaki structure $\cf^\X$ in $\cS(\X,J^\X)^\T$. Since
$\K$-extremality of $\cf^\X$ depends only on the transversal geometry, we may
translate $\cf^\X$ by any closed basic $1$-form $\alpha$, and hence this holds
if and only if there is a $\K$-extremal Sasaki structure in
$\Sc(\X,J^\X,\cf_{\Ds_0}^\X)^\T$. By Lemma~\ref{l:weighted-extremal} and the
bijection $\hTheta$ of~\eqref{e:bij-pot}, this holds if and only if there is
an extremal Sasaki structure in $\Sc(\K,J^\K,\cf_{\Ds_0}^\K)^\T$.

Hence it only remains to show that if there is an extremal Sasaki structure
$\cf$ in $\cS(\K,J^\K)$, then there is also one in
$\Sc(\K,J^\K,\cf_{\Ds_0}^\K)^\T$. Since (again) the extremality condition
depends only on the transversal geometry, we may assume $\cf\in
\Sc(\K,J^\K,\cf_{\Ds_0}^\K)$. Now by~\cite[Theorem 4.8]{BGS} $\cf$ is
invariant under a maximal torus $\T_{\max}\leq \Aut(\Sm,\Ds,J)^\K\leq
\Aut(\Cm,J)^\K$ (where $\Ds=\ker\cf$). Note that $\T_{\max}$ is also maximal
in $\Aut(\Cm,J)^\K$: any torus containing $\T_{\max}$ defines an abelian
subalgebra of $\aut(\Cm, J)^\K$, which descends to an abelian subalgebra of
$\aut(\Cm, J)^\K/\mathopen<\K, J\K\mathclose> \leq \thol(\Sm, \K, J^\K)$, and
this cannot be larger than the Lie algebra of $\T_{\max}$ by~\cite{BGS}.
Since $\thol(\Sm, \K, J^\K)$ is a finite dimensional Lie algebra, it follows
that $\Aut(\Cm,J)^\K$ is a finite dimensional complex Lie group.  Hence
$\T_{\max}$ is conjugate to a maximal torus containing $\T$, and apply the
conjugating automorphism to $\cf$, we obtain an extremal Sasaki structure in
$\Sc(\K,J^\K,\cf_{\Ds_0}^\K)^\T$.
\end{proof}

\section{Properness of the weighted Mabuchi energy}
 
In this section, we consider a compact Sasaki manifold $(\Sm, \Ds, J, \K)$,
with corresponding contact form $\cf_0^\K$, and fix a maximal compact torus
$\T \leq \Aut(\Sm, \Ds, J)$ such that $\K \in \tor$.  By
\cite[Theorem~4.8]{BGS} (see the proof of Proposition~\ref{p:reduction}), the
search for an extremal Sasaki structure in $\cS(\K, J^\K)$ can be reduced to
the search of such structures in the space $\Sc(\K,J^\K, \cf_0^\K)^\T$ of
$\cf_0^\K$-normalized, $\T$-invariant Sasaki structures, see
Definition~\ref{d:sasaki-slice}. We denote by $K_\ext \in \tor$ the
\emph{extremal vector field} associated to $(K, \T)$, see
Definition~\ref{d:contact-Futaki} and \cite{He-Li}, which is well-defined
irrespective of the existence of an extremal Sasaki structures in
$\Sc(\K,J^\K, \cf_0^\K)^\T$, but which will coincide with the vector field
defined by the transversal scalar curvature of any extremal Sasaki structure in
that space, should it exist.

We fix a quasiregular Sasaki--Reeb vector field $\X \in \tor_+$ in the Reeb
cone of $(\Sm,\K,J^\K,\T)$. We consider the induced action of $\T$ and of its
complexification $\G=\T_\C$ on the polarized complex cone $(\Cm, J, \K)$ (see
Lemma~\ref{l:complexified-action}) and identify the polarized complex cone of
$(\Sm, \Ds, J, \X)$ with $(\Cm, J, \X)$, see Section~\ref{s:abstractcone}.
The important feature of this setting is that $\G$ naturally acts on the
spaces $\cpot_\X(\Cm,J)^\T$ and $\cpot_\K(\Cm,J)^\T$ of $\T$-invariant cone
potentials of $(\Cm, J, \X)$ and $(\Cm, J, \K)$, respectively. Using the
basepoints $r^\X_0 \in \cpot_\X(\Cm,J)^\T$ and $r^\K_0 \in \cpot_\K(\Cm,J)^\T$
corresponding to the contact forms $\cf_0^\X, \cf_0^\K$, there is an induced
action of $\G$ on the corresponding spaces $\Sp( \X,J^\X, \cf_0^\X)^\T$ and
$\Sp( \K,J^\K, \cf_0^\K)^\T$ of $\T$-invariant Sasaki potentials.

\begin{defn}\label{d:relative-Sasaki-Mabuchi}  (see e.g.~\cite{He1, He-Li})
The (relative) Mabuchi energy is the functional
\[
\ME^{K}\colon \Sp(\K,J^\K, \cf_0^\K)^\T\to \R
\]
characterized by
\[
\begin{split}
(\d \ME^\K)_\psi(\dot \psi)
& = \int_\Sm \, \dot \psi \Big(\Scal(g_\psi) - \cf_\psi(\K_\ext)\Big) \,
\cf_\psi \wedge \d\cf_\psi^m, \\
\ME^\K(0) &=0,
\end{split}
\]
where for any $\psi \in \Sp(\K,J^\K, \cf_0^\K)^\T$, $\cf_\psi:= \cf_0^\K +
\d^c_\K \psi$ stands for the corresponding contact form in $\Sc(\K,J^\K,
\cf_0^\K)^\T$ whereas $\Scal(g_\psi)$ denotes the corresponding transversal
scalar curvature.
\end{defn}
  
\begin{defn}\label{d:K-twisted-Mabuchi} The $K$-twisted (relative) Mabuchi
energy is the functional
\[
\ME^\X_\K \colon \Sp(\X,J^\X, \cf_0^\X)^\T\to \R
\]
characterized by
\[
\begin{split}
(\d \ME^\X_\K)_{\varphi} (\dot \varphi)
&= \int_\Sm \, \dot \varphi \Big(\Scal_\K(g_{\varphi})-\cf_{\varphi}(\K_\ext)\Big) \, (\cf_{\varphi}(\K))^{-m-3} \cf_{\varphi} \wedge \d\cf_{\varphi}^m, \\
   \ME^\X_\K(0) &=0.
\end{split}
\]
\end{defn}
Both functionals are invariant under additive constants, i.e., they descend to
the spaces $\Sc(\K,J^\K, \cf_0^\K)^\T$ and $\Sc(\X,J^\X, \cf_0^\X)^\T$,
respectively.
 
\begin{rem}\label{r:Lahdili-Mabuchi} As $\X\in \tor_+$ is
quasiregular, $\ME^\X_\K$ gives rise to a functional $\ME_{\check K, \kappa}$
acting on the space of $\check\T$-invariant (where we recall $\check\T= \T
/\Sph^1_\X$) $\omega_0$-relative K\"ahler potentials on the Sasaki--Reeb
quotient $(M, J, \omega_0)$, defined by
\[
(\d \ME_{\check K, \kappa})_\varphi (\dot \varphi)
= (2\pi)  \int_M \dot \varphi \big(\Scal_{f_{\varphi}}(\omega_{\varphi})\big)^\perp f_{\varphi}^{-m-3} \omega_{\varphi}^{m}, \ \ \ME_{\check K, \kappa} (0)=0,
\]
where $\omega_{\varphi}= \omega_0 + \d \d^c \varphi$ is the K\"ahler metric
corresponding to $\varphi$, $f_{\varphi}$ is the $\kappa$-normalized Killing
potential of $\check{K}$ with respect to $\omega_{\varphi}$, and
$\big(\Scal_{f_{\varphi}}(\omega_{\varphi})\big)^\perp$ denotes the orthogonal
projection of the corresponding $f_{\varphi}$-scalar curvature to the
orthogonal complement of the space of $\omega_{\varphi}$-Killing potentials of
elements of ${\rm Lie}(\check\T)$, with respect to global $L^2$-product
$(\phi_1, \phi_2)_{f_{\varphi}} =\int_M \phi_1 \phi_2 f_{\varphi}^{-m-3}
\omega_{\varphi}^m$.  This is, up to the multiplicative factor $2\pi$, the
relative weighted Mabuchi energy introduced in \cite{lahdili}. Equivalently,
in terms of the momentum polytope $P_\X \sub \check{\tor}^*$ of $(M, \omega_0,
\check\T)$, $\ME^\X_\K$ is $2\pi$ times the Mabuchi functional $\ME_{v,w}$
defined in \cite{lahdili2}, where the weights $v(x)= (\ell_\K(x))^{-1-m}$ and
$w(x)= \ell_\ext(x) \ell_\K(x)^{-3-m}$ are determined by the affine-linear
functions $\ell_{K}(x)$ and $\ell_\ext(x)$ on $\check{\tor}^*$, corresponding
to $\K, \K_\ext \in \tor$, see Lemma~\ref{l:momentum-cone-coincide}.
\end{rem}
 
\begin{lemma}\label{M} Let $\hTheta\colon \Sp(\X,J^\X, \cf_0^\X)^\T \to
\Sp(\K,J^\K, \cf_0^\K)^\T$ be the bijection defined by~\eqref{e:bij-pot}\textup;
then $\ME^\K\circ\hTheta = \ME^\X_\K$.
\end{lemma}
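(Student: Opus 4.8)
The plan is to deduce the identity from the derivative formulas defining $\ME^\K$ and $\ME^\X_\K$, feeding in the explicit description of $\d\hTheta$ from Corollary~\ref{dhTheta}. Because the base points are chosen so that $\IsoPot(r_0)=\tilde r_0$, the bijection $\hTheta$ of~\eqref{e:bij-pot} has $\hTheta(0)=0$, so $\ME^\K(\hTheta(0))=0=\ME^\X_\K(0)$; since $\Sp(\X,J^\X,\cf_0^\X)^\T$ is connected, it then suffices, by the chain rule, to check $(\d\ME^\K)_{\hTheta(\varphi)}\circ\d\hTheta_\varphi=(\d\ME^\X_\K)_\varphi$ for every $\varphi$. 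Fix $\varphi$, put $\psi:=\hTheta(\varphi)$ and $f:=\cf_\varphi^\X(\K)>0$, and let $\Psi_\varphi$ be the $\T$-equivariant CR isomorphism of Corollary~\ref{dhTheta}, for which $\cf_\psi^\K=\Psi_\varphi^*(\cf_\varphi^\X/f)$ and $\d\hTheta_\varphi(\dot\varphi)=\Psi_\varphi^*(\dot\varphi/f)$.

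First I would transport the integral defining $(\d\ME^\K)_\psi\bigl(\d\hTheta_\varphi(\dot\varphi)\bigr)$ back through $\Psi_\varphi$. The only algebraic input is the elementary wedge identity
\[
\frac{\cf_\varphi^\X}{f}\wedge\bigl(\d(\cf_\varphi^\X/f)\bigr)^{m}=\frac{1}{f^{m+1}}\,\cf_\varphi^\X\wedge(\d\cf_\varphi^\X)^m,
\]
which follows at once from $\d(\cf/f)=\d\cf/f-(\d f\wedge\cf)/f^2$ and $\cf\wedge\cf=0$, and which yields $\cf_\psi^\K\wedge(\d\cf_\psi^\K)^m=\Psi_\varphi^*\bigl(f^{-m-1}\cf_\varphi^\X\wedge(\d\cf_\varphi^\X)^m\bigr)$. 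Next, $\T$-equivariance of $\Psi_\varphi$ carries $\K_\ext$ to $\K_\ext$, so $\cf_\psi^\K(\K_\ext)=\Psi_\varphi^*(\cf_\varphi^\X(\K_\ext)/f)$; and, being a CR isomorphism of strictly pseudoconvex CR manifolds, $\Psi_\varphi$ preserves the canonical co-orientation, hence the orientation, so $\int_\Sm\Psi_\varphi^*(\cdot)=\int_\Sm(\cdot)$. Substituting these, together with $\d\hTheta_\varphi(\dot\varphi)=\Psi_\varphi^*(\dot\varphi/f)$ and the scalar-curvature relation $\Scal(g_\psi)=\Psi_\varphi^*\bigl(\Scal_\K(g_\varphi)/f\bigr)$ discussed below, into the definition of $\ME^\K$ reduces $(\d\ME^\K)_\psi\bigl(\d\hTheta_\varphi(\dot\varphi)\bigr)$ to
\[
\int_\Sm \dot\varphi\,\bigl(\Scal_\K(g_\varphi)-\cf_\varphi^\X(\K_\ext)\bigr)\,(\cf_\varphi^\X(\K))^{-m-3}\,\cf_\varphi^\X\wedge(\d\cf_\varphi^\X)^m=(\d\ME^\X_\K)_\varphi(\dot\varphi),
\]
the exponent $-m-3$ being accounted for by $-1$ from $\dot\psi$, $-1$ from $\Scal(g_\psi)$ and $\cf_\psi^\K(\K_\ext)$, and $-m-1$ from the volume form.

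The hard part — the only non-formal ingredient — is the scalar-curvature relation $\Scal(g_\psi)=\Psi_\varphi^*\bigl(\Scal_\K(g_\varphi)/f\bigr)$ used above, which is precisely the pointwise form of the Apostolov--Calderbank correspondence behind Lemma~\ref{l:weighted-extremal}. Indeed $\Psi_\varphi$ identifies the Sasaki structure with contact form $\cf_\psi^\K$ and Reeb field $\K$ with the one on the $\X$-side sharing the \emph{same} underlying CR structure $(\Ds_\varphi,J_\varphi)$, the same Reeb field $\K$, and contact form $\cf_\varphi^\X/f$; by the transversal scalar-curvature computation of~\cite{AC}, the transversal scalar curvature of the latter equals $\Scal_\K(g_\varphi)/f$, where $\Scal_\K(g_\varphi)$ is the $\K$-scalar curvature of the $\X$-Sasaki structure $\cf_\varphi^\X$ in the sense of Definition~\ref{d:Sasaki-(xi,wt)-extremal}. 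I expect pinning down this rescaling by $f$ — and verifying that its power meshes with the powers of $f$ from the volume forms, as above — to be the only delicate point; granting it, the two derivatives agree, and together with $\hTheta(0)=0$ and connectedness this gives $\ME^\K\circ\hTheta=\ME^\X_\K$.
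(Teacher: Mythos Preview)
Your proof is correct and follows the same route as the paper: both verify the defining characterization of $\ME^\X_\K$ for $\ME^\K\circ\hTheta$, using $\hTheta(0)=0$ for the base-point condition and Corollary~\ref{dhTheta} together with the scalar-curvature identity~\eqref{scal-twist} (which is exactly your relation $\Scal(g_\psi)=\Psi_\varphi^*\bigl(\Scal_\K(g_\varphi)/f\bigr)$) for the derivative. Your write-up is simply more explicit about the volume-form transformation and the power-counting that the paper compresses into ``the result follows.''
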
  
\begin{proof} We check  $\ME^\K\circ\hTheta$ satisfies the characterization
of $\ME^\X_\K$. Clearly $\ME^\K(\hTheta(0))=\ME^\K(0)=0$ and if $\varphi\in
\Sp(\X,J^\X, \cf_0^\X)^\T$ with $\psi=\hTheta(\varphi)$ then
$\d(\ME^\K\circ\hTheta)_\varphi=(\d\ME^\K)_\psi\circ (\d \hTheta)_\varphi$. We
now apply Corollary~\ref{dhTheta}. As $\Psi_\varphi$ is a CR diffeomorphism
between the CR structures induced by $\varphi$ and $\psi$, \eqref{scal-twist}
implies that $\Psi_\varphi^{\,*}
\Bigl(\frac{\Scal_\K (g_{\varphi})}{\cf_{\varphi}(\K)}\Big)= \Scal(g_\psi)$,
and the result follows.
\end{proof}

We now recall the definition of the $d_1$-distance  on the space $\Sp(\K,J^\K, \cf_0^\K)^\T$, originally due to Darvas~\cite{darvas} in the K\"ahler case, and extended by He--Li~\cite{He-Li} to the Sasaki case.
\begin{defn} For $\psi_0, \psi_1 \in  \Sp(\K,J^\K, \cf_0^\K)^\T$, we let
\[
d_1(\psi_0, \psi_1) := \inf_{{\psi_t}}\Big\{\int_{0}^1 \Big(\int_\Sm |\dot \psi_t| \cf_{\psi_t} \wedge d\cf_{\psi_t}^m\Big) \d t\Big\},
\]
where the infimum is taken over all smooth paths $\psi_t, \, t\in [0,1],$ connecting $\psi_0$ and $\psi_1$ inside  $\Sp(\K,J^\K, \cf_0^\K)^\T$.
\end{defn}
The fact that $d_1$ is a distance is not obvious,  and is  one of the main results of \cite{He-Li}.   We  define the $d_1$-distance on the space $\Sp(\X,J^\X, \cf_0^\X)^\T$ by the same formula. When $\X$ is regular, $d_1$ is,  up to a factor $2\pi$,  the distance  defined  in \cite{darvas} on the space of relative K\"ahler potentials on $(M,J)$. We now show that $\hTheta$ is
bilipschitz with respect to $d_1$.

\begin{lemma}\label{d1} There exists $\Lambda\in\R^+$, depending only
on $(\Sm, \Ds, J, \K, \X)$, such that
\[
\frac{1}{\Lambda} d_1(\varphi_0, \varphi_1) \le d_1\bigl(\hTheta(\varphi_0),  \hTheta(\varphi_1)\bigr) \le \Lambda d_1(\varphi_0, \varphi_1).
\]
\end{lemma}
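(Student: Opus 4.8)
The plan is to reduce the claim to a pointwise comparison of the arc-length elements of the two $d_1$-metrics along a path in $\Sp(\X,J^\X,\cf_0^\X)^\T$ and its image under $\hTheta$. By Lemma~\ref{dTheta} the map $\hTheta$ is smooth, and so is its inverse---which is the analogous map with the roles of $\X$ and $\K$ exchanged, this being legitimate since $\K\in\tor_+^\X$ if and only if $\X\in\tor_+^\K$ by Lemma~\ref{l:complexified-action}. Hence $\hTheta$ restricts to a bijection between the smooth paths in $\Sp(\X,J^\X,\cf_0^\X)^\T$ and those in $\Sp(\K,J^\K,\cf_0^\K)^\T$ with corresponding endpoints, and it suffices to produce a constant $\Lambda\in\R^+$, depending only on $(\Sm,\Ds,J,\K,\X)$ and the fixed torus $\T$, such that for every smooth curve $\varphi_t$ in $\Sp(\X,J^\X,\cf_0^\X)^\T$, with $\psi_t:=\hTheta(\varphi_t)$, one has at each $t$
\[
\Lambda^{-1}\!\int_\Sm|\dot\varphi_t|\,\cf_{\varphi_t}^\X\wedge(\d\cf_{\varphi_t}^\X)^m
\;\le\;\int_\Sm|\dot\psi_t|\,\cf_{\psi_t}^\K\wedge(\d\cf_{\psi_t}^\K)^m
\;\le\;\Lambda\!\int_\Sm|\dot\varphi_t|\,\cf_{\varphi_t}^\X\wedge(\d\cf_{\varphi_t}^\X)^m;
\]
integrating in $t$ and taking infima over paths then gives both inequalities of the lemma.

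First I would establish the pointwise identity behind these bounds. Fix $t$, put $f_t:=\cf_{\varphi_t}^\X(\K)>0$, and let $\Psi_t$ be the CR isomorphism supplied by Corollary~\ref{dhTheta}, identifying the CR structure induced by $\psi_t$ with that induced by $\varphi_t$, so that $\dot\psi_t=\Psi_t^*(\dot\varphi_t/f_t)$ and $\cf_{\psi_t}^\K=\Psi_t^*(\cf_{\varphi_t}^\X/f_t)$. From $\d(\cf_{\varphi_t}^\X/f_t)=f_t^{-1}\d\cf_{\varphi_t}^\X-f_t^{-2}\,\d f_t\wedge\cf_{\varphi_t}^\X$ and repeated use of $\cf_{\varphi_t}^\X\wedge\cf_{\varphi_t}^\X=0$, the binomial expansion collapses to the algebraic identity
\[
\frac{\cf_{\varphi_t}^\X}{f_t}\wedge\Bigl(\d\frac{\cf_{\varphi_t}^\X}{f_t}\Bigr)^{\!m}
= f_t^{-(m+1)}\;\cf_{\varphi_t}^\X\wedge(\d\cf_{\varphi_t}^\X)^m.
\]
Pulling this back by $\Psi_t$, combining with $|\dot\psi_t|=\Psi_t^*(|\dot\varphi_t|/f_t)$, and then changing variables by the orientation-preserving diffeomorphism $\Psi_t$, I get
\[
\int_\Sm|\dot\psi_t|\,\cf_{\psi_t}^\K\wedge(\d\cf_{\psi_t}^\K)^m
=\int_\Sm\frac{|\dot\varphi_t|}{f_t^{\,m+2}}\;\cf_{\varphi_t}^\X\wedge(\d\cf_{\varphi_t}^\X)^m.
\]

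It then remains only to bound $f_t$ uniformly, and here I would invoke Lemma~\ref{l:momentum-cone-coincide}: every $\cf\in\cS(\X,J^\X)^\T$ has the same transversal polytope $P_\X\sub\tor^*$, and $\K\in\tor_+$ corresponds, under the identification $\tor\cong\mathrm{Aff}(P_\X,\R)$, to an affine-linear function $\ell_\K$ that is strictly positive on the compact set $P_\X$. Since $f_t=\langle\mu_{\cf_{\varphi_t}^\X},\K\rangle=\ell_\K\circ\mu_{\cf_{\varphi_t}^\X}$ has image contained in $\ell_\K(P_\X)$, we obtain $0<a\le f_t\le b$ pointwise, where $a:=\min_{P_\X}\ell_\K$ and $b:=\max_{P_\X}\ell_\K$ depend only on $(\Sm,\Ds,J,\K,\X,\T)$. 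Substituting into the last display yields the required arc-length comparison with $\Lambda:=\max\{b^{m+2},a^{-(m+2)}\}$. The only step that is not purely formal is precisely this uniformity: a priori the Killing potential $f_t$ could degenerate as $\varphi_t$ runs through the infinite-dimensional space of Sasaki potentials, and it is exactly the invariance of the transversal polytope $P_\X$---equivalently, of the momentum image of $\K$---established in Lemma~\ref{l:momentum-cone-coincide} that prevents this; the rest is the change-of-variables bookkeeping afforded by Corollary~\ref{dhTheta}.
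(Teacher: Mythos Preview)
Your argument is correct and follows essentially the same route as the paper: both reduce to the arc-length identity
\[
\int_\Sm|\dot\psi_t|\,\cf_{\psi_t}^\K\wedge(\d\cf_{\psi_t}^\K)^m
=\int_\Sm\frac{|\dot\varphi_t|}{f_t^{\,m+2}}\;\cf_{\varphi_t}^\X\wedge(\d\cf_{\varphi_t}^\X)^m
\]
via Corollary~\ref{dhTheta} and then bound $f_t$ uniformly. The only minor difference is in the justification of the uniform bounds on $f_t$: the paper invokes the $\kappa$-normalization relation~\eqref{potential} and \cite[Lemma~1]{lahdili2}, whereas you use the invariance of the transversal polytope from Lemma~\ref{l:momentum-cone-coincide}; these are equivalent, and your version has the virtue of being self-contained within the paper.
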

\begin{proof} Let $\varphi_t$ be a smooth path connecting $\varphi_0$ and $\varphi_1$ in  $\Sp(\K,J^\K, \cf_0^\K)^\T$. By Corollary~\ref{dhTheta}, we have
\[
\begin{split}
\int_{0}^1 \Big(\int_\Sm |\dot \psi_t| \cf_{\psi_t} \wedge \d\cf_{\psi_t}^m\Big) \d t  &= \int_{0}^1 \Big(\int_\Sm\frac{ |\dot \varphi_t|}{(\cf_{\varphi_t}(\K))^{m+2}} \cf_{\varphi_t} \wedge \d\cf_{\varphi_t}^m\Big) \d t 
\end{split}
\]
The key point is that the positive functions $f_t=\cf_{\varphi_t}(\K)$ define $\kappa$-normalized K\"ahler potentials on $M$ with respect to the K\"ahler metrics $\omega_{\varphi_t}$,  i.e., they are related by the formula \eqref{potential}. It thus follows (see e.g.~\cite[Lemma~1]{lahdili2} for a general statement) that $f_t(\Sm)=[a,b]$ is a fixed positive interval (independent of $\varphi_0, \varphi_1$ and $\varphi_t$), i.e., we have the uniform estimates
\[
\frac{1}{b^{m+2}} d_1(\varphi_0, \varphi_1) \le d_1(\psi_0, \psi_1) \le \frac{1}{a^{m+2}} d_1(\varphi_0, \varphi_1). \qedhere
\]
\end{proof}
We next introduce a suitable notion of properness of $\ME^\K$ (resp. $\ME^{\X}_\K$),  analogous to the corresponding notions in K\"ahler geometry, see \cite{DaRu, tian0, ZZ}.  To this end, we consider the action of  $\G:=\T_\C$ on the spaces $\Sp(\X,J^\X, \cf_0^\X)^\T$ and $\Sp(\K,J^\K, \cf_0^\K)^\T$ as explained in the beginning of the section.  

\begin{lemma}\label{l:isometric} $\G$ acts by $d_1$ isometries  on $\Sp(\X,J^\X, \cf_0^\X)^\T$ and $\Sp(K,J^\K, \cf_0^\K)^\T$.
\end{lemma}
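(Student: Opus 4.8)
The statement to prove is that $\G=\T_\C$ acts by $d_1$-isometries on $\Sp(\X,J^\X, \cf_0^\X)^\T$ and on $\Sp(\K,J^\K, \cf_0^\K)^\T$. By Lemma~\ref{d1} the map $\hTheta$ is bilipschitz, but more importantly it is $\G$-equivariant by construction (the $\G$-action on both $\cpot_\X(\Cm,J)^\T$ and $\cpot_\K(\Cm,J)^\T$ is induced by the \emph{same} holomorphic $\T_\C$-action on the cone $(\Cm,J)$, and $\IsoPot$ commutes with it since $\IsoPot(r)$ is characterised by $N_{\IsoPot(r)}=N_r$, a condition preserved by biholomorphisms). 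So it suffices to prove the isometry statement on one of the two spaces; I will work on $\Sp(\X,J^\X, \cf_0^\X)^\T$, where $\X$ is quasiregular and the $d_1$-metric transports (up to the factor $2\pi$) to the Darvas $d_1$-metric on $\check\T$-invariant relative K\"ahler potentials on the Sasaki--Reeb quotient $(M,J,\omega_0)$, i.e. on hermitian metrics of positive curvature on the polarization $L$.

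Here is the core of the argument. Fix $\sigma\in\G$. For a path $\varphi_t$ in $\Sp(\X,J^\X,\cf_0^\X)^\T$ with associated cone potentials $r_t=e^{\pi_\X^*\varphi_t}r^\X_0$, the action of $\sigma$ sends $r_t$ to the cone potential $\sigma\cdot r_t$ obtained by pulling back $r_t$ along $\sigma^{-1}\in\mathrm{Aut}(\Cm,J)$ and renormalising, with corresponding potential $\sigma\cdot\varphi_t$. The key computation is the change-of-variables identity for the $d_1$-integrand: writing $\Psi^\X_{r_t}\colon \Sm_{r_t}\xrightarrow{\ \sim\ }\Sm^\X$ for the CR identification, the $1$-form $\cf_{r_t}$ and the tangent vector $\dot\varphi_t$ transport to $\Sm_{r_t}\subset\Cm$, and $\sigma$ maps $\Sm_{r_t}$ to $\Sm_{\sigma\cdot r_t}$ by a biholomorphism of $\Cm$ that intertwines the hypersurface CR structures. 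Therefore $\sigma^*(\cf_{\sigma\cdot r_t}\wedge\d\cf_{\sigma\cdot r_t}^{\,m})=\cf_{r_t}\wedge\d\cf_{r_t}^{\,m}$ and $\sigma^*(\overline{\dot\varphi}_{\sigma\cdot r_t})=\overline{\dot\varphi}_{r_t}$ as functions on $\Sm_{r_t}$ (bars denoting the transported objects); integrating and taking absolute values gives
\[
\int_\Sm |\,\dot{(\sigma\cdot\varphi_t)}\,|\;\cf_{\sigma\cdot\varphi_t}\wedge\d\cf_{\sigma\cdot\varphi_t}^{\,m}
=\int_\Sm |\dot\varphi_t|\;\cf_{\varphi_t}\wedge\d\cf_{\varphi_t}^{\,m}.
\]
Integrating over $t\in[0,1]$ and taking the infimum over paths shows $d_1(\sigma\cdot\varphi_0,\sigma\cdot\varphi_1)=d_1(\varphi_0,\varphi_1)$. (Equivalently, in the regular picture: $\sigma$ acts on positive-curvature hermitian metrics on $L$ by pullback along a biholomorphism of $(L^*)^\times$ that preserves the volume form $\cf\wedge(\d\cf)^m$ on the corresponding Sasaki level set, hence acts isometrically for the Darvas $d_1$; this is the Sasaki analogue of the classical fact, cf.~\cite{PS, He-Li}, that holomorphic automorphisms act by $d_1$-isometries on the space of K\"ahler potentials.) Finally, the statement on $\Sp(\K,J^\K,\cf_0^\K)^\T$ follows either by running the identical argument with $\K$ in place of $\X$, or by $\G$-equivariance of $\hTheta$ together with the fact (to be checked) that $\hTheta$ merely rescales the $d_1$-integrand fibrewise by a $\sigma$-invariant function, so the isometry property passes through; the cleanest route is to note the argument above used nothing about regularity of $\X$, so it applies verbatim to $\K$.

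\textbf{Main obstacle.} The one point that needs care is the precise description of the $\G$-action on Sasaki potentials and the verification that, under this action, the measure $\cf_{\varphi}\wedge\d\cf_{\varphi}^{\,m}$ on $\Sm$ and the ``velocity'' $\dot\varphi$ transform compatibly — i.e. that pulling back by $\sigma$ genuinely identifies the $d_1$-integrand at $\sigma\cdot\varphi_t$ with that at $\varphi_t$. This hinges on the fact that the $\G$-action is defined \emph{via} biholomorphisms of the cone $(\Cm,J)$ that preserve the vector field generating the $\R^+$-action (so they descend to CR automorphisms of $\Sm^\X$ and carry level sets $\Sm_{r}$ to level sets $\Sm_{\sigma\cdot r}$), which makes the transported contact forms and volume forms match on the nose. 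Once that bookkeeping is in place — essentially a matter of chasing the definitions in \S\ref{ss:Reebcone-rev}--\S\ref{ss:Thetamap} and using Lemma~\ref{l:cone-potentials} — the isometry is immediate, with no estimates required. I would expect the write-up to be short, the bulk of it being the careful statement of how $\sigma\cdot\varphi$ is defined and why the integrand is $\sigma$-invariant.
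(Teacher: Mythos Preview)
Your proposal is correct and takes essentially the same approach as the paper: the $\G$-action on cone potentials is by pullback, so both the velocity $\dot\varphi_t$ and the volume form $\cf_{\varphi_t}\wedge\d\cf_{\varphi_t}^{\,m}$ transform by pullback, and the $d_1$-integrand is preserved by change of variables. The paper's proof is a single displayed computation to this effect; your detour through $\hTheta$-equivariance is unnecessary (and bilipschitz alone would not transfer the isometry property), but you correctly note at the end that the direct argument applies verbatim to $\K$.
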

\begin{proof} For any smooth curve $r(t)$ of cone potentials on $(\Cm, J, \X)$,
and any $g\in \G$, let $(g^*r)(t)= r(t) \circ g$ be the corresponding $\G$
action on curves. We compute the $d_1$-lengths of $r(t)$ and $(g^*r)(t)$:
\begin{equation*}
\begin{split} 
  \int_{0}^1 \int_\Sm \Big|\frac{\d (g^*r)(t)}{\d t} \Big| \Big(\cf_{(g^*r)(t)} \wedge (\d\cf_{(g^*r)(t)})^m\Big) \d t  &= \int_{0}^{1} \int_\Sm |g^*\dot{r}(t)|  g^*\Big(\cf_{r(t)} \wedge (\d\cf_{r(t)})^m\Big)\d t\\
  &= \int_{0}^1 \int_\Sm |\dot{r}(t)| \cf_{r(t)} (\d\cf_{r(t)})^m \d t. \qedhere
\end{split}
\end{equation*}
\end{proof}
\begin{lemma}\label{l:equivariant}
$\hTheta\colon \Sp(\X,J^\X, \cf_0^\X)^\T \to \Sp(\K,J^\K, \cf_0^\K)^\T$ is
  $\G$-equivariant.
\end{lemma}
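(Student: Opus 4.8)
The plan is to reduce the $\G$-equivariance of $\hTheta$ to that of the underlying bijection $\IsoPot\colon\cpot_\X(\Cm,J)^\T\to\cpot_\K(\Cm,J)^\T$ of Corollary~\ref{bijection-potentials}. Recall from the start of this section that, having fixed basepoints $r^\X_0\in\cpot_\X(\Cm,J)^\T$ and $r^\K_0:=\IsoPot(r^\X_0)\in\cpot_\K(\Cm,J)^\T$, the spaces $\Sp(\X,J^\X,\cf_0^\X)^\T$ and $\Sp(\K,J^\K,\cf_0^\K)^\T$ are identified with $\cpot_\X(\Cm,J)^\T$ and $\cpot_\K(\Cm,J)^\T$ via $r\mapsto\varphi_r$ with $\pi_\X^*\varphi_r=\log r-\log r^\X_0$ and $\tilde r\mapsto\psi_{\tilde r}$ with $\pi_\K^*\psi_{\tilde r}=\log\tilde r-\log r^\K_0$, and that the $\G$-action on Sasaki potentials is the one transported through these identifications from the $\G$-action $r\mapsto g^*r:=r\circ g$ on cone potentials. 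Thus $g^*\varphi_r=\varphi_{g^*r}$ and $g^*\psi_{\tilde r}=\psi_{g^*\tilde r}$ by definition, and since $\hTheta(\varphi_r)=\psi_{\IsoPot(r)}$ by~\eqref{e:bij-pot}, the asserted equivariance $\hTheta(g^*\varphi)=g^*\hTheta(\varphi)$ is equivalent to the single identity $\IsoPot(g^*r)=g^*\IsoPot(r)$ for all $g\in\G$ and $r\in\cpot_\X(\Cm,J)^\T$.

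First I would record that $\G$ does act on both $\cpot_\X(\Cm,J)^\T$ and $\cpot_\K(\Cm,J)^\T$ by $r\mapsto g^*r=r\circ g$: since $\G=\T_\C$ is abelian and the flows of $\X$, of $\K$, and of every element of $\tor$ lie in $\T\leq\G$, each $g\in\G$ satisfies $g_*\X=\X$, $g_*\K=\K$ and $g_*Y=Y$ for $Y\in\tor$; as $g$ is moreover holomorphic (so $g_*(-J\X)=-J\X$, $g_*(-J\K)=-J\K$ and $\d\d^c(r\circ g)=g^*(\d\d^c r)$ is again positive), the defining conditions~\eqref{e:cpot} are preserved, as is $\T$-invariance. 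This is the action already used at the start of the section and in Lemma~\ref{l:isometric}.

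Next, the equivariance of $\IsoPot$ falls out of its characterization in Corollary~\ref{bijection-potentials}. Writing $\Sm_s:=s^{-1}(1)$, for any $g\in\G$ we have $\Sm_{g^*s}=(s\circ g)^{-1}(1)=g^{-1}(\Sm_s)$, since $g$ acts as a diffeomorphism of $\Cm$. Hence, if $\tilde r=\IsoPot(r)$ --- equivalently $\Sm_{\tilde r}=\Sm_r$ --- then $\Sm_{g^*\tilde r}=g^{-1}(\Sm_{\tilde r})=g^{-1}(\Sm_r)=\Sm_{g^*r}$, and because $g^*\tilde r\in\cpot_\K(\Cm,J)^\T$ by the first step, the uniqueness in Corollary~\ref{bijection-potentials} forces $\IsoPot(g^*r)=g^*\tilde r=g^*\IsoPot(r)$. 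Combined with the reduction above, this proves the lemma.

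No step here should present real difficulty: the whole argument rests on the facts that the $\G$-action on $\Cm$ commutes with $\X$ and $\K$ --- so it preserves $\cpot_\X(\Cm,J)^\T$ and $\cpot_\K(\Cm,J)^\T$ and acts naturally on their level hypersurfaces --- and that $\IsoPot$, hence $\hTheta$, is pinned down by an equality of such hypersurfaces in $\Cm$, a manifestly $\G$-invariant condition. The only point to watch is that $\G$ fixes neither $r^\X_0$ nor $r^\K_0$, so one should compare $\hTheta$ and the two $\G$-actions on the level of cone potentials, exactly as organised above, rather than directly on Sasaki potentials.
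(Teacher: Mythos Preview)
Your proof is correct and is simply a careful unwinding of what the paper declares ``obvious from the definition of $\hTheta$'': the $\G$-action on Sasaki potentials is by construction the transport of the pullback action on cone potentials, and $\IsoPot$ is characterized by the $\G$-equivariant condition $\Sm_{\tilde r}=\Sm_r$. The only thing to note is that you have supplied the details the paper omits; there is no substantive difference in approach.
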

\begin{proof} Obvious from the definition of $\hTheta$.
\end{proof}
The following crucial result is a refinement (in the regular case) of the
uniqueness result of \cite{V1} (which in turn is a Sasaki version of
\cite{BB}).
\begin{lemma}\label{l:uniqueness} Suppose $(\Sm, \Ds, J, \T, \X, \K)$ is as
above with $\X$ regular. Then any two extremal Sasaki structures $\psi,  \tilde \psi \in \Sp(\K,J^\K, \cf_0^\K)^\T$  are isometric under the action of $\G$.
\end{lemma}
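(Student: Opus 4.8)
The plan is to transport the statement through the bijection $\hTheta$ of~\eqref{e:bij-pot} to the regular K\"ahler quotient $(M,J,L)$ of $\X$, where it reduces to the transitivity theorem of Lahdili~\cite{lahdili3}.

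\emph{Reduction via $\hTheta$.} First I would observe that $\hTheta\colon \Sp(\X,J^\X,\cf_0^\X)^\T\to\Sp(\K,J^\K,\cf_0^\K)^\T$ maps $\K$-extremal Sasaki structures bijectively onto extremal ones. Indeed, by~\eqref{e:bij-pot} the contact forms $\cf_\varphi\in\Sc(\X,J^\X,\cf_0^\X)^\T$ and $\cf_{\hTheta(\varphi)}\in\Sc(\K,J^\K,\cf_0^\K)^\T$ induce the same CR structure $(\Ds,J)$ on $\Sm$, and by Lemma~\ref{l:weighted-extremal} the Sasaki structure $\cf_\varphi$ is $\K$-extremal (with respect to $\X$) precisely when $(\Ds,J,\K)$ is an extremal Sasaki structure, i.e.\ when $\cf_{\hTheta(\varphi)}$ is extremal. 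Equivalently, by Lemma~\ref{M} the map $\hTheta$ identifies the critical sets of $\ME^\X_\K$ and $\ME^\K$, which by Definitions~\ref{d:relative-Sasaki-Mabuchi} and~\ref{d:K-twisted-Mabuchi} are exactly the $\K$-extremal and the extremal Sasaki structures in the respective markings. Since $\hTheta$ is $\G$-equivariant (Lemma~\ref{l:equivariant}), it therefore suffices to prove that any two $\K$-extremal structures $\varphi_1,\varphi_2\in\Sp(\X,J^\X,\cf_0^\X)^\T$ lie in one $\G$-orbit; their images $\psi_i=\hTheta(\varphi_i)$ will then do too.

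\emph{Passage to the K\"ahler quotient and conclusion.} Since $\X$ is regular, Example~\ref{e:regular-cone} identifies $\cpot_\X(\Cm,J)^\T$ with the space of $\T$-invariant positively curved hermitian metrics on the ample line bundle $L\to M$, hence $\Sp(\X,J^\X,\cf_0^\X)^\T$ with the $\check\T$-invariant $\omega_0$-relative K\"ahler potentials on $(M,J)$, $\omega_0\in\kcl=2\pi c_1(L)$. Under this identification (Remark~\ref{r:Lahdili-Mabuchi}) the $\K$-extremal elements of $\Sp(\X,J^\X,\cf_0^\X)^\T$ correspond to the $\T$-invariant $\K$-extremal K\"ahler metrics in $\kcl$, while the $\G=\T_\C$-action becomes the tautological action of $\T_\C\le\Aut(M,L)$ on hermitian metrics on $L$; on curvature forms it descends to the pullback action of $\check\T_\C=\G/\C^{\times}$ on K\"ahler metrics in $\kcl$, the central $\C^{\times}\le\G$ acting there trivially and realizing exactly the translations by constants on relative K\"ahler potentials. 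Now I would invoke the transitivity theorem of~\cite{lahdili3} (obtained by the method of~\cite{CPZ}): since $\T$, hence $\check\T$, is a maximal torus---as arranged at the start of this section, cf.~\cite[Theorem~4.8]{BGS} and the proof of Proposition~\ref{p:reduction}---the $\check\T_\C$-action is transitive on the set of $\T$-invariant $\K$-extremal K\"ahler metrics in $\kcl$. Hence, if $\omega_1,\omega_2$ are the $\K$-extremal metrics associated to $\varphi_1,\varphi_2$, there is $\bar g\in\check\T_\C$ with $\bar g^{\,*}\omega_1=\omega_2$; lifting $\bar g$ to $g_0\in\G$ gives $g_0\cdot\varphi_1=\varphi_2+c$ for a constant $c\in\R$, and correcting $g_0$ by the element of the central $\C^{\times}\le\G$ that realizes translation by $-c$ yields $g\in\G$ with $g\cdot\varphi_1=\varphi_2$. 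Applying $\hTheta$ and Lemma~\ref{l:equivariant} gives $g\cdot\psi_1=\psi_2$, as required.

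\emph{Main obstacle.} The substantive input is the transitivity theorem of~\cite{lahdili3}; granting it, the only real work is the bookkeeping in the second step---checking that the $\G$-action on cone potentials, after the regular identification, is precisely the action used in~\cite{lahdili3}, and keeping track of the central $\C^{\times}$ and the additive constants on Sasaki potentials. A secondary point, handled exactly as in the proof of Proposition~\ref{p:reduction}, is to guarantee that the ambient torus may be taken maximal so that~\cite{lahdili3} is applicable; this, together with the regularity of $\X$, is where the hypotheses enter, and it is also what refines the uniqueness result of~\cite{V1} to an honest statement about $\G$-orbits.
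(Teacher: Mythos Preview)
Your argument follows essentially the same route as the paper: transport via the $\G$-equivariant bijection $\hTheta$ to $\K$-extremal structures with respect to $\X$, then descend to the regular K\"ahler quotient $(M,J,L)$ and invoke Lahdili's uniqueness result~\cite{lahdili3}.

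The one step the paper spells out that you compress is \emph{why} the uniqueness in~\cite{lahdili3} --- which, as stated there, is only modulo the identity component of the centralizer $\Aut_r^{\check\T}(M,J)$ --- actually yields $\check\T_\C$-transitivity. You assert this follows from maximality of $\check\T$, but that alone is not quite enough: the paper observes that the existence of a $\K$-extremal metric forces $\check\K_\ext\in\check\tor$ (since it is central among Killing fields and $\check\T$ is maximal), and then invokes~\cite[Theorem~B1]{lahdili2} to conclude that $\Aut_r^{\check\T}(M,J)$ is reductive; since $\check\T$ is simultaneously central in this group and a maximal torus in $\Aut_r(M,J)$, the identity component of $\Aut_r^{\check\T}(M,J)$ is abelian and hence equals $\check\T_\C$. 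With that bridge in place, your bookkeeping with the central $\C^\times$ and additive constants is correct and matches the paper's implicit handling.
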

\begin{proof}  Using Proposition~\ref{bijection-potentials} and the fact
that $\hTheta$ is $\G$-equivariant (Lemma~\ref{l:equivariant}), it is enough
to establish the result for the corresponding $\K$-extremal Sasaki structures
in $\cS(\X,J^\X)^\T$. As $\X$ is regular, it is enough to show that the
induced $\check\T$-invariant $(\K, \kappa)$-extremal K\"ahler metrics $\omega$
and $\tilde \omega$ on $(M, J, \kcl, \check\T=\T/\Sph^1_\X)$ are isometric by
an element of $\check\T_\C$.  Indeed, as $\check\T$ is a maximal torus in the
reduced group of automorphisms $\Aut_r(M, J)$, and the induced extremal vector
field $\check\K_\ext:= J \grad_g \Scal_f(g)$ is central in the Lie algebra of
Killing fields of $(M, J, g, \omega)$, we must have $\check\K_\ext \in
\check{\tor}$; it then follows by \cite[Theorem B1]{lahdili2} (see also
\cite{FO,lahdili1}) that the group $\Aut_r^{\check\T}(M, J)$ of reduced
automorphisms of $(M, J)$, commuting with the $\check\T$-action, is a
reductive Lie group. As $\check\T$ is simultaneously central and maximal, it
follows that the identity component of $\Aut_r^{\check\T}(M, J)$ is abelian
and equal to $\check\T_\C$.

The result now follows from the uniqueness result
in~\cite{lahdili3}, established in the more general context of $(v,w)$-extremal
K\"ahler metrics and building on an argument in~\cite{CPZ}, which implies that
any two $\check\T$-invariant $(\check\K, \kappa)$-extremal K\"ahler metrics in
$\kcl$ are isometric by an element in the connected component of the identity
of the group $\Aut_r^{\check\T}(M, J)$.
\end{proof}

With this understood, for any $\psi_0,\psi_1 \in \Sp(\K,J^\K, \cf_0^\K)^\T$, we let
\[
d_1^{\G}(\psi_0, \psi_1) := \inf_{g_0,g_1\in \G} \{ d_1(g_0 \cdot \psi_0, g_1\cdot \psi_1) \}= \inf_{g\in \G}\{d_1(\psi_0, g\cdot \psi_1) \}.
\]
and define similarly $d_1^{\G}(\varphi_0, \varphi_1)$ for $\varphi_0, \varphi_1 \in \Sp(\X,J^\X, \cf_0^\X)^\T$.

\begin{defn} $\ME^\K$ (resp. $\ME^\X_\K$) is $\G$-proper if there exist constants $\Lambda>0$ and $C$,  such that  for any $\psi \in \Sp(\K,J^\K, \cf_0^\K)^\T$ we have  $\ME^\K (\psi) \ge \Lambda d_1^{\G}(0, \psi) - C$  (resp. for any $\varphi \in \Sp(\X,J^\X, \cf_0^\X)^\T$ we have $\ME^\X_\K(\varphi) \ge \Lambda d_1^{\G}(0, \varphi) - C$).
\end{defn}
As $\hTheta(0)=0$ by the definition of $\hTheta$, Lemmas~\ref{M} and \ref{d1} yield
\begin{cor}\label{central}  $\ME^\K$ is $\G$-proper if and only if $\ME^\X_\K$ is $\G$-proper.
\end{cor}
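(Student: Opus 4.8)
The plan is to deduce this equivalence formally from the structural properties of the change-of-polarization map $\hTheta$ already in hand: it is a bijection (by Corollary~\ref{bijection-potentials} and~\eqref{e:bij-pot}), it intertwines the two Mabuchi functionals, $\ME^\K\circ\hTheta=\ME^\X_\K$ (Lemma~\ref{M}), it is $\G$-equivariant (Lemma~\ref{l:equivariant}), it is bilipschitz for $d_1$ with a constant $\Lambda$ depending only on $(\Sm,\Ds,J,\K,\X)$ (Lemma~\ref{d1}), and it satisfies the normalization $\hTheta(0)=0$. We also use that $\G$ acts by $d_1$-isometries (Lemma~\ref{l:isometric}), which is what makes the two formulas for $d_1^{\G}$ agree and lets us reduce the double infimum in $d_1^{\G}$ to a single one.

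First I would show that $\hTheta$ distorts the $\G$-invariant distance to the basepoint by at most the factor $\Lambda$. Fix $\varphi\in\Sp(\X,J^\X,\cf_0^\X)^\T$. For any $g\in\G$, equivariance of $\hTheta$ and $\hTheta(0)=0$ give $g\cdot\hTheta(\varphi)=\hTheta(g\cdot\varphi)$ and $0=\hTheta(0)$, so Lemma~\ref{d1} yields
\[
\tfrac1\Lambda\, d_1(0,g\cdot\varphi)\ \le\ d_1\bigl(0,g\cdot\hTheta(\varphi)\bigr)\ =\ d_1\bigl(\hTheta(0),\hTheta(g\cdot\varphi)\bigr)\ \le\ \Lambda\, d_1(0,g\cdot\varphi).
\]
Taking the infimum over $g\in\G$ gives $\tfrac1\Lambda\,d_1^{\G}(0,\varphi)\le d_1^{\G}\bigl(0,\hTheta(\varphi)\bigr)\le\Lambda\,d_1^{\G}(0,\varphi)$. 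Now suppose $\ME^\K$ is $\G$-proper, say $\ME^\K(\psi)\ge\Lambda'\,d_1^{\G}(0,\psi)-C$ for all $\psi\in\Sp(\K,J^\K,\cf_0^\K)^\T$. Then for $\varphi\in\Sp(\X,J^\X,\cf_0^\X)^\T$, using Lemma~\ref{M} and the inequality just proved,
\[
\ME^\X_\K(\varphi)=\ME^\K\bigl(\hTheta(\varphi)\bigr)\ \ge\ \Lambda'\,d_1^{\G}\bigl(0,\hTheta(\varphi)\bigr)-C\ \ge\ \tfrac{\Lambda'}{\Lambda}\,d_1^{\G}(0,\varphi)-C,
\]
so $\ME^\X_\K$ is $\G$-proper. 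The converse is the same computation read backwards: since $\hTheta$ is onto, every $\psi\in\Sp(\K,J^\K,\cf_0^\K)^\T$ equals $\hTheta(\varphi)$ for a (unique) $\varphi$, and $\G$-properness of $\ME^\X_\K$ with constants $\Lambda',C$ then gives $\ME^\K(\psi)=\ME^\X_\K(\varphi)\ge\Lambda'\,d_1^{\G}(0,\varphi)-C\ge\tfrac{\Lambda'}{\Lambda}\,d_1^{\G}(0,\psi)-C$.

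I do not expect a genuine obstacle here: the statement is a formal corollary of the fact that $\hTheta$ is a surjective, $\G$-equivariant, $d_1$-bilipschitz bijection fixing the basepoint and intertwining the two functionals. The only points deserving (routine) attention are that the two $\G$-actions entering the definitions of $d_1^{\G}$ on $\Sp(\X,J^\X,\cf_0^\X)^\T$ and $\Sp(\K,J^\K,\cf_0^\K)^\T$ are exactly the ones intertwined by $\hTheta$ (Lemma~\ref{l:equivariant}), and that surjectivity of $\hTheta$ is used so that the infimum over all of $\Sp(\K,J^\K,\cf_0^\K)^\T$ in the definition of properness of $\ME^\K$ may be replaced by an infimum over the $\hTheta$-images of $\Sp(\X,J^\X,\cf_0^\X)^\T$.
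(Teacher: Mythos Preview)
Your argument is correct and is precisely the approach the paper takes: the paper's proof is a one-line invocation of Lemmas~\ref{M} and~\ref{d1} together with $\hTheta(0)=0$, and you have simply written out the details of that invocation (including the implicit use of $\G$-equivariance of $\hTheta$, Lemma~\ref{l:equivariant}, which the paper does not cite explicitly but of course needs).
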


The main result in this section extends one implication
in \cite[Theorem 1]{He1} and \cite[Theorem 5.1]{He-Li}.

\begin{Theorem}\label{main-result} Let $(\Sm, \Ds, J)$ be a compact CR manifold
of Sasaki type, $\T \leq \Aut(\Sm, \Ds, J)$ a maximal torus, and $\X, \K \in
\tor_+$ with $\X$ regular. If there exists an extremal Sasaki structure
$\cf_\psi$ associated to $\psi\in \Sp(\K,J^\K, \cf_0^\K)^\T$, then $\ME^\K$ is
$\G$-proper.
\end{Theorem}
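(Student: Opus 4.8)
The plan is to deduce Theorem~\ref{main-result} from the axiomatic properness framework of Darvas--Rubinstein~\cite{DaRu} (applied as in~\cite{BDL,Dyrefelt} in the K\"ahler case), using $\ME^\K$ on the $d_1$-completion $\mathcal{E}^1$ of $\Sp(\K,J^\K,\cf_0^\K)^\T$. The structural inputs I would invoke, all due to He~\cite{He1} and He--Li~\cite{He-Li} (the Sasaki counterparts of Darvas' theory and of Berman--Darvas--Lu), are: (a) $(\Sp(\K,J^\K,\cf_0^\K)^\T,d_1)$ embeds densely in a complete geodesic metric space $\mathcal{E}^1$; (b) $\ME^\K$ extends to a $d_1$-lower semicontinuous functional on $\mathcal{E}^1$ that is convex along the finite-energy $d_1$-geodesics; (c) regularity of minimizers: any $\psi\in\mathcal{E}^1$ minimizing $\ME^\K$ is a smooth extremal Sasaki structure in $\Sp(\K,J^\K,\cf_0^\K)^\T$. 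From the characterization of $\ME^\K$ in Definition~\ref{d:relative-Sasaki-Mabuchi} together with the convexity in (b), a smooth $\psi$ is extremal---necessarily with extremal vector field the fixed $\K_\ext\in\tor$, since $\K_\ext$ is central in the transversal isometry algebra and is determined by the fixed transversal K\"ahler class---exactly when it is a global minimizer of $\ME^\K$. In particular the hypothesis of the theorem makes $\ME^\K$ bounded below.

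Next I would pin down the minimizing set and the symmetries. By (c) every minimizer of the extended $\ME^\K$ is a smooth extremal Sasaki structure, and by Lemma~\ref{l:uniqueness}---the one place where the regularity of $\X$ is used, via the K\"ahler uniqueness result of~\cite{lahdili3}---any two such structures are related by $\G=\T_\C$. Hence $\{\psi\in\mathcal{E}^1:\ME^\K(\psi)=\inf_{\mathcal{E}^1}\ME^\K\}$ is a single, $d_1$-closed, $\G$-orbit $\G\cdot\psi$. Moreover $\ME^\K$ is $\G$-invariant: in terms of Sasaki potentials the infinitesimal $\G$-action in the direction $\Z\in\tor$ is $\dot\psi=\cf_\psi(\Z)$, so $(\d\ME^\K)_\psi(\cf_\psi(\Z))=\int_\Sm\bigl(\Scal(g_\psi)-\cf_\psi(\K_\ext)\bigr)\cf_\psi(\Z)\,\cf_\psi\wedge\d\cf_\psi^m=0$ by the very definition of the extremal vector field $\K_\ext$ (see Definition~\ref{d:contact-Futaki}); thus $\ME^\K$ is constant along $\G$-orbits, and by Lemma~\ref{l:isometric}, extended to $\mathcal{E}^1$ by density, $\G$ acts on $\mathcal{E}^1$ by $d_1$-isometries (and it does so as nicely as in the K\"ahler case~\cite{BDL}).

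With (a)--(c), the $\G$-invariance, and the minimizing set being a single $d_1$-closed $\G$-orbit, the conclusion follows from the abstract principle of~\cite{DaRu}: a $\G$-invariant, $d_1$-lower semicontinuous, geodesically convex functional on a complete geodesic metric space on which $\G$ acts by isometries, whose minimizing set is a single closed $\G$-orbit, is $\G$-proper, i.e.\ $\ME^\K(\psi)\ge\Lambda\,d_1^{\G}(0,\psi)-C$ for suitable $\Lambda>0$ and $C\in\R$. I would reproduce the standard argument: were properness to fail, one could pick $\psi_j$ (after acting by $\G$) with $d_1(0,\psi_j)=d_1^{\G}(0,\psi_j)\to\infty$ and $\ME^\K(\psi_j)$ sublinear in $d_1(0,\psi_j)$; the unit-speed $d_1$-geodesics from $0$ to $\psi_j$ subconverge to a geodesic ray along which, by convexity and lower semicontinuity, $\ME^\K$ attains its infimum, forcing the ray into the minimizing orbit $\G\cdot\psi$ and contradicting $d_1^{\G}(0,\psi_j)\to\infty$. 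This yields the $\G$-properness of $\ME^\K$ (equivalently, by Corollary~\ref{central}, of $\ME^\X_\K$).

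The genuinely delicate points lie not in this last, largely formal step but in the analytic inputs (a)--(c): the convexity of $\ME^\K$ along weak $d_1$-geodesics and, above all, the regularity of finite-energy minimizers---needed here for the \emph{extremal} relative Mabuchi functional, not merely the CSC one. I would rely on~\cite{He1,He-Li} for these, checking that their hypotheses (maximality of $\T$, the role of $\K_\ext$) match our setting, and on Lemma~\ref{l:uniqueness} for uniqueness modulo $\G$, which is where the assumption that $\X$ is regular is indispensable.
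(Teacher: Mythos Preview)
Your proposal is correct and follows essentially the same route as the paper: both invoke the Darvas--Rubinstein properness principle on $(\Sp(\K,J^\K,\cf_0^\K)^\T,d_1,\ME^\K,\G)$, citing He~\cite{He1} and He--Li~\cite{He-Li} for the $d_1$-completion, the lower semicontinuous convex extension of $\ME^\K$, and regularity of minimizers, and using Lemma~\ref{l:uniqueness} (where the regularity of $\X$ enters via the K\"ahler uniqueness of~\cite{lahdili3}) for the transitivity of $\G$ on extremal structures. The paper simply runs through the checklist (A1)--(A4), (P1)--(P7) of~\cite{DaRu} and points to the precise references, whereas you unpack the logic (including a sketch of the geodesic-ray contradiction) and verify the $\G$-invariance of $\ME^\K$ explicitly; the content is the same.
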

\begin{proof}  One can directly check that in the setting above, all the
conditions of the general existence/properness principle of \cite[Section 3]{DaRu}
hold true with
\[
(\mathcal R, d, F, G)= \Big(\Sp(\X,J^\X, \cf_0^\X)^\T, d_1, \ME^K, \G\Big).
\]
Indeed, for showing the conditions (A1)--(A4) of \cite{DaRu},
we notice that the $d_1$ completion of $\Sp(\K,J^\K, \cf_0^\K)^\T$ and the
extension of $\ME^\K$ is defined and proved in \cite{He-Li} whereas the fact
that the $\G$-action is isometric is established in Lemma~\ref{l:isometric}
above. For proving the conditions (P1)--(P7) of \cite{DaRu}, we notice that: (P1)
follows from \cite[Theorem 6.7]{He-Li} and the fact that the (relative)
Mabuchi energy $\ME^\K$ differs from the functional $\mathcal K$ in
\cite{He-Li} by an affine-linear function on bounded geodesics,
see~\cite[Prop.~10]{berndtsson} or \cite{lahdili3} for a direct argument
with respect to $\ME^\X_\K$ on the regular quotient; (P2) follows from
\cite[Theorem 6.6]{He-Li}; (P3) is the regularity result \cite[Theorem
  5.4]{He1} which can be extended from the CSC to the extremal case by the
argument in \cite{He-ext}. (P4) and (P5) are established in Lemmas
\ref{l:isometric} and \ref{l:uniqueness} above. (P6) follows from the general
principle in \cite[Section 6]{DR} (see in particular \cite[Prop.~6.8]{DR})
whereas (P7) is automatically satisfied as it can be checked easily, see also
\cite[Prop.~5.1]{He-Li}.
\end{proof}
\begin{rem}\label{r:properness}
An alternative way to establish Theorem~\ref{main-result} is to compare
directly the notion of $\G$-properness of $\ME^\K$ with the properness notion
used in \cite[Theorem~1]{He1} and \cite[Theorem~5.1]{He-Li}.  W. He considers
in \cite{He1} the identity component $G := \Aut_0(\Sm, \K, J^\K)$ of the group
of diffeomorphisms of $\Sm$ preserving $\K$ and the transversal holomorphic
structure $J^\K$, which naturally acts on the space $\cS(\K, J^\K)$ of
compatible contact forms by pullback. There is an induced action of $G$ on the
quotient space $\cS(\K,J^\K)/\Omega^1_{X,\cl}(\Sm)$ and hence, by using the
base point $\cf_0^\K$, on the marking $\Sc(\K, J^\K, \cf_0^\K)$. As $G$ does
not act on the space of $\cf_0^\K$-relative Sasaki potentials $\Sp(\K,J^\K,
\cf_0^\K)$, a slice $\Sp_0(\K,J^\K, \cf_0^\K) := ({\bf I}^\K)^{-1}(0)$ is
introduced, where ${\bf I}^\K : \Sp(\K,J^\K, \cf_0^\K) \to \R$ is a Sasaki
version of the Aubin--Mabuchi functional defined by
\[ 
\begin{split}
(\d{\bf I}^\K)_\psi(\dot \psi)
&= \int_\Sm \dot \psi\,\cf_\psi \wedge \d\cf_\psi^m, \\
   {\bf I}^\K(0) &=0.
\end{split}
\]
Unlike $\ME^\K$, ${\bf I}^\K$ is not invariant but is equivariant under
additive real constants and thus determines a unique representative $\psi_0
\in \Sp_0(\K,J^\K, \cf_0^\K)$ of $\cf_\psi \in \Sc(\K,J^\K, \cf_0^\K) $. This
leads to an action of $G$ on the slice $\Sp_0(\K,J^\K, \cf_0^\K)$, which we
denote by $[g] \cdot \psi_0$, where $[g]\in [G]$, the effective quotient of
$G$.  One direction of the statements in \cite[Theorem~1]{He1} and
\cite[Theorem~5.1]{He-Li} then yields that if there exists an extremal Sasaki
metric in $\cS(\K,J^\K)^\T$, the Mabuchi energy $\ME^\K$ is proper on
$\Sp(\K, J^\K, \cf_0^\K)^\T$ with respect to $[G]$, i.e., satisfies
\begin{equation}\label{improper-proper}
\ME^\K(\psi) \ge \Lambda d_1^{[G]}(0, \psi_0) - C, 
\end{equation}
where $d_1^{[G]}(0, \psi_0):= \inf_{g\in G} \{d_1(0, [g] \cdot \psi_0) \}$. 
 
Now $\G$ also acts on $\cS(\K,J^\K)^\T$, hence on $\Sp_0(\K,J^\K, \cf_0^\K)$, and
we let $[\G]$ denote the effective quotient. Although $\G$ is not a subgroup
of $G$, $[\G]$ is a subgroup of $[G]$ and the $\G$-orbits of induced Sasaki
structures are inside the $[G]$-orbits on $\Sp_0(\K,J^\K, \cf_0^\K)$.  On the
other hand, the action of $\G$ on $\Sp(\K,J^\K, \cf_0^\K)^\T$ includes
translations by constants, so we conclude that for any $\psi \in \Sp(\K,J^\K,
\cf_0^\K)^\T$ with projection $\psi_0$ to $\Sp_0(\K,J^\K, \cf_0^\K)$ we have
\[
d_1^{[\G]}(0, \psi_0) := \inf_{g\in \G}\{d_1(0, [g]\cdot \psi_0)\}  \ge d_1^{[G]}(0, \psi_0), \quad d_1^{\G}(0, \psi)= d_1^{\G}(0, \psi_0)  \le d_1^{[\G]}(0, \psi_0).
\]
It thus follows that the conclusion of Theorem~\ref{main-result} will hold
true if one can improve \eqref{improper-proper} by replacing $G$ with $[\G]$
for functions $\psi \in \Sp(\K,J^\K, \cf_0^\K)^\T$. An inspection of the
arguments in \cite{He1} (which in turn are an application of the general principle of \cite{DaRu}) reveals that the only missing ingredient to do so is
proving that the $[\G]$-action is transitive on the space of smooth extremal
Sasaki potentials $\psi_0 \in \Sp_0(\K,J^\K, \cf_0^\K)^\T$. This property is
established in Lemma~\ref{l:uniqueness} above.

Note that this argument yields the stronger notion of $d_1^{[\G]}$-properness
of $\ME^\K$, should an extremal Sasaki metric in $\cS(\K, J^\K)$
exist. However, as the map $\hTheta$ does not respect the corresponding slices
$\Sp_0(\K,J^\K, \cf_0^\K)^\T = ({\bf I}^\K)^{-1}(0)$ and $\Sp_0(\X,J^\X,
\cf_0^\X)^\T= ({\bf I}^\X)^{-1}(0)$, it is not immediately clear whether or
not the energy $\ME^\X_\K$ is also $d_1^{[\G]}$-proper. The somewhat weaker
but more natural notion of $d_1^{\G}$-properness we use above remedies this,
see Corollary~\ref{central}.
\end{rem}

\section{Weighted K-stability of regular Sasaki--Reeb quotients} In this
section we relate the existence of extremal Sasaki structures on a compact
Sasaki manifold of regular type with a special case of the weighted
K-stability of a polarized variety, defined in \cite{lahdili2}.  We thus
assume that $(\Sm, \Ds, J)$ is a compact CR manifold of Sasaki type, $\T\leq
\Aut(\Sm, \Ds, J)$ a torus, and $\X \in \tor_+$ a regular Sasaki--Reeb vector
field. The Sasaki--Reeb quotient $(M, J, L)$ is a smooth projective polarized
variety, as in Example~\ref{e:regular}. We denote by $\T$ the induced torus
action on $L$, and by $\check\T= \T/\Sph^1_\X$ the torus action on $(M,J)$.

\subsection{Polarized test configurations}
We first recall the notion of a (compactified) {polarized test configuration} associated to  $(M, J, L, \T)$,  going back to \cite{donaldson,Tian}.  
\begin{defn}\label{d:test-configuration} A $\T$-\emph{equivariant polarized test configuration of exponent} $s\in \N$ associated to  $(M, J, L, \T)$ is a normal polarized variety $(\tstM, \tstL)$ endowed an action $\T \leq \Aut(\tstM, \tstL)$, and
\begin{bulletlist}
\item a (flat) surjective morphism $\pi\colon \tstM \to \C P^1$ such that $\check\T$ preserves each fibre $M_{t}:= \pi^{-1}(t)$ and, for $t \neq 0$,   $(M_t,  L_t := {\tstL}_{|M_t})$ is $\T$-equivariantly isomorphic, as a polarized variety,  to $(M, L, \T)$;
\item a $\C^\times$-action $\rho$ on $(\tstM, \tstL)$ commuting with $\T$ and covering the usual $\C^\times$-action on $\C P^1$;
\item  an isomorphism  of polarized varieties
$$\lambda\colon \Big(M \times (\C P^1\setminus \{0\}), \, L^{s}\otimes \cO_{\C P^1}(s)\Big) \cong (\tstM \setminus M_0,  \, \tstL), $$
which is equivariant with respect to the actions of $\T\times\Sph_0^1$ on  $\big(M \times (\C P^1\setminus \{0\}), \, L^{s}\otimes \cO_{\C P^1}(s)\big)$  and  $\T\times\Sph_\rho^1$ on $\big(\tstM \setminus M_0,   \, \tstL\big)$, where $\Sph_0^1$ stands for the standard $\Sph^1$-action on $\cO(1) \to \C P^1$ and $\Sph^1_\rho$ is the $\Sph^1$-action induced by $\rho$.
\end{bulletlist}
The test configuration $(\tstM, \tstL)$ as above is  called \emph{product} if $\big(\tstM\setminus M_\infty, \pi\big)$ is  $\check\T$-equivariantly isomorphic (as morphism of complex varieties)  to $\big(M\times (\C P^1\setminus\{\infty\}), \pi_{\C P^1}\big)$.
\end{defn}
\begin{rem}\label{r:lifted action} Polarized test configurations are often
introduced (see e.g.~\cite{odaka,wang}) only by the corresponding requirements
for the induced torus actions on $\tstM$, $M$ and $\C P^1$, i.e., are
independent of the chosen lifts of these actions to $\tstL$ and $L$. In
particular, as $\cO(1)$ is trivial over $\C P^1\setminus\{0\}$, the third
condition reduces to the existence of a biholomorphism $\check{\lambda}\colon
M \times \big(\C P^1\setminus\{0\}\Big) \to \tstM \setminus M_0$ which is
equivariant with respect to the induced actions of $\check\T \times
\check{\Sph}_0^1$ on $M \times (\C P^1\setminus\{0\})$ and $\check\T\times
\Sph^1_{\check\rho}$ on $\tstM \setminus M_0$.  It is not hard to see that
given such a biholomorphism $\check{\lambda}$ (and assuming $L$ and $\tstL$
are very ample) one can always lift $\check\rho$, $\check\T$ and
$\check{\lambda}$ to obtain a polarized test configuration in the sense of
Definition~\ref{d:test-configuration}.
\end{rem}

We shall further assume that the $\T$-equivariant polarized test configuration
$(\tstM, \tstL,\pi, \rho)$ for $(M, J, L)$ as above has a smooth total space
$\tstM$, and take $s=1$ for simplicity. We consider the associated polarized
complex cones $(\tstC ,\X)$ and $(\Cm, \X)$, where $\tstC= (\tstL^*)^\times,
\Cm= (L^*)^\times$ and $\X$ denotes the regular Sasaki--Reeb vector field
corresponding to the fibrewise $\Sph^1$-actions on $\tstL$ and $L$ (see
Example~\ref{e:regular-cone}).  According to Lemma~\ref{l:key-identification},
the induced momentum polytopes for the $\check\T$-actions on $\tstM$ and $M$
are equal to the transversal polytope $P_\X$ defined
by~\eqref{e:transversal-polytope}.  By Lemma~\ref{l:momentum-cone-coincide},
$\K\in \tor_+$ is determined by an affine-linear function $\ell_K(x)$ on
$\check{\tor}^*$, which is positive on $P_\X$, whereas the extremal vector
field $\K_\ext$ gives rise to an affine-linear function $\ell_\ext(x)$ on
$\check{\tor}^*$.

For a $\check\T$-invariant K\"ahler metric $\tilde \Omega \in 2\pi c_1(\tstL)$,  we let $m_{\tilde \Omega}$ denote the $\check\T$-momentum map sending $\tstM$ to $P_\X$, and set
\begin{equation}\label{extremal-futaki}
\begin{split}
\Fut^\ext_\K(\tstM, \tstL) :=  & -\int_{\tstM} \Big(\Scal_{\tilde f, m+2}(\tilde \Omega) - \ell_\ext(m_{\tilde \Omega})\Big){\tilde f}^{-3-m}\frac{{\tilde \Omega}^{m+1}}{(m+1)!} \\
 &  +  2\int_{\tstM} {\tilde f}^{-1-m} (\pi^*\omega_{FS})\wedge\frac{{\tilde \Omega}^m}{m!},
 \end{split}
\end{equation}
where $\omega_{\rm FS}$ is a Fubini--Study metric with ${\rm Ric}(\omega_{\rm FS})=\omega_{\rm FS}$, $\tilde f= \ell_K(m_{\tilde \Omega})$ is a Killing potential of the Killing vector field induced by $\K$ on $\tstM$,  and we have set 
\[ \Scal_{f,\nu}(g) := f^2\Scal(g) - 2(\nu-1) f\Delta_g f - \nu(\nu-1)|\d f|^2_g, \] 
as in \cite{ACGL}.

Formula \eqref{extremal-futaki} is in fact the {\it weighted Futaki invariant}
of Lahdilli~\cite{lahdili2} with weights $v(x)= \ell_\K(x)^{-m-1}$ and
$w(x)=\ell_\ext(x)\ell_{K}^{-m-3}(x)$; it follows that it is independent of
the choice of $\tilde \Omega \in 2\pi c_1(\tstL)$.  We also notice that the
contact Futaki invariant associated to $(\Sm, \Ds, \K)$ (see
Definition~\ref{d:contact-Futaki}) vanishes if and only if $\K_\ext = c_\K
\K$, i.e., $\ell_\ext(x) = c_\K \ell_\K(x)$, in which case we have
$\Fut^\ext_\K(\tstM, \tstL) = \Fut_\K(\tstM, \tstL)$, where we have set
\begin{equation}\label{abdellah-futaki-0}
\begin{split}
\Fut_\K(\tstM, \tstL) :=  & -\int_{\tstM} \Big(\Scal_{\tilde f, m+2}(\tilde \Omega) - c_\K{\tilde f}\Big) \tilde f^{-3-m} \frac{{\tilde \Omega}^{m+1}}{(m+1)!}   \\ & + 2 \int_\tstM \tilde f^{-1-m} (\pi^*\omega_{FS})\wedge\frac{{\tilde \Omega}^m}{m!}.
\end{split}
\end{equation}

Proposition~\ref{p:reduction} and \cite[Theorem~2]{lahdili2} then yield the
following necessary condition for the existence of extremal Sasaki structures
in $\cS(\K, J^K)$, which we refer to as \emph{relative $\K$-weighted {\rm
    K}-semistability with respect to $\T$}.

\begin{Theorem}\label{Theorem:lahdili1} Let $(\Sm, \Ds, J, \X, \K)$ be as in Theorem~\ref{main-result}. If there exists an extremal Sasaki structure in $\cS(\K, J^\K)$, then we have $\Fut^\ext_{K}(\tstM, \tstL)  \ge 0$ for any $\T$-equivariant smooth polarized test configuration with reduced central fibre  $(\tstM, \tstL)$ associated to $(M, J, L)$.
\end{Theorem}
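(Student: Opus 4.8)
The plan is to obtain the statement by combining Proposition~\ref{p:reduction} with Lahdili's weighted K-semistability theorem \cite[Theorem~2]{lahdili2}, and then matching the two Donaldson--Futaki invariants; no analysis beyond what is already in place is needed.

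First I would reduce to the K\"ahler quotient. Since $\X\in\tor_+$ is regular, the Sasaki--Reeb quotient of $(\Sm,\Ds,J,\X)$ is the smooth polarized variety $(M,J,L)$, and Proposition~\ref{p:reduction} gives that the existence of an extremal Sasaki structure in $\cS(\K,J^\K)$ is equivalent to the existence of a $\K$-extremal K\"ahler metric $(g,\omega)$ on $(M,J)$ with $\omega\in 2\pi c_1(L)$. By Definition~\ref{d:K-extremal} and the discussion of \S\ref{ss:calabi-problem-sasaki}, such a metric is exactly a $(\check\K,\kappa)$-extremal K\"ahler metric, where $\check\K=\K+\spn{\X}\in\check\tor$ and $\kappa$ is the constant pinned down by $\K$; by the remarks of \S\ref{ss:calabi-problem-kahler} it may be taken invariant under the maximal torus $\check\T=\T/\Sph^1_\X$. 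Writing $P_\X\sub\check\tor^*$ for the momentum polytope of $(M,\omega,\check\T)$ (equivalently, the transversal polytope of $(\Sm,\X,J^\X,\T)$), the $(\check\K,\kappa)$-extremal condition is, by Remark~\ref{r:Lahdili-Mabuchi}, precisely Lahdili's $(v,w)$-extremal condition for the smooth weights
\[
v(x)=\ell_\K(x)^{-m-1},\qquad w(x)=\ell_\ext(x)\,\ell_\K(x)^{-m-3},
\]
where $\ell_\K,\ell_\ext$ are the affine-linear functions on $P_\X$ associated to $\K$ and to the extremal vector field $\K_\ext\in\tor$ of Definition~\ref{d:contact-Futaki}. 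Here $v>0$ on $P_\X$ since $\K\in\tor_+$ forces $\ell_\K>0$ there (Lemma~\ref{l:momentum-cone-coincide}), so the hypotheses of \cite{lahdili2} hold.

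Next I would invoke \cite[Theorem~2]{lahdili2}: the existence of a $(v,w)$-extremal K\"ahler metric in $2\pi c_1(L)$ forces $\Fut_{v,w}(\tstM,\tstL)\ge 0$ for every smooth $\T$-equivariant polarized test configuration $(\tstM,\tstL)$ of $(M,J,L)$ with reduced central fibre, which is exactly the class of test configurations appearing in the statement. It then remains to identify $\Fut^\ext_\K(\tstM,\tstL)$ with a positive multiple of $\Fut_{v,w}(\tstM,\tstL)$: choosing a $\check\T$-invariant $\tilde\Omega\in 2\pi c_1(\tstL)$ with momentum map $m_{\tilde\Omega}\colon\tstM\to P_\X$, the Killing potential $\tilde f=\ell_\K(m_{\tilde\Omega})$ is automatically $\kappa$-normalized on each fibre, and substituting the weights $v,w$ into Lahdili's integral formula for $\Fut_{v,w}$ --- with the Fubini--Study term normalized by $\mathrm{Ric}(\omega_{FS})=\omega_{FS}$ --- reproduces $\tfrac1{2\pi}\Fut^\ext_\K(\tstM,\tstL)$ as displayed in~\eqref{extremal-futaki}. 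Hence $\Fut^\ext_\K(\tstM,\tstL)=2\pi\,\Fut_{v,w}(\tstM,\tstL)\ge 0$, which is the asserted relative $\K$-weighted K-semistability with respect to $\T$.

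The conceptual content is thus entirely carried by Proposition~\ref{p:reduction} and \cite{lahdili2}; the main obstacle, and where the care goes in a full write-up, is normalization bookkeeping: (a) checking that the transversal extremal vector field $\K_\ext$ of Definition~\ref{d:contact-Futaki}, which governs $\ME^\K$, coincides with the ``extremal affine function'' $\ell_\ext$ entering Lahdili's $(v,w)$-problem for \emph{these} weights $v,w$; (b) that the $\kappa$-normalization of $f=\cf^\X(\K)$ built into the $\K$-extremal condition matches the fibrewise normalization of $\tilde f$ implicit in~\eqref{extremal-futaki}; and (c) that every normalizing constant --- the factor $2\pi$, the conventional $c=\tfrac14$ in the conical potential, and the Fubini--Study normalization --- is tracked consistently, so that the two Futaki invariants genuinely differ by the stated positive scalar. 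None of these is conceptually hard, but none should be skipped.
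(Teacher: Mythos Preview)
Your proposal is correct and follows exactly the paper's approach: the theorem is stated immediately after the remark that \eqref{extremal-futaki} is Lahdili's weighted Futaki invariant with weights $v(x)=\ell_\K(x)^{-m-1}$ and $w(x)=\ell_\ext(x)\ell_\K(x)^{-m-3}$, and the paper simply says ``Proposition~\ref{p:reduction} and \cite[Theorem~2]{lahdili2} then yield'' the result, with no further proof. Your write-up spells out the normalization checks more carefully than the paper does, but the logical skeleton is identical.
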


We can sharpen this to a positivity result as follows.

\begin{lemma}\label{l:key} Let  $(M, J, L)$ a smooth polarized complex manifold,  $\T$ be a maximal torus in $\Aut(M,L)$, and $\omega_0 \in 2\pi c_1(L)$ a $\check\T$-invariant K\"ahler metric,  where $\check\T\leq \Aut_r(M, J)$ is the induced torus action on $M$. Suppose that  $\ME_{\check K,\kappa}$ is $\G$-proper with respect to the  action of $\G= \T_\C$ on $\check\T$-invariant $\omega_0$-relative K\"ahler potentials, induced by its natural action on hermitian metrics on $L$.  Then,  for any $\T$-equivariant smooth polarized test configuration $(\tstM, \tstL)$ associated to $(M, J, L)$,  which has reduced central fibre $(M_0, L_0)$ and is not a product, we have
\[
\Fut_{K}^\ext(\tstM, \tstL)  > 0.
\]
\end{lemma}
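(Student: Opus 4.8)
The plan is to run, in the weighted setting, the by-now standard argument deducing K-stability from $\G$-properness of a Mabuchi-type energy, as in the treatment of the (relative) Calabi problem in \cite{BDL, Dyrefelt}; the point is that the only genuinely weighted ingredients are the energy $\ME_{\check\K,\kappa}$ itself and Lahdili's slope formula from \cite{lahdili2}, while the relevant $d_1$-geometry and the action of $\G=\T_\C$ on $\check\T$-invariant $\omega_0$-relative K\"ahler potentials are the classical ones. To begin, I would pass to the polytope description of Remark~\ref{r:Lahdili-Mabuchi}: $\ME_{\check\K,\kappa}=2\pi\,\ME_{v,w}$ for the weights $v(x)=\ell_\K(x)^{-m-1}$ and $w(x)=\ell_\ext(x)\ell_\K(x)^{-m-3}$ on $P_\X$, and by~\eqref{extremal-futaki} the invariant $\Fut_\K^\ext(\tstM,\tstL)$ coincides with Lahdili's weighted Donaldson--Futaki invariant $\Fut_{v,w}(\tstM,\tstL)$. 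Thus the hypothesis is that $\ME_{v,w}$ is $\G$-proper for $d_1^\G$, and the goal is $\Fut_{v,w}(\tstM,\tstL)>0$.

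To the smooth $\T$-equivariant polarized test configuration $(\tstM,\tstL)$ with reduced central fibre I would attach its (Phong--Sturm) geodesic ray $\{\varphi_t\}_{t\geq0}$, emanating from $0$ in the $d_1$-completion of the space of $\check\T$-invariant $\omega_0$-relative K\"ahler potentials, obtained as the envelope of $\Sph^1_\rho$-invariant subgeodesics dominated by the relative potential on the total space; smoothness of $\tstM$ ensures that $\varphi_t$ is bounded, of finite energy, and of linear $d_1$-growth. By \cite{lahdili2}, the weighted Mabuchi energy is convex along $\varphi_t$ and its asymptotic slope computes the weighted Donaldson--Futaki invariant:
\[
\lim_{t\to\infty}\frac{\ME_{v,w}(\varphi_t)}{t}=\Fut_{v,w}(\tstM,\tstL).
\]
I would then invoke $\G$-properness along this ray, $\ME_{v,w}(\varphi_t)\geq\Lambda\,d_1^\G(0,\varphi_t)-C$. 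If $\Fut_{v,w}(\tstM,\tstL)\leq0$, then convexity together with $\ME_{v,w}(\varphi_0)=0$ forces $\ME_{v,w}(\varphi_t)\leq\Fut_{v,w}(\tstM,\tstL)\,t\leq0$ for all $t\geq0$, whence $d_1^\G(0,\varphi_t)\leq C/\Lambda$ is bounded in $t$.

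It then remains to see that a finite-energy geodesic ray issuing from $0$ along which $d_1^\G(0,\varphi_t)$ stays bounded must arise from a \emph{product} test configuration, which contradicts the hypothesis and so gives $\Fut_\K^\ext(\tstM,\tstL)=\Fut_{v,w}(\tstM,\tstL)>0$. This is exactly the content I would import from \cite{BDL, Dyrefelt}: such a ray is asymptotic to a one-parameter $\G$-orbit, and --- using here the maximality of $\T$ and the $\T$-equivariance of $(\tstM,\tstL)$ to keep all competing degenerations inside $\T_\C$ --- the associated test configuration is then a product. The arguments of \cite{BDL, Dyrefelt} use only the $d_1$-geometry and the fact that $\G$ acts by isometries (Lemma~\ref{l:isometric}), so they transfer despite $\G$ acting here on hermitian metrics on $L$ rather than on K\"ahler potentials by automorphisms. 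I expect this last step --- essentially the weighted/Sasakian analogue of the statement that the minimal norm of a test configuration vanishes precisely on products --- to be the main obstacle, whereas the remaining ingredients (construction and regularity of the geodesic ray, linear $d_1$-growth, convexity of $\ME_{v,w}$, and the slope formula) are available off the shelf from \cite{lahdili2} and the references therein and require only routine bookkeeping in our setting.
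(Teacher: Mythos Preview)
Your approach follows the same BDL/Dyrefelt template as the paper and the overall architecture is correct, but there is one point where a gap can open: you apply the $\G$-properness inequality, the slope formula, and convexity directly to the Phong--Sturm geodesic ray $\varphi_t$, which is only $C^{1,\bar 1}$. The hypothesis of the lemma is properness on \emph{smooth} $\check\T$-invariant potentials, and the slope formula of \cite[Thm.~7]{lahdili2} is proved for the \emph{smooth} ray $u_t$ induced by the $\C^\times$-action on $\T$-invariant hermitian metrics on $\tstL$, not for the geodesic ray; likewise convexity of $\ME_{v,w}$ along weak geodesics is \cite{lahdili3}, not \cite{lahdili2}. To run your argument as written you would need to extend $\ME_{\check\K,\kappa}$ and the properness inequality to the $d_1$-completion, and in the weighted setting this is not immediate (it is in fact one of the technical themes of the paper, handled elsewhere via the map $\hTheta$ and the Sasaki pluripotential theory of \cite{He-Li}).

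The paper sidesteps all of this by reorganizing the argument: it first works with the smooth ray $u_t$, where both \cite[Thm.~7]{lahdili2} and the properness hypothesis apply on the nose, to obtain $d_1^\G(0,u_t)\leq o(t)$ under the assumption $\Fut_\K^\ext(\tstM,\tstL)=0$; only then does it introduce the Phong--Sturm ray $\varphi_t$ and transfer this bound via the BDL estimate $|\varphi_t-u_t|\leq C$, giving $d_1^\G(0,\varphi_t)\leq o(t)$. (This is only a sublinear bound rather than your bounded one, but \cite[Prop.~4.10]{Dyrefelt} is designed precisely for that case.) From there the conclusion that $(\tstM,\tstL)$ is a product is as you describe, via \cite[Lemma~4.1, Prop.~4.3]{BDL} and \cite[Thm.~A.6]{Dyrefelt}. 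The paper also flags a subtlety you pass over: since here $\G=\T_\C$ acts on hermitian metrics on $L$ rather than on potentials through $\Aut_r(M,J)$, one must verify that the orbits $t\mapsto\flow_t^{-J\Y}\cdot\varphi$ are $d_1$-geodesics; this is handled by observing (via \cite[Lemma~4.2]{BDL}) that they differ from the usual normalized-automorphism orbits only by an affine function of $t$.
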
 
\begin{proof} The $\G$-properness implies that  $\ME_{\check\K,\kappa}$  is bounded from below on the space of $\check\T$-invariant $\omega_0$-relative K\"ahler potentials, so that by \cite[Thm.~7]{lahdili2}, we have  $$\Fut_{\check\K}^\ext(\tstM, \tstL) \ge 0.$$  We thus have to establish that  if $\Fut_\K^\ext(\tstM, \tstL) =0$  for a  smooth polarized  test configuration  $(\tstM, \tstL, \T)$ with reduced central fibre associated to $(M, J, L, \T)$, then $(\tstM, \tstL, \T)$ must be  a  $\C^\times$-equivariant  compactification at infinity of the product $\C \times M$.  To this end,  we use a slight modification of the  arguments from \cite{BDL,Dyrefelt}: suppose  that $\Fut_\K^\ext(\tstM, \tstL) =0$ and denote by $u_t$ the smooth ray of  $\check\T$-invariant $\omega_0$-relative K\"ahler potentials on $(M, J, \omega, \check\T)$  with $u_0=0$ (defined by the $\C^\times$-action on $\T$-invariant hermitian products on $\tstL \to \tstM$  on the test configuration $(\tstM, \tstL, \T)$). Using that
\[
	0=\Fut_{K}^\ext(\tstM, \tstL) = \lim_{t\to +\infty} \frac{\ME_{\check\K, \kappa}(u_t)}{t}
\]
(see again~\cite[Thm.~7]{lahdili2}) and the $\G$-properness of
$\ME_{\check\K, \kappa}$, we conclude that
\begin{equation}\label{bounded}
0 \le d_1^{\G}(0, u_t) \le o(t).
\end{equation}
To any test configuration $(\tstM, \tstL)$ as above,  and a $\check\T$-invariant $\omega_0$-relative K\"ahler potential $\varphi_0$, Phong--Sturm~\cite{PS} associate a $C^{1, \bar 1}$ geodesic ray $\varphi_t$ emanating from $\varphi_0$. A key estimate in \cite{BDL} reads
\[ |\varphi_t - u_t| \le C \]
which implies $d_1(u_t, \varphi_t) \le C$. As $\G$ acts by $d_1$-isometries (Lemma~\ref{l:isometric})  and using  the triangle inequality, we have
 \[d_1^{\G}(0, \varphi_t) -C \le d_1^{\G}(0, u_t). \]
and also 
 \begin{equation}\label{bounded-geodesic}
	0 \le d_1^{\G}(\varphi_0, \varphi_t) \le o(t).
	\end{equation}
 In our case,  we can take $\varphi_0=0$ (as we consider the space $\T$-invariant  hermitian metrics $h=e^{-2\varphi} h_0$ on $L$  with  $\T$  being a maximal torus in $\Aut(M, L)$). In \cite{DaRu} and \cite{BDL} the authors rather consider $\varphi_0$ to be the normalized potential  of a CSC metric in order to insure  that it is fixed under a maximal compact subgroup of $K \leq \Aut_r(M,J)$,  but we do not  need this assumption as we apply \cite[Prop.~6.8]{DaRu} with $K:=\T$ and $G:= \T_\C=\G$ acting on the space $\check\T$-invariant $\omega_0$-relative K\"ahler potentials. One property needed for this result that we did not find readily available  in the literature is that for a vector field $\Y\in \tor$ on $L$,  the flow $\flow_t^{-J\Y} \cdot \varphi$ generates a smooth geodesic in the space of $\check\T$-invariant potentials. This is classically known to be true for the induced action $[\flow_t^{-J\Y}]\cdot \varphi$ on normalized relative K\"ahler potentials with respect to the Aubin--Mabuchi functional ${\bf I}$, i.e.,  such that  ${\bf I}\Big([\flow_t^{-J\Y}]\cdot \varphi\Big)=0$ (see e.g.~\cite{gauduchon-book}), but the argument in \cite[Lemma~4.2]{BDL} actually shows that  $\flow_t^{-J\Y} \cdot \varphi$ differs from  $[\flow_t^{-J\Y}]\cdot \varphi$ by an affine linear function in $t$.
 
 Now, the arguments in \cite[Lemma~4.1]{BDL} and \cite[Prop.~4.10]{Dyrefelt} (we have a $o(t)$ bound for $d_1^{\G}(\varphi_0, \varphi_t)$ as in \cite{Dyrefelt}) extend tautologically in our situation,  and show that $\varphi_t$ is induced by the flow of a vector field $J\Y$ on $L$ with $\Y  \in \tor$. Finally, the fact that $(\tstM, \tstL)$ is a product test configuration follows from \cite[Prop.~4.3]{BDL} or \cite[Thm.~A.6]{Dyrefelt}.
\end{proof}

This lemma now leads to the following main result, which is an improvement of
Theorem~\ref{Theorem:lahdili1} to the following statement, which we call
\emph{relative $\K$-weighted {\rm K}-stability with respect to $\T$}.

\begin{Theorem}\label{Theorem:polystable}  Let $(\Sm, \Ds, J, \X)$ be a compact regular Sasaki manifold,  $\T$ a maximal torus in $\Aut(\Sm, \Ds, J)$, and  $\K \in \tor_+$ a Sasaki--Reeb vector field such that  there exists an extremal Sasaki structure in $\cS(\K, J^\K)$. Denote by  $(M, J, L, \T)$ the  smooth polarized variety obtained as Sasaki--Reeb quotient  with respect to $\X$,   with the induced  action of $\T$ on $L$.  Then,  for any $\T$-equivariant smooth polarized test configuration $(\tstM, \tstL)$ associated to $(M, J, L)$,  which has reduced central fibre $(M_0, L_0)$ and is not a product, we have $\Fut_{K}^\ext(\tstM, \tstL)  > 0.$
\end{Theorem}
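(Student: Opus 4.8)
The plan is to deduce Theorem~\ref{Theorem:polystable} by combining the $\G$-properness of the weighted Mabuchi energy (established via Theorem~\ref{main-result} and Corollary~\ref{central}) with Lemma~\ref{l:key}; the analytic heavy lifting is already in place, so the work is to line up hypotheses, markings and group actions. First I would reduce the hypothesis to the normalized $\T$-invariant form required by Theorem~\ref{main-result}. Since extremality of a Sasaki structure depends only on its transversal K\"ahler geometry, any $\cf \in \cS(\K,J^\K)$ may be translated by a closed basic $1$-form (Remark~\ref{r:trans-class}) to lie in the marking $\Sc(\K,J^\K,\cf_0^\K)$ without affecting extremality; running the conjugation argument at the end of the proof of Proposition~\ref{p:reduction} (via \cite[Theorem~4.8]{BGS} and the finite-dimensionality of $\Aut(\Cm,J)^\K$) then produces an extremal Sasaki structure $\cf_\psi$ associated to some $\psi \in \Sp(\K,J^\K,\cf_0^\K)^\T$. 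As $\X$ is regular, Theorem~\ref{main-result} applies and gives that $\ME^\K$ is $\G$-proper, whence $\ME^\X_\K$ is $\G$-proper by Corollary~\ref{central}.

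The second step transports this to the regular quotient. Because $\X$ is regular, Example~\ref{e:regular-cone} identifies $\cpot_\X(\Cm,J)^\T$ with the space of $\check\T$-invariant hermitian metrics of positive curvature on the polarization $L$, and under this identification the $\G=\T_\C$-action used throughout (coming from the holomorphic action of $\T_\C$ on the cone $\Cm=(L^*)^\times$, Lemma~\ref{l:complexified-action}) corresponds to the natural action of $\G$ on hermitian metrics on $L$. Via Remark~\ref{r:Lahdili-Mabuchi}, $\ME^\X_\K$ then coincides, up to the positive factor $2\pi$, with the weighted Mabuchi energy $\ME_{\check\K,\kappa}$ on the space of $\check\T$-invariant $\omega_0$-relative K\"ahler potentials on the Sasaki--Reeb quotient $(M,J,\omega_0)$, $\omega_0\in 2\pi c_1(L)$; hence $\ME_{\check\K,\kappa}$ is $\G$-proper in the sense demanded by Lemma~\ref{l:key}. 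The final step is to check that $(M,J,L,\T,\omega_0)$ meets the hypotheses of Lemma~\ref{l:key}: $(M,J,L)$ is a smooth polarized manifold since $\X$ is regular, $\omega_0$ is $\check\T$-invariant and lies in $2\pi c_1(L)$, and $\T$ is a maximal torus in $\Aut(M,L)$. For the last point, a maximal torus $\T$ of $\Aut(\Sm,\Ds,J)$ containing $\X$ is abelian, hence commutes with $\X$, descends to $\Aut(M,J)$, and acts by bundle automorphisms of $L$; one checks that the resulting torus, which contains the fibre-rotation circle coming from $\Sph^1_\X$, is maximal in $\Aut(M,L)$. Lemma~\ref{l:key} then yields $\Fut^\ext_\K(\tstM,\tstL)>0$ for every $\T$-equivariant smooth polarized test configuration $(\tstM,\tstL)$ of $(M,J,L)$ with reduced central fibre that is not a product, which is precisely the assertion of Theorem~\ref{Theorem:polystable} (an improvement of the K-semistability in Theorem~\ref{Theorem:lahdili1}).

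I expect the only genuine obstacle to be bookkeeping rather than substance: one must be scrupulous about the chain of equivalences in play---between $\T$-invariant cone potentials, Sasaki potentials, hermitian metrics on $L$ and relative K\"ahler potentials---and about the matching normalizations, in particular that the $\kappa$-normalized Killing potential $f_\varphi$ entering $\ME_{\check\K,\kappa}$ equals $\cf_\varphi^\X(\K)$ transported through the splitting $\Cm\cong\R^+\times\Sm$, and that the $\G$-action of Theorem~\ref{main-result} literally agrees with the one in Lemma~\ref{l:key}. All the genuinely hard input---the extension of $\ME^\K$ to the $d_1$-completion, the bilipschitz property of $\hTheta$ (Lemma~\ref{d1}), the uniqueness/transitivity result (Lemma~\ref{l:uniqueness}), and the structural arguments of \cite{BDL,Dyrefelt} packaged into Lemma~\ref{l:key}---is already available, so once these identifications are pinned down the theorem follows immediately.
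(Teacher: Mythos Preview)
Your proposal is correct and follows essentially the same route as the paper's own proof: invoke Theorem~\ref{main-result} to obtain $\G$-properness of $\ME^\K$, transfer to $\ME^\X_\K$ via Corollary~\ref{central}, identify this with $\ME_{\check\K,\kappa}$ on the regular quotient through Remark~\ref{r:Lahdili-Mabuchi} and Example~\ref{e:regular-cone}, and conclude with Lemma~\ref{l:key}. You are in fact more explicit than the paper about the preliminary reduction to a $\T$-invariant extremal Sasaki potential (via the conjugation argument of Proposition~\ref{p:reduction}) and about checking that $\T$ is maximal in $\Aut(M,L)$, both of which the paper leaves implicit.
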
 
\begin{proof}   By Theorem~\ref{main-result}, the (modified) Mabuchi
energy $\ME^K$ is $\G$-proper and by Corollary~\ref{central}, the
corresponding $\K$-twisted modified Mabuchi energy $\ME^\X_\K$ is also
$\G$-proper.  Recall that $\G$-action was introduced by the natural action of
$\T_\C$ on the space of cone potentials $\cpot_\X(\Cm, J)$, Equivalently, in
terms of the smooth polarized variety $(M, J, L, \T)$, the group $\G=\T_\C$
acts naturally on the space of $\T$-invariant hermitian metrics $h$ on $L$
and, therefore, by considering the corresponding curvatures, on the space of
$\check\T$-invariant K\"ahler metrics $\omega \in 2\pi c_1(L)$. Furthermore,
by introducing a basepoint $h_0$ (with curvature $\omega_0$), $\G$ acts also
on the space of $\check\T$-invariant $\omega_0$-relative K\"ahler potentials
(which is naturally identified with the space $\Sp(\X, J^\X, \cf_0^\X)^\T$ of
$\T$-invariant Sasaki potentials).  As we recalled in
Remark~\ref{r:Lahdili-Mabuchi}, the $\K$-twisted Mabuchi energy $\ME^\X_\K$
defined on the space $\Sp(\X, J^\X, \cf_0^\X)^\T$ corresponds to the relative
$(\check{K}, \kappa)$-Mabuchi energy $\ME_{\check\K, \kappa}$ acting on the
space of $\check\T$-invariant $\omega_0$-relative K\"ahler potentials
$\varphi$ of K\"ahler metrics $\omega_{\varphi}=\omega_0 + \d \d^c \varphi \in
2\pi c_1(L)$.

By Theorem~\ref{main-result}, $\ME_{\check\K,\kappa}$ is $\G$-proper  in the sense that for any $\check\T$-invariant $\omega_0$-relative K\"ahler potential $\varphi$
\[ \ME_{K,\kappa}(\varphi) \ge \Lambda d_1^{\G}(0, \varphi)  - C,\]
where we recall $d_1^{\G}:=\inf_{g\in \G}d_1(0, g\cdot \varphi)$. The result now
follows from Lemma~\ref{l:key} below.
\end{proof}
\begin{rem}   One can extend the conclusions of Theorems~\ref{main-result} and \ref{Theorem:polystable} to smooth polarized test configurations  with reduced central fibre, which are $\T$-equivariant with respect to the torus $\T \leq \Aut(M, L)$  generated by the  Sasaki--Reeb vector fields $\X, \K$,  and the  CR vector field  $\K_\ext$  of Definition~\ref{d:contact-Futaki}.  To this end, one needs to consider instead of $\G = \T_\C$, the action of the centralizer $G_\T$ of $\T$ in $\Aut(M,L)$ (which also acts on the spaces $\cpot_\X(\Cm, J)$ and $\cpot_\K(\Cm, J)$), and use the uniqueness of $\T$-invariant $\K$-extremal metrics modulo $G_\T$ (which is  established \cite{lahdili3}) in order to obtain the  properness of $\ME_{\check{K}, \kappa}$ with respect to $G_\T$. \end{rem}

\subsection{Transcendental K\"ahler test configurations} A key feature of Lahdilli's construction~\cite{lahdili2} is that it allows one to extend the  conclusion of Theorem~\ref{Theorem:lahdili1} to a more general class of smooth test configurations, the {\it K\"ahler test configurations} introduced in \cite{DR,dyrefelt}, which are defined as follows
\begin{defn}\label{d:kahler-test-configuration} A  $\check\T$-equivariant K\"ahler test configuration associated to the K\"ahler manifold $(M,J)$ with  a fixed K\"ahler  class  $\kcl$  is a  smooth K\"ahler  manifold $\tstM$,  endowed a K\"ahler class $\tstA$ and a torus $\check\T \leq \Aut_r(\tstM)$ in its reduced group of automorphisms, and
\begin{bulletlist}
\item a surjective  holomorphic map $\pi\colon \tstM \to \C P^1$ such that $\check\T$ preserves each fibre $M_{t}:= \pi^{-1}(t)$ and, for $t \neq 0$,   $(M_t,  \kcl_t := {\tstA}_{|M_t})$ is $\check\T$-equivariantly isomorphic to $(M, \kcl, \check\T)$;
\item a $\C^\times$-action $\rho$ commuting with $\check\T$ and covering the usual $\C^\times$-action $\rho_0$ on $\C P^1$;
\item  a biholomorphism
$$\lambda \colon \Big(M \times (\C P^1\setminus \{0\}), \, \kcl \Big) \cong (\tstM \setminus M_0,  \, \tstA), $$
which is equivariant with respect to the actions of $\check\T\times \Sph_{\rho_0}^1$ on $M \times (\C P^1\setminus \{0\})$ and  $\check\T\times \Sph_\rho^1$ on $\tstM \setminus M_0$.
\end{bulletlist}
\end{defn}
Even in the case when $\kcl =2\pi c_1(L)$ for some  polarization $L$ of $(M,J)$,  and $\tstM$ is a smooth projective manifold, the K\"ahler class $\tstA$ on $\tstM$ need not be integral, thus allowing for a stronger K-stability condition. Indeed, following \cite{lahdili2}, if we have a smooth,  $\check\T$-equivariant test configuration $(\tstM, \tstA)$  associated to $(M, J, L)$ as above,  we can still use formula  \eqref{extremal-futaki}  in order to introduce  the numerical  invariant $\Fut_\K^\ext(\tstM, \tstA)$ (which again is independent of the choice of  a $\check\T$-invariant K\"ahler metric $\tilde \Omega \in \tstA$). Furthermore, according to \cite[Theorem 2]{lahdili2},  $\Fut^\ext_\K(\tstM, \tstA) \ge 0$  for any $\check\T$-equivariant smooth K\"ahler test configuration with reduced central fibre $(\tstM, \tstA)$ of $(M, J,2\pi c_1(L))$, should an extremal Sasaki structure exist in $\cS(\K, J^\K)$,  i.e.,  Theorem~\ref{Theorem:lahdili1} holds true even for smooth K\"ahler test configurations with reduced central fibre  associated to $(M, J, 2\pi c_1(L), \check\T)$.  

We expect that the following will hold true.
\begin{conjecture}\label{conjecture0} In the setting of Theorem~\ref{Theorem:polystable}, for any $\check\T$-equivariant smooth K\"ahler test configuration $(\tstM, \tstA)$ associated to $(M, J, 2\pi c_1(L))$, which has a reduced central fibre and is not a product, we have  $\Fut^\ext_\K(\tstM, \tstA)  > 0.$ 
\end{conjecture}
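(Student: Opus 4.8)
The plan is to follow the proof of Theorem~\ref{Theorem:polystable} verbatim up to the point where the polarization $\tstL$ of the total space is used. The properness input is entirely available and does not involve the test configuration: by Theorem~\ref{main-result} and Corollary~\ref{central}, combined with Lemma~\ref{M} and Remark~\ref{r:Lahdili-Mabuchi}, the weighted Mabuchi energy $\ME_{\check\K,\kappa}$ is $\G$-proper (where $\G=\T_\C$) on the space of $\check\T$-invariant $\omega_0$-relative K\"ahler potentials of K\"ahler metrics $\omega\in 2\pi c_1(L)$ on the base $M$. Thus the content of Conjecture~\ref{conjecture0} reduces to a transcendental analogue of Lemma~\ref{l:key}: if $(\tstM,\tstA)$ is a smooth $\check\T$-equivariant K\"ahler test configuration of $(M,J,2\pi c_1(L))$ with reduced central fibre and $\Fut^\ext_\K(\tstM,\tstA)=0$, then $(\tstM,\tstA)$ is a product.

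\emph{Step 1 (slope formula and sublinear growth).} We already have $\Fut^\ext_\K(\tstM,\tstA)\ge 0$ by \cite[Thm.~2]{lahdili2}. Granted the K\"ahler test configuration version of the weighted Mabuchi slope formula of \cite[Thm.~7]{lahdili2}, there is a smooth ray $u_t$ of $\check\T$-invariant $\omega_0$-relative K\"ahler potentials, determined by the $\C^\times$-action on $(\tstM,\tstA)$, with $\lim_{t\to\infty}\ME_{\check\K,\kappa}(u_t)/t=\Fut^\ext_\K(\tstM,\tstA)$. Assuming the latter vanishes, the $\G$-properness of $\ME_{\check\K,\kappa}$ gives $0\le d_1^{\G}(0,u_t)\le o(t)$, exactly as in the proof of Lemma~\ref{l:key}.

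\emph{Step 2 (geodesic approximation and rigidity).} One next needs a bounded (ideally $C^{1,\bar1}$) geodesic ray $\varphi_t$ emanating from $\varphi_0=0$ with $|\varphi_t-u_t|\le C$ uniformly in $t$. In the polarized case this is the Phong--Sturm ray of \cite{PS} together with the estimate of \cite{BDL}; here $\tstA$ is merely a K\"ahler class, so one must instead construct a bounded geodesic ray attached to a transcendental K\"ahler test configuration via the pluripotential theory of $\Sph^1$-invariant positive metrics on $\tstA$ over the unit disc, and compare it with $u_t$, along the lines of \cite{dyrefelt,Dyrefelt}. Granted this, Lemma~\ref{l:isometric} and the triangle inequality yield $d_1^{\G}(0,\varphi_t)\le d_1^{\G}(0,u_t)+C=o(t)$, hence $0\le d_1^{\G}(\varphi_0,\varphi_t)\le o(t)$. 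A bounded geodesic ray with sublinear $d_1^{\G}$-growth must then be generated by the flow $\flow_t^{-J\Y}$ of some $\Y\in\check\tor$ acting on $\omega_0$-relative K\"ahler potentials (the transcendental version of \cite[Lemma~4.1]{BDL} and \cite[Prop.~4.10]{Dyrefelt} already used in the proof of Lemma~\ref{l:key}, which applies in the K\"ahler setting), and \cite[Prop.~4.3]{BDL} or \cite[Thm.~A.6]{Dyrefelt}---the latter stated for K\"ahler test configurations---then forces $(\tstM,\tstA)$ to be a product, completing the transcendental analogue of Lemma~\ref{l:key}.

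\emph{Main obstacle.} The delicate point is Step~2: producing a bounded geodesic ray associated to a \emph{transcendental} K\"ahler test configuration, together with the uniform comparison $|\varphi_t-u_t|\le C$ with the test-configuration ray. For polarized $\tstL$ this rests on the holomorphic line bundle on the total space (Phong--Sturm) and on the energy estimates of \cite{BDL}; with only a K\"ahler class $\tstA$ on $\tstM$ the required pluripotential input---existence, boundedness and regularity of the ray, and the comparison with $u_t$---is substantially harder, which is precisely why the statement is posed as a conjecture. An alternative route via approximating $\tstA$ by rational classes carrying compatible test-configuration data seems difficult to control, since perturbing $\tstA$ need not preserve the product versus non-product dichotomy.
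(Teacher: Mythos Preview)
This statement is a conjecture; the paper does not prove it. Your outline correctly mirrors the proof of Lemma~\ref{l:key} and you are right that the properness input (Theorem~\ref{main-result}, Corollary~\ref{central}) is independent of the test configuration, so the issue reduces to a transcendental analogue of Lemma~\ref{l:key}.

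However, your diagnosis of the obstacle differs from the paper's. You locate the difficulty in constructing a transcendental analogue of the Phong--Sturm ray together with the comparison $|\varphi_t-u_t|\le C$; but these ingredients are in fact supplied in the K\"ahler setting by \cite{dyrefelt,Dyrefelt}, which you yourself cite. The paper's own assessment, in the remark immediately following the conjecture, is that the missing point is instead the upgrade from $\G$-properness to the stronger $[\G]$-properness of $\ME^\X_\K$ (equivalently $\ME_{\check\K,\kappa}$); see the end of Remark~\ref{r:properness}. The reason is structural: in Lemma~\ref{l:key} the group $\G=\T_\C$ acts on \emph{unnormalized} K\"ahler potentials via hermitian metrics on $L$, and the smooth ray $u_t$ is defined the same way via hermitian metrics on $\tstL$; this is what makes the $d_1^{\G}$-argument run. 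When $\tstA$ is merely a K\"ahler class there is no such lift, and the rigidity arguments of \cite{Dyrefelt} operate with normalized potentials and the effective action $[\G]$ (note that you already slipped into writing $\Y\in\check\tor$ rather than $\Y\in\tor$). One therefore needs $d_1^{[\G]}$-properness of $\ME_{\check\K,\kappa}$. This \emph{is} known for $\ME^\K$ on the Sasaki side, but the bilipschitz map $\hTheta$ does not respect the Aubin--Mabuchi slices $({\bf I}^\K)^{-1}(0)$ and $({\bf I}^\X)^{-1}(0)$, so it is not known to transfer. That, rather than the geodesic-ray construction, is the gap the paper identifies.
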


\begin{rem} In the light of recent results in the (unweighted) K\"ahler
setting~\cite{Dyrefelt}, and the discussion at the end of
Remark~\ref{r:properness}, the main missing point for establishing the above
conjecture is the improvement of the $\G$-properness of ${\bf M}^{X}_K$ to the
stronger notion of $[\G]$-properness.
\end{rem}

\subsection{Transcendental K\"ahler classes}\label{ss:(K,kappa)-stability}
We make here a few comments about the obstruction theory for the existence of
a $(\check K, \kappa)$-extremal K\"ahler metric on a general compact K\"ahler
manifold $(M, J, \kcl)$. When $\kcl = 2\pi(L)$ for an ample line bundle, and
taking $\check\T\leq \Aut_r(M, J)$ be a maximal torus with $\check\K \in
\check\tor$, this is covered by Theorem~\ref{Theorem:polystable} and
Conjecture~\ref{conjecture0} (by using
Proposition~\ref{p:reduction}). Following \cite{lahdili2}, one can extend the
weighted K-stability notion to the general K\"ahler case. To this end, one
fixes a momentum polytope $P \sub \check\tor^*$ for the $\check\T$-action on
$(M, J, \kcl)$, which in turn gives rise to a positive affine-linear function
$\ell(x)$ on $P$, such that the $\kappa$-normalized Killing potential $f$ of
$\check\K$ with respect to a $\check\T$-invariant K\"ahler metric $\omega \in
\kcl$ is written as $f= \ell(m_{\omega})$, where the momentum map $m_{\omega}$
of $(M, \omega, \check\T)$ is normalized by $m_{\omega}(M) = P$. Furthermore,
as shown in \cite{lahdili2}, the $L^2$ projection $\Scal_f(g)^\ext$ of the
$f$-scalar curvature $\Scal_{f}(g)$ of $\omega$ to the space of Killing
potentials with respect to $\omega$ of elements of $\check\tor^*$, by using
the weighted product $(\phi_1, \phi_2)_f = \int_M \phi_1 \phi_2 f^{-m-3}
\omega^m/m!$ can be written as $\Scal_f(g)^\ext=\ell_\ext(m_{\omega})$ for an
affine-linear function on $P$, independent of $\omega \in \kcl$.  For any
smooth $\check\T$-equivariant K\"ahler test configuration $(\tstM, \tstA)$
associated to $(M, \kcl, \check\T)$, and $\check\T$-invariant K\"ahler metric
$\tilde \Omega \in \tstA$, one can still define a numerical invariant via
formula \eqref{extremal-futaki}, where the momentum map $m_{\tilde \Omega}$ is
normalized by the condition $m_{\tilde \Omega}(\tstM)=P$, and $\tilde f =
\ell(m_{\tilde \Omega})$. This expression is independent of the
$\check\T$-invariant K\"ahler metric $\tilde \Omega \in \tstA$ according to
\cite{lahdili2}; furthermore, it is not hard to see that the numerical
invariant does not depend on the chosen momentum polytope $P$ for $(M, \kcl,
\check\T)$: it merely depends upon the data $(\check\K, \kappa, \check\T)$ so
we shall denote it by $\Fut^\ext_{(\check\K, \kappa)}(\tstM, \tstA)$. In this
more general K\"ahler setting, \cite[Theorem 2]{lahdili3} implies that if
$\kcl$ admits a $(\check\K, \kappa)$-extremal K\"ahler metric, then
$\Fut^\ext_{\check\K, \kappa}(\tstM, \tstA)\ge 0$ for any smooth
$\check\T$-equivariant test configuration with reduced central fibre
associated to $(M, J, \kcl, \check\T)$, and we can extend
Conjecture~\ref{conjecture0} to a relative $(\check\K, \kappa)$-weighted
K-stability statement.

\section{Sasaki K-stability versus weighted K-stability}\label{s:stability}

\subsection{Sasaki K-stability} \label{ss:CS} Given a complex polarized cone $(\Cm, J, \K, \T)$ as defined in Definition~\ref{d:cone-polarization}, it is known that (see e.g. the proof of \cite[Theorem 3.1]{V2}) there exists a $\T$-equivariant holomorphic embedding $\Cm \hookrightarrow \C^N$ sending $\Cm$ biholomorphically to $\Cv \backslash \{0\}$, where $\Cv \subset \C^{N}$ is a $\T_\C$-homogeneous affine variety, and $\T_\C \leq \mbox{GL}(\C^N)$ is a diagonal torus whose real form is  identified with $\T$. When $\K$ is quasiregular, one can further identify  the space $\Hol$ of holomorphic functions  on $\Cm$ introduced  in  \S \ref{ss:Reebcone-rev} with the space of regular function functions on $\Cv$ (see Remark~\ref{r:regular}), i.e.,
$\Hol = \C [x_1,\ldots, x_N]/I$ for a $\T_\C$-homogeneous ideal $I$. Thus, in
the decomposition \eqref{e:hol-alpha}, $\Gamma \sub \tor^*$ is the set of
weights of the $\T_\C$-action on the space of regular functions.  Given a
polarization $\K \in \tor_+$, the pair $(\Cv, \K)$ is referred to as a {\it
  polarized affine cone}.

\smallskip
In this latter setting, Collins and Sz\'ekelyhidi \cite{CSz} introduced the $\T$-equivariant index character
\[
F(\K, t):= \sum_{\alpha \in \Gamma} e^{-t\alpha(\K)} \textrm{dim} \, {\Hol}_{\alpha},
\quad {\rm Re}(t)>0,
\]
and prove (cf. \cite[Prop.~4.3]{CSz}) that $F(\K, t)$ is a well-defined function in $t$,  which has meromorphic extension to a neighbourhood of the origin, with Laurent expansion  
\[
F(\K, t) =  \frac{a_0(\K) m!}{t^{m+1}} + \frac{a_1(\K)(m-1)!}{t^m} + O(t^{1-m}).
\]
According to \cite[Theorem~4.10]{CSz}, the coefficients
$a_0(\K)>0$ and $a_1(\K)$ are smooth functions on $\tor_+$. With these
ingredients, the following definition is proposed in \cite{CSz}.
\begin{defn}\label{d:Donaldson-Futaki-Sasaki}  Given  a $\T_{\C}$-homogeneous affine variety $\Cv$ polarized by $\K \in \tor_+$,  and $\Y \in \tor$, the $\K$-Donaldson--Futaki invariant is defined to be the quantity
\[
\DF_{\Cv, \K} (\Y) := \frac{a_0(\K)}{m}D_{\Y}(a_1(\K)/a_0(\K)) + \frac{a_1(\K)D_{\Y}a_0(\K)}{m(m+1)a_0(\K)},
\]
where $D_\Y$ denotes the derivative in the direction of $\Y$ on $\tor$.
\end{defn}

If we assume, furthermore, that  $(\Cm, J)$ is smooth and $\K$ is primitive in the lattice of circle subgroups of $\T$, then  the quotient of $\Cm$ by the subgroup $\C_\K^\times$  generated by $K$ is a polarized orbifold $(M_\K,L_\K)$  whose generic isotropy groups are trivial,  and  for which the orbifold Riemann--Roch formula of \cite[(2.17)]{RT} gives

\begin{equation}\label{e:RR}
\dim_{\C} H^0(M_\K,L_\K^k) = a_0(\K)k^n+ (a_1(\K)+\rho(\K))k^{n-1}+ \cdots
\end{equation}
where  $\rho(K)$ is a periodic function in $k$ with average $0$,  and, up to multiplicative positive dimensional constants,  $a_0(\K)$ and $a_1(\K)$  are respectively the total volume and total scalar curvature of the polarized orbifold $(M_\K,L_\K)$. It follows that, up to a positive dimensional constant,  $\DF_{\Cv, \K} (\Y)$ coincides with the Futaki invariant of the real holomorphic vector field induced by $\Y$ on the polarized orbifold $(M_\K,L_\K)$, see \cite[Props.~4.3 \& 4.13 and Theorem 4.14]{CSz}.

\begin{rem} In the notation of Appendix~\ref{ss:GIT}, there is thus a
dimensional constant $c(m)>0$ such that
\[
\DF_{\Cv, \K}(\Y)= c(m) \Futc_\K(\Y),
\]
where $\Futc_\K(\Y)$ denotes the contact Futaki invariant $\Futc_\K(\Y)$
introduced in Definition~\ref{d:contact-Futaki}. Indeed by
Remark~\ref{r:contact-futaki}, $\DF_{\Cv, \K}(\Y)$ and $\Futc_\K(\Y)$ agree up
to a positive dimensional constant when $\K$ is a quasiregular.  However they
are both continuous and homogeneous of order $-m$ in $\K$ (where $m+1$ is the
complex dimension of $\Cv$) and the quasiregular directions are dense in
$\tor_+$.
\end{rem}

Following \cite{CSz}, Definition~\ref{d:Donaldson-Futaki-Sasaki} can be used to assign a numerical invariant  to any $\T$-{\it equivariant  test configuration} associated to the polarized affine cone $(\Cv, \K)$, as follows. Let
\[
w=(w_1, \ldots, w_N), \ w_i \in \intZ
\]
be a set of integers and denote by $\rho_{w}\colon \C^\times \to {GL}(\C^N)$ the weighted $\C^\times$-action defined by $w$. Taking the flat limit  across $0\in \C^\times$ of the $\C^\times$ orbit of $\Cv$ under the action of $\rho_w$, we obtain a flat family of  affine schemes over $\C$, with central fibre $C_0$.  Besides the $\T$-action on $\Cv_0$, the central fibre inherits the $\C^\times$-action $\rho_w$, and we denote by $\Y_{w}$ the  generator of the corresponding $\Sph^1$-action on $\C^k$ whose flow preserves $\Cv_0$.   It is  shown  in \cite[Section 5]{CSz} that  for a fixed $\K \in \tor_+$, Definition~\ref{d:Donaldson-Futaki-Sasaki}  can be applied on  the scheme $\Cv_0$ and gives a necessary condition for the existence of a CSC Sasaki structures (known as K-semistability). 

  \begin{Theorem}\label{Theorem:CSz}\cite{CSz} Let $(\Sm, \Ds, J)$ be a compact CR manifold of Sasaki type,  $\T$ a torus in $\Aut(\Sm, \Ds, J)$, and $\K \in \tor_+$  a Sasaki--Reeb vector field such that  $\Sm$ admits a CSC Sasaki structure in $\cS(\K, \th/J{\K})$. Then,  for any $\T$-equivariant test-configuration of the associated polarized affine cone $(\Cv, \K)$ with central fibre $\Cv_0$, $\DF_{\Cv_0, \K}(\Y_{w}) \ge 0$.
\end{Theorem}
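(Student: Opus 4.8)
\medskip
\noindent\textbf{Proof proposal.}
The plan is to follow the now-standard route to K-semistability of constant scalar curvature metrics in its Sasakian form (Donaldson; He--Li~\cite{He-Li}): attach to the test configuration a weak geodesic ray of Sasaki structures, identify the asymptotic slope of the Sasaki Mabuchi energy $\ME^\K$ along that ray with a positive multiple of the algebraic invariant $\DF_{\Cv_0,\K}(\Y_w)$, and conclude from the existence of a critical point of $\ME^\K$.

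First I would turn the degeneration back into a path of metrics. Pick a quasiregular $\X\in\tor_+$ commuting with the weighted $\C^\times$-action $\rho_w$ and a reference cone potential $r_0\in\cpot_\X(\Cm,J)^\T$; by the correspondence of \S\ref{ss:Thetamap}, $r_0$ fixes a marked Sasaki structure with basepoint contact form $\cf_0^\K$ for $(\K,J^\K)$. Acting by $\rho_w$ on the embedding $\Cv\hookrightarrow\C^N$ and solving the homogeneous Monge--Amp\`ere equation on the total space of the resulting family of cones produces, in the cone--potential coordinates of Proposition~\ref{p:links=potentials}(ii), a bounded weak geodesic ray $\psi_t$, $t\ge 0$, in the $d_1$-completion of $\Sp(\K,J^\K,\cf_0^\K)^\T$ emanating from $0$ (the Phong--Sturm construction~\cite{PS} in its Sasakian form, cf.~\cite{He-Li}), smooth away from $\Cv_0$.

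Next I would prove the slope formula $\lim_{t\to\infty}\ME^\K(\psi_t)/t = c(m)\,\DF_{\Cv_0,\K}(\Y_w)$ for a positive dimensional constant $c(m)$. The key is that the two Laurent coefficients $a_0(\K),a_1(\K)$ of the equivariant index character $F(\K,t)$, together with their $\Y_w$-derivatives, are exactly the data entering the asymptotic expansion of $\ME^\K$ along the $\rho_w$-orbit: for quasiregular $\K$ this is transparent from the orbifold Riemann--Roch expansion~\eqref{e:RR}, where $a_0(\K)$ and $a_1(\K)$ are up to constants the transversal volume and total transversal scalar curvature, and the general case follows since quasiregular Reeb fields are dense in $\tor_+$ and both sides are continuous in $\K$ (smoothness of $a_0,a_1$ by \cite[Theorem~4.10]{CSz}, and continuity of the slope). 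Concretely this is an asymptotic Riemann--Roch/equivariant localization computation on a compactification of the test configuration, the radial and transversal directions of the cone being bookkept separately. With the slope formula in hand, He--Li's convexity of $\ME^\K$ along bounded geodesics and the fact that a CSC Sasaki structure in $\cS(\K,J^\K)$ is a critical (hence minimizing) point give $\ME^\K$ bounded below, so $\lim_{t\to\infty}\ME^\K(\psi_t)/t\ge 0$ and thus $\DF_{\Cv_0,\K}(\Y_w)\ge 0$.

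The main obstacle is the slope formula when $\Cv_0$ is singular or non-reduced: one must either normalise/resolve the family and check that the two leading coefficients are unaffected, or run a Tian--Yau--Zelditch/Bergman-kernel expansion uniformly over $\C P^1$. Non-reducedness of $\Cv_0$ is precisely where only $\ge 0$ (not $>0$) is obtained, since the Berman--Berndtsson type rigidity used in the axiomatic framework of~\cite{DaRu} --- which upgrades semistability to stability elsewhere in this paper --- is unavailable here; in the special case that $\tor$ contains a regular Sasaki--Reeb field the sharper conclusion does follow from Theorem~\ref{Theorem:polystable} once $\DF$ is matched with the weighted Donaldson--Futaki invariant.
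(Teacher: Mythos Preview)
The paper does not supply its own proof of this theorem: it is stated with the citation \cite{CSz} and used as a known result from Collins--Sz\'ekelyhidi, with no accompanying \texttt{proof} environment. So there is nothing in the paper to compare your argument against.

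That said, your proposed route is \emph{not} the one taken in \cite{CSz}. Collins--Sz\'ekelyhidi do not work with the Mabuchi energy or weak geodesic rays; instead they adapt Donaldson's lower bound for the Calabi functional to the Sasakian/affine-cone setting, proving an inequality of the form
\[
\inf_{g} \|\Scal(g)-c_\K\|_{L^2} \;\ge\; -\,\frac{\DF_{\Cv_0,\K}(\Y_w)}{\|\Y_w\|}
\]
over compatible Sasaki structures. A CSC Sasaki metric makes the left-hand side vanish, giving $\DF_{\Cv_0,\K}(\Y_w)\ge 0$ directly. The algebraic input is a Bergman-kernel/test-configuration embedding argument rather than a slope formula for $\ME^\K$.

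Your approach is a legitimate alternative in spirit, but the slope identity you lean on --- $\lim_{t\to\infty}\ME^\K(\psi_t)/t = c(m)\,\DF_{\Cv_0,\K}(\Y_w)$ for an arbitrary (possibly non-normal, non-reduced) central fibre of an affine-cone test configuration --- is exactly the delicate point, and you flag it yourself. In the K\"ahler literature this step requires either Paul--Tian/Phong--Ross--Sturm type arguments or the intersection-theoretic rewriting of the Donaldson--Futaki invariant (Odaka, Wang), and in the present paper the comparable identification (Propositions~\ref{p:globalDFS} and \ref{p:Weighted=GF}) is only carried out for \emph{smooth} test configurations of the cone coming from smooth polarized test configurations of a regular quotient. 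So your sketch would need substantial additional work to cover the general affine test configurations allowed in the statement, whereas Donaldson's Calabi-functional argument sidesteps this by never invoking $\ME^\K$.
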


\subsection{Test configurations of polarized complex cones} \label{ss:ConeTC} The Definition~\ref{d:test-configuration}  of  a  $\T$-equivariant polarized test configuration  of a  regular Sasaki--Reeb quotient $(M, J, L)$ provides  a natural link to the theory of test configurations of affine cones described above. Indeed,  given a $\T$-equivariant polarized test configuration $(\tstM, \tstL,\pi, \rho)$  for $(M, J, L)$  as in Definition~\ref{d:test-configuration}, and assuming that $s=1$ and $\tstM$ is smooth for simplicity, we consider  the corresponding complex cones $\tstcone:=(\tstL^*)^\times$ and $\Cm :=(L^*)^\times$ and denote by $\tilde \K$ and $\K$ the corresponding (regular) cone polarizations, see  Examples~\ref{e:regular} and \ref{e:regular-cone}.  We thus get the following objects: 
\begin{numlist}
\item a flat surjective morphism $\pi\colon\tstcone \to\C P^1$ such that $\T$ preserves each fibre $\Cm_{t}:= \pi^{-1}(t)$ and, for $t \neq 0$,  $(\Cm_{t}, \tilde{\K}_{|_{\Cm_t}})$ is $\T$-equivariantly isomorphic, as a polarized complex cone, to $(\Cm, \K)$;
\item a $\C^\times$-action $\rho$ on $\tstcone$ commuting with $\T$ and covering the usual $\C^\times$-action on $\C P^1$;
\item a biholomorphism 
\begin{equation}\label{e:equivbiholTC}
 \lambda\colon \Cm \times (\C P^1\setminus \{0\}) \cong \tstcone \setminus\Cm_0,
\end{equation}
which is equivariant with respect to the actions of $\T\times \Sph_0^1$ on $\Cm \times (\C P^1\setminus \{0\})$ and  $\T\times \Sph_{\rho}^1$ on $\tstcone \setminus\Cm_0$, where $\Sph_0^1$ stands for the standard $\Sph^1$-action on $\cO(1) \to\C P^1$.
\end{numlist}  

This prompts the following
\begin{defn}\label{d:test-configurationCONE} Let $(\Cm, J, \K)$ be a smooth polarized complex cone endowed with a holomorphic action of a compact torus $\T$,  and  $\K\in \tor_+$  a Sasaki--Reeb polarization.  A  \emph{smooth  $\T$-equivariant test configuration of $(\Cm,  J, \K)$} is a smooth polarized complex cone $(\tstcone, \tstJ, \tilde{\K})$, endowed with a $\T$-action such that $\tilde{\K} \in  \tilde \tor^+$, satisfying the conditions (i)--(iii) above. \end{defn}

Note that Definition~\ref{d:test-configurationCONE} does not require $\K$ to be regular nor the existence of a regular Sasaki--Reeb vector field $\X$ commuting with $\K$. To simplify the notation, we shall  at times  omit  the complex  structures $J$ and $\tstJ$.

\begin{lemma}\label{l:key-identification}  Let $(\tstcone, \tilde \K)$ be a $\T$-equivariant smooth test configuration of polarized cones associated to $(\Cm, \K)$.  Then,  the reduced Sasaki--Reeb cone $\tor_+ $ of $(\Cm, \K)$ coincides with the reduced Sasaki--Reeb cone $\tilde \tor_+$ of $(\tstcone, \tilde \K)$. Moreover,  $\tilde \K =  \K \in \tor_+$.
\end{lemma}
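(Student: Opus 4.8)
The plan is to prove the two parts of the statement separately: that $\tilde\K=\K$ as elements of $\tor=\mathrm{Lie}(\T)$, and then that the two Reeb cones coincide as subsets of $\tor$. For the first part, pick $t\in\C P^1\setminus\{0,\infty\}$ and let $\phi_t\colon(\Cm_t,\tilde\K|_{\Cm_t})\to(\Cm,\K)$ be the $\T$-equivariant isomorphism of polarized complex cones provided by condition~(i). Since $\T$ preserves the fibre $\Cm_t$, every $Y\in\tor$ restricts to a vector field $Y|_{\Cm_t}$ on $\Cm_t$, and $\T$-equivariance of $\phi_t$ forces $(\phi_t)_*(Y|_{\Cm_t})=Y|_{\Cm}$; applying this with $Y=\tilde\K$ gives $(\phi_t)_*(\tilde\K|_{\Cm_t})=\tilde\K|_{\Cm}$. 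On the other hand, an isomorphism of polarized complex cones is by definition equivariant for the $\R^+$-actions generated by the polarizations, so, being holomorphic, $\phi_t$ also satisfies $(\phi_t)_*(\tilde\K|_{\Cm_t})=\K|_{\Cm}$. Comparing the two identities yields $\tilde\K=\K$ in $\tor$, and in particular $\tilde\K=\K\in\tor_+$.

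For the inclusion $\tilde\tor_+\subseteq\tor_+$ I would restrict cone potentials to a generic fibre. Given $Y\in\tilde\tor_+$, average a cone potential over $\T$ (using that $Y$ commutes with $\T$ and that $\d\d^c$ preserves positivity) to obtain a $\T$-invariant $\tilde r\in\cpot_Y(\tstcone)^\T$. Fix $t$ as above; the fibre $\Cm_t$ is a smooth complex submanifold of $\tstcone$ to which $Y$, hence $-JY$, is tangent, so $\tilde r|_{\Cm_t}$ satisfies $\cL_Y(\tilde r|_{\Cm_t})=0$, $\cL_{-JY}(\tilde r|_{\Cm_t})=\tilde r|_{\Cm_t}$ and $\d\d^c(\tilde r|_{\Cm_t})=(\d\d^c\tilde r)|_{\Cm_t}>0$ (a positive $(1,1)$-form restricts positively to a complex submanifold); it is moreover surjective onto $\R^+$ since $\Cm_t$ is nonempty and invariant under the $\R^+$-action scaling $\tilde r$. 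Hence $\tilde r|_{\Cm_t}\in\cpot_{Y|_{\Cm_t}}(\Cm_t)$, and transporting it along the biholomorphism $\phi_t$ (which sends $Y|_{\Cm_t}$ to $Y|_{\Cm}$ and therefore $-JY|_{\Cm_t}$ to $-JY|_{\Cm}$) gives an element of $\cpot_{Y|_{\Cm}}(\Cm)$; thus $Y\in\tor_+$.

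For the reverse inclusion $\tor_+\subseteq\tilde\tor_+$ I would invoke the intrinsic weight description of the Reeb cone from \S\ref{ss:Reebcone-rev}. Both $\tor_+$ and $\tilde\tor_+$ are open in $\tor$, so each contains a quasiregular direction; fix one such $\Y_0\in\tilde\tor_+$, which by the previous step also lies in $\tor_+$. Using $\Y_0$ to realise $\tstcone\cup\{0\}$ and $\Cm\cup\{0\}$ as affine cones and to define their rings of fibrewise-polynomial holomorphic functions, let $\tilde\Gamma,\Gamma\sub\tor^*$ be the respective sets of $\T$-weights. By Lemma~\ref{l:reeb} and the discussion following it, $\tilde\tor_+=\{Y\in\tor:\beta(Y)>0\ \text{for all }\beta\in\tilde\Gamma\setminus\{0\}\}$ and $\tor_+=\{Y\in\tor:\alpha(Y)>0\ \text{for all }\alpha\in\Gamma\setminus\{0\}\}$, so it is enough to show $\tilde\Gamma\subseteq\Gamma$. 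A nonzero holomorphic function $f$ on $\tstcone\cup\{0\}$ of $\T$-weight $\beta$ restricts to a nonzero function on the compactified fibre $\Cm_t\cup\{0\}$ for $t$ outside a proper subvariety of $\C P^1$ (irreducibility of $\tstcone$), and $\phi_t$ extends to a $\T$-equivariant isomorphism $\Cm_t\cup\{0\}\cong\Cm\cup\{0\}$ taking the apex to the apex; since $\T$ acts trivially on $\C P^1$ and on its line bundles, no character shift occurs, so $\beta$ is a $\T$-weight of a nonzero holomorphic function on $\Cm\cup\{0\}$, i.e. $\beta\in\Gamma$. This gives $\tor_+\subseteq\tilde\tor_+$, hence equality, and completes the proof.

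The step I expect to be the genuine obstacle is this last one: one must make precise the identification of $\tilde\tor_+$ with the dual of the weight cone of $\tstcone$ when $\tilde\K$ itself need not be quasiregular (forcing the passage to the auxiliary direction $\Y_0$ and the use of the orbifold Kodaira embedding of \S\ref{ss:Reebcone-rev}), together with the bookkeeping showing that restricting sections to a generic fibre preserves $\T$-weights verbatim — which hinges precisely on $\T$ acting trivially in the base ($\C P^1$) direction. By comparison, the inclusion $\tilde\tor_+\subseteq\tor_+$ and the identity $\tilde\K=\K$ are comparatively soft.
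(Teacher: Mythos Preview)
Your argument is essentially correct, but it takes a markedly different route from the paper's, and it is worth seeing the contrast.

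For $\tilde\K=\K$ you and the paper do the same thing: condition~(i) gives a $\T$-equivariant, $\R^+$-equivariant biholomorphism $\Cm_t\cong\Cm$, and comparing the two equivariance properties forces $\tilde\K=\K$ in $\tor$.

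For the equality of Reeb cones the paper proceeds symmetrically via the \emph{momentum cone} description of~\S\ref{ss:Reebcone}. It fixes one $\T$-invariant cone potential $\tilde r\in\cpot_{\tilde\K}(\tstcone)^\T$, uses the associated cone momentum map $\tilde\mu$ from~\eqref{e:coneMOMmap}, and observes that since $\T$-orbits lie in fibres, $\tilde\mu|_{\Cm_t}$ agrees with the momentum map of $(\Cm,\K,\iota_t^*\tilde r)$; hence $\tilde\mu(\tstcone\setminus\Cm_0)\subseteq\Sigma$, and by density and the fact that both momentum cones are strictly convex polyhedral (with vertex removed), $\tilde\Sigma=\Sigma$. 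The duality~\eqref{e:ReebCone=dualMOMcone} then gives $\tilde\tor_+=\tor_+$ in one stroke.

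Your approach is asymmetric: you obtain $\tilde\tor_+\subseteq\tor_+$ by restricting cone potentials to a fibre (clean and elementary), but for $\tor_+\subseteq\tilde\tor_+$ you pass to a quasiregular auxiliary direction $Y_0$, invoke the affine-cone machinery and weight decomposition of~\S\ref{ss:Reebcone-rev} (which itself rests on orbifold Kodaira embedding), and then argue that restriction of weight-$\beta$ regular functions to a generic fibre gives $\tilde\Gamma\subseteq\Gamma$. This works, but it imports substantially heavier input than the paper needs, and the bookkeeping you flag (extension of $\phi_t$ to apices, preservation of the fibrewise-polynomial condition under $\phi_t$) does require care. The paper's momentum-cone argument avoids all of this: it uses only the elementary contact/symplectic material of~\S\ref{ss:Reebcone}, treats both inclusions at once, and never needs to choose a quasiregular direction or look at rings of functions.
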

\begin{proof} For $t\neq 0$ we denote $\iota_t\colon\Cm\rightarrow\Cm_t$ a $\T\times \R^+$-equivariant biholomorphism given by the condition (i). 
  By Lemma~\ref{l:momentum-cone-coincide} and \eqref{e:ReebCone=dualMOMcone}, the Reeb cone $\tilde \tor_+ \sub \tor$ is independent of the choice of a compatible $\T$-invariant cone potential $\tilde r\in \cpot_{\tilde \K} (\tstcone)^\T$. The same argument implies that the moment cone $\Sigma_t$ associated to $(\Cm, K, \iota^*_t \tilde r)$ does not depend on $t$. We denote this cone by $\Sigma$. Using \eqref{e:coneMOMmap} we define the cone momentum maps $\mu^{t}\colon\Cm \rightarrow \tor^*$ and  $\tilde \mu\colon \tstcone \rightarrow \tor^*$ associated respectively to the  cones $(\Cm, K, \iota^*_t \tilde r)$   and $(\tstcone, \tilde K, \tilde r)$. Since condition (i) ensures that orbits of  $\T$ lie in the fibre $\Cm_t$ of $\pi$ we have, for $p\in\Cm_t$,
\[
\langle\mu^{t}_p, \Y\rangle = \tfrac{1}{4} \iota^*_t  \d^c({\tilde r}^2)(\Y)=  \tfrac{1}{4}  \d^c({\tilde r}^2)(\Y)= \langle{\tilde \mu}_p, \Y\rangle.
\]
In particular, $\tilde \mu(\tstcone\backslash\Cm_0) \sub \Sigma$ and thus $\tilde \mu(\tstcone) = \Sigma$ because they are both strictly convex polyhedral cones as recalled in \S\ref{ss:Reebcone}. The relation \eqref{e:ReebCone=dualMOMcone} concludes the proof of the first claim whereas  the condition (i) yields the second claim.  \end{proof}

Consequently, we will identify  the Sasaki--Reeb vector fields $\tilde\K$ and $\K$ and  the reduced Sasaki--Reeb cones $\tor_+$ and $\tilde \tor_+$ in Definition~\ref{d:test-configurationCONE} and in what follows.

\begin{ex}\label{ex:productcase} {\bf (Product test configuration of polarized cones)} A particular class of smooth test configurations of polarized complex cones are the \emph{product test configurations}. These are defined (similarly to Definition~\ref{d:test-configuration}) by requiring that the holomorphic surjection $\big(\tstcone\setminus\Cm_{\infty}, \pi\big)$ is $\T$-equivariantly isomorphic to the product $\big(\Cm \times (\C P^1 \setminus \{\infty\}), \pi_{\C P^1}\big)$. In the regular case, examples are obtained by taking the polarized complex cones over product polarized test configurations of the regular quotient in the sense of \cite[Example 2.8]{BHJ}. 
\end{ex}

\subsection{The global Futaki invariant of a smooth test configuration of polarized complex cones}

\begin{defn}\label{l:GF} Given a  smooth $\T$-equivariant  test configuration $(\tstcone,\K)$ of the polarized complex cone $(\Cm, \K)$  with $\K \in \tor_+$ as in Definition~\ref{d:test-configurationCONE}, and a cone potential $\tilde r\in \cpot_\K(\tstcone)^\T$, we let
\begin{equation}\label{e:GF}
\begin{split}
\tstGF_{\K} (\tstcone) := &  - \frac{1}{(m+1)!}\int_{\tstN} \Big(\Scal(\tilde g_{\K})- c_{\K}\Big){\tilde \cf}^{\K}_{\tstD} \wedge (\d \tilde \cf^{\K}_{\tstD})^{m+1} \\
                                     &  +  \frac{2}{m!} \int_{\tstN} (\pi^*\omega_{FS}) \wedge \tilde \cf^{\K}_{\Ds}\wedge (\d\tilde \cf^{\K}_{\Ds})^m,
                                     \end{split}
\end{equation}
where   $(\tstN, \tstD, \tstJ, \K)$ is the induced Sasaki manifold with contact form $\tilde \cf^\K_{\tstD}$,  $c_\K$ is the constant \eqref{cK} of $\Sm$, and $\omega_{\rm FS}$ is the Fubini--Study metric on $\C P^1$ satisfying ${\rm Ric}(\omega_{FS})= \omega_{\rm FS}$. We shall refer to \eqref{e:GF} as the {\it global Futaki invariant} of $(\tstcone, \K)$.
\end{defn} 
Note that, as shown for instance in \cite{FOW}, the first line of \eqref{e:GF} is independent of the choice of a cone potential $\tilde r\in \cpot_\K(\tstcone)^\T$ on $(\tstcone, \K)$ and so is the second line by a similar argument.

\begin{prop}\label{p:globalDFS} Let $(\tstcone, \K)$ be a smooth $\T$-invariant test configuration of polarized complex cones associated to $(\Cm,\K)$ with  $\K \in \tor_+$. Consider the corresponding polarized affine cone $(\widehat{\tstcone}, \K)$ with central fibre the polarized affine variety $(\Cv_0, \K)$ and associated Donaldson--Futaki invariant $\DF_{\Cv_0, \K}(\Y_{w})$.  Then  
\begin{equation}\label{e:CSz=GF}
\DF_{\Cv_0, \K}(\Y_{w})= \lambda(m) \tstGF_{\K}(\tstcone). 
\end{equation}
for a dimensional constant $\lambda(m)>0$.
\end{prop}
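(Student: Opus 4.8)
The plan is to prove $\DF_{\Cv_0,\K}(\Y_w) = \lambda(m)\,\tstGF_\K(\tstcone)$ by matching two computations of the same quantity through a common intermediate object, namely a quasiregular quotient of the test configuration. Since both $\DF_{\Cv_0,\K}$ and $\tstGF_\K$ are continuous in $\K\in\tor_+$ (the former by the remark following Definition~\ref{d:Donaldson-Futaki-Sasaki}, the latter because the integrands in \eqref{e:GF} vary smoothly with $\cf^\K_{\tstD}$), and since quasiregular Reeb vector fields are dense in $\tor_+$, it suffices to establish \eqref{e:CSz=GF} when $\K$ is quasiregular and primitive in the lattice of circle subgroups of $\T$. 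I would fix such a $\K$ once and for all.

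First I would set up the geometry on the test configuration side. For quasiregular $\K$, the complex cone $\tstcone$ (with zero section blown down) is the total space of the dual of an orbi-ample orbiline bundle $\tstL_\K$ over the compact K\"ahler orbifold $\tstM_\K := \tstcone/\C^\times_\K$, as in \S\ref{ss:Reebcone-rev} and Example~\ref{e:regular-cone}. The surjection $\pi\colon\tstcone\to\C P^1$ descends to $\pi_\K\colon\tstM_\K\to\C P^1$, and the Fubini--Study term pulls back compatibly. A cone potential $\tilde r\in\cpot_\K(\tstcone)^\T$ corresponds to a hermitian metric on $\tstL_\K$ whose curvature is a K\"ahler metric $\tilde\Omega_\K\in 2\pi c_1^{orb}(\tstL_\K)$, and under this correspondence the standard Sasaki-to-K\"ahler dictionary gives
\[
\int_{\tstN}(\Scal(\tilde g_\K)-c_\K)\,\tilde\cf^\K_{\tstD}\wedge(\d\tilde\cf^\K_{\tstD})^{m+1}
= 2\pi\int_{\tstM_\K}(\Scal(\tilde\Omega_\K)-c_\K)\,\frac{\tilde\Omega_\K^{m+1}}{(m+1)!}
\]
up to the usual normalization, and similarly the second line of \eqref{e:GF} becomes $4\pi\int_{\tstM_\K}\pi_\K^*\omega_{FS}\wedge\tilde\Omega_\K^m/m!$. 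Thus $\tstGF_\K(\tstcone)$ is, up to an explicit positive constant, precisely the classical Donaldson--Futaki invariant of the test configuration $(\tstM_\K,\tstL_\K)$ of the polarized orbifold $(M_\K,L_\K)$, computed via the differential-geometric formula of Donaldson--Tian for the generating vector field $\Y_w$. Here one must be slightly careful that $\Y_w$ is the $\C^\times$-action of the test configuration descended to $\tstM_\K$; since the weighting $\rho_w$ commutes with $\K$ by Definition~\ref{d:test-configurationCONE}, it does descend.

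Second, on the algebraic side, I would invoke the Riemann--Roch analysis already recorded in the excerpt: by \eqref{e:RR} and the discussion after Definition~\ref{d:Donaldson-Futaki-Sasaki}, $\DF_{\Cv_0,\K}(\Y_w)$ equals, up to a positive dimensional constant $c(m)$, the (algebraic) Donaldson--Futaki invariant of the test configuration $(\tstM_\K,\tstL_\K)$ of $(M_\K,L_\K)$ induced by $\Y_w$ — this is exactly \cite[Props.~4.3 \& 4.13, Thm.~4.14]{CSz} applied to the central fibre $\Cv_0$, whose quotient by $\C^\times_\K$ is the central fibre $(M_\K)_0$ of the orbifold test configuration. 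Finally, the equality of the algebraic and differential-geometric definitions of the Donaldson--Futaki invariant for a smooth (here: orbifold) test configuration is the standard result of Donaldson, extended to the orbifold setting as in \cite{RT}; combining it with the two constants produces $\lambda(m) = c(m)/(2\pi\cdot(\text{normalization}))>0$, independent of $\tstcone$ and $\K$.

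The main obstacle I anticipate is bookkeeping rather than conceptual: keeping the normalizing constants ($c_\K$, the $2\pi$'s from the $\Sph^1$-bundle, the orbifold multiplicities) consistent between the three descriptions, and verifying that the orbifold Riemann--Roch argument of \cite{CSz,RT} applies verbatim to the (possibly singular, non-normal) central fibre $\Cv_0$ — one needs that $\Cv_0/\C^\times_\K$ is a polarized orbifold scheme with trivial generic stabilizer so that the leading two coefficients of its Hilbert polynomial still compute volume and total scalar curvature. The flatness of the family and the fact that $\K$ acts with the same weights on $\Cv_0$ as on $\Cv$ (so $a_0(\K),a_1(\K)$ are unchanged, as in \cite[\S5]{CSz}) are what make this go through; I would state this as a lemma citing \cite{CSz} and not reprove it. Once that is in place, the density/continuity argument removes the quasiregularity hypothesis and completes the proof.
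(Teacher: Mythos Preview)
Your proposal is correct and follows essentially the same route as the paper: reduce by density/continuity to quasiregular primitive $\K$, pass to the polarized orbifold quotient $(\tstM_\K,\tstL_\K)$ as a test configuration for $(M_\K,L_\K)$, and identify both $\tstGF_\K$ and $\DF_{\Cv_0,\K}$ with the Ross--Thomas orbifold Donaldson--Futaki invariant of that test configuration. The only difference is one of explicitness: the paper writes out the intermediate step as the Odaka--Wang intersection formula for $\tstGF_\K$ and then connects it to the central fibre via the Donaldson exact sequences $0\to H^0(\tstM,\tstL^k\otimes\check\pi^*\cO(-1))\to H^0(\tstM,\tstL^k)\to H^0(M_0,L_0^k)\to 0$, whereas you package this bridge as ``the standard equality of algebraic and differential-geometric Donaldson--Futaki invariants, extended to orbifolds''---which is exactly what that computation establishes.
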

\begin{proof} As  both functionals in \eqref{e:CSz=GF}  depend continuously  on  $\K \in \tor_+$ (see \cite[Theorem~3]{CSz}) and are homogeneous of degree $-(m+1)$ it  will  be enough to prove \eqref{e:CSz=GF} for $\K$ primitive in the lattice of circle subgroups of $\T$ (since the set of such vectors is  dense in $\tor_+/\R^+$).

Assume  that $\K$ is a primitive element  in the lattice of circle subgroups of $\T$,   and denote $\C^{\times}_{\K} \leq \T_{\C}$ the subgroup generated by $\K$. The $\C^\times_\K$ GIT reduction of $\tstV$ can be identified (cf.~\cite{Reid, RT}) with the compact polarized orbifold $(\tstM,\tstL)$ associated to the grading induced by $\C^{\times}_{\K}$-action on the coordinates ring of the affine cone $\widehat{\tstcone} \sub \C^N$, which also gives an orbifold embedding of $(\tstM, \tstL)$ into a weighted projective space.      By construction,  $\tstM\simeq \widehat{\tstcone} \backslash \{0\}/\C^\times_\K
\simeq \tstcone/\C^\times_\K$, so that $\tstM$ has only cyclic orbifold singularities. The fact that $\K$ is primitive is used to determine the orbifold structure of the quotient such that the generic isotropy groups are trivial. Observe that the
$\T_{\C}$-invariant holomorphic map $\pi \colon \tstcone \ra \C P^1$ descends to
the quotient $\check{\pi} \colon \tstM \rightarrow \C P^1$ as a
$\C^\times_\rho$-equivariant surjective orbifold holomorphic map.  Then one can
define a complex orbifold $M := \Cm_1/\C^\times_\K \simeq \Cm/\C^\times_\K$ with
orbiline bundle $L=\tstL_{|_{M}}$ (the restriction of $L$ to the orbifold charts is 
taken  so that we  get a locally ample orbiline bundle in the
sense of \cite{RT}). This orbiline bundle is in fact ample (in the sense of \cite{RT}) because 
$((L^{*})^\times,\K) \simeq (\Cm_1,\K) \sub \widehat{\tstcone}$ and sections of
$\tstL$ restrict to section of $L$ over $M$. Finally,  we observe that $(\tstM,\tstL)$ defines a test configuration for $(M,L)$ in the sense of \cite{RT},  as  the map $\lambda$ in \eqref{e:equivbiholTC} (which is $\T\times \C^\times_\rho$ equivariant) descends to a $(\T/\Sph^1_\K)\times \C^\times_\rho$-equivariant map.

Using Chern--Weil theory on orbifolds (in particular that  $c_1^{orb}(L) = \frac{1}{2\pi}[\omega_\K]$ for a $2$-form $\omega_\K$  which  pulls back  as the differential of the connection $1$-form $\eta_{\Ds}^{\K}$ on the $\Sph^1$- bundle $\Sm\ra M$), we have
\begin{equation}\label{odaka-wang}
\begin{split}
\frac{1}{(2{\pi})}\tstGF_{\K} (\tstcone) &= \frac{2(2{\pi})^{m+1}}{m!}\Big\{\big(\tstK_{\tstM/\C P^1}^{orb} \cdot \tstL^{m}\big)_{\tstM} + \frac{c_{\K}}{2(m+1)}(\tstL^{m+1})_{\tstM}\Big\} \\
&= \frac{2(2{\pi})^{m+1}}{m!}\Big\{\big(\tstK_{\tstM/\C P^1}^{orb} \cdot \tstL^{m}\big)_{\tstM}  \\
& \hspace{2.5cm}- \frac{m}{m+1}\Big(\frac{\big(K^{orb}_M\cdot L^{m-1}\big)_M}{(L^m)_M}\Big)(\tstL^{m+1})_{\tstM}\Big\} 
 \end{split}
\end{equation} 
where $\tstK^{orb}$ and $K^{orb}$ stand for the (relative) canonical orbi-bundles on $\tstM$ and $M$, respectively,  and the products are taken by using integration of forms over orbifolds. Notice that up to a positive dimensional constant, \eqref{odaka-wang} is an orbifold version of the intersection formula for the Donaldson--Futaki invariant found by Odaka~\cite{odaka} and Wang~\cite{wang} in the settings of projective schemes.

We next argue that, similarly to the conclusions of~\cite{odaka,wang},
\eqref{odaka-wang} computes, up to a positive dimensional constant, the
orbifold version of the Donaldson--Futaki invariant associated to the central
fibre $(M_0:=\check{\pi}^{-1}(0), L_0:=\tstL_{M_0})$ of $(\tstM, \tstL)$,
defined in \cite[Def.~6.4]{RT}. This follows from the considerations in
\cite[pp. 315]{Do-02}~\footnote{We are grateful to X. Wang for pointing out
to us  this reference and to J. Ross for answering our questions.}, namely that for $k\gg0$ we have exact sequences
\begin{equation*}
 \begin{split}
  0&\rightarrow H^0(\tstM, \tstL^k\otimes\check{\pi}^*\cO(-1)) \rightarrow H^0(\tstM, \tstL^k) \rightarrow H^0(M_0, L_0^k) \rightarrow 0 \\
  0&\rightarrow H^0(\tstM, \tstL^k\otimes\check{\pi}^*\cO(-1)) \rightarrow H^0(\tstM, \tstL^k) \rightarrow H^0(M, L^k) \rightarrow 0 
 \end{split}
\end{equation*} given by multiplication by the sections of $\cO(-1)$ defining $M_0$ and $M_\infty$, respectively.  This implies that $d_k:=\dim_\C H^0(M_0, L_0^k)= \dim_\C H^0(M, L^k)$ whereas the weight $w_k$ for the induced $\C^\times$-action on $H^0(M_0, L_0^k)$ is 
$$w_k = \dim_\C H^0\big(\tstM, \tstL^k\otimes \check{\pi}^*\cO(-1)_{\C P^1}\big)=  \dim_\C(\tstM, \tstL^k)- \dim_\C H^0(M, L^k).$$ 
Now the claim follows from the definition \cite[Def.~6.4]{RT} and an easy computation using the orbifold Riemann--Roch formula \eqref{e:RR}.

To conclude the proof of the proposition,   we use  that $\DF_{\Cv_0, \K}(\Y_{w})$ too coincides with the Ross--Thomas orbifold Futaki invariant on $(M_0, L_0)$, see \cite[Def.~5.2]{CSz} and the discussion following it. \end{proof}

\subsection{Sasaki K-stability versus weighted K-stability} Our main observation is the following result which links the relative Sasaki K-stability notion of \cite{CSz} with the relative weighted K-stability of \cite{lahdili2}.
\begin{prop}\label{p:Weighted=GF} Let $(\Sm, \Ds, J, \X)$ be a compact regular Sasaki manifold, and $(M, J, L)$ the corresponding smooth polarized K\"ahler manifold.  Let $\T \leq \Aut(\Sm, \Ds, J)$ be a compact torus with $\X\in \tor_+$ and  $\K \in \tor_+$  another  Sasaki--Reeb vector field on $(\Sm, \Ds, J)$. Then,   for any $\T$-equivariant smooth polarized test configuration $(\tstM, \tstL)$ associated to $(M, J, L, \T)$ we have 
 \begin{equation}
\Fut_\K(\tstM, \tstL) = \frac{1}{2\pi} \tstGF_\K (\tstC),
\end{equation} 
where $\Fut_\K(\tstM, \tstL)$ is introduced by \eqref{abdellah-futaki-0}.
\end{prop}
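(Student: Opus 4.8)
The plan is to compute both sides directly in terms of a fixed $\T$-invariant K\"ahler metric on the total space $\tstM$ of the polarized test configuration and to match the integrands. First I would choose a $\check\T$-invariant K\"ahler metric $\tilde\Omega \in 2\pi c_1(\tstL)$ on $\tstM$, together with a $\check\T$-invariant hermitian metric $\tilde h$ on $\tstL$ whose curvature is $\tilde\Omega$; this gives an $\R^+$-invariant cone potential $\tilde r := r_{\tilde h}$ on the associated complex cone $\tstC = (\tstL^*)^\times$, with corresponding Sasaki manifold $(\tstN, \tstD, \tstJ, \X)$, contact form $\tilde\cf^\X_{\tstD} = \tilde r^{-1}\d^c\tilde r$, transversal K\"ahler structure $\d\tilde\cf^\X_{\tstD}$ descending to $\tilde\Omega$, and the fibration $\tstN \to \tstM$ with $\int$ over the $\Sph^1$-fibre contributing a factor $2\pi$. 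Since $\K \in \tor_+$ with positive Killing potential $\tilde f = \cf^\X(\K) = \ell_\K(m_{\tilde\Omega})$, the change-of-polarization results of Section~\ref{s:abstractcone} (Corollary~\ref{bijection-potentials}, Lemma~\ref{dTheta}, Corollary~\ref{dhTheta}) relate the $\K$-contact form $\tilde\cf^\K_{\tstD}$ to $\tilde\cf^\X_{\tstD}/\tilde f$ via a CR isomorphism, so that pushing down to $\tstM$ the $\K$-contact volume form $\tilde\cf^\K_{\tstD}\wedge(\d\tilde\cf^\K_{\tstD})^{m+1}$ produces the weight $\tilde f^{-m-3}\tilde\Omega^{m+1}$ after integrating over the fibre. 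This is exactly the weight appearing in the Lahdili weighted Futaki invariant \eqref{abdellah-futaki-0}.

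Next I would match the two terms of $\tstGF_\K(\tstC)$ in \eqref{e:GF} with the two terms of $\Fut_\K(\tstM,\tstL)$ in \eqref{abdellah-futaki-0}. For the second (lower-order) term this is essentially immediate: in both formulas one integrates $(\pi^*\omega_{FS})$ wedged with the appropriate power of the transversal K\"ahler form times the weight $\tilde f^{-1-m}$, and the Fubini--Study factors agree because $\pi$ is the same equivariant map to $\C P^1$. The main work is the first term: I would use the key computation from \cite{AC} (recorded around \eqref{(f,wt)-scalar-curvature}, Lemma~\ref{l:weighted-extremal} and the statement ``\eqref{scal-twist}'' referenced in the proof of Lemma~\ref{M}) that the Tanaka--Webster scalar curvature $\Scal(\tilde g_\K)$ of the $\K$-Sasaki structure on $\tstN$, pulled back and multiplied by $\tilde f$, equals $\Scal_{\tilde f, m+2}(\tilde\Omega)$ computed on $\tstM$; here one must be careful that $\tstN$ has complex-dimensional quotient $m+1$ (one more than the Sasaki--Reeb quotient $M$), so the ``$m$'' in \eqref{(f,wt)-scalar-curvature} becomes $m+1$ and the Tanaka--Webster scalar curvature on $\tstN$ corresponds to $\Scal_{\tilde f, m+2}$ on $\tstM$ — this is precisely why the exponent $m+2$ (rather than $m+1$) occurs. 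Combining this identity with the volume-form matching above, the $\Scal$-term of $\tstGF_\K(\tstC)$ becomes $-\int_{\tstM}\Scal_{\tilde f,m+2}(\tilde\Omega)\,\tilde f^{-3-m}\tilde\Omega^{m+1}/(m+1)!$ (after the $2\pi$ fibre integration), matching the first summand of \eqref{abdellah-futaki-0}.

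Finally I would handle the normalizing constant. The constant $c_\K$ in \eqref{e:GF} is defined (see \eqref{cK}) so that it equals the average of $\Scal(\tilde g_\K)$ against the $\K$-contact volume on $\Sm$ — equivalently, via the identification just used, the average of $\Scal_{\tilde f, m+2}(\omega)$ against $\tilde f^{-m-3}\omega^{m+1}$ on $M$ — and this is exactly the constant $c_\K$ appearing as the coefficient of $\tilde f$ in \eqref{abdellah-futaki-0}. Thus the two ``$c_\K \tilde f$'' terms match as well, and collecting all factors of $2\pi$ (one from the $\Sph^1$-fibre of $\tstN \to \tstM$) gives $\Fut_\K(\tstM,\tstL) = \frac{1}{2\pi}\tstGF_\K(\tstC)$. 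The hardest step is the scalar-curvature identity linking the Tanaka--Webster scalar curvature of the cone over $\tstM$ (in the $\K$-polarization) to the weighted scalar curvature $\Scal_{\tilde f,m+2}$ on $\tstM$; this requires invoking the computation of \cite{AC} in one dimension higher than its original statement, keeping careful track of the shift $m \rightsquigarrow m+1$ and of the homogeneity/normalization conventions for the cone potential, and it is where a sign or combinatorial factor is most likely to need checking.
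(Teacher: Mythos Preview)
Your overall strategy---fix a K\"ahler metric $\tilde\Omega$ on $\tstM$, lift to the Sasaki manifold $\tstN$, and match the integrands using the change-of-polarization formula \eqref{scal-twist}---is exactly the paper's approach, and your treatment of the volume form and the $\pi^*\omega_{FS}$ term is correct. However, there is a genuine gap in your handling of the scalar-curvature term.

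You claim that $\tilde f\,\Scal(\tilde g_\K) = \Scal_{\tilde f, m+2}(\tilde\Omega)$, arguing that the dimension shift $m\rightsquigarrow m+1$ (since $\tstM$ has complex dimension $m+1$) explains the index $m+2$. But this is a miscalculation: with $m$ replaced by $m+1$ in \eqref{(f,wt)-scalar-curvature}, the weighted scalar curvature becomes
\[
\tilde f^2\Scal(\tilde g) - 2(m+2)\tilde f\,\Delta_{\tilde g}\tilde f - (m+3)(m+2)|\d\tilde f|^2_{\tilde g} = \Scal_{\tilde f, m+3}(\tilde\Omega),
\]
not $\Scal_{\tilde f, m+2}$. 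Thus \eqref{scal-twist} on $\tstN$ yields $\tilde f\,\Scal(\tilde g_\K) = \Scal_{\tilde f, m+3}(\tilde\Omega)$, which does \emph{not} match the integrand $\Scal_{\tilde f, m+2}(\tilde\Omega)$ appearing in the definition \eqref{abdellah-futaki-0} of $\Fut_\K(\tstM,\tstL)$. The two quantities differ pointwise by $2\tilde f\,\Delta_{\tilde g}\tilde f + 2(m+2)|\d\tilde f|^2_{\tilde g}$, and you cannot simply identify them.

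The paper closes this gap with an additional step you are missing: one computes
\[
\Scal_{\tilde f, m+2}(\tilde\Omega)\,\tilde f^{-m-3} - \Scal_{\tilde f, m+3}(\tilde\Omega)\,\tilde f^{-m-3}
= 2\,\delta_{\tilde\Omega}\bigl(\tilde f^{-m-2}\d\tilde f\bigr),
\]
so that after integrating over $\tstM$ against $\tilde\Omega^{m+1}$ the two expressions agree by Stokes' theorem. Only after this integration-by-parts identity can one invoke \eqref{scal-twist} to pass to the $\K$-Sasaki scalar curvature on $\tstN$. Your ``hardest step'' warning was well placed, but the error is in the index bookkeeping rather than in signs or homogeneity: you need both the correct dimension shift \emph{and} the divergence identity to bridge $\Scal_{\tilde f, m+2}$ and $\tilde f\,\Scal(\tilde g_\K)$.
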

\begin{proof} The definition \eqref{(f,wt)-scalar-curvature} can be rewritten as
\begin{equation*}
\begin{split}
\Scal_{\tilde f, m+2}(\tilde \Omega)\tilde f^{-m-3}
& = \bigl(\Scal_{\tilde f, m+3}(\tilde \Omega)  + 2 \tilde f \Delta_{\tilde \Omega} \tilde f  + 2(m+2) |\d\tilde f|^2_{\tilde \Omega}\bigr)\tilde f^{-m-3}\\
&= \Scal_{\tilde f, m+3}(\tilde \Omega) \tilde f^{-m-3}
+  2 \delta_{\tilde\Omega}\bigl(\tilde f^{-m-2}\d f\bigr).
\end{split}
\end{equation*}
Hence by Stokes' Theorem
\begin{equation}\label{miracle}
\int_{\tstM} \Scal_{\tilde f, m+2}(\tilde \Omega)\tilde f^{-m-3} \tilde \Omega^{m+1}
=  \int_{\tstM} \Scal_{\tilde f, m+3}(\tilde \Omega)
\tilde f^{-m-3} \tilde \Omega^{m+1},
\end{equation}
so  that \eqref{abdellah-futaki-0} is equivalently given by
\begin{equation}\label{abdellah-futaki}
\Fut_\K(\tstM, \tstL) =  -\int_{\tstM} \Big(\Scal_{\tilde f}(\tilde \Omega) - c_\K{\tilde f}\Big) \tilde f^{-3-m} \frac{{\tilde \Omega}^{m+1}}{(m+1)!}   +  2 \int_\tstM \tilde f^{-1-m}\pi^*\omega_{FS}\wedge \frac{\omega^m}{m!},
\end{equation}
where $\Scal_{\tilde f}(\tilde \Omega)=  \Scal_{\tilde f, m+1}(\tilde \Omega)$ is $\tilde f$-scalar curvature of the K\"ahler manifold $(\tstM, \tilde \Omega)$.  Formula \eqref{scal-twist} tells us that on the Sasaki manifold  $(\tstN, \tstD, \tstJ,\X)$ we have 
\[ {\tilde \cf}_{\tstD}^\X = {\tilde f} {\tilde \cf}_{\tstD}^\K,  \, \tilde f = {\tilde \cf}_{\tstD}^\X(\K),  \,  \Scal_\K(\tilde g_\X)= \Scal_{\tilde f}(\tilde \Omega)= \tilde f \Scal({\tilde g}_\K)\]
and we compute
\[
\begin{split}
\Fut_\K(\tstM, \tstL)  =&  -\int_{\tstM} \Big(\Scal_{\tilde f}(\tilde \Omega) - c_\K{\tilde f}\Big) \tilde f^{-3-m} \frac{{\tilde \Omega}^{m+1}}{(m+1)!}   +  2 \int_\tstM \tilde f^{-1-m}\pi^*\omega_{FS}\wedge \frac{\omega^m}{m!} \\
   = &  - \frac{1}{2\pi (m+1)!}\int_{\tstN} \big(\Scal_\K(\tilde g_\X) -c_\K\tilde\cf_{\tstD}^\X(\K)\big)\tilde\cf_{\tstD}^\X(\K)^{-3-m}   {\tilde \cf}_{\tstD}^\X \wedge (\d {\tilde \cf}_{\tstD}^\X )^{m+1}  \\
   &  + \frac{2}{(2\pi) m!} \int_{\tstN}     \pi^*(\omega_{FS})  \wedge \tilde\cf_{\tstD}^\X(\K)^{-1-m}   \tilde\cf_{\tstD}^\X  \wedge (\d {\tilde \cf}_{\tstD}^\X )^{m}    \\
   =&    \frac{1}{2\pi}  \tstGF( \tstC).   
\end{split}
\]

\end{proof}

As a direct consequence of Propositions~\ref{p:globalDFS} \& \ref{p:Weighted=GF} and  Theorem~\ref{Theorem:polystable} we obtain the following ramification of the result in \cite{CSz}.
 \begin{cor}\label{c:polystability} Let $(\Cm,\K)$ be the  polarized complex cone associated to a compact Sasaki manifold $(\Sm, \Ds, J, \K)$,  $\T \leq \Aut(\Cm,J)^\K$ a maximal torus with $\K \in \tor_+$ and assume that there exists a regular Sasaki--Reeb vector field $\X \in \tor_+$. If $(\Cm, \K)$ admits a scalar-flat K\"ahler cone metric,  then for any smooth  $\T$-equivariant  test configuration $(\tstC,\K)$  of $(\Cm,\K)$,   obtained  from a  smooth $\T$-equivariant polarized test configuration $(\tstM, \tstL)$ associated to the regular quotient $(M, J, L, \T)$,  which has reduced central fibre and is not a product,  we have $\DF_{\Cv_0, K}(\Y_{w}) >0$.
\end{cor}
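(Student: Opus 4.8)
The plan is to assemble the three earlier results --- Theorem~\ref{Theorem:polystable}, Proposition~\ref{p:Weighted=GF} and Proposition~\ref{p:globalDFS} --- into one chain of (in)equalities. First I would translate the hypothesis into Sasaki language: a scalar-flat K\"ahler cone metric $\tfrac14\d\d^c(r^2)$ on $(\Cm,J,\K)$, with $r\in\cpot_\K(\Cm,J)$, is exactly the datum of a CSC Sasaki structure $\cf_r\in\cS(\K,J^\K)$ via the standard dictionary between the scalar curvature of a K\"ahler cone and the transversal scalar curvature of the associated Sasaki metric (cf.~\cite{CSz,He-Sun,MSY2}); in particular it is an \emph{extremal} Sasaki structure. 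Next I would align the maximal-torus data so that Theorem~\ref{Theorem:polystable} literally applies. Since $\X\in\tor_+$ is regular, the Sasaki--Reeb quotient of $\Sm$ by $\X$ is a smooth polarized variety $(M,J,L)$, and since $\Sph^1_\X\leq\T$ and $\tor$ is abelian, maximality of $\T$ in $\Aut(\Cm,J)^\K$ forces $\check\T=\T/\Sph^1_\X$ to be a maximal torus in $\Aut(M,L)$ (a strictly larger torus there would pull back to one strictly containing $\T$ and still commuting with $\K$). Moreover, exactly as in the proof of Proposition~\ref{p:reduction} --- using \cite[Theorem~4.8]{BGS} together with the fact that $\Aut(\Cm,J)^\K$ is a finite-dimensional complex Lie group --- the CSC Sasaki structure can be conjugated to a $\T$-invariant one. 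We are then in the setting of Theorem~\ref{Theorem:polystable}.

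With this in place, let $(\tstC,\K)=(\tstL^*)^\times$ be the smooth $\T$-equivariant test configuration of polarized complex cones determined, as in \S\ref{ss:ConeTC}, by the given smooth $\T$-equivariant polarized test configuration $(\tstM,\tstL)$ of $(M,J,L,\T)$, which has reduced central fibre and is not a product. Theorem~\ref{Theorem:polystable} then yields $\Fut^\ext_\K(\tstM,\tstL)>0$. Because the Sasaki structure is CSC, its transversal scalar curvature is constant, so the contact Futaki invariant of $(\Sm,\Ds,\K)$ of Definition~\ref{d:contact-Futaki} vanishes, i.e.\ $\K_\ext=c_\K\K$, equivalently $\ell_\ext=c_\K\ell_\K$; comparing \eqref{extremal-futaki} with \eqref{abdellah-futaki-0} this gives $\Fut^\ext_\K(\tstM,\tstL)=\Fut_\K(\tstM,\tstL)$. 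Proposition~\ref{p:Weighted=GF} identifies $\Fut_\K(\tstM,\tstL)=\tfrac1{2\pi}\tstGF_\K(\tstC)$, and Proposition~\ref{p:globalDFS} gives $\DF_{\Cv_0,\K}(\Y_w)=\lambda(m)\,\tstGF_\K(\tstC)$ for a positive dimensional constant $\lambda(m)$. Chaining these, $\DF_{\Cv_0,\K}(\Y_w)=2\pi\,\lambda(m)\,\Fut_\K(\tstM,\tstL)>0$, which is the assertion.

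Since all the substantive analytic content is packaged in the three cited results, there is no genuine obstacle here; the only points requiring care are the two bookkeeping steps above. The first is the reduction of the maximal-torus hypothesis so that Theorem~\ref{Theorem:polystable} applies verbatim, which is precisely the argument already carried out in the proof of Proposition~\ref{p:reduction}. The second --- and the one most likely to be overlooked --- is the observation that the CSC (rather than merely extremal) hypothesis is exactly what collapses the relative weighted Donaldson--Futaki invariant $\Fut^\ext_\K$ to the ``scalar-flat'' version $\Fut_\K$ appearing in Propositions~\ref{p:Weighted=GF} and~\ref{p:globalDFS}, via the vanishing of the contact Futaki invariant. I would therefore present the corollary as a short deduction rather than a self-contained argument.
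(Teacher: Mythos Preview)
Your proposal is correct and follows the same route as the paper: translate the scalar-flat cone metric into a CSC Sasaki structure, invoke Theorem~\ref{Theorem:polystable} to get $\Fut^\ext_\K(\tstM,\tstL)>0$, use the CSC condition to identify $\Fut^\ext_\K=\Fut_\K$, and then chain Propositions~\ref{p:Weighted=GF} and~\ref{p:globalDFS}. The paper's own proof is a three-sentence version of exactly this; your additional care in aligning the maximal-torus hypotheses (on $\Aut(\Cm,J)^\K$ versus $\Aut(M,L)$) via the argument of Proposition~\ref{p:reduction} is a point the paper glosses over, but it is the natural way to fill that gap.
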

\begin{proof} It is well-known (see e.g.~\cite{BG-book}) that a scalar-flat K\"ahler cone metric on $(\Cm, J, \K)$ corresponds to a CSC Sasaki structure in $\cS(\K, J^\K)$,  such that the constant $c_K$ defined by \eqref{cK} equals $m(m+1)$. Thus, we are under the assumptions of Theorem~\ref{Theorem:polystable} with $\ell_\ext(x) = c_K \ell_K(x)$, i.e., $\Fut^\ext_K(\tstM, \tstL)= \Fut_K(\tstM, \tstL) >0$.  By Proposition~\ref{p:Weighted=GF}  we have $\tstGF_\K (\tstC)>0$ whereas Proposition~\ref{p:globalDFS}  yields $\DF_{\Cv_0, K}(\Y_{w}) >0$.
\end{proof}

\section{Applications}\label{s:applications} In this section we solve  the existence problem  of extremal  Sasaki metrics on a special class of CR manifolds of Sasaki type, extending Theorem 2 in \cite{AC} to arbitrary dimension.

\subsection{The Calabi construction on admissible manifolds} This is the familiar construction of special K\"ahler metrics on $\PP^1$ bundles,  going back to Calabi~\cite{calabi} and studied in many other places.

We essentially follow the notation of \cite{HFKG3}  and consider $(M, J)= \PP(\cO \oplus L) \to B$ to be the total space of a $\PP^1$-bundle over the product $B= B_1 \times \cdots \times B_k$ of  compact CSC K\"ahler manifolds  $(B_j, J_j, g_j, \omega_j)$ satisfying the Hodge condition $\omega_j = 2\pi c_1(L_j)$ for a polarization $L_j$ on $B_j$, and  $L = \bigotimes_{j=1}^k  L_j^{p_j}$  with $p_j \in \intZ$ being the induced line bundle  on $B$. We fix real constants $c_j \in \R$ such that the affine linear functions $p_j z + c_j>0$ on the interval $P=[-1,1] \sub \R$ (equivalently, we require $c_j >|p_j|$).

 Then, as shown for instance in \cite{HFKG3},  any  smooth function $\Theta(z)$ on $[-1,1]$ satisfying the following boundary conditions
\begin{equation}\label{boundary}
\Theta(\pm 1) = 0, \ \ \Theta'(\pm 1) = \mp 2,
\end{equation}
and 
\begin{equation}\label{positivity}
\Theta(z) >0  \ \textrm{on} \ (-1,1).
\end{equation}
gives rise to a K\"ahler metric on $M$ of the form
\begin{equation}\label{metric}
\begin{split}
g &= \sum_{j=1}^k (p_j z+ c_j) \pi^* g_j  +  \frac{\d z^2}{\Theta(z)} + \Theta(z) \theta^2, \\
\omega &= \sum_{j=1}^k  (p_j z+ c_j) \pi^* \omega_j  + dz \wedge \theta,   \,  \d\theta = \sum_{j=1}^k p_j \pi^*\omega_j,
\end{split}
\end{equation}
which is isometric to a K\"ahler metric in the same K\"ahler  class $\kcl_c$ of $(M, J)$ (here $c=(c_1, \ldots, c_k)$ stand for the fixed real constants in the definition \eqref{metric}). Notice that  in \eqref{metric}, $\hTheta$ is  a  connection 1-form on the principal $\Sph^1$-bundle $P$ over $B$,   corresponding to line bundle  $L^*$, and $M= \PP^1 \times_{\Sph^1} P \to B$ is the corresponding fibre bundle associated to the toric variety $(\PP^1, \cO(2))$. The induced $\Sph^1$-action on $M$ corresponds to fibrewise rotations in the trivial factor $\cO$ of $\cO\oplus L$, and  \eqref{metric} is $\Sph^1$-invariant with  $m_\omega=z$ being  the corresponding momentum map with  image $P=[-1, 1] \sub \R$. For any real constants   $b>|a|$, $f(z) = az + b$  is a positive Killing potential  with respect to \eqref{metric}  for the Killing vector field $a\check\K$ where $\K$ is the generator of the $\Sph^1$-action,  and the constant $b$ is essentially  the normalization constant $\kappa>0$ of \S\ref{ss:calabi-problem-kahler}.   The $f$-scalar curvature of the  K\"ahler metric \eqref{metric} is computed in \cite{AMT} to be
\begin{equation}\label{scal-f-metric}
\Scal_f(g)=  -\frac{\Big(f(z)^{-m-1}p_c(z) \Theta(z)\Big)''}{p_c(z)f(z)^{-m-3}} + f^2(z)\sum_{j=1}^k \frac{\Scal_j}{p_jz+ c_j},
\end{equation}
where $\Scal_j$ stands for the constant scalar curvature of $(B_j, g_j)$ and
\[
p_c(z):= \prod_{j=1}^k (p_jz+ c_j)^{\dim_\C B_j}
\]
is a polynomial of degree $(m-1)$.  By \cite[Lemma 2.4]{AMT}, \eqref{metric} gives rise to an $f$-extremal K\"ahler metric if and only if  the smooth function $\Theta(z)$ on $[-1,1]$ satisfies  \eqref{boundary}, \eqref{positivity} and the condition
\begin{equation}\label{extremal}
\Big( f(z)^{-m-1}p_c(z) \Theta(z)\Big)'' =  p_c(z)f(z)^{-1-m} \Big(\sum_{j=1}^k \frac{\Scal_j}{p_jz + c_j}\Big)  -(Az+B)f(z)^{-m-3}p_c(z)
\end{equation}
for some (unknown) real constants $A$ and $B$.  The general solution of \eqref{extremal}  thus depends on $4$ real constants (including $A$ and $B$) and the $4$ boundary conditions in \eqref{boundary} determine these constants uniquely, as it follows from the arguments of  \cite[Prop.~2.2]{AMT}. We denote by $\ell^\ext_{a,b,c}(z)=A_{a,b,c}z + B_{a,b,c}$  the corresponding affine-linear function and 
by  $\Theta^\ext_{a,b,c}(z)$  the  corresponding solution  of \eqref{extremal},  verifying \eqref{boundary} but possibly not \eqref{positivity}.

Up to a suitable renormalization, this is the setting  of \cite{HFKG3, AMT}, except that we allowed $p_j=0$ (which introduces a reducible manifold $(M,J)$ on which nontrivial solutions of the $f$-extremal equation can exist when $f\neq const$), and  did not introduce here blow-downs.  Indeed, when $p_j\neq 0$, in order to get to the notation of \cite{HFKG3},  one simply needs to replace $\omega_j$ by (the possibly negative definite) CSC K\"ahler metric $p_j \omega_j$ and thus letting in \eqref{metric} $p_j=1$ and $c_j = \frac{1}{x_j}, \, 0<|x_j|<1$. 

\begin{defn}\label{admissible}\cite{HFKG3} A K\"ahler manifold $(M, J)$ obtained as above is called {\it admissible}. The K\"ahler metric on it and the corresponding K\"ahler class $\kcl_c$ defined by \eqref{metric} for some choice of $c=(c_1, \ldots, c_k)$ with $c_j > |p_j|$ and smooth function $\Theta(z)$ are called {\it admissible}.
\end{defn}
Our main result relevant to the admissible setting is the following explicit Yau--Tian--Donaldson type correspondence.
\begin{Theorem}\label{conjecture1} For an admissible K\"ahler manifold $(M, J, \kcl_c)$ as above, there exists an $\Sph^1$-invariant  $(a\check\K, b)$-extremal K\"ahler  metric in the K\"ahler class $\kcl_c$ if and only if the function $\Theta^\ext_{a,b,c}(z)$  is positive on $(-1,1)$. In this case,  $(M, J)$ admits an admissible $(a\check\K, b)$-extremal K\"ahler  metric of the form \eqref{metric}.
\end{Theorem}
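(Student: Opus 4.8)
The ``if'' part of the equivalence, together with the last sentence, is immediate from the admissible formalism recalled above: if $\Theta^\ext_{a,b,c}>0$ on $(-1,1)$, then $\Theta:=\Theta^\ext_{a,b,c}$ satisfies the boundary conditions \eqref{boundary}, the positivity \eqref{positivity}, and the extremal equation \eqref{extremal} with $f(z)=az+b$, so by \cite[Lemma~2.4]{AMT} formula \eqref{metric} defines an admissible K\"ahler metric in $\kcl_c$ which is $f$-extremal, i.e.\ an $\Sph^1$-invariant $(a\check\K,b)$-extremal K\"ahler metric of Calabi type. Thus the entire content lies in the ``only if'' direction, which I would establish through weighted K-stability.

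Assume then that $(M,J,\kcl_c)$ carries an $\Sph^1$-invariant $(a\check\K,b)$-extremal K\"ahler metric. First, by the weighted K-semistability theorem in the (possibly transcendental) K\"ahler setting --- \cite[Theorem~2]{lahdili3}, see \S\ref{ss:(K,kappa)-stability}, and, when $\kcl_c$ is polarized, Proposition~\ref{p:reduction} combined with Theorem~\ref{Theorem:lahdili1} --- one has $\Fut^\ext_\K(\tstM,\tstA)\ge0$ for every smooth $\check\T$-equivariant K\"ahler test configuration $(\tstM,\tstA)$ of $(M,J,\kcl_c)$ with reduced central fibre, where $\check\T$ denotes the fibrewise circle action. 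The plan is to apply this to the explicit family of test configurations attached to the Calabi construction: following \cite{HFKG3,AMT}, for each $z_0\in(-1,1)$ there is a smooth $\check\T$-equivariant K\"ahler test configuration $(\tstM,\tstA_{z_0})$ of $(M,J,\kcl_c)$ with reduced, \emph{reducible} central fibre, for which the weighted Donaldson--Futaki invariant \eqref{extremal-futaki} reduces (this computation is carried out in \cite{AMT}, cf.\ \cite{lahdili2}) to a single evaluation of the extremal polynomial,
\[
\Fut^\ext_\K(\tstM,\tstA_{z_0})=\lambda(z_0)\,\Theta^\ext_{a,b,c}(z_0),\qquad \lambda(z_0)>0.
\]
Combining the two facts gives $\Theta^\ext_{a,b,c}(z_0)\ge0$ for all $z_0\in(-1,1)$; equivalently, writing $\Theta^\ext_{a,b,c}(z)=(1-z^2)R(z)$ (which is legitimate since $\Theta^\ext_{a,b,c}(\pm1)=0$), the polynomial $R$ is nonnegative on $(-1,1)$ with $R(\pm1)=1$.

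It remains to rule out interior zeros, and this is the delicate step. If $\Theta^\ext_{a,b,c}(z_0)=0$ for some $z_0\in(-1,1)$, then, being $\ge0$ there, $\Theta^\ext_{a,b,c}$ has at $z_0$ a zero of multiplicity at least two; in particular $z_0$ need not be rational, so the polarized stability of Theorem~\ref{Theorem:polystable} alone does not suffice, since $(\tstM,\tstA_{z_0})$ is then a genuinely transcendental K\"ahler test configuration. Here I would use that, the extremal metric being assumed to exist, the weighted Mabuchi energy $\ME_{\check\K,\kappa}$ is proper with respect to the natural action of $\G=\check\T_\C$ on $\check\T$-invariant relative K\"ahler potentials (the existence$\Rightarrow$properness implication for weighted extremal K\"ahler metrics, \cite{lahdili,lahdili3}; in the polarized case this is Theorem~\ref{main-result} together with Proposition~\ref{p:reduction}), and feed this into the non-Archimedean slope formula $\Fut^\ext_\K(\tstM,\tstA_{z_0})=\lim_{t\to\infty}\ME_{\check\K,\kappa}(u_t)/t$ of \cite{lahdili2,Dyrefelt} along the ray $u_t$ attached to $(\tstM,\tstA_{z_0})$: since $(\tstM,\tstA_{z_0})$ has reducible central fibre it is not a product, so --- provided its reduced non-Archimedean $d_1$-norm relative to $\G$ (or to the centralizer of $\check\T$ in $\Aut_r(M,J)$, which is explicit in the admissible case) is strictly positive --- properness forces $\Fut^\ext_\K(\tstM,\tstA_{z_0})>0$, contradicting $\Theta^\ext_{a,b,c}(z_0)=0$. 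This yields $\Theta^\ext_{a,b,c}>0$ on all of $(-1,1)$ and completes the proof. The hard part is exactly this last step: excluding an irrational interior double zero requires the strict properness/uniform-stability estimate over the transcendental test configurations $(\tstM,\tstA_{z_0})$, not just the K-semistability of \cite{lahdili2,lahdili3}; one must either invoke the K\"ahler refinement of the properness$\Rightarrow$stability mechanism (cf.\ \cite{Dyrefelt} and the end of Remark~\ref{r:properness}, with due attention to the distinction between $\G$- and $[\G]$-properness) or exploit the explicit geometry of the test configurations $(\tstM,\tstA_{z_0})$ and of $\Aut_r(M,J)$ to verify directly that their reduced non-Archimedean norm does not degenerate.
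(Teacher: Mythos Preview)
Your ``if'' direction is fine and matches the paper. The ``only if'' direction, however, has a genuine gap: the crucial last step you isolate is precisely Conjecture~\ref{conjecture0}, which the paper does \emph{not} prove. You propose to deduce strict positivity $\Fut^\ext_\K(\tstM,\tstA_{z_0})>0$ for the transcendental test configurations $(\tstM,\tstA_{z_0})$ from $\G$-properness of $\ME_{\check\K,\kappa}$, but as the paper explains (Remark after Conjecture~\ref{conjecture0} and the end of Remark~\ref{r:properness}), passing from $\G$-properness to the needed $[\G]$-properness, and then running the Berman--Darvas--Lu/Dyrefelt argument for K\"ahler (non-polarized) test configurations, is exactly the missing ingredient. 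Your caveat ``provided its reduced non-Archimedean $d_1$-norm relative to $\G$ is strictly positive'' is the entire difficulty, not a technicality. Moreover, Theorem~\ref{main-result} and Lemma~\ref{l:key} are stated for the polarized case (regular $\X$), so even for rational $c$ they only control rational $z_0$.

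The paper bypasses this completely by a two-step argument that never invokes strict K-stability for transcendental test configurations. First, Theorem~\ref{Theorem:clever} handles the case $\Theta^\ext_{a,b,c}(z_0)=0$ with $\ell^\ext_{a,b,c}(z_0)\neq 0$: one perturbs the weight $w$ to $w_t$ so that $\Theta^\ext_{v,w_t,c}(z_0)<0$ for small $t$, contradicting Theorem~\ref{Theorem:lahdili3} via the openness result \cite[Theorem~B2]{lahdili2}. Second, for the degenerate case $\Theta^\ext_{a,b,c}(z_0)=\ell^\ext_{a,b,c}(z_0)=0$, the paper observes that $P_{a,b,c}(z):=p_c(z)\Theta^\ext_{a,b,c}(z)$ and $\ell^\ext_{a,b,c}$ are rational in $(a,b,c)$; combining openness, uniqueness \cite{lahdili3}, and the hamiltonian $2$-form characterization \cite{HFKG2,HFKG3}, one shows the double-root condition \eqref{key-relation} would persist on an open set and hence, by analytic continuation, on the whole parameter domain $\mathcal P$. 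Specializing to $(a,b)=(1,1)$ then gives an explicit contradiction with the boundary conditions~\eqref{polynomial-boundary}. This route uses only K-semistability (Theorem~\ref{Theorem:lahdili3}) plus deformation/uniqueness, not the conjectural K\"ahler polystability you invoke.
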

Theorem~\ref{conjecture1} is established in various special cases, but to the best of our knowledge was  open in the above generality prior to this work. Indeed, in the non-weighted case (which correspond to $(a,b)=(0,1)$) Theorem~\ref{conjecture1}  was first proved in \cite{sz} in the case of  polarized  ruled surfaces,  and in \cite{HFKG3} in general, whereas on  a ruled complex surface and arbitrary $(a, b)$ with $|a|<b$  it is established in \cite[Prop.~7]{AC}. Many other special cases are discussed in \cite{AMT}.

It is not hard to see that when the real constants $c_1, \ldots, c_k$ in \eqref{metric} are all rational, the corresponding K\"ahler class $\kcl_c$ on $(M,J)$ belongs to $H^2(M, \Q)$.  Thus,   for  a suitable homothety factor $\lambda_c>0$, a K\"ahler metric $(\lambda_cg, \lambda_c \omega)$ given by \eqref{metric} defines  a smooth CR manifold  $(\Sm, \Ds, J)$ of Sasaki type (see Example~\ref{e:regular}), endowed with a $2$-dimensional torus $\T \leq \Aut(\Sm,  \Ds, J)$ and  a regular Sasaki--Reeb element  $\X \in \tor_+$ generating an $\Sph^1_\X \leq \T$, such that $M$ is the Sasaki--Reeb quotient   with respect to $\X$ and has the induced $\check \T= \T/\Sph^1_\X= \Sph^1_{\check\K}$  action generated by $\check\K$. In other words,  the choice of a positive Killing potential $f=az+b,  b > |a]$ with respect to the  K\"ahler metric $(\lambda_c g, \lambda_c\omega)$  defines  an element $\K_{a,b} \in \tor_+$,  inducing $a\check\K$ on $M$. In this setting, Proposition~\ref{p:reduction}  and  Theorem~\ref{conjecture1} yield the following generalization of \cite[Theorem 2]{AC}.
\begin{cor}\label{c:extension} $(\Sm, \Ds, J)$ admits an extremal Sasaki metric in $\cS(\K_{a,b} ,J^{\K_{a,b}})$ if and only if the corresponding smooth function $\Theta^\ext_{a,b,c}(z)>0$ on $(-1,1)$.
\end{cor}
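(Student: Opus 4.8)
The plan is to chain together two results proved earlier in the paper: the reduction of the extremal Sasaki problem on $(\Sm,\K_{a,b},J^{\K_{a,b}})$ to a weighted extremal K\"ahler problem on a regular quotient (Proposition~\ref{p:reduction}), and the explicit solution of the latter in the admissible setting via the Calabi ansatz (Theorem~\ref{conjecture1}). First I would fix the data as in the paragraph preceding the corollary: since $c=(c_1,\dots,c_k)$ is rational, a suitable homothety $(\lambda_c g,\lambda_c\omega)$ of the admissible metric \eqref{metric} satisfies the Hodge condition, so by Example~\ref{e:regular} it defines a compact regular Sasaki manifold $(\Sm,\Ds,J,\X)$ with regular Sasaki--Reeb field $\X$ whose Sasaki--Reeb quotient is $(M,J)$ with the (rescaled) K\"ahler class $\kcl_c$. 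The fibrewise $\Sph^1$-action on $M=\PP(\cO\oplus L)$ induces the Killing field $\check\K$, and the choice of positive Killing potential $f(z)=az+b$ with $b>|a|$ pins down, via Lemma~\ref{l:potential} and the identifications of \S\ref{ss:calabi-problem-sasaki}, a Sasaki--Reeb vector field $\K_{a,b}\in\tor_+$ inside the $2$-torus $\T\leq\Aut(\Sm,\Ds,J)$ generated by $\X$ and the lift of $\check\K$; in particular $[\X,\K_{a,b}]=0$ and $\cf_0^\X(\K_{a,b})$ is, up to the homothety normalization, the function $f$.

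Next I would invoke Proposition~\ref{p:reduction}, whose hypotheses hold because $\X$ is regular (hence quasiregular) with compact K\"ahler quotient $(M,J,\kcl_c)$ and $\K_{a,b}\in\crJ_+(\Sm,\Ds,J)$ commutes with $\X$: there is an extremal Sasaki structure in $\cS(\K_{a,b},J^{\K_{a,b}})$ if and only if $(M,J)$ admits a $\K_{a,b}$-extremal K\"ahler metric in $\kcl_c$. By Definition~\ref{d:K-extremal} this means exactly a $(\check\K_{a,b},\kappa)$-extremal K\"ahler metric, where $\check\K_{a,b}=a\check\K$ and $\kappa$ is the normalizing constant attached to $\K_{a,b}$ by the integral formula $\kappa=\tfrac{1}{(2\pi)m!}\int_\Sm\cf^\X(\K_{a,b})\,\cf^\X\wedge(\d\cf^\X)^m$ of \S\ref{ss:calabi-problem-sasaki}; evaluating this fibre integral in the Calabi ansatz and choosing $\lambda_c$ appropriately identifies $\kappa$ with the constant $b$ of the admissible normalization, so that we are indeed looking for an $(a\check\K,b)$-extremal K\"ahler metric in the sense of Theorem~\ref{conjecture1}.

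Finally, $(M,J,\kcl_c)$ is admissible in the sense of Definition~\ref{admissible}, and the torus $\check\T=\T/\Sph^1_\X=\Sph^1_{\check\K}$ acting on $M$ is just the fibrewise circle, so $\check\T$-invariance coincides with $\Sph^1$-invariance. Theorem~\ref{conjecture1} then asserts that an $\Sph^1$-invariant $(a\check\K,b)$-extremal K\"ahler metric in $\kcl_c$ exists if and only if $\Theta^\ext_{a,b,c}(z)>0$ on $(-1,1)$, and that in this case it is of the admissible form \eqref{metric}. Composing the three equivalences gives the corollary, with the resulting extremal Sasaki structure corresponding to an admissible $(a\check\K,b)$-extremal K\"ahler metric of Calabi type, in accordance with Theorem~\ref{Theorem:main2}.

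The one point that demands genuine care, rather than a mere invocation of earlier statements, is the normalization bookkeeping in the middle step: checking that the constant $\kappa$ produced by $\K_{a,b}$ on the rescaled Sasaki manifold is \emph{literally} the constant $b$ appearing in $f(z)=az+b$, so that the function $\Theta^\ext_{a,b,c}$ of the corollary and of Theorem~\ref{conjecture1} coincide. This reduces to an elementary one-variable integration against the Calabi ansatz \eqref{metric} combined with a correct choice of the homothety factor $\lambda_c$; it is routine but must be carried out explicitly, after which no further analysis is needed.
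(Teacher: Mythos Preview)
Your proposal is correct and follows exactly the route the paper takes: the corollary is stated as an immediate consequence of Proposition~\ref{p:reduction} combined with Theorem~\ref{conjecture1}, and your argument spells out precisely this chain. The only remark is that your final caveat about normalization is slightly overcautious: in the paper's setup the pair $(a,b)$ \emph{defines} $\K_{a,b}$ by the requirement $\cf_0^\X(\K_{a,b})=az+b$, so the identification of a $\K_{a,b}$-extremal metric with an $(a\check\K,b)$-extremal metric is tautological rather than something to be verified by an integral computation (the paper's phrase ``the constant $b$ is essentially the normalization constant $\kappa$'' signals the same parametrization, not a nontrivial identity).
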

Notice that in Theorem~\ref{conjecture1}, the usually more difficult part, namely establishing the existence of an $(a\check\K, b)$-extremal K\"ahler metric under a suitable positivity algebraic condition, is trivial because if $\Theta^\ext_{a,b,c}(z)$  verifies \eqref{positivity}, then letting $\Theta(z)=\Theta^\ext_{a,b,c}(z)$ in \eqref{metric} defines an admissible  $(a\K, b)$-extremal K\"ahler  in $\kcl_c$.
In the other direction, Lahdili shows in \cite[Theorem 8]{lahdili2} that for each $z_0\in (-1,1)$, one can find an $\Sph^1$-invariant smooth test configuration $\tstM$ which, as a complex manifold,  is the degeneration to the normal cone~\cite{RT} of $(M, J)$  with respect to the infinity section $B_\infty:=\PP(\cO \oplus 0) \sub M$,   together with  a K\"ahler class $\tstA_{z_0}$, such that the corresponding relative $(\check\K, \kappa)$-weighted Donaldson--Futaki invariant introduced in \S\ref{ss:(K,kappa)-stability} satisfy
$$\Fut_{a\check\K,b}^\ext(\tstM, \tstA_{z_0}) = C(z_0) \Theta^\ext_{a,b,c}(z_0)$$ 
for a positive  function $C(z)$ on $(-1,1)$.  Thus, Theorem~\ref{Theorem:lahdili1} yields (see \cite[Theorem~8]{lahdili2}) that $\Theta^\ext_{a,b,c}(z) \ge 0$ on $(-1,1)$ and the main difficulty in the proof of  Theorem~\ref{conjecture1} is to establish  the  strict inequality in Theorem~\ref{Theorem:lahdili1} for the test configurations $(\tstM, \tstA_{z_0})$ as above. In particular, Conjecture~\ref{conjecture0} would imply Theorem~\ref{conjecture1} for $c$ rational.

\subsection{The case of arbitrary weights}
Before giving the proof of  Theorem~\ref{conjecture1}, we will need to recall the more general setting of $(v, w)$-extremal K\"ahler metrics introduced in \cite{lahdili2}. We limit the discussion to the $\PP^1$-fibre bundle case, referring  the Reader to \cite{lahdili2} for the most general setting.  Following \cite[Section 10]{lahdili2},  let $v(z)$ and $w(z)$  be  two positive smooth functions on $[-1,1]$,  and let us look, more generally, for K\"ahler metrics on $M=\PP(\cO \oplus L) \to B$ of the form \eqref{metric},  satisfying the  equation
\begin{equation}\label{weighted-extremal}
\Big(v(z)p_c(z) \Theta(z)\Big)'' =  p_c(z)v(z) \Big(\sum_{j=1}^k \frac{\Scal_j}{p_jz + c_j}\Big)  -(Az+B)w(z)p_c(z),
\end{equation}
for some (unknown) real constants $A, B$.
If we set $v(z)=(az+b)^{-m-1}$ and $w(z)= (az+b)^{-3-m}$ in \eqref{weighted-extremal},  we obtain \eqref{extremal}.  For general positive weight functions $(v, w)$ defined over $P=[-1,1]$, \eqref{weighted-extremal} describes a {\it $(v, w)$-extremal K\"ahler metric} in the K\"ahler class $\kcl_c$ in the sense of \cite{lahdili2}, which is given by the Calabi construction \eqref{metric}.
\begin{defn} \cite{lahdili2} Let $v(z), w(z)$ be given positive smooth functions on $[-1,1]$ and $\tilde \omega \in \kcl_c$ an $\Sph^1$-invariant  K\"ahler metric with corresponding normalized momentum map $m_{\tilde \omega}\colon M \to [-1, 1]$. We say that $\tilde \omega$ is {\it $(v, w)$-extremal} if it satisfies
\begin{equation}\label{(v,w)-extremal}
\Scal_{v}(\tilde g):= v(m_{\tilde \omega}) \Scal(\tilde g) + 2\Delta_{\tilde g} v(m_{\tilde \omega})+ v''(m_{\tilde \omega})\tilde g(\K, \K) = (Am_{\tilde \omega} +B) w(m_{\tilde \omega}).
\end{equation}
\end{defn}
One can argue again that  for $v(z), w(z)$ and $c=(c_1, \ldots, c_k)$ fixed,  \eqref{weighted-extremal} admits a unique smooth solution $\Theta^\ext_{v, w, c}(z)$, satisfying \eqref{boundary} but possibly not \eqref{positivity}. Notice that the constants $A, B$ can be determined  by  integrating  \eqref{weighted-extremal} on $[-1,1]$ against  the affine-linear functions $1$ and $z$. Integration by parts and using \eqref{boundary}  yields a linear system  for  $A,B$,  independent of $\Theta$. This leads to the notion of the {\it extremal affine-linear function} $\ell^\ext_{v,w,c}(z):= A_{v,w,c}z + B_{v,w,c}$ associated to $(v, w, \kcl_c)$.

By establishing a suitable notion of relative $(v, w)$-K-stability, the following result is obtained in \cite[Theorem~8]{lahdili2}.
\begin{Theorem}\label{Theorem:lahdili3}\cite{lahdili2} If  $\Theta^\ext_{v, w, c}(z_0)< 0$ for some $z_0\in (-1,1)$ then $(M, J)$ admits no $(v, w)$-extremal K\"ahler metric in $\kcl_c$.
\end{Theorem}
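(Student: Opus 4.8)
The plan is to prove the contrapositive using weighted K-semistability. If $(M,J,\kcl_c)$ carried a $(v,w)$-extremal K\"ahler metric, then by Lahdili's weighted K-semistability theorem \cite[Theorem~2]{lahdili2} (the $(v,w)$-analogue of Theorem~\ref{Theorem:lahdili1}, see \S\ref{ss:(K,kappa)-stability}) one would have $\Fut^\ext_{v,w}(\tstM,\tstA)\ge 0$ for every smooth $\Sph^1$-equivariant K\"ahler test configuration $(\tstM,\tstA)$ of $(M,J,\kcl_c)$ with reduced central fibre. It therefore suffices to exhibit, for each $z_0\in(-1,1)$, one such test configuration $(\tstM,\tstA_{z_0})$ whose weighted Donaldson--Futaki invariant satisfies
\[
\Fut^\ext_{v,w}(\tstM,\tstA_{z_0}) = C(z_0)\,\Theta^\ext_{v,w,c}(z_0), \qquad C(z_0)>0,
\]
for then $\Theta^\ext_{v,w,c}(z_0)<0$ would force $\Fut^\ext_{v,w}(\tstM,\tstA_{z_0})<0$.

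\textbf{The test configuration.} I would take $\tstM$ to be the degeneration to the normal cone \cite{RT} of $(M,J)$ along the infinity section $B_\infty=\PP(\cO\oplus 0)\sub M$, i.e. the blow-up of $M\times\C$ along $B_\infty\times\{0\}$ (compactified over $\C P^1$). Its total space is smooth; the fibrewise $\Sph^1$-action on $M$ and the scaling $\C^\times$ on the $\C$-factor lift to commuting actions, and the central fibre is the transversal union of the proper transform of $M$ (a copy of $M$, since $B_\infty$ is a divisor) and the $\PP^1$-bundle $\PP(\cO\oplus N_{B_\infty/M})$ over $B_\infty$, hence reduced. The K\"ahler class is $\tstA_{z_0}=\mathrm{pr}^*\kcl_c-\epsilon(z_0)[E]$, with $E$ the exceptional divisor and $\epsilon(z_0)>0$ chosen within the K\"ahler cone of $\tstM$ so that the momentum interval of the induced fibrewise $\Sph^1$ on $\tstM$ degenerates precisely over $z=z_0$; here $z_0$ is an affine function of $\epsilon$, and $\epsilon$ sweeps a full interval as $z_0$ runs over $(-1,1)$. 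The structural point is that $\tstM$ again lies within the Calabi framework: its $\Sph^1$-invariant K\"ahler metrics in $\tstA_{z_0}$ admit a two-variable momentum description, and one can pick such a metric that restricts over $z\neq z_0$ to the admissible metric \eqref{metric} on $M$, so that the integrals below reduce to one-dimensional ones over $[-1,1]$.

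\textbf{Computing the invariant.} As $\Fut^\ext_{v,w}$ is independent of the chosen $\check\T$-invariant K\"ahler metric in $\tstA_{z_0}$, I would evaluate it on the admissible-type metric just described. Using the $(v,w)$-version of \eqref{extremal-futaki}, the formula \eqref{(v,w)-extremal} for $\Scal_v$, and the defining ODE \eqref{weighted-extremal} for $\Theta^\ext_{v,w,c}$ (which lets the $\Scal_j$-terms and the affine term $\ell^\ext_{v,w,c}$ be absorbed into $(v\,p_c\,\Theta^\ext_{v,w,c})''$), the Calabi reduction turns $\Fut^\ext_{v,w}(\tstM,\tstA_{z_0})$ into a one-dimensional integral over $[-1,1]$ of $\bigl(v\,p_c\,(\Theta^\ext_{v,w,c}-\Theta)\bigr)''$ against an explicit weight $W_{z_0}$ determined by $\tstA_{z_0}$, where $\Theta$ is the profile of the starting metric on $M$. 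Since both $\Theta$ and $\Theta^\ext_{v,w,c}$ satisfy the boundary conditions \eqref{boundary}, their difference vanishes to second order at $\pm1$, so two integrations by parts kill all boundary contributions there; as $W_{z_0}$ is affine away from $z_0$, what remains is the corner contribution at $z=z_0$, which (once the metric-dependent pieces cancel against the $\pi^*\omega_{FS}$-term of \eqref{extremal-futaki}, as they must by metric-independence) is a multiple of $v(z_0)\,p_c(z_0)\,\Theta^\ext_{v,w,c}(z_0)$. Since $v(z_0)>0$ and $p_c(z_0)=\prod_j(p_jz_0+c_j)^{\dim_\C B_j}>0$ on $(-1,1)$, this gives $\Fut^\ext_{v,w}(\tstM,\tstA_{z_0})=C(z_0)\,\Theta^\ext_{v,w,c}(z_0)$ with $C(z_0)>0$, and the theorem follows.

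\textbf{Main obstacle.} The delicate part is this last computation: carrying out the Calabi reduction on the two-variable test space $\tstM$ precisely, pushing the double integration by parts through the $\pi^*\omega_{FS}$-term as well, and thereby pinning down the exact form of $C(z_0)$ together with its strict positivity, while checking that $\tstA_{z_0}$ is genuinely K\"ahler with reduced central fibre for every $z_0\in(-1,1)$. A secondary point is to confirm that Lahdili's weighted K-semistability \cite[Theorem~2]{lahdili2} applies verbatim to the transcendental classes $\tstA_{z_0}$ and to arbitrary smooth positive weights $(v,w)$, which is its stated scope. (An alternative, test-configuration-free route would restrict the weighted Mabuchi energy of \cite{lahdili2} to admissible metrics \eqref{metric} to get a one-variable functional whose critical point is $\Theta^\ext_{v,w,c}$, and show it is unbounded below when $\Theta^\ext_{v,w,c}$ is somewhere negative; unboundedness of the Mabuchi energy on any family already rules out a $(v,w)$-extremal metric.)
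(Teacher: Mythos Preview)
The theorem is quoted from \cite[Theorem~8]{lahdili2} without proof in the paper; the surrounding text explains exactly the mechanism you propose: the degeneration to the normal cone of $M$ along $B_\infty=\PP(\cO\oplus 0)$, endowed with the K\"ahler class $\tstA_{z_0}$, has $\Fut^\ext_{v,w}(\tstM,\tstA_{z_0})=C(z_0)\,\Theta^\ext_{v,w,c}(z_0)$ with $C(z_0)>0$, so weighted K\nobreakdash-semistability \cite[Theorem~2]{lahdili2} forces $\Theta^\ext_{v,w,c}(z_0)\ge 0$. Your outline matches this approach precisely, including the identification of the delicate step (the explicit Calabi-type reduction carried out in \cite[\S10]{lahdili2}).
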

We shall next improve the above statement under a  technical assumption allowing to deduce Theorem~\ref{conjecture1}.
\begin{Theorem}\label{Theorem:clever} If $\Theta^\ext_{v, w, c}(z)$ vanishes at $z_0\in (-1,1)$ but  $\ell^\ext_{v,w,c}(z_0)\neq 0$, then there is no $(v, w)$-extremal metric in $\kcl_c$. 
\end{Theorem}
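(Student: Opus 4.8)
## Proof proposal

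The plan is to reduce \textbf{Theorem~\ref{Theorem:clever}} to \textbf{Theorem~\ref{Theorem:lahdili3}} by exhibiting an auxiliary admissible data $(v,\tilde w,\tilde c)$ for which the extremal profile function becomes \emph{strictly} negative somewhere on $(-1,1)$, so that the non-existence statement we already have applies. The key structural observation is that the boundary-value problem \eqref{weighted-extremal}--\eqref{boundary} is \emph{linear} in $\Theta$ once the constants $A,B$ are fixed, and that $A,B$ themselves are determined by a linear system obtained by pairing \eqref{weighted-extremal} against the affine functions $1$ and $z$ and integrating by parts (using only \eqref{boundary}). Hence $\Theta^\ext_{v,w,c}$ depends smoothly and, in an appropriate sense, \emph{affinely} on the inhomogeneous term---in particular on the constants $c=(c_1,\dots,c_k)$ and on the weight $w$ entering the right-hand side. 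I would make this precise by writing $\Theta^\ext_{v,w,c}(z) = \Theta_0(z) - \ell^\ext_{v,w,c}(z)\cdot G_{v,c}(z)$, where $\Theta_0$ solves the $\ell^\ext\equiv 0$ problem and $G_{v,c}$ is the Green-type particular solution of $(v\,p_c\,G)'' = w\,p_c$ normalized to vanish to first order at $z=\pm1$ as in \eqref{boundary}; note $G_{v,c}>0$ on $(-1,1)$ by a convexity/maximum-principle argument since $w\,p_c>0$ there.

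First I would establish the factorization above and record that, under the hypothesis $\Theta^\ext_{v,w,c}(z_0)=0$ with $\ell^\ext_{v,w,c}(z_0)\neq 0$, we get $\Theta_0(z_0) = \ell^\ext_{v,w,c}(z_0)\,G_{v,c}(z_0)$, a nonzero quantity of sign equal to that of $\ell^\ext_{v,w,c}(z_0)$. Next I would perturb: replace $\ell^\ext_{v,w,c}$ by a nearby affine function $\ell_t := \ell^\ext_{v,w,c} + t\,\chi$ for a suitable fixed affine $\chi$ and small $t$, which amounts---by the linear system defining $(A,B)$---to perturbing $w$ (or equivalently $c$, or adding a small correction to the CSC data $\Scal_j$, whichever keeps us inside the admissible class) in a one-parameter family. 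The corresponding solution is $\Theta_t(z) = \Theta_0(z) - \ell_t(z) G_{v,c}(z) + o(t)$ uniformly on $[-1,1]$, so $\Theta_t(z_0) = -t\,\chi(z_0)\,G_{v,c}(z_0) + o(t)$. Choosing the sign of $t$ so that $t\,\chi(z_0)>0$, and using $G_{v,c}(z_0)>0$, yields $\Theta_t(z_0)<0$ for all small $t>0$. Since $(v,w_t,c)$ (or the perturbed admissible data) still has a well-defined extremal profile and Theorem~\ref{Theorem:lahdili3} applies, $(M,J)$ admits no $(v,w_t)$-extremal metric in the corresponding class for small $t>0$.

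The final step is to propagate non-existence back from the perturbed problem to the original one. Here I would invoke the deformation/openness results for the weighted Calabi problem in the admissible setting from \cite{AMT,lahdili2}: the set of admissible data admitting a $(v,w)$-extremal metric of the form \eqref{metric} is \emph{open} (a $(v,w)$-extremal solution of \eqref{weighted-extremal} persists under small perturbation of $v,w,c$, because \eqref{boundary} together with \eqref{positivity} is a stable open condition and the constants $A,B$ vary continuously), while we have just shown it fails on a sequence $t\downarrow 0$; hence it fails at $t=0$ as well. Equivalently---and this is the cleaner route---one notes that if $\Theta^\ext_{v,w,c}$ vanished at an interior point $z_0$ then $\Theta^\ext_{v,w,c}\geq 0$ forces $z_0$ to be a double zero, whence $\big(\Theta^\ext_{v,w,c}\big)'(z_0)=0$; but then the perturbation above shows nearby profiles dip below $0$, and an existing extremal metric would have to persist, contradicting Theorem~\ref{Theorem:lahdili3}. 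The main obstacle I anticipate is the last implication: getting a genuine \emph{non-existence} at $t=0$ from non-existence at $t>0$ requires either the openness statement (which must be stated carefully, since the relevant automorphism group and the normalization $\kappa$ can jump) or, preferably, a direct argument showing a double interior zero of $\Theta^\ext$ is incompatible with $\ell^\ext(z_0)\neq 0$ via the explicit second-order ODE---i.e.\ differentiating \eqref{weighted-extremal} and evaluating at $z_0$ to extract a contradiction with $w(z_0)>0$ and $\ell^\ext(z_0)\neq 0$. I would pursue this ODE-based route first, as it avoids any subtlety about continuity of the existence set.
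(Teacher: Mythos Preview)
Your high-level strategy---perturb the weight $w$ to produce an extremal profile that is strictly negative somewhere in $(-1,1)$, apply Theorem~\ref{Theorem:lahdili3}, and then invoke the openness result (Theorem~B2 in \cite{lahdili2}) to conclude non-existence at $t=0$---is exactly the paper's approach. However, several steps in your execution break down.

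First, the decomposition $\Theta^\ext_{v,w,c}=\Theta_0-\ell^\ext_{v,w,c}\cdot G_{v,c}$ does not hold. The function $G_{v,c}$ you propose, satisfying $(v\,p_c\,G)''=w\,p_c$ with $G(\pm 1)=G'(\pm 1)=0$, does not exist: integrating once gives $(v\,p_c\,G)'\big|_{-1}^{1}=\int_{-1}^1 w\,p_c>0$, while the boundary conditions force $(v\,p_c\,G)'(\pm 1)=0$. Even if such a $G$ existed, $\Theta_0-\ell\, G$ would not solve \eqref{weighted-extremal}, because $\ell^\ext$ is affine, not constant, and the product rule introduces an extra term $-2A(v\,p_c\,G)'$. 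Second, perturbing $\ell^\ext$ first and then reverse-engineering a perturbation of $w$ is indirect and not obviously well-posed, since $\ell^\ext$ is a function of the data rather than a free parameter. Third, the ``ODE-based route'' you prefer does not produce a contradiction: evaluating the equation at a double zero $z_0$ gives only $(v p_c\Theta^\ext)''(z_0)=p_c v S-\ell^\ext w p_c$ at $z_0$, and the sign of this depends on the scalar curvatures $\Scal_j$, which are unconstrained.

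The paper's key insight is to construct the perturbed profile \emph{directly} and read off the perturbed weight afterwards. One sets
\[
\Theta_t(z):=\Theta^\ext_{v,w,c}(z)-t(1-z^2)^2(Az+B)^3,
\]
where $\ell^\ext_{v,w,c}(z)=Az+B$. The factor $(1-z^2)^2$ ensures the boundary conditions \eqref{boundary} hold exactly for every $t$. Crucially, $\Theta_t$ solves \eqref{weighted-extremal} with the \emph{same} extremal affine function $Az+B$ but with weight $w_t=w+t\tilde w$, where
\[
\tilde w(z)=\frac{\big(v(z)p_c(z)(1-z^2)^2(Az+B)^3\big)''}{(Az+B)\,p_c(z)}.
\]
The cubic power $(Az+B)^3$ is essential: after two differentiations at least one factor of $(Az+B)$ survives, so $\tilde w$ is smooth on $[-1,1]$ (a lower power would leave a pole at the root of $Az+B$). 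Thus $\Theta_t=\Theta^\ext_{v,w_t,c}$ by uniqueness. Since $\ell^\ext(z_0)\neq 0$, one sign of $t$ gives $\Theta_t(z_0)<0$, and Theorem~\ref{Theorem:lahdili3} plus openness finishes the argument. So the missing idea in your proposal is precisely this explicit ansatz for $\Theta_t$, which keeps $\ell^\ext$ fixed while perturbing $w$ smoothly.
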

\begin{proof} Suppose $\Theta^\ext_{v, w, c}(z_0)= 0$ for some $z_0\in (-1,1)$ and $\ell^\ext_{v,w,c}(z_0)\neq 0$, where $\ell^\ext_{v,w,c}(z)=Az+B$ is the  extremal affine-linear function. Assume for contradiction that $\kcl_c$ admits a $(v,w)$-extremal K\"ahler metric. By Theorem~\ref{Theorem:lahdili3}, $\Theta^\ext_{v, w, c}(z)\ge 0$ on $(-1,1)$.  The idea is to violate Theorem~\ref{Theorem:lahdili3} by varying the weights $(v, w)$.

Consider the smooth family of functions
$$\Theta_t(z):= \Theta^\ext_{v, w, c}(z) - t(1-z^2)^2 (Az + B)^3.$$
By its very construction, $\Theta_t(z)$ satisfies \eqref{boundary} for each $t$, $\Theta_0(z) =  \Theta^\ext_{v, w, c}(z)$ and  either for all $t>0$ or  for all $t<0$ we have $\Theta_t(z_0) <0$. Suppose that $\Theta_t(z_0) <0$ for $t>0$.

Notice that  $\Theta_t(x)$ solves \eqref{weighted-extremal}  with respect to the weight functions $v(x)$ and 
$$w_t(z) = w(z)  + t\tilde w(z), \ \ \tilde w(z):= \frac{\Big(v(z)p(z)(1-z^2)^2 (Az + B)^3\Big)''}{(Az + B)p(z)}. $$ 
As $\tilde w(z)$ is a smooth function on $[-1,1]$, $w_t(z)$ is positive and smooth on $[-1,1]$ for $t$ small enough. It follows that $\Theta_t(z)=\Theta^\ext_{v, w_t, c}(z)$ is the unique solution of \eqref{weighted-extremal} for the weights $(v, w_t)$, satisfying \eqref{boundary}. Using Theorem~\ref{Theorem:lahdili3} again, we conclude that  for small positive values of $t$, there is no $(v, w_t)$-extremal K\"ahler metric in $\kcl_c$. This contradicts Theorem B2 in \cite{lahdili2}. \end{proof}

\subsection{Proof of Theorem~\ref{conjecture1}}
\begin{proof}
As we have already explained, we are in the special case of  Theorem~\ref{Theorem:clever}  with $v(z)=(az+b)^{-1-m}$ and  $w(z)=(az +b)^{-3-m}$ ($b>|a|$).  Accordingly, it is enough  to assume  that  $(M,J)$ admits an $(a\K,b)$-extremal K\"ahler metric in $\kcl_c$ but $\Theta^\ext_{a,b,c}(z_0)=\ell^\ext_{a,b,c}(z_0)=0$ for some $z_0 \in (-1,1),$ and derive a contradiction.  To this end, we shall use a modification of the arguments from \cite{HFKG3}. 

\bigskip
We start with the preliminary observation that $\Theta^\ext_{a,b,c}(z)$ and  $\ell^\ext_{a,b,c}(z)$ are rational functions of $z$ whose coefficients are rational functions of $(a,b,c)$. Indeed, letting $P_{a,b,c}(z):=p_c(z)\Theta^\ext_{a,b, c}(z)$ and multiplying both sides of \eqref{weighted-extremal} with $(az+b)^{m+3}$, it follows that $P^\ext_{a,b,c}(z)$ solves the ODE
\begin{equation}\label{polynomial-solution}
\begin{split}
& (az+b)^2F''(z) -2a(m+1)(az+b)F'(z) +a^2(m+1)(m+2)F(z) \\
& =  p_c(z)(az+b)^2 \Big(\sum_{j=1}^k \frac{\Scal_j}{p_jz + c_j}\Big)  -(Az+B)p_c(z).
\end{split}
\end{equation}
where $A$ and $B$ are real constants (equal to $A_{a,b,c}$ and $B_{a,b,c}$ for the specific solution $P_{a,b,c}(z)$). When $a\neq 0$, using that the monomials $(az+b)^s, s=0, \ldots, m$ are eigenfunctions for nonzero eigenvalues of the linear differential operator in \eqref{polynomial-solution}, we see that the general solution of \eqref{polynomial-solution} is a polynomial of degree $\le m+2$ of the form
\[
F(z) = \sum_{s=0}^{m+2}p_{s}(az+b)^s
\]
whose coefficients $p_0, \ldots, p_m$ are determined from $a,b,c$ and the two real constants $A,B$, whereas the coefficients $p_{m+2}$ and $p_{m+1}$ are the 2 free constants of integration. Imposing the boundary conditions (see \eqref{boundary}) 
\begin{equation}\label{polynomial-boundary}
F(\pm 1)=0, \, F'(\pm 1)= \mp 2p_c(\pm 1)
\end{equation} 
leads to a linear system for $(p_{m+2}, p_{m+1}, A, B)$. Under the geometric constraints $|a|<b$ and $c_j >|p_j|$ (and using that the setting is invariant under a homothety of $(a,b)$)  it follows from the arguments in  \cite[Prop.~2.2]{AMT} that the system admits a unique solution.   This conclusion holds as long as the corresponding $4\times 4$ system is nondegenerate. In summary, for the values of $(a, b, c)$ such that $a<|b|, a\neq 0, c_j >|p_j|$,   $P_{a,b,c}(z)$ is a polynomial of degree $\le m+2$, whose coefficients are rational functions of $a, b, c$, and, similarly,  $\ell^\ext_{a,b,c}(z)= A_{a,b,c}z + B_{a,b,c}$ is an affine-linear function whose coefficients are  rational functions of $(a,b,c)$.  The definition of  $P_{a,b,c}(z)$ and $\ell^\ext_{a,b,c}(z)$ can be also extended for $a=0, b>0$ and $c_j >|p_j|$ (e.g.~by the arguments  in the  proof of  \cite[Prop.~2.2]{AMT}) and for values $(a,b,c)$  in  a maximal connected component $\mathcal P$  of the geometric solutions (i.e., satisfying  $a<|b|, c_j>|p_j|$).

\smallskip
We  next observe that under the assumption made, for all $(\tilde a, \tilde b, \tilde c)$ sufficiently close to the initial value $(a,b,c)$,   $(M, J)$ admits a $(\tilde a \check\K, \tilde b)$-extremal K\"ahler metric in $\kcl_{\tilde c}$,  $\Theta^\ext_{\tilde a, \tilde b, \tilde c}(z) \ge 0$ on $(-1,1)$ and $\Theta^\ext_{\tilde a,\tilde b,\tilde c}(\tilde z_0)=0=\ell^\ext_{\tilde a, \tilde b, \tilde c}(\tilde z_0)$ for some $\tilde z_0 \in (-1,1)$. (This implies that $-\frac{B_{\tilde a, \tilde b, \tilde c}}{A_{\tilde a, \tilde b, \tilde c}}$ is a double root, possibly at infinity,  of the degree $(m+2)$ polynomial $P_{\tilde a, \tilde b, \tilde c}(z)$.) Indeed, otherwise, by  the openness result \cite[Theorem B2]{lahdili2},  we can find a sequence $\lim_{k\to \infty} (a_k, b_k, c_k) =(a,b,c)$ such that $(M,J)$ admits an $(a_k \check\K, b_k)$-extremal K\"ahler metric in $\kcl_{c_k}$ (and therefore $\Theta^\ext_{a_k, b_k, c_k}(z) \ge 0$ on $(-1,1)$ by Theorem~\ref{Theorem:lahdili3}) but $\Theta^\ext_{a_k,b_k,c_k}(z)$  and $\ell^\ext_{a_j, b_j, c_j}(z)$ do not having a common zero in $(-1,1)$. By Theorem~\ref{Theorem:clever}, $\Theta^\ext_{a_k,b_k,c_k}(z)>0$ on $(-1,1)$, i.e., there exist an admissible $(a_k\check\K, b_k)$-extremal K\"ahler metrics $(g_k,\omega)$ in $\kcl_{c_k}$ of the form \eqref{metric}. The latter are, by the results in \cite{HFKG2}, precisely the K\"ahler metrics on $(M,J)$ admitting a hamiltonian 2-form of order $1$ with associated Killing vector field $\check{\K}$. Using the uniqueness result of \cite{lahdili3} (see also Lemma~\ref{l:uniqueness} in this paper), we conclude as in the proof of Theorem~2 in \cite{HFKG3} that the initial $(a\check\K,b)$-extremal metric in $\kcl_c$ must also admit a hamiltonian $2$-form of order $1$ associated to $\check\K$,  a contradiction.

The conclusion, which will are going to use in order  to  find a contradiction,  is that (by analytic continuation)  for all $(\tilde a, \tilde b,  \tilde c) \in \mathcal P$,   $P_{\tilde a,\tilde b,\tilde c}(z)$ is a polynomial of  degree  $\le (m+2)$, which  has a double root at $-\frac{B_{\tilde a,\tilde b,\tilde c}}{A_{\tilde a,\tilde b,\tilde c}}$, i.e., 
\begin{equation}\label{key-relation}
(A_{\tilde a, \tilde b, \tilde c})^{m+2}P_{\tilde a, \tilde b, \tilde c}\Big(-\frac{B_{\tilde a, \tilde b, \tilde c}}{A_{\tilde a, \tilde b, \tilde c}}\Big)=0, \qquad (A_{\tilde a, \tilde b, \tilde c})^{m+1}P^{\prime}_{\tilde a, \tilde b, \tilde c}\Big(-\frac{B_{\tilde a, \tilde b, \tilde c}}{A_{\tilde a, \tilde b, \tilde c}}\Big)=0.
\end{equation}

\bigskip
We thus reduced the proof to test \eqref{key-relation} in the extended domain of definition $\mathcal P$  of $(a,b,c)$ and show that \eqref{key-relation} cannot hold for all such values of the parameters.  We are going to specialize \eqref{key-relation} for $a=1=b$. In this case,  the system \eqref{polynomial-boundary} for the general solution of \eqref{polynomial-boundary} is still nondegenerate. Indeed,  the boundary conditions at $z=-1$ of the general solution hold if and only if  $A_{1,1,c}=B_{1,1,c}=-2m(m+1)$ which in turn identifies all but the coefficients $p_{m+2}$ and $p_{m+1}$ of the general solutions of \eqref{polynomial-solution}.  The boundary conditions at $z=1$ thus fix the constants $p_{m+2}$ and $p_{m+1}$. In particular, $(1,1,c)\in \mathcal P$. Notice that by \eqref{polynomial-boundary},  $z=-1$ is a single zero of $P_{1,1,c}(z)$, which shows that the second equality in \eqref{key-relation} does not hold. \end{proof}

\appendix	

\section{Holomorphic versus contact viewpoint}\label{s:h-vs-c}

For any compact co-oriented contact $(2m+1)$-manifold $(\Sm,\Ds)$, let
$\Cx_+(\Sm, \Ds)^\T$ be the space of $\T$-invariant CR structures $J$ on
$\Ds$, and let
\[
\con_+(\Sm, \Ds)=\{\X\in \con(\Sm, \Ds)\st \cf_\Ds(\X)>0
\]
(using the chosen orientation of $T\Sm/\Ds$). Given a (compact, real) torus
$\T$ in $\Con(\Sm,\Ds)$ let $\Con(\Sm,\Ds)^\T$ denote the group of
$\T$-equivariant contact transformations, with Lie algebra $\con(\Sm,\Ds)^\T$
(in which the Lie algebra $\tor$ of $\T$ is central).

Let $(\Sm,\Ds_0,J_0,\X)$ be a fixed Sasaki manifold invariant under a torus
$\T$, let $\cf_0:=\cf_{\Ds_0}^\X$ be its contact form, and denote the induced
transversal holomorphic structure on $\Ds_\X$ by $J^\X_0$. We consider the
following two ways of parametrizing a family of CR structures on $\Sm$.
\begin{bulletlist}
\item The \emph{contact viewpoint} refers to the space $\Cx_0(\X,\Ds_0)^\T
  \sub\Cx_+(\Sm,\Ds_0)^\T$ of $\T$-invariant CR structures $J$ on $\Ds_0$ such
  that the induced transversal holomorphic structure $J^\X$ is the
  pushforward of $J^\X_0$ by some $\Phi\in\Diff_0(\Sm)^\T$. Note that $J$ is
  uniquely determined by $J^\X$ (via the given fixed $\Ds_0$ and $\X$), and
  it is sometimes convenient to view $T_J\Cx_+(\Sm,\Ds_0)^\T$ as a space of
  endomorphisms of $\Ds_\X$ rather than $\Ds_0$.
\item The (\emph{transversal}) \emph{holomorphic viewpoint} refers to the
  space $\cS(\X,J^\X_0)^\T$ of $\T$-invariant contact forms
  $\cf=\cf_\Ds^\X$ which are compatible with the transversal holomorphic
  structure $(\X,J^\X_0)$, and hence induce a CR structure $(\Ds,J)$ where
  $J$ is the lift of $J^\X_0$ to $\Ds$.
\end{bulletlist}

Note that the groups $\Con_0(\Sm,\Ds_0)^\T:=\Con(\Sm,\Ds_0)\cap
\Diff_0(\Sm)^\T$ and $\Aut_0(\Sm,\X,J^\X_0)^\T:=\Aut(\Sm,\X,J^\X_0)\cap
\Diff_0(\Sm)^\T$ act on $\Cx_0(\X,\Ds_0)^\T$ and $\cS(\X,J^\X_0)^\T$
respectively by pushforward. Imitating arguments in~\cite{gauduchon-book}, we
have the following Sasaki analogue of a fundamental result in K\"ahler
geometry.

\begin{prop} The relation $\sim$ between $\Cx_0(\X,\Ds_0)^\T$
and $\cS(\X,J^\X_0)^\T$, defined by $J\sim \cf$ if and only if there exists
$\Phi\in\Diff_0(\Sm)^\T$ such that $(\Ds_0,J^\X)=\Phi\cdot
(\Ds,J^\X_0)$, where $\Ds=\ker\cf$, induces respectively
$\Con_0(\Sm,\Ds_0)^\T$ and $\Aut_0(\Sm,\X,J^\X_0)^\T$ invariant maps
\begin{align*}
  p&\colon\Cx_0(\X,\Ds_0)^\T\to \cS(\X,J^\X_0)^\T/\Aut_0(\Sm,\X,J^\X_0)^\T
\text{ and }\\
  q&\colon\cS(\X,J^\X_0)^\T\to \Cx_0(\X,\Ds_0)^\T/ \Con_0(\Sm,\Ds_0)^\T,
\end{align*}
which define a bijection
\[
\Cx_0(\X,\Ds_0)^\T/\Con_0(\Sm,\Ds_0)^\T\cong
\cS(\X,J^\X_0)^\T/\Aut_0(\Sm,\X,J^\X_0)^\T.
\]
The derivative of $p$ at $J$, with $J^\X=\Phi\cdot J^\X_0$, is given by
$\d p_J(-\cL_\Y J^\X)=\Phi^*(\cL_\Y\cf_0)$ modulo
$\aut(\Sm,\X,J^\X_0)^\T$, and the derivative of $q$ at $\cf=\Psi\cdot
\cf_0^\X$ is given by $\d q_\cf(-\cL_\Y\cf)= \Psi^*(\cL_\Y J^\X_0)$
modulo $\con(\Sm,\Ds_0)^\T$.
\end{prop}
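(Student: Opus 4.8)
The plan is to model this on the classical K\"ahler picture (as in~\cite{gauduchon-book}) where one parametrizes K\"ahler metrics either by varying the complex structure with fixed symplectic form, or by varying the K\"ahler potential with fixed complex structure, and these two pictures are exchanged by the Moser trick. Here the roles are played by $\Cx_0(\X,\Ds_0)^\T$ (vary the CR structure on a fixed contact distribution) and $\cS(\X,J^\X_0)^\T$ (vary the contact form compatible with a fixed transversal holomorphic structure), with the exchange supplied by the equivariant Gray--Moser theorem of Remark~\ref{r:equiGM}. First I would spell out that $\sim$ is well defined: given $J\in\Cx_0(\X,\Ds_0)^\T$, by definition $J^\X=\Phi_*J^\X_0$ for some $\Phi\in\Diff_0(\Sm)^\T$; then $\Phi^*\cf_0$ is a $\T$-invariant contact form whose kernel carries a CR structure inducing the transversal holomorphic structure $\Phi^*J^\X=J^\X_0$, so $\Phi^*\cf_0\in\cS(\X,J^\X_0)^\T$, and we set $J\sim\Phi^*\cf_0$. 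The non-uniqueness of $\Phi$ is governed exactly by $\Aut_0(\Sm,\X,J^\X_0)^\T$ (two choices differ by post-composition with an element fixing $(\X,J^\X_0)$), so the assignment $J\mapsto [\Phi^*\cf_0]$ descends to the claimed map $p$; conversely, given $\cf\in\cS(\X,J^\X_0)^\T$ with $\Ds=\ker\cf$, the equivariant Gray--Moser theorem provides $\Psi\in\Diff_0(\Sm)^\T$ with $\Psi^*\cf=\cf_0$, and then $\Psi_*$ carries the CR structure induced by $\cf$ on $\Ds$ to one on $\Ds_0$, i.e.\ to an element of $\Cx_0(\X,\Ds_0)^\T$, well defined modulo $\Con_0(\Sm,\Ds_0)^\T$; this is $q$.

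Next I would check equivariance and that $p,q$ are mutually inverse on the quotients. Equivariance of $p$ under $\Con_0(\Sm,\Ds_0)^\T$ and of $q$ under $\Aut_0(\Sm,\X,J^\X_0)^\T$ is immediate from the construction, since pre- or post-composing the intertwining diffeomorphism by an element of the respective group only changes $\Phi$ (resp.\ $\Psi$) by a factor already quotiented out. To see that $p$ and $q$ induce mutually inverse bijections between $\Cx_0(\X,\Ds_0)^\T/\Con_0(\Sm,\Ds_0)^\T$ and $\cS(\X,J^\X_0)^\T/\Aut_0(\Sm,\X,J^\X_0)^\T$, I would trace an element $J$ around the loop: $p(J)=[\Phi^*\cf_0]$, and applying $q$ to $\Phi^*\cf_0$ uses Gray--Moser to pull it back to $\cf_0$, necessarily by (a representative of the class of) $\Phi^{-1}$, which carries the CR structure back to $J$; the other composition is analogous. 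One should note here that both quotients make sense as sets even though the quotient \emph{groups} need not be well behaved (cf.\ the discussion after Definition~\ref{d:torus}); only the orbit equivalence relations are used.

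Finally I would compute the derivatives. Fix $J\in\Cx_0(\X,\Ds_0)^\T$ with $J^\X=\Phi_*J^\X_0$ and a curve through $J$ of the form $J_t$ with $\dot J_t=-\cL_\Y J^\X$ for a projectable vector field $\Y$ (every tangent vector to $\Cx_0(\X,\Ds_0)^\T$ arises this way, since the transversal holomorphic structures in this space form a single $\Diff_0(\Sm)^\T$-orbit). Lifting the flow of $\Y$ and composing with $\Phi$, the corresponding curve in $\cS(\X,J^\X_0)^\T$ is $\Phi^*(\Phi_t^\Y)^*\cf_0$ up to $\Aut_0(\Sm,\X,J^\X_0)^\T$, whose $t$-derivative at $0$ is $\Phi^*(\cL_\Y\cf_0)$ modulo $\aut(\Sm,\X,J^\X_0)^\T$; this is the stated formula $\d p_J(-\cL_\Y J^\X)=\Phi^*(\cL_\Y\cf_0)$. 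The formula for $\d q$ is obtained symmetrically, writing $\cf=\Psi_*\cf_0^\X$ and differentiating $\Psi^*(\Phi_t^\Y)^*J^\X_0$. The main obstacle I anticipate is purely bookkeeping rather than conceptual: one must be careful that the ``modulo'' clauses are the right ones---that the ambiguity in $\Phi$ (resp.\ $\Psi$) contributes exactly $\aut(\Sm,\X,J^\X_0)^\T$ (resp.\ $\con(\Sm,\Ds_0)^\T$) to the derivative and nothing more---and that Lemma~\ref{l:ddc}(ii) and Lemma~\ref{l:contact} are invoked correctly to identify $\cL_\Y\cf_0$ as a genuine tangent vector to $\cS(\X,J^\X_0)^\T$; the $\T$-invariance must be maintained throughout, which is automatic because $\tor$ is central in $\con(\Sm,\Ds_0)^\T$ and all the diffeomorphisms are taken $\T$-equivariant.
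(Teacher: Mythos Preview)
Your proposal is correct and follows essentially the same approach as the paper: construct $p$ directly from the definition of $\Cx_0(\X,\Ds_0)^\T$, construct $q$ via the equivariant Gray--Moser theorem, verify that the ambiguity in the intertwining diffeomorphism is precisely the relevant stabilizer subgroup, and compute the derivatives by differentiating a curve $\Phi_t=\Phi_t^\Y\circ\Phi$ of diffeomorphisms. The paper's presentation is slightly more relational (it starts from $\sim$ and argues that it induces well-defined partial maps which turn out to be everywhere defined), but the content and the derivative computation $p(J_t)=\Phi^*(\Phi_t\Phi^{-1})^*\cf_0\Rightarrow \d p_J(-\cL_\Y J^\X)=\Phi^*(\cL_\Y\cf_0)$ are the same as yours.
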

\begin{proof} Suppose $J\sim \cf$; then $\tilde J\sim\cf$ if and
only if there exists $\Phi\in\Diff_0(\Sm)^\T$ with
\[
\Phi\cdot(\Ds_0,J^\X)=(\Ds_0,{\tilde J}^\X), \text{ i.e., } \Phi\in
\Con_0(\Sm,\Ds_0)^\T\text{ and }\Phi\cdot J=\tilde J.
\]
Similarly, $J\sim\tilde\cf$, with $\ker\tilde\cf=\tilde \Ds$ if and only if
  there exists $\Psi\in\Diff_0(\Sm)^\T$ with
\[
\Psi\cdot(\Ds,J^\X_0)=(\tilde\Ds,J^\X_0), \text{ i.e., }
\Psi\in\Aut_0(\Sm,\X,J^\X_0)^\T\text{ and } \Psi\cdot\cf=\tilde \cf
\]
(since $\Psi$ preserves $\X$).  Hence the relation $J\sim\cf$ induces
invariant and uniquely defined, but perhaps only partially defined, $p$ and
$q$.  However, $p$ is everywhere defined by definition of
$\Cx_0(\X,\Ds_0)^\T$, while $q$ is everywhere defined by the equivariant
Gray--Moser theorem (cf. Remark~\ref{r:equiGM}).  Thus $p$ and $q$ are
invariant maps as stated, and descend to mutually inverse maps on respective
quotients by construction.

To compute $\d p_J(-\cL_\Y J^\X)$, let $\Phi_t$ be a curve in
$\Diff_0(\Sm)^\T$ with $\Phi_0=\Phi$ and $\frac{\d}{\d t}
\Phi_t\Phi^{-1}\restr{t=0} =\Y$ and let $J_t$ be the curve in $\Cx_0(\X,\Ds_0)^\T$
with ${J_t}^\X=\Phi_t\cdot J^\X_0$; thus $J_t$ passes through $J$ at $t=0$
with derivative (the lift to $\Ds_0$ of) $-\cL_\Y J^\X$.  Then
$p(J_t)=\Phi_t^{-1}\cdot \cf_0 = \Phi^*(\Phi_t\Phi^{-1})^*\cf_0$ modulo
$\Aut_0(\Sm,\X,J^\X_0)^\T$ and so
\[
\d p_J(-\cL_\Y J^\X)
=\Phi^*\bigl(\tfrac{\d}{\d t} (\Phi_t\Phi^{-1})^*\cf_0\restr{t=0}\bigr)
=\Phi^*(\cL_\Y\cf_0)
\]
modulo $\aut(\Sm,\X,J^\X_0)^\T$. The derivative of $q$ is
computed similarly, or by using the inverse relationship between $p$ and $q$.
\end{proof}
\begin{rem}\label{r:dpq} We may describe the derivatives of $p$ and $q$ more
explicitly using Lemma~\ref{l:ddc}.
\begin{numlist}
\item If $\cL_\Y J^\X$ is tangent to $\Cx_0(\X,\Ds_0)^\T$ at $J^\X$, its
lift to $\Ds_0$ is skew with respect to $\d \cf_0$. However, this lift differs
from $\cL_\Y J$ by a $1$-form with values in the span of $\X$ (on which
$\d\cf_0$ vanishes), so $\cL_\Y J$ is skew with respect to $\d \cf_0$.  Since
$J$ is also skew with respect to $\d \cf_0$ it follows that $J$ is skew with
respect to $\d(\cL_\Y\cf_0)=\cL_\Y\d\cf_0$; hence $\d(\cL_\Y\cf_0)$ is
$J^\X$-invariant, so $\cL_\Y\cf_0$ is tangent to $\cS(\X,J^\X)^\T$ at
$\cf_0$ hence of the form $\d^J_\X\varphi+\alpha$ by Lemma~\ref{l:ddc}.

Hence, modulo $\aut(\Sm,\X,J^\X_0)^\T$, $\d p_J(-\cL_\Y
J)\equiv\Phi^*(\cL_\Y\cf_0) =\cL_{\Phi^*\Y} \cf= \d^{J_0}_\X\psi+\beta$,
with $\psi=\Phi^*\varphi$ and $\beta=\Phi^*\alpha$. Since $\lo(\Sm,\X)^\T\leq
\aut(\Sm,\X,J^\X_0)^\T$ induces exact forms $\d f$ for $\T$-invariant
functions $f$, we only need to determine $\beta$, and hence $\alpha$, modulo
such exact forms. Thus we may take $\d^{J}\varphi+\alpha=\iota_\Y\d\cf_0$,
since this differs from $\cL_\Y\cf_0$ by such an exact form. Thus $\d
p_J(-\cL_\Y J) =\d^{J_0}_\X\psi+\beta= \Phi^* \iota_\Y\d\cf_0$ modulo
$\aut(\Sm,\X,J^\X_0)^\T$.

\item Conversely the derivative of $q$ at $\cf=\Psi\cdot\cf_0
=\Phi^*\cf_0$, with $\Psi=\Phi^{-1}$, sends
$\d^{J_0}_\X(\Phi^*\varphi)+\Phi^*\alpha$ to $-\cL_\Y J^\X$ modulo
$\con_0(\Sm,\Ds_0)^\T$, where $\iota_\Y\d\cf_0=\d^{J}_\X\varphi+\alpha$.
We check that this has tangent vectors of the form $\d f$ in its kernel, since
they correspond to elements of $\con_0(\Sm,\Ds_0)^\T$. On the other hand,
applied to a tangent vector of the form $\d^{J_0}_\X(\Phi^*\varphi)$, we
obtain instead $-\cL_\Y J^\X$ where $\iota_\Y\d\cf_0=\d^{J}_\X\varphi$
modulo $\con_0(\Sm,\Ds_0)^\T$. Thus $\Y=-J\X_\varphi$ modulo the span of $\X$,
where $\X_\varphi$ is the contact vector field of $\varphi$ with respect to
$\cf_0$, and we conclude that $\d q_\cf(\d^{J_0}_\X(\Phi^*\varphi)) =
-\cL_\Y J^\X=\cL_{J^\X \X_\varphi}J^\X=J^\X\circ
\cL_{\X_\varphi}J^\X$ modulo $\con_0(\Sm,\Ds_0)^\T$.

Thus the image of the marking $\Sc(\X,J^\X,\cf_0)$ under $q$ may be
interpreted as (the quotient by $\Con_0(\Sm,\Ds_0)^\T$ of) an orbit for a
complexification of $\Con_0(\Sm,\Ds_0)^\T$, even though such a
complexification does not exist.
\end{numlist}
\end{rem}

\section{The contact Futaki invariant} \label{ss:GIT}

Let $(\Sm,\Ds)$ be a compact co-oriented contact $(2m+1)$-manifold and $\T$ a
fixed compact real torus in its group $\Con(\Sm,\Ds)$ of contact
transformations.  The group $\Con(\Sm,\Ds)^\T$ (with Lie algebra
$\con(\Sm,\Ds)^\T$) of $\T$-equivariant contact transformations naturally acts
on the space $\Cx_+(\Sm, \Ds)^\T$ of $\T$-invariant CR structures on $\Ds$,
compatible with the fixed orientation on $T\Sm/\Ds$ (which we assume is
nonempty). Using Lemma~\ref{l:contact}, we identify $\con(\Sm, \Ds)^\T$ with
the space $C^\infty_\Sm(T\Sm/\Ds)^\T$ of smooth $\T$-invariant sections of
$T\Sm/\Ds$. Fix $\K\in \tor \cap \con_+(\Sm, \Ds)$ where $\tor$ is the Lie
algebra of $\T$.  It is shown in \cite{AC} that for any CR structure $J\in
\Cx_+(\Sm, \Ds)^\T$ and any $\X \in \tor \cap \con_+(\Sm, \Ds)$,
\begin{equation}\label{scal-twist}
\Scal_\K(g_\X) \sas= \Scal(g_\K)\cae =:\Scal_\cae(J),
\end{equation} 
where $\cae:=\cf_\Ds(\K), \sas:=\cf_\Ds(\X)$. It follows that $\Scal_\cae(J)$
is a well-defined element of $C^\infty_\Sm(T\Sm/\Ds)^\T$; furthermore,
following \cite{AC}, the linear map
\begin{equation}\label{moment-map}
\begin{split}
\mu_\cae (J)\colon C^\infty_\Sm(T\Sm/\Ds)^\T & \to \R \\
\rho  & \mapsto \iip{\Scal_\cae(J), \rho }_\cae
\end{split}
\end{equation}
where  for $\cae_1, \cae_2\in C^\infty_\Sm(T\Sm/\Ds)^\T$ 
\[
\iip{\cae_1, \cae_2 }_\cae :=  \int_\Sm \cae_1 \cae_2 \cae^{-m-3}  \vol_\Ds
\]
with $\vol_\Ds :=\cf_\Ds\wedge\Lv_\Ds^{\wedge m}$ denoting the section of
$\Wedge^{2m+1}T^*\Sm\otimes (T\Sm/\Ds)^{m+1}$, can be identified with the
momentum map for the action of $\Con(\Sm, \Ds)^\T$ on the space $\Cx_+(\Sm,
\Ds)^\T$, with respect to a formal symplectic structure constructed in
\cite{He}.

This provides a conceptual explanation for Lemma~\ref{l:weighted-extremal}.
Notice that for a $J\in \Cx_+(\Sm, \Ds)^\T$, $(\Ds, J, K)$ is extremal Sasaki
if and only if the contact vector field $\X_{\Scal_\cae(J)}\in \crJ(\Sm, \Ds,
J)$ whereas $(\Ds, J, K)$ is CSC Sasaki if and only if
\[
\X_{\Scal_\cae(J)} = c_K K
\]
where $c_K$ is the constant
\begin{equation}\label{cK}
c_K := \frac{\iip{\Scal_\cae(J), \cae }_\cae}{\iip{\cae, \cae }_\cae}
= \frac{\int_\Sm \Scal(g_\K) \cf_\Ds^\K \wedge (\d \cf_\Ds^\K)^m}{\int_\Sm \cf_\Ds^\K \wedge (\d \cf_\Ds^\K)^m}. 
\end{equation}

We now use the above setting  to give a definition of a  \emph{contact} Futaki invariant  $\Futc_\K\colon \tor \to \R$ associated  to  $(\Sm, \Ds, \T)$, which  obstructs  the existence of a CSC Sasaki structure in  $\Cx_+(\Sm, \Ds)^\T$,  thus providing a contact analogue of the construction in \cite{Lejmi}. Compared to the constructions in \cite{BGS,FOW}, using  Remark~\ref{r:equiGM}, $\Futc_\K$  coincides with the restriction to the Lie algebra $\tor \leq \aut(\Sm,\K,J^\K)$ of the   invariant defined in these references  with respect to  the transversal holomorphic structure $(\Sm, \K, J^\K)$.

The point is that  the flow of  a vector field $\Y\in \tor \leq \con(\Sm, \Ds)^\T$  acts trivially on the space $\Cx_+(\Sm, \Ds)^\T$ and, therefore,  by the momentum map interpretation of \eqref{moment-map}, we have that the quantity
\[
\iip{\Scal_\cae(J), \rho }_\cae
\]
where $\rho = \cf_\Ds(\Y) \in C^\infty_\Sm(T\Sm/\Ds)$ is independent of $J\in \Cx_+(\Sm, \Ds)^\T$. In particular, the constant $c_\K$ defined by \eqref{cK}  and  the $\iip{\cdot, \cdot}_\cae$-orthogonal projection $\cae_\ext$ of $\Scal_\cae(J)$  to  the  space $C^\infty_\Sm(T\Sm/\Ds)$ are  independent of $J$.  We denote by $\K_\ext  \in \tor$ the vector field corresponding to $\cae_\ext$ via Lemma~\ref{l:contact},  and  for any   $\K_1, \K_2 \in \con(\Sm, \Ds)^\T$ with corresponding sections $\cae_1, \cae_2 \in C^\infty_\Sm(T\Sm/\Ds)^\T$, we let 
\[ \iip{\K_1, \K_2}_\K := \iip{\cae_1, \cae_2}_\cae= \int_\Sm \cf_\Ds^\K (\K_1) \cf_\Ds^\K (\K_2) \cf_\Ds^\K\wedge (\d \cf_\Ds^\K)^m.\]
The above prompts the following definition.
\begin{defn}\label{d:contact-Futaki} Given a compact  Sasaki manifold  $(\Sm, \Ds, J, \K)$, a compact torus $\T \sub \Aut(\Sm,\Ds, J)$ with Lie
algebra $\tor \sub \con(\Sm, \Ds)$ such that $\K \in \tor$,  we define a linear map 
$\Futc_\K\colon \tor \to \R$  by
\begin{equation*}
\Futc_\K (\Z) := \iip{\X_{\Scal_\cae(J)}- c_\K\K,\Z}_\K,
\end{equation*}
where the constant $c_K$ is  given by \eqref{cK}.
We also write
\[
\Futc_\K (\Z) =  \iip{\K_\ext-c_\K\K,\Z}_\K
\]
where $\K_\ext\in \tor$ is the  $\iip{\cdot,\cdot}_\K$-orthogonal projection of $\X_{\Scal_\cae(J)}$ to $\tor$, called \emph{extremal vector field} of $(\K, \T)$ (cf.~\cite{He-Li} in the case when $\T$ is maximal and \cite{legendre2} in the toric case).
\end{defn}
As $\Futc_\K$ is independent of $J \in \Cx_+(\Sm, \Ds)^\T$, it follows that  the condition $\Futc_\K \equiv 0$ (or equivalently $\K_\ext=c_\K\K$) is
necessary for finding a CR structure $J \in \Cx_+(\Sm, \Ds)^\T$ which is CSC
with respect to $\K$.

\begin{rem}\label{r:contact-futaki}
In the case when $\X$ is quasiregular, the Futaki invariant $\Futc_\K$
can be re-written in terms of the corresponding compact K\"ahler orbifold $(M,
J, \omega)$ as
\begin{equation*}
\Futc_\K (\Z) = (2\pi) \int_M \Bigl(\Scal_{f_\K}(g) - c_\K f_\K\Bigr)
f_\Z f_\K^{-m-3} \omega^m
\end{equation*}
where $f_\Z =\cf_\Ds^\X(\Z)$ and $f_\K= \cf_\Ds^\X(\K)$ are the induced
Killing potentials on $M$. This is, up to a positive dimensional constant, the
Futaki invariant of \cite{lahdili2}, associated to the K\"ahler class
$\kcl=[\omega]$ on $(M, J)$, the induced torus action $\check\T$ with momentum
polytope $P_\X$, and the weight functions $v(x)= \ell_\K(x)^{-m-1}$ and
$w(x)=\ell_\K(x)^{-m-2}$ (where $\ell_\K(x)$ is the positive affine-linear
function over $P_\X$ corresponding to $\K$, see
Lemma~\ref{l:momentum-cone-coincide}).
\end{rem}

\section*{Acknowledgements} {VA was supported in part by an NSERC Discovery Grant and is grateful to the University of Bath and the Institute of Mathematics and Informatics  of the Bulgarian Academy of Sciences for their support and hospitality. EL was supported  by the French ANR-14-CE25-0010 and is grateful to the UQAM and the UMI-CNRS in Montreal for their support and hospitality. We thank R. Dervan, A. Lahdili, J. Ross, Z. Sj\"ostr\"om Dyrefelt and X. Wang for valuable discussions.}

\end{document}